\tikzstyle{every node}=[font=\normalsize]
\newtheorem{theorem}{Theorem}
\numberwithin{theorem}{chapter}
\newtheorem{lemma}[theorem]{Lemma}
\newtheorem{corollary}[theorem]{Corollary}
\theoremstyle{definition}
\newtheorem{remark}[theorem]{Remark}
\newtheorem{example}[theorem]{Example}
\newtheorem{definition}[theorem]{Definition}
\newtheorem*{definition*}{Definition}
\newcommand{\bQ}{\mathbb{Q}}
\newcommand{\R}{\mathbb{R}}
\newcommand{\Z}{\mathbb{Z}}
\newcommand{\bC}{\mathbb{C}}
\newcommand{\N}{\mathbb{N}}
\newcommand{\F}{\mathbb{F}}
\newcommand{\bV}{\mathbb{V}}
\newcommand{\bO}{\mathbb{O}}
\newcommand{\bJ}{\mathbb{J}}
\newcommand{\bI}{\mathbb{I}}
\newcommand{\vN}{\mathbb{\bV\hspace{-0,61mm}\N}} 
\newcommand{\gvN}{\widehat{\mathbb{\bV\hspace{-0,61mm}\N}}} 
\newcommand{\On}{\mathbb{\bO\hspace{-0,3mm}{\mathbb N}}}
\newcommand{\Oz}{\mathbb{\bO\hspace{-0,3mm}{\mathbb Z}}}
\newcommand{\No}{\mathbb{\N\hspace{-0,3mm}{\mathbb O}}}
\DeclareRobustCommand{\ootimes}{%
  \mathbin{\mathpalette\o@plus@times\relax}%
}
\newcommand{\o@plus@times}[2]{%
  \ooalign{$\m@th#1\oplus$\cr$\m@th#1\otimes$\cr}%
}
\newcommand{\cP}{{\mathcal P}}
\newcommand{\op}{\mathsf{op}}
\newcommand{\I}{\mathbf{I}}
\newcommand{\II}{\mathbf{I\hspace{-0,6mm}I}}
\newcommand{\III}{ \mathbf{I\hspace{-0,6mm}I\hspace{-0,6mm}I} \hspace{0.2mm}}
\newcommand{\eset}{\emptyset}
\newcommand{\Alpha}{\boldsymbol{\alpha}}
\newcommand{\OM}{\boldsymbol{\omega}}
\newcommand{\Beta}{\mathrm{b}} 
\newcommand{\tip}{\mathrm{t}} 
\newcommand{\w}{\mathrm{w}} 
\newcommand{\sgn}{\mathrm{sgn}} 
\newcommand{\yca}{\mathrm{yca}} 
\newcommand{\card}{\mathrm{card}}
\newcommand{\mex}{\mathrm{mex}} 
\newcommand{\rk}{\mathrm{rank}}
\newcommand{\Co}{\mathrm{Co}}
\newcommand{\CD}{\mathrm{CD}} 
\newcommand{\desc}{\mathrm{desc}}
\newcommand{\vn}{\mathrm{vN}} 
\newcommand{\interior}[1]{    {\kern0pt#1}^{\mathrm{o}}}
\newcommand{\eps}{\varepsilon}
\newcommand{\msk}{\medskip}
\newcommand{\ssk}{\smallskip}
\newcommand{\nin}{\noindent}
\newcommand{\llbrack}{{[}\hspace{-0,4mm}{ [}} 
\newcommand{\rrbrack}{] \hspace{-0,4mm} ]} 
\newcommand\reallywidehat[1]{%
\savestack{\tmpbox}{\stretchto{%
  \scaleto{%
    \scalerel*[\widthof{\ensuremath{#1}}]{\kern-.6pt\bigwedge\kern-.6pt}%
    {\rule[-\textheight/2]{1ex}{\textheight}}
  }{\textheight}%
}{0.5ex}}%
\stackon[1pt]{#1}{\tmpbox}%
}
\title{\LARGE{On Conway's Numbers and Games,  \\ 
the Von Neumann Universe, and Pure Set Theory}}
\date{(February 23, 2025)}
\author{\Large{Wolfgang Bertram}\\
\small{http://wolfgang.bertram.perso.math.cnrs.fr}\\
\\
\small{Institut \'Elie Cartan de Lorraine (IECL)}\\
\small{Universit\'e de Lorraine, Nancy, France}\\
}
\begin{document}

\maketitle
\begin{abstract}
We take up Dedekind's question
 "Was sind und was sollen die Zahlen?" ("What are numbers, and would should they be?"),
 with the aim to describe the place that Conway's (Surreal) Numbers and Games take, or 
 deserve to take, in the whole of mathematics. 
Rather than just reviewing the work of Conway, and subsequent one by Gonshor, Alling, 
Ehrlich, and others, we propose a new setting which puts the theory of surreal numbers onto the
firm ground of "pure" set theory. 
This approach is closely related to Gonshor's one by "sign expansions", but
appears to be significantly simpler and clearer, and hopefully may contribute to
realizing that "surreal" numbers are by no means surrealistic, goofy or wacky. 
 They could, and probably should, play a central
role in mathematics.
We discuss the interplay between the various approaches to surreal numbers,  
and analyze  the link with Conway's original approach via Combinatorial
Game Theory (CGT).
 To clarify this, we propose to call {\em pure set theory} the algebraic theory of pure sets,
 or in other terms, of the algebraic structures of the {\em von Neumann universe}.
This topic may be interesting in its own right:
it puts CGT into a broad context which has a strong "quantum flavor", and  where
Conway's numbers (as well as their analogue, the nimbers) arise naturally.
\end{abstract}

{\small

\bigskip
\nin \textbf{Keywords.}

\nin
{\em
surreal numbers (Conway numbers), ordinal numbers, nimbers, 
(Conway) reals, binary trees, 
pure set theory, (graded) von Neumann universe, combinatorial game theory, 
order theory
}

\bigskip
\nin \textbf{MSC2020 classification.}

\nin
03A05, 
03E05,
05C05,
06F25,
12-02,
91A46,
97F50,
97H50

}


\vfill \eject

\setcounter{tocdepth}{2}

 \tableofcontents

\setcounter{chapter}{-1} 
\chapter{Introduction}

\section{The place of Conway numbers in mathematics}

In the present work, I would like to contribute to  Dedekind's question
{\em Was sind und was sollen die Zahlen?}\footnote{What are numbers and what should they be? \cite{Ded}}
Dedekind's idea to realize the real numbers by "cuts" of the rationals has been taken up in an ingenious way by John Horton Conway 
to construct,  "out of nothing", an ordered Field, the Conway numbers, 
 later called (following Donald Knuth's book \cite{K}) {\em surreal numbers}, see \cite{Co01}.\footnote{I prefer the term {\em Conway number} because the word 
 \href{https://dictionary.cambridge.org/fr/dictionnaire/anglais/surreal}{"surreal"} is biased as established term in history of arts, or as meaning "surrealistic". Since Conway is no longer among the living, it may be time to honor him by remembering him as the creator of a new concept of "number".} 
  In Conway's own words, this Field 
 comprises "All Numbers Great and Small" -- besides the real numbers, it contains infinitely big and small numbers, such as all ordinal numbers, 
 and  it rightly deserves to be called the {\em absolute arithmetic continuum}, see \cite{E12, E20}.

I first learned about Conway numbers as an undergraduate through a chapter (\cite{He}) in the beautiful 
book "Zahlen" \cite{Numbers}, which  follows the original presentation \cite{Co01} of surreal numbers as 
a particular instance of Games. According to his own writings,
 Conway got his amazing intuition from studying Games.  
Several authors, most notably Alling \cite{A}, and Gonshor \cite{Go}, admitted to have
difficulties with Conway's approach, and proposed alternative ones.\footnote{E.g., \cite{A}, p.\ 14: "...the author found
Conway's basic construction of his surreal numbers [..] hard to follow -- so much 
that he gave another construction of the surreals within a more conventional set
theory."}
Conway himself discusses such issues in an "Appendix to Part Zero" in \cite{Co01}, and  I will come back to this at several places.

Following the ideas of Dedekind and Cantor, the natural, rational, and real numbers can be constructed on the basis of
set theory in the form later given by \href{https://en.wikipedia.org/wiki/Zermelo–Fraenkel_set_theory}{Zermelo and Fraenkel (ZF)}, or others, where
mathematical objects are realized as {\em pure sets}. A particularly clear model of the universe of pure sets is given by the
\href{https://en.wikipedia.org/wiki/Von_Neumann_universe}{\em cumulative von Neumann hierarchy}, also called the {\em von Neumann
universe}.
Therefore, if Conway numbers are full citizens of the mathematical universe, one must ask: {\em How can we realize the
Conway numbers in the universe of pure sets, that is, in the von Neumann hierarchy?} 
The present work gives an answer to this question -- an answer which is stunningly simple, and which, hopefully, contributes to putting
Conway numbers into the center of mathematics instead of considering them as an exotic and surrealist 
fresco on its border. 
In the following, I give a concise description of this construction.

\section{Numbers as sets of ordinals}

The {\em ordinal numbers} are the prolongation of the natural numbers $\N$ into the infinite range.
Their theory  is one of Georg Cantor's greatest achievements, and it belongs to the central issues of modern  set theory. 
John von Neumann discovered a system realizing not only the ordinals, but the whole universe of sets,
 "out of nothing", starting from the empty set,
$\vN_0 = \eset$, by a transfinite induction procedure:
$\vN_{\alpha + 1} = \cP(\vN_\alpha)$ is the power set of the preceding stage, and
$\vN_\beta = \cup_{\alpha < \beta} \vN_\alpha$ if $\beta$ is a {\em limit ordinal}.
The ordinal $\alpha$  itself is an element of $\vN_{\alpha+1}$, and the class of ordinals $\On$ thus is a subclass of $\vN$. Now we define:

\begin{definition}
A {\em (Conway) number} is a {\em set of ordinals having a maximal element}. 
The maximal element $\Beta(x)$ of such a set $x$ is called the {\em birthday of $x$}.
\end{definition}

For instance, every finite and non-empty subset of $\N$ is a number, but
 the first infinite ordinal $\omega  = \{ 0,1,2,3,\ldots \}=\N$ is a set of ordinals having no maximal element,
and thus is not a number. Likewise, $\eset$ is not a number.
The maximal element $\Beta(x)$ of $x$ plays the r\^ole of a  "full stop sign" at the end
of a phrase $x$: it is a necessary part of the phrase, though usually not pronounced. 
As a consequence,
an ordinal $\beta$ is a number if, and only if, it has a maximal element, i.e.,
if it is a {\em successor ordinal} in von Neumann's system. 
 To remain in keeping with Conway's notation, we therefore must
 define:
{\em for every von Neumann ordinal $\alpha$, 
the corresponding \textbf{Conway ordinal} $\alpha_\Co$  is given by}
\begin{equation}\label{eqn:Conway-ordinal}
\Alpha := \alpha_\Co := \alpha + 1 .
\end{equation}
Just like the von Neumann hierarchy, numbers are organized in stages: for every ordinal $\alpha$,
we let $\No_\alpha$ be the set of numbers $x$ such that $\Beta(x)<\alpha$.
The union of all $\No_\alpha$ forms a proper class $\No$ in $\vN$, called the {\em class of (Conway) numbers}, and containing in turn the class of (Conway) ordinals.
In some textbooks on set theory (cf.\ \cite{D}, p. 88), the von Neumann stages  $\vN_\alpha$
 are represented like
an icecream cone -- their size exploses as a function of $\alpha$ ("tetration").
Now, there is another cone $\No$ inside, whose size growths roughly by $2^\alpha$, still much faster than
the linear growth of $\On$, but much gentler than "tetration".

\section{The number tree}

As emphasized by Ehrlich (\cite{E12, E20}), 
the structure of {\em binary tree} on $\No$ is a salient feature: 
this tree is the Hasse diagram of a natural partial order $x \prec y$, meaning
"$y$ is a descendant of $x$", or "$x$ is an initial segment of $y$".
 Every number $x$ has {\em exactly  two} immediate
descendants (children), denoted by $x_+$ and $x_-$. 
We start by the
root $0_\Co$ and represent right-descendants $x_+$ by a right branch
and left-descendants $x_-$ by a left branch.  
Here is a figure of the number tree $\No_4$:
\begin{center}
\newrgbcolor{wwwwww}{0.4 0.4 0.4}
\psset{xunit=0.4cm,yunit=0.4cm,algebraic=true,dimen=middle,dotstyle=o,dotsize=5pt 0,linewidth=2pt,arrowsize=3pt 2,arrowinset=0.25}
\begin{pspicture*}(-14.633691567548487,-8.805684356179417)(19.64254665899164,0.2)
\rput[tl](-0.536254913965393,0.22999587024050655){$\{ 0\}$}
\rput[tl](-8.09576584037922,-3.7091727687234033){$\{1\}$}
\rput[tl](7.2396079870496,-3.7091727687234033){$\{0,1\}$}
\rput[tl](-12.089645154884296,-5.733467763746523){$\{2\}$}
\rput[tl](-4.429241863644729,-5.733467763746523){$\{1,2\}$}
\rput[tl](3.58721031360037,-5.788178439287688){$\{0,2\}$}
\rput[tl](11.583052083228484,-5.788178439287688){$\{0,1,2\}$}
\psline[linewidth=1.6pt,linecolor=wwwwww](-1.2842867355041276,-0.7821516272710536)(-6.946841654014747,-3.380908715476411)
\psline[linewidth=1.6pt,linecolor=wwwwww](-11.132208332913901,-5.541980399352444)(-8.916425973496702,-4.393056212987971)
\psline[linewidth=1.6pt,linecolor=wwwwww](-6.946841654014747,-4.475122226299719)(-4.7857699701387135,-5.6514017504347756)
\psline[linewidth=1.6pt,linecolor=wwwwww](1.1503383260777333,-0.8095069650416362)(6.758182569047188,-3.326198039935245)
\psline[linewidth=1.6pt,linecolor=wwwwww](8.919254252923222,-4.557188239611467)(11.052970599028672,-5.596691074893609)
\psline[linewidth=1.6pt,linecolor=wwwwww](4.898019600647564,-5.706112425975941)(6.867603920129519,-4.557188239611467)
\rput[tl](-14.113940149907416,-7.730407420999061){$\{3\}$}
\rput[tl](-10.120060835402342,-7.730407420999061){$\{2,3\}$}
\rput[tl](-6.208247534209015,-7.785118096540226){$\{1,3\}$}
\rput[tl](-2.269078895245105,-7.757762758769643){$\{1,2,3\}$}
\rput[tl](1.834221770342301,-7.812473434310809){$\{0,3\}$}
\rput[tl](5.691324395994462,-7.785118096540226){$\{0,2,3\}$}
\rput[tl](9.739914386040702,-7.703052083228478){$\{ 0,1,3\}$}
\rput[tl](13.815859713857526,-7.757762758769643){$\{0,1,2,3\}$}
\psline[linewidth=1.6pt,linecolor=wwwwww](-13.3479906923311,-7.292722016669738)(-12.636751910295951,-6.60883857240517)
\psline[linewidth=1.6pt,linecolor=wwwwww](-11.21427434622565,-6.663549247946335)(-10.694522928584579,-7.32007735444032)
\psline[linewidth=1.6pt,linecolor=wwwwww](-4.867835983450462,-6.800325936799249)(-5.55171942771503,-7.456854043293234)
\psline[linewidth=1.6pt,linecolor=wwwwww](-3.3359370682978304,-6.7456152612580835)(-2.652053624033263,-7.374788029981485)
\psline[linewidth=1.6pt,linecolor=wwwwww](3.3661206854949324,-6.636193910175753)(2.6822372412303648,-7.32007735444032)
\psline[linewidth=1.6pt,linecolor=wwwwww](4.624466222941737,-6.7182599234875005)(5.280994329435722,-7.374788029981485)
\psline[linewidth=1.6pt,linecolor=wwwwww](11.43594532781683,-6.7182599234875005)(10.72470654578168,-7.32007735444032)
\psline[linewidth=1.6pt,linecolor=wwwwww](12.803712216345966,-6.772970599028666)(13.378174309528204,-7.347432692210903)
\end{pspicture*}
\end{center}
The innocent-looking definition (\ref{eqn:Conway-ordinal}) is of utmost importance for all the following,
and it reflects some rather subtle issues to which
Conway alludes by an amusing anecdote told in the nice book 
\cite{CG}, beginning of Chapter 10, under the headline 
"Sierpi\'nski's luggage".
\footnote{{\em Wac\l aw Sierpi\'nski, the great Polish mathematician, was very interested
in infinite numbers. The story, presumably apocryphal, is that
once when he was travelling, he was worried that he'd lost one piece
of his luggage. "No, dear!" said his wife, "All six pieces are here."
"That can't be true," said Sierpi\'nski, "I've counted them several
times: zero, one, two, three, four, five."} (He  forgot to say:
"full stop: six.")}

\section{Sign expansions: "Sierpi\'nski's luggage"} 

Our setting of surreal numbers is close to the approach via
 {\em sign-expansions} used by Gonshor  \cite{Go}.
 Let us define:

 \begin{definition}\label{def:sign-expansion}
 The {\em sign-expansion $s= s_x$ of a number $x$} is defined by:
 $$
 s_x(\alpha) := \Bigg\{ \begin{matrix} 
 + & \mbox{ if } &  \alpha \mbox  {  \em  is an element in } x, \mbox{ i.e.: } 
 \alpha < \Beta(x) \mbox{ and } \alpha \in x,
 \\
 - & \mbox{ if }  &  \alpha \mbox{ \em  is a hole in } x, \mbox{ i.e.: } 
 \alpha < \Beta(x) \mbox{ and } \alpha \notin x,
 \\
 0 & \mbox{ if } &  \alpha \mbox{ \em  is not in } x, \mbox{i.e.: } 
  \mbox{ if }  \alpha \geq \Beta(x).
 \end{matrix}
$$
\end{definition}

 \nin
  Here is the binary tree $\No_4$, written in terms of sign-expansions (you may adjoin as many  zeroes to the right of each entry as you like):
 \begin{center}
\newrgbcolor{wwwwww}{0.4 0.4 0.4}
\psset{xunit=0.4cm,yunit=0.4cm,algebraic=true,dimen=middle,dotstyle=o,dotsize=5pt 0,linewidth=2pt,arrowsize=3pt 2,arrowinset=0.25}
\begin{pspicture*}(-15.633691567548487,-8.75684356179417)(19.64254665899164,0.2)
\rput[tl](-0.5,0.22999587024050655){000}
\rput[tl](-8.59576584037922,-3.7091727687234033){$-00$}
\rput[tl](7.352396079870496,-3.7091727687234033){$+00$}
\rput[tl](-12.889645154884296,-5.733467763746523){$--0$}
\rput[tl](-4.529241863644729,-5.733467763746523){$-+0$}
\rput[tl](3.4858721031360037,-5.788178439287688){$+-0$}
\rput[tl](11.683052083228484,-5.788178439287688){$++0$}
\psline[linewidth=1.6pt,linecolor=wwwwww](-1.2842867355041276,-0.7821516272710536)(-6.946841654014747,-3.380908715476411)
\psline[linewidth=1.6pt,linecolor=wwwwww](-11.132208332913901,-5.541980399352444)(-8.916425973496702,-4.393056212987971)
\psline[linewidth=1.6pt,linecolor=wwwwww](-6.946841654014747,-4.475122226299719)(-4.7857699701387135,-5.6514017504347756)
\psline[linewidth=1.6pt,linecolor=wwwwww](1.1503383260777333,-0.8095069650416362)(6.758182569047188,-3.326198039935245)
\psline[linewidth=1.6pt,linecolor=wwwwww](8.919254252923222,-4.557188239611467)(11.052970599028672,-5.596691074893609)
\psline[linewidth=1.6pt,linecolor=wwwwww](4.898019600647564,-5.706112425975941)(6.867603920129519,-4.557188239611467)
\rput[tl](-15.813940149907416,-7.730407420999061){$---$}
\rput[tl](-11.920060835402342,-7.730407420999061){$--+$}
\rput[tl](-7.4908247534209015,-7.785118096540226){$-+-$}
\rput[tl](-2.7,-7.757762758769643){$-++$}
\rput[tl](1.534221770342301,-7.812473434310809){$+--$}
\rput[tl](5.491324395994462,-7.785118096540226){$+-+$}
\rput[tl](9.739914386040702,-7.703052083228478){$++-$}
\rput[tl](13.815859713857526,-7.757762758769643){$+++$}
\psline[linewidth=1.6pt,linecolor=wwwwww](-13.3479906923311,-7.292722016669738)(-12.636751910295951,-6.60883857240517)
\psline[linewidth=1.6pt,linecolor=wwwwww](-11.21427434622565,-6.663549247946335)(-10.694522928584579,-7.32007735444032)
\psline[linewidth=1.6pt,linecolor=wwwwww](-4.867835983450462,-6.800325936799249)(-5.55171942771503,-7.456854043293234)
\psline[linewidth=1.6pt,linecolor=wwwwww](-3.3359370682978304,-6.7456152612580835)(-2.652053624033263,-7.374788029981485)
\psline[linewidth=1.6pt,linecolor=wwwwww](3.3661206854949324,-6.636193910175753)(2.6822372412303648,-7.32007735444032)
\psline[linewidth=1.6pt,linecolor=wwwwww](4.624466222941737,-6.7182599234875005)(5.280994329435722,-7.374788029981485)
\psline[linewidth=1.6pt,linecolor=wwwwww](11.43594532781683,-6.7182599234875005)(10.72470654578168,-7.32007735444032)
\psline[linewidth=1.6pt,linecolor=wwwwww](12.803712216345966,-6.772970599028666)(13.378174309528204,-7.347432692210903)
\end{pspicture*}
\end{center}
Compare this to the  basic definition adopted in \cite{Go}, p.3:
{\em A surreal number is a function from an initial segment of the ordinals into the set $\{  + , -  \}$, i.e.,
informally, an ordinal sequence consisting of pluses and minuses which terminate. The empty sequence
is included as possibility.}  
 It is not quite clear to me what the words "which terminate" shall mean:
Does the empty sequence terminate? 
Does a sequence of $\omega$ pluses "terminate"?
Gonshor adds: 
{\em For stylistic reasons I shall occasionally say that $a(\alpha)=0$ if $a$ is undefined at $\alpha$.
This should be regarded as an abuse of notation since we do not want the domain of $\alpha$ to be
the proper class of all ordinals.}
It is quite unsatisfying to start the basic definition of a theory by an "abuse of notation", which, as often,
reflects a fundamental problem -- in our case, the missing distinction between Conway and von Neumann
ordinals.

\section{The Conway reals}

$\No$ contains a canonical copy of $\R$ which we call the {\em Conway reals}, $\R_\Co$.
The set $\R_\Co$ is a
 a disjoint union of the set of  {\em short numbers} $\No_\omega$ (the finite subsets of $\omega = \N$)
with the set of {\em long reals} -- those of the form $x = X \cup \{ \omega \}$,
with $X \subset \N$ both infinite and co-infinite. 
The short numbers correspond to {\em dyadic rationals}: $\No_\omega \cong \Z[\frac{1}{2}]$.
Under this correspondence, the number tree $\No_4$ is represented like this,
where all Conway reals ought to carry an index $\Co$ omitted here:
\begin{center}
\newrgbcolor{wwwwww}{0.4 0.4 0.4}
\psset{xunit=0.45cm,yunit=0.45cm,algebraic=true,dimen=middle,dotstyle=o,dotsize=5pt 0,linewidth=2pt,arrowsize=3pt 2,arrowinset=0.25}
\begin{pspicture*}(-14.633691567548487,-8.805684356179417)(19.64254665899164,0.2)
\rput[tl](-0.13536254913965393,0.22999587024050655){0}
\rput[tl](-8.09576584037922,-3.7091727687234033){$-1$}
\rput[tl](7.852396079870496,-3.7091727687234033){1}
\rput[tl](-12.089645154884296,-5.733467763746523){$-2$}
\rput[tl](-4.529241863644729,-5.733467763746523){$-\frac{1}{2}$}
\rput[tl](3.8858721031360037,-5.788178439287688){$\frac{1}{2}$}
\rput[tl](11.983052083228484,-5.788178439287688){2}
\psline[linewidth=1.6pt,linecolor=wwwwww](-1.2842867355041276,-0.7821516272710536)(-6.946841654014747,-3.380908715476411)
\psline[linewidth=1.6pt,linecolor=wwwwww](-11.132208332913901,-5.541980399352444)(-8.916425973496702,-4.393056212987971)
\psline[linewidth=1.6pt,linecolor=wwwwww](-6.946841654014747,-4.475122226299719)(-4.7857699701387135,-5.6514017504347756)
\psline[linewidth=1.6pt,linecolor=wwwwww](1.1503383260777333,-0.8095069650416362)(6.758182569047188,-3.326198039935245)
\psline[linewidth=1.6pt,linecolor=wwwwww](8.919254252923222,-4.557188239611467)(11.052970599028672,-5.596691074893609)
\psline[linewidth=1.6pt,linecolor=wwwwww](4.898019600647564,-5.706112425975941)(6.867603920129519,-4.557188239611467)
\rput[tl](-14.113940149907416,-7.730407420999061){$-3$}
\rput[tl](-10.120060835402342,-7.730407420999061){$-\frac{3}{2}$}
\rput[tl](-6.208247534209015,-7.785118096540226){$-\frac{3}{4}$}
\rput[tl](-2.269078895245105,-7.757762758769643){$-\frac{1}{4}$}
\rput[tl](1.834221770342301,-7.812473434310809){$\frac{1}{4}$}
\rput[tl](5.691324395994462,-7.785118096540226){$\frac{3}{4}$}
\rput[tl](9.739914386040702,-7.703052083228478){$\frac{3}{2}$}
\rput[tl](13.815859713857526,-7.757762758769643){3}
\psline[linewidth=1.6pt,linecolor=wwwwww](-13.3479906923311,-7.292722016669738)(-12.636751910295951,-6.60883857240517)
\psline[linewidth=1.6pt,linecolor=wwwwww](-11.21427434622565,-6.663549247946335)(-10.694522928584579,-7.32007735444032)
\psline[linewidth=1.6pt,linecolor=wwwwww](-4.867835983450462,-6.800325936799249)(-5.55171942771503,-7.456854043293234)
\psline[linewidth=1.6pt,linecolor=wwwwww](-3.3359370682978304,-6.7456152612580835)(-2.652053624033263,-7.374788029981485)
\psline[linewidth=1.6pt,linecolor=wwwwww](3.3661206854949324,-6.636193910175753)(2.6822372412303648,-7.32007735444032)
\psline[linewidth=1.6pt,linecolor=wwwwww](4.624466222941737,-6.7182599234875005)(5.280994329435722,-7.374788029981485)
\psline[linewidth=1.6pt,linecolor=wwwwww](11.43594532781683,-6.7182599234875005)(10.72470654578168,-7.32007735444032)
\psline[linewidth=1.6pt,linecolor=wwwwww](12.803712216345966,-6.772970599028666)(13.378174309528204,-7.347432692210903)
\end{pspicture*}
\end{center}
We call {\em mirror Berlekamp algorithm} (Theorem \ref{th:Berlekamp}) the algorithm translating
sets (numbers $x$) into real numbers, and vice versa -- indeed, it is a version of Berlekamp's
Rule (\cite{Co01}, p. 31), simpler than the original one because it is performed using the
{\em left} part of the tree, and not the positive (right) part. 
The Conway reals $\R_\Co$ form a model of $\R$ in the von Neumann universe.
As far as the order structure is concerned, this is easy: 

\section{Total order}\label{sec:total}

Order structures of $\No$ can all be described in terms of the natural order of ordinals.
There are partial orders, like $\preceq$, or "older than", and there is a total order $\leq$
(Theorem \ref{th:order}). 
To highlight  the order theoretic viewpoint, one may consider for any  well-ordered set
$(M,<)$ the set $\No_M$ of {\em numbers in $M$}, 
\begin{equation}\label{eqn:NoM}
\No_M := \{ x \in \cP(M) \mid \, x \mbox{ has a maximum, denoted by } \Beta(x) \}.
\end{equation}
All our order-theoretic definitions can be formulated for $\No_M$.
Total order $\leq$ and tree-order $\preceq$ have close relations among each other : in the figures of the tree,
the total order increases from left to right, the birthday from top to bottom, and $\preceq$ according to the
edges of the tree.
In our setting, this is the easy part.

\section{Conway cuts, and arithmetic operations}\label{sec:ONAG}

Extending the arithmetic structure of $\R_\Co$ to all of $\No$ is an entirely different problem. 
For the time being, I do not know a 
"direct" or "simple" algorithm  how to define and compute 
sum $x+y$ and product $xy$ of $x,y \in \No$,  in terms of sets or of sign-sequences. 
It needed Conway's genius to find a procedure of defining the arithmetic structure, described in a very
concise way p.4-5 \cite{Co01}.
Because of its importance we quote the whole paragraph:

{\em
\textbf{Construction.}
If $L,R$ are any two sets of numbers, and no member of $L$ is $\geq$ any member of of $R$,
then there is a number $\{ L \vert R \}$. All numbers are constructed this way.

\ssk
\textbf{Convention.}
If $x = \{ L \vert R\}$ we write $x^L$ for the typical member of $L$, and $x^R$ for the typical
member of of $R$. For $x$ itself we then write $\{ x^L\vert x^R\}$.

$x=\{ a,b,c,\ldots \vert d,e,f,\ldots\}$ means that $x=\{ L \vert R\}$, where $a,b,c,\ldots$ are
typical members of $L$, and $d,e,f,\ldots$ the typical members of $R$.

\ssk
\textbf{Definitions.}

{\em Definition} of $x \geq y, x\leq y$.

\nin We say $x \geq y$ if (no $x^R \leq y$ and $x \leq$ no $y^L$), and $x \leq y$ iff $y \geq x$.
We write $x \not\leq y$ to mean $x \leq y$ does not hold.

{\em Definition} of $x = y, x > y, x<y$.

\nin
$x=y$ iff ($x\geq y$ and $y \geq x$). 
$x>y$ iff ($x \geq y$ and $y \not\geq x$).
$x<y$ iff $y>x$.

{\em Definition} of $x+y$.

\nin
$x+y = \{ x^L + y, x + y^L \vert x^R + y, x + y^R \}$

{\em Definition} of $-x$.

\nin
$-x = \{ - x^R \vert - x^L \}$

{\em Definition} of $xy$.

\nin
$
xy  = \{ x^L y+ xy^L - x^L y^L, \, x^r y + xy^R -x^R y^R \mid$

$
{ }  \qquad \qquad \mid x^l y  + x y^R - x^L y^R, x^R y + xy^L - x^R y^L \} .
$
}
 
\ssk
Conway's  framework is quite different from the one proposed here, and we follow the same
strategy as Gonshor to translate between the settings: 
 the "Fundamental Existence Theorem" 
(\cite{Go} Thm.\ 2.1, our Theorem \ref{th:FundamentalExistence}) ensures that every number $x$ can be represented by a
{\em Conway cut} $(L,R)$ in $\No$.
This representation is not unique, but it can be used to define the arithmetic operations
by the formulae given above.
The  "Fundamental Existence Theorem" (Theorem \ref{th:FundamentalExistence})
 essentially corresponds to
{\em connectedness and completeness of the binary number tree} (Theorems \ref{th:connected},
\ref{th:completeness}).
This also entails that Alling's axioms of surreal number systems (recalled in Def.\ \ref{def:Alling}) are satisfied, and establishes
 equivalence of our approach with all other known approaches to 
surreal numbers.
Thus a first goal of the present work is achieved: we have given a simple and
rigorous definition of $\No$ based on classical,  "pure",  set theory. 
Let me  explain what is meant by this term.

\section{Pure set theory, or: "What are the elements of $\pi$"?}\label{sec:puresettheory}

We start this paper with a Chapter \ref{chap:vNu}  on the von Neumann universe. 
The von Neumann universe plays a paradoxical  role in mathematics:  
on the one hand, it is the canonical model of set theory -- there is just one empty set,
and by induction, all other sets, too, are unique like individual persons, defined by their place
in the von Neumann hierarchy. On the other hand, everybody agrees that 
this model has "nothing to do with the real mathematical life", and  "nobody considers mathematical 
objects as sets of sets of sets...",  in other words, that it is practically irrelevant.\footnote{ Cf.\ the detailed discussion p.125/26 in \cite{D}, which is 
very well-written reference on general set-theory.}
I would like to propose, be it just by curiosity to see where it leads, as a sort of game or experiment, 
to take temporarily
the opposite position: to do as if 
 the von Neumann hierarchy perfectly encoded "the rules of the game of 
the mathematical universe",  and do as if mathematical objects were indeed pure sets.
I have been playing this game for some time, and the longer I do so, the more it appears to make sense.
To be more specific, it seems to shed light onto a fundamental issue of modern mathematics,
{\em  "classical", versus "quantum"}:
\begin{enumerate}
\item
"Quantum" is discreteness and the tree-like structure of a world organized in hierarchies, molecules, atoms, particles called 
"elementary" on some level, but appearing less elementary on the next higher level ;
these levels are organised in "stages", indexed by  quantum numbers, e.g., the one called "birthday" by 
Conway, and "rank" by von Neumann,
\item
"Classical" mathematics thinks the world as a "continuum",
 ignoring the notion of "rank" or "birthday".  Sets are underlying to "spaces", whose
 elements are considered to be
"points": points
 have no internal structure and no "individuality". 
 Classically, "equality" always means "equivalence under isomorphy", referring to some category;
 it makes no sense to ask if two isomorphic objects "are the same, or not".
  A global    "equality relation" on the mathematical universe appears to be a fiction.
 \end{enumerate}
 "Quantum" seems to be paradoxical and disturbing --
 traditionally, we all think "classically", and tend to believe that this point of view is the only correct and mathematically
reasonable one. 
To express this, some authors point out that
it makes no sense to ask questions like, for instance:  "what are the elements
of $\pi$?"\footnote{ E.g., \cite{Lei}, Introduction.}
But why does it not make sense? 
The problem comes from the notion of {\em ordered pair}. Let me explain: 

\subsubsection{The trouble with Kuratowski's definition}

The "elements of $\pi$"
would depend on the {\em model} of $\R$ that we use; and these models use
standard constructions of mathematics:  {\em  Cartesian products,
equivalence relations, functions}, and so on.  
Most of it is based on the notion of {\em ordered pair}. In set theory,   the
most commonly accepted definition of ordered pair  is the
\href{https://en.wikipedia.org/wiki/Ordered_pair#Kuratowski's_definition}{\em Kuratowski definition}
(see Eqn.\  (\ref{eqn:Kuratowski})).
However, there exist
\href{https://en.wikipedia.org/wiki/Ordered_pair#Defining_the_ordered_pair_using_set_theory}{several
variants of this definition}, and none of them is "natural" in any way. 
Halmos  \cite{Ha}, p. 24/25, remarks:  "The explicit definition of the ordered pair (by
(\ref{eqn:Kuratowski})) is frequently relegated to pathological set theory," leading to
  "mistrust and suspicion that many mathematicians feel towards
(this) definition".
Indeed, every definition of the "elements of $\pi$" that would use the Kuratowski definition is 
biased by this arbitrary choice, and would not have much sense.
Thus let us call {\em natural (in the sense of pure set theory)} every construction of the field
$\R$ in ZF-set theory that does {\em not} refer, explicitly or implicitly, to the Kuratowski
construction of ordered pairs.
As far as I see, all of the constructions described in the comprehensive overview \cite{We}
are not natural in this sense (recall that already the usual constructions of $\Z$ and $\mathbb Q$ out of $\N$
do use ordered pairs).  
Now, it is remarkable that, 
 {\em The construction of the Conway reals $\R_\Co$ 
is natural.} It even appears to be the {\em only} natural construction of $\R$ I know of.
(Note that this construction can easily be given without mentioning the general class $\No$
surrounding it.)\footnote{ Weiss, \cite{We},  completely 
underestimates the novelty of Conway's approach: he writes,
{\em
...one may view the
surreal number system as providing yet another construction of the real
numbers. However, when one distills just the real numbers from the
entire array of surreal ones, the construction basically collapses to the
Dedekind cuts construction.} This is not true.}
Summing up, the question
"what are the elements of $\pi$?" does make sense in pure set theory,
and it does have an answer (Example \ref{ex:numbers}).

\section{Approach by combinatorial game theory}\label{sec:CGT}

My aim is to analyze, as completely as possible, the relations between standard set theory
and foundations of surreal numbers. 
Of course, already
Conway himself discusses such foundational issus, including the trouble with the
Kuratowski definition, at various places,
e.g.,  in the Appendix to Part Zero (p.\ 64--68 in \cite{Co01}), p.25--27 ("The logical 
theory of real numbers"),
and in the
Epilogue, p.\ 225--228 loc.\ cit.
He admits his dissatisfaction both with Gonshor's and Alling's approaches, on the grounds that
the true nature of his constructions belongs to  the more general context of 
\href{https://en.wikipedia.org/wiki/Combinatorial_game_theory}{\em combinatorial game theory}
(see Chapter VIII of \cite{S} for a modern account). 
He writes (\cite{Co01}, p.65/66): 
\begin{enumerate}
\item[ ]
{\em 
Plainly the proper set theory in which to perform a formalisation would be one with two kinds of
membership, and would in fact be very much like the abstract theory of games that underlies
the next part of this book.} 
\end{enumerate}

\nin
In Chapter \ref{chap:Games}, I  try to carry out this idea.
Implicitly, this is also done in \cite{S}, however, the relevance for general pure set theory 
remains invisible there.
Historically and logically, combinatorial game theory developed in two steps:
\begin{enumerate}
\item[(A)]
The theory of {\em impartial games} (Sprague, Grundy). 
H.W.\ Lenstra was one among the first to notice that this theory corresponds exactly to what
we call "pure set theory": 
 \cite{Le}, 
 p.1,  "Definition. {\em A game is a set.}"
 \item[(B)]
 The theory of {\em partizan games}, started by Conway, with Berlekamp and Guy.
 The quote given above is a very precise definition: {\em it is set theory with two kinds
 of membership}, say $\in_L$ and $\in_R$, replacing $\in$.
 \end{enumerate}
 
\nin
The "universe" for (A) is the von Neumann universe $\vN$.
We define the "universe" for (B), 
which we call the {\em graded von Neumann universe} $\gvN$. 
Our definition is close to the "universe of games" $\widetilde{\mathbf{PG}}$
defined in \cite{S}, p.398,
but we insist on the fact that it really concerns set theory, and not so much a certain 
and quite specialised application of mathematics. 
Compressed into a simple slogan:
{\em Von Neumann created the "ungraded (impartial) universe" out of nothing, and
Conway created the "graded (partizan) universe" out of nothing.}
However, there is also a "diagonal imbedding" of the ungraded into the graded universe,
so the partizan theory appears to be strictly more general than the impartial one.

There exists an analog of the surreal numbers in the ungraded context: this is the
{\em Field of nimbers}, another beautiful invention of Conway's (\cite{Co01}, Chapter 6).
We recall  the main results (Section \ref{sec:impartial}).
The structural analogy
$$
\begin{matrix}
\frac{ \mbox{ ungraded von Neumann universe }}{ \mbox{ Field of nimbers }} & = &
\frac{ \mbox{ graded von Neumann universe }}{ \mbox{ Field of surreal numbers }} 
\end{matrix}
$$
is strong, but it becomes clear that the technical realization is much more complicated in
the graded case, and that certain objects are not yet well identified in this context.
In my opinion, this is an important topic for further research.

This analysis of the logical and set-theoretic foundations of surreal numbers leads me to
the conclusion that Conway was wrong when objecting 
 against  Gonshor's definition of surreal numbers that it
{\em requires a prior construction of the ordinals, which are in ONAG produced as particular
cases of the surreals} (\cite{Co01}, p. 226).
Indeed, the ordinals are the backbone of the von Neumann universe, and the purpose of
axiomatic set theory is to ensure their existence
and consistency. If it were true that in ONAG they are "produced", then this would mean
that Conway's "axioms" (quoted in Section \ref{sec:ONAG}) are at least as strong as those of axiomatic set theory.
But Conway's "axioms" are too weak and not formal enough -- they not even imply the 
\href{https://en.wikipedia.org/wiki/Axiom_of_infinity}{axiom of
infinity}! (The stage $\No_\omega$ of short numbers would be a valid "model of Conway's axioms",
which thus could be realized in a finitary structure.) 
Or maybe by using the  word "set" in his definition, Conway implicitly assumed that
axiomatic set theory underlies also his theory -- but then his objection against Gonshor would
be self-contradictory.
Of course, these remarks by no means prevent recognizing
 the genius and the profoundness of Conway's work, but they confirm the impression that
 Philip Ehrlich once expressed by saying that
 {\em Conway was his own worst enemy in promoting the surreals}.
 \footnote{See  \url{https://mathoverflow.net/questions/29300/whats-wrong-with-the-surreals}.}

\section{Everything is number}

The final chapter \ref{chap:No-theory} is both a summary of the preceding ones and a
(personal) outlook on further research  topics. 
 
In his overview \cite{E20}, Philip Ehrlich puts the Conway numbers into the wide context
of "infinitesimalist  theories of continua", and I'm grateful to him for mentioning in this context also
my own contribution (cf.\ \cite{Be08}), under the hashtag TDC (topological differential calculus).
Indeed, my motivation and interest in Conway numbers come from my work on
foundations of differential calculus and number systems containing infinitesimals, and I do believe
that the present approach to Numbers will, ultimately, make it possible to see most, if not all, of the approaches
discussed in \cite{E20} in a common language and framework.

Both "TDC" and the present approach to Conway numbers reflect the Pytha\-gorian idea that
{\em everything is number}, and "pure set theory" as defined above is its prolongation even further.
In the last chapter, we put forward some more arguments which might support such a point of view.
As a sort of preliminary conclusion, this leads me to answer  the question
"what are Conway numbers and what should they be?"
They should, eventually, become to us as familiar as the real numbers;
as long as this is not the case, they "are" given by their simplest model, and to my
feeling this is the model proposed in Chapter \ref{chap:Conway}.

I acknowledge that all original and deep ideas in this realm are due to John Horton Conway, and 
 my contribution 
simply is a modest, and possibly quite naive, try to clarify the place that these ideas may deserve to take in the 
mainstream mathematical universe.
If the reader, and future mathematicians, may retain something new it is probably the idea that ordinals, 
Conway numbers, and the von Neumann universe should be seen as a kind of trinity, created  on
the same grounds and enriching the understanding of each other.

\bigskip
\nin
\textbf{\large Notation.}

\nin
We use standard notation of set theory, and write
$\vN$ for the class of the von Neumann universe, and
$\On$ for the class of ordinals.
The only deviation to standard notation is 
that we denote ordinal addition by $\alpha \oplus \beta$, to distinguish it from the
{\em Hessenberg sum} $\alpha+\beta$ (which is also the Conway-sum), and similarly
$\alpha \otimes \beta$ for the usual ordinal product, reserving
$\alpha \cdot \beta$ for their Hessenberg-Conway product. 
In our approach to Conway numbers, we will use several (partial) order relations simultaneously,
denoted by $\preceq, \leq , \sqsubseteq, \subseteq$,... 
When $a<b$ in some ordered set, 
we use  "French notation" for open, resp.\ closed intervals:
\begin{equation}
\begin{matrix}
]a,b[ & =  &\{ x \mid a<x<b\},\\
[a,b] & =  & \{ x \mid a \leq x \leq b\}, \\
[a,b[ & = & \{ x \mid a \leq x < b \}.
\end{matrix}
\end{equation}
Recall that a subset $I$ is called {\em convex} if, whenever $a,b \in I$, then also
the interval $[a,b]$ belongs to $I$.
We say that {\em $b$ is an immediate successor of $a$}, or {\em $a$ is an
immediate predecessor of $b$},  if
$a<b$ and $]a,b[ = \eset$. 
 {\em Ordinal intervals}, that is, intervals in $\On$, are denoted by double brackets, e.g., 
\begin{equation}
\begin{matrix}
\llbrack  \beta , \gamma \rrbrack &=  & \{ \alpha \mid \, \beta \leq \alpha \leq \gamma \}, \\
\rrbrack  \beta , \gamma \llbrack & = & \{ \alpha \mid \, \beta  <  \alpha  < \gamma \} .
\end{matrix}
\end{equation}

\chapter{The von Neumann universe}\label{chap:vNu} 

\section{The Beginning of Infinity}\label{sec:basic}

For convenience of the reader, we recall the construction of the {\em von Neumann universe}
$\vN$, in the spirit of
"naive set theory" (\cite{Ha}), not going into details of axiomatic set
theory. 
The von Neumann universe is the "universe of all pure sets", that is, the universe
 of all sets all of whose elements are sets.
To construct it,  start from (0),  and build up "stages" using
(1) and (2): 
\begin{enumerate}
\item[(0)]
(Empty set.)
There exists a set $\eset$ having no elements.
\item[(1)]
(Power set.)
For every set $M$, there is a set $\cP(M)$ whose elements are all the subsets of $M$.
Note that  $A \subset B$ implies $\cP(A) \subset \cP(B)$. 
\item[(2)]
(Ascending unions.)
Assume $M$ is a {\em chain} (i.e., a set of sets which is totally ordered for inclusion:
 for all $a,b \in M$, we have
$a \subset b$ or $b \subset a$).
Then there is  a set $S = \bigcup_{a \in M} a$ which is the union of all sets $a \in M$.\footnote{This 
 seems more intuitive than the stronger
\href{https://en.wikipedia.org/wiki/Axiom_of_union}{\em axiom of union} that $\cup M$ exists for any
set of sets.}
\end{enumerate}
Thus, the beginning of  this hierarchy is defined as follows:  let $\vN_0 := \eset$, and
\begin{align*}
\vN_1 & = \cP(\eset) =  \{ \eset \},
\\
\vN_2 & = \cP (\cP(\eset)) = 
\cP( \{ \emptyset \}) = \{ \eset , \{ \eset \} \} ,
\\
 \vN_3 &= \cP(\{ \emptyset , \{ \eset \}  \}) = 
 \{ \eset , \{ \eset \} , \{ \{ \eset \} \} , \{ \eset , \{ \eset \} \} \} ,
 \\
 \vN_{n+1} & = \cP(\vN_n) .
 \end{align*} 
Since $\vN_0 = \eset  \subset \vN_1$, we get by complete induction that
$\vN_n \subset \vN_{n+1}$, for all $n\in \N$. Therefore, by Principle (2), the union
$\bigcup_{n\in \N} \vN_n$ should exist. 
An element $x$ of this union is called {\em of rank $n$} if
$n$ is the smallest integer such that $x \in \vN_{n+1}$.
 To get a more intelligible notation, we let 
\begin{align}
\I &  :=  \{ \eset \} =  \{ \{ \} \}, 
\\
\II & := \{ \I \} = \{ \{ \eset \} \} = \{   \{ \{ \} \} \} ,
\\
\III & :=  \{ \eset , \I \} = \{ \eset , \{ \eset \} \} =\{ \{ \}, \{ \{ \} \} \} ,
\end{align} 
so $\vN_1 = \{ \eset \}$, $\vN_2 = \{ \eset , \I \}$, 
$\vN_3 = \{ \eset,\I,\II,\III\}$.
The only pure set of rank $0$  is $\emptyset$,  the only one 
of rank $1$ is $\I$, there are $2=4-2$ sets
of rank $2$, namely $\II$ and $\III$,  and  there are
$16 - 4 = 12$ sets of
rank $3$.  Let us give their list, with order number in 
\href{https://en.wikipedia.org/wiki/Lexicographic_order#Colexicographic_order}{\em
colex-order}, as follows. 
By "depth 1" we mean the description of a set by a plain list of its elements,
by "depth 2 we replace again each of those elements by its list of elements, and we leave
to reader to give a "depth 3"-description, replacing $\I$ by $\{ \eset \}$. 
As symbols for these sets we use roman letters. Horizontal lines indicate stratification by rank.

\begin{table}[!h] \caption{The beginning of infinity}\label{table:beginning}
\begin{center}
\begin{tabular}{|*{10}{c|}}
\hline
order number & symbol & rank  & depth $1$ & depth $2$ 
\\
\hline
$0$ & $\eset$ & $0$ & $\eset$ & $\eset$ 
\\
\hline
$1$ & $\I$ & $1$ & $\{ \eset \}$ &  $\{ \eset \} $ 
\\
\hline
$2$ & $\II$ & $2$ & $\{ \I \}$ & $\{ \{ \eset\}\}$ 
\\
$3$ & $\III$ & $2$ & $\{ \I,\eset \}$  & $\{ \eset , \{ \eset \}\}$ 
\\
    \hline
$4$ & IV & $3$ & $\{ \II \}$   & $\{ \{ \I \} \}$ 
\\
 $5$ & V & $3$ &  $\{ \II,\eset \}$ & $\{ \{ \I \}, \eset\}$ 
 \\
 $6$ & VI &  $3$ &  $\{ \II,\I \}$ & $\{ \{ \I\}, \eset \}$ 
 \\
 $7$ &VII  &  $3$ &  $\{ \II,\I ,\eset\}$ & $\{ \{ \I \}, \{ \eset  \}, \eset \}$  
 \\
 $8$ &VIII  & $3$ & $\{ \III \}$   & $\{ \{ \I,\eset \} \}$
\\
 $9$ & IX & $3$ & $\{ \III , \eset \}$   & $\{ \{ \I,\eset \} ,\eset \}$
\\
 $10$ & X  & $3$ & $\{ \III , \I \}$   & $\{ \{ \I,\eset \},\{ \eset \}  \}$
\\
 $11$ & XI & $3$ & $\{ \III , \I, \eset  \}$   & $\{ \{ \I,\eset \} ,\{ \eset \} , \eset \}$
\\
 $12$ & XII  & $3$ & $\{ \III , \II  \}$   & $\{ \{ \I,\eset \} , \{ \I \}  \}$
\\
 $13$ &  XIII & $3$ & $\{ \III , \II ,\eset \}$   & $\{ \{ \I,\eset \} , \{ \I \} , \eset \}$
\\
 $14$ & XIV  & $3$ & $\{ \III , \II ,\I \}$   & $\{ \{ \I,\eset \} , \{ \I \} , \{ \eset \}   \}$
\\
 $15$ & XV  & $3$ & $\{ \III , \II ,\I ,\eset \}$   & $\{ \{ \I,\eset \} , \{ \I \} , \{ \eset \}, \eset   \}$
\\
\hline
$16$ & XVI & $4$ & $\{ \mbox{ IV } \}$ &  $\{ \{ \II \}\}$ 
\\ 
    \end{tabular}
\end{center}
\end{table}
\nin 
The elements with order number $0,1,3,11,\ldots$, that is,
$N_{n+1} = 2^{N_n} + N_n$, are the {\em von Neumann ordinals}, and
those with order number $0,1,2,4,16,\ldots$, that is,
$Z_{n+1} = 2^{Z_n}$, are the {\em Zermelo ordinals}, see below, 
 Remark \ref{rk:Zermelo}.

\section{Von Neumann ordinals and von Neumann stages}\label{sec:vNo}\label{sec:vN-hierarchy}

The general von Neumann stages are indexed by {\em ordinal numbers},
which form a proper class $\On$ generalizing  the natural numbers $\N$. 
Like the natural numbers, ordinals are {\em well-ordered}: Every non-empty set of ordinals 
contains a minimal element.
Von Neumann's insight was that there exists a natural "model" of the ordinals given by pure sets,
such that the order relation gets modelized by the element-relation. We shall use this model. 
However, since there exist other models of ordinals by pure sets (see
Remark \ref{rk:Zermelo}), we will call them  the
{\em von Neumann ordinals}, and will write $\alpha_\vn$ instead of $\alpha$,
if necessary. 
We use the same three principles as in the construction of $\vN$: start with
$0_\vn =\eset$, and define the
{\em (von Neumann) successor} of an ordinal $\alpha_\vn$ by
\begin{equation}
(\alpha+1)_\vn := \alpha_\vn \cup \{ \alpha_\vn\} .
\end{equation}
Thus,
\begin{equation*}
1_\vn = \{ \eset \} = \{ 0_\vn \}, \quad
2_\vn = \{ 0_\vn, 1_\vn \} = \{ \eset, \{ \eset \} \} , \quad
3_\vn = \{ 0_\vn,1_\vn,2_\vn\} \ldots
\end{equation*}
where we have already met $1_\vn = \I$ and $2_\vn = \III$ and 
 $3_\vn = $XI (the set with order number 11 in Table \ref{table:beginning}).
An ordinal $\beta$ is a successor ordinal,
$\beta = \alpha + 1$, if and only if it has maximal element (namely, $\alpha$).
An ordinal $\beta$ that does not have a maximal element 
is called a {\em limit ordinal}. 
Every limit ordinal $\beta$ is the union (supremum)
of its preceding ordinals:
\begin{equation}
\beta = \bigcup_{\alpha \in \beta} \alpha = \cup \beta .
\end{equation}
In particular,
we consider $\eset$ as limit ordinal, and 
 the smallest infinite ordinal $\omega$ is the next limit ordinal:
\begin{equation}
\omega  = \omega_0 = \bigcup_{n \in \N } \, n_\vn = \{ 0_\vn,1_\vn,2_\vn,\ldots\} = \N .
\end{equation}
Existence and consistency of these constructions is ensured by 
axiomatic set theory, which we will not recall here.\footnote{ In particular, the  "NBG-versus-ZF discussion" (cf.\ \cite{A}, p.15) is unimportant for the present 
work:
all  "proper classes" we deal with reduce to honest sets, by intersecting them with
$\alpha$-stages.  }
 As a result, the class $\On$ of von
Neumann ordinals is a subclass of $\vN$, such that each rank in $\vN$ is represented by exactly
one ordinal, and it satisfies:
\begin{enumerate}
\item
$\eset$ is an ordinal,
\item
for two ordinals $\alpha,\beta$, either
$\alpha = \beta$, or $\alpha \in \beta$, or $\beta \in \alpha$,
\item
if $\alpha \in \beta$ and $\beta \in \gamma$, then $\alpha \in \gamma$,
so $\in$ defines a total order on $\On$,
\item
every non-empty set of ordinals contains a smallest element.
\end{enumerate}
It follows that $\alpha \leq \beta$ iff $\alpha \subset \beta$,
and $\alpha \cup \beta$ is  the ordinal $\max(\alpha,\beta)$ and
$\alpha \cap \beta$ the ordinal $\min(\alpha,\beta)$.
Not only the chain of ordinals has no end, but also the chain of limit ordinals has no end:
by induction, the ordinals $\omega + n$ for $n\in \N$ are defined, etc., and we get, e.g.,
\begin{align*}
 \omega \cdot 2 & := \cup_{n\in \N} (\omega + n),
 \\
 \omega^2 & := \cup_{n\in \N} (\omega \cdot n), \ldots , 
 \\
 \omega^\omega &:= \cup_{n\in \N} (\omega^n), \ldots,
 \\
 \eps_0 & := \omega \cup \omega^\omega \cup \omega^{\omega^\omega} \cup \ldots .
 \end{align*}
 There are ever and ever larger limit ordinals, and beyond $\eps_0$ it becomes increasingly
 difficult to define them: there is no "general algorithm" to do this. 
Now we can more formally define the construction of $\vN$:

\begin{definition}\label{def:vNu}
The {\em von Neumann hierarchy} is the collection of sets 
$\vN_\alpha$, for each ordinal $\alpha$, given by:
for successor ordinals,
\begin{equation*}
\vN_{\alpha + 1} := \cP (\vN_\alpha) ,
\end{equation*}
and for limit ordinals: 
\begin{equation*}
\vN_\alpha := \bigcup_{\beta \mbox{  \tiny ordinal}, \beta  < \alpha} \vN_\beta .
\end{equation*}
\end{definition}

Set theory tells us that
we then  have the principle of
\href{https://en.wikipedia.org/wiki/Transfinite_induction}{\em transfinite induction}, or
\href{https://en.wikipedia.org/wiki/Epsilon-induction}{\em principle of $\epsilon$-induction}:
{\em assume a property $P(\alpha)$ 
 holds for an ordinal $\alpha$ whenever $P(\beta)$ holds for all
$\beta \in \alpha$; then $P(\alpha)$ holds for all ordinals}. 
Or:
{\em Assume a property holds for a pure set $x$ whenever it holds for all elements of $x$.
Then it holds for all pure sets.} 

\ssk
We conclude this section  by some general remarks.
By {\em (naive) pure set theory} we mean a study of things one can do with pure sets by taking seriously
the definition of the von Neumann hierarchy. That is,  retain and study 
features of sets seen as elements of $\vN$, for instance,
the {\em rank} of a set, or the theory of $\On$ and of  $\No$.
We'll see in Chapter \ref{chap:Games} that pure set theory is just another way of looking
at the {\em theory of impartial combinatorial games}, and then propose a setting of 
 "pure set theory with two different kinds of membership
relation", to cover the
{\em theory of partizan games}. 
Results can be interpreted as being about "games", or about "pure sets",
whatever the reader may prefer.

\begin{remark}[Zermelo-ordinals]\label{rk:Zermelo}
 \href{https://mathoverflow.net/questions/273292/where-did-zermelo-first-model-the-natural-numbers-by-iterates-of-the-singleton-o}{Another model of the ordinals by pure sets is attributed to E.\ Zermelo}: 
start with $0_Z := \eset$, and define successors by
\begin{equation}
(\alpha + 1)_Z := \{ \alpha_Z \} .
\end{equation}
The literature is not clear about Zermelo's definition of limit ordinals $\lambda_Z$.
One possibility would be
\begin{equation}
\lambda_Z := \cup_{\alpha < \lambda} \alpha_Z .
\end{equation}
This system is less satisfying than von Neumann's, but would lead to the same
hierarchy $\vN$, with essentially the same rank function.
Another, quite radical, choice (indexed by $X$) would be to choose 
$\vN_\alpha$ itself as ordinal
$\alpha_X$. The order number of $n_X$ in Table \ref{table:beginning} is 
$w_{n+1}-1$ with $w_n$ as follows:
\end{remark}

\begin{remark}
The cardinality $w_n$ of the stages $\vN_n$ for $n \in \N$ growths by
\href{https://en.wikipedia.org/wiki/Tetration}{\em tetration} (see Section
\ref{app:A}): it
 is given inductively by
$w_0 = \vert \vN_0 \vert = 0$, and
$w_{n+1} = 2^{w_n}$, so 
$w_n=H_4(2,n)$, 
starting with the values 
$$
\begin{matrix}
n : &  & 0 & 1 & 2 & 3 & 4 & 5  & 6\\
w_n :& & 0 &1& 2 & 4& 16 & 
2^{16} = 65536    & 2^{65536} \simeq 10^{20.000} 
\end{matrix}
$$
As has been remarked,  $w_6$ exceeds by far the 
\href{https://en.wikipedia.org/wiki/Eddington_number}{number of atoms in the physical universe},  
so the von Neumann hierarchy certainly has "room enough" to modelize any physical system one
may imagine.
My point of view is that the stages $\vN_n$  look like 
interesting mathematical objects in itself, and that it may be worth studying them as
algebraic structures in their own right (Chapter \ref{chap:No-theory}).
\end{remark}

\section{Ordinal arithmetic, and hyperoperations}\label{sec:ordinalarithmetic}

By induction,
the successor operation $\alpha \mapsto \alpha + 1$ on ordinals gives rise to 
several  binary  operations on ordinals:
 \begin{enumerate}
 \item
Usual {\em ordinal arithmetic}, going back to Cantor himself: we will denote 
\href{https://en.wikipedia.org/wiki/Ordinal_arithmetic#Addition}{\em ordinal addition} by $\oplus$ and
\href{https://en.wikipedia.org/wiki/Ordinal_arithmetic#Multiplication}{\em ordinal multiplication} by
$\otimes$. These operations are associative and
 "continuous in the second argument", but fail to be commutative.
\item
The so-called \href{https://en.wikipedia.org/wiki/Ordinal_arithmetic#Natural_operations}{\em natural
sum and natural product}, going back to Hessenberg, and which we will denote by $+$ and
$\cdot$. They are commutative, associative, and distributive, but fail to be continuous in one of 
the arguments.
\end{enumerate}
Our choice of notation is dictated by the link with surreal numbers:
the Field operations there will correspond to $+$ and $\cdot$, and not to  $\oplus$ and $\otimes$.

\subsection{Ordinal arithmetic}\label{sec:ordinalarithmetic}

\begin{definition}
By transfinite induction, for a pair of ordinals $(\alpha,\beta)$, we define

\ssk
{\em ordinal addition} $\alpha \oplus \beta$ by $\alpha \oplus 0 = \alpha$, and

$\alpha \oplus (\beta + 1) := (\alpha \oplus \beta) + 1$

$\alpha \oplus \beta := \bigcup_{\beta' < \beta} (\alpha\oplus \beta')$ if $\beta$ is a limit
ordinal,

\ssk
{\em ordinal multiplication} $\alpha \otimes \beta$ by
$\alpha \otimes 0 = 0$,

$\alpha \otimes (\beta + 1) := (\alpha \otimes \beta) \oplus \alpha$

$\alpha \otimes \beta := \bigcup_{\beta' < \beta} (\alpha\otimes \beta')$ if $\beta$ is a limit
ordinal,

\ssk
{\em ordinal exponentiation} $\alpha \ootimes \beta$ by 
$\alpha \ootimes 0 = 1$, and 

$\alpha \ootimes (\beta + 1) := (\alpha \ootimes \beta) \otimes \alpha$

$\alpha \ootimes \beta := \bigcup_{\beta' < \beta} (\alpha\ootimes \beta')$ if $\beta$ is a limit
ordinal. 
\end{definition}

\begin{theorem}
When restricted to $\N$,  these operations give the  usual
operations $+,\cdot,\alpha^\beta$ of natural numbers.
For general ordinals, these operations are no longer commutative,
but $\oplus$ and $\otimes$ are still associative, and they are
{\em left distributive}, i.e.,:

$\alpha \otimes  (\gamma \oplus \beta) = (\alpha \otimes \gamma) \oplus (\alpha \otimes \beta)$,

$\alpha \ootimes  (\gamma \oplus \beta) = (\alpha \ootimes \gamma) \otimes (\alpha \ootimes \beta)$.
\end{theorem}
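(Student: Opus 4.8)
The plan is to prove every assertion by transfinite induction, concentrating all the real work into one preliminary monotonicity–continuity lemma; once that lemma is available the successor steps become purely formal and the limit steps reduce to interchanging two unions. I would carry out the arguments in the dependency order forced by the successor computations: first the behaviour of $\oplus$ by itself, then left distributivity, then associativity of $\otimes$, and finally the exponential law; the claim about $\N$ is split off separately at the end because it never touches a limit clause.

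First I would record the lemma. For each fixed $\alpha$ the map $\beta \mapsto \alpha \oplus \beta$ is strictly increasing, and likewise $\beta \mapsto \alpha \otimes \beta$ when $\alpha \neq 0$ and $\beta \mapsto \alpha \ootimes \beta$ when $\alpha > 1$; moreover each is continuous in its second argument, in the sense that for a limit ordinal $\lambda$ and any $C \subseteq \lambda$ cofinal in $\lambda$ one has $\alpha \oplus \lambda = \bigcup_{\mu \in C} (\alpha \oplus \mu)$, and similarly for $\otimes$ and $\ootimes$. Strict monotonicity follows by transfinite induction directly from the three defining clauses: the successor clause adds one, and the limit clause is a union of a strictly increasing family and so exceeds every earlier value. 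Continuity along a cofinal set is then immediate, since a strictly increasing function sends a cofinal subset to a cofinal subset and the defining union is unchanged when restricted to a cofinal subfamily. I would also flag the degenerate cases $0 \otimes \beta = 0$ and $1 \ootimes \beta = 1$, where monotonicity fails but every identity below still holds trivially because both sides collapse.

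With the lemma in hand, associativity of $\oplus$ is an induction on $\gamma$: $\gamma = 0$ is definitional, the successor case peels off one successor on each side, and in the limit case both sides equal $\bigcup_{\gamma' < \gamma} (\alpha \oplus (\beta \oplus \gamma'))$ once one observes that $\beta \oplus \gamma$ is a limit ordinal with the $\beta \oplus \gamma'$ cofinal in it, so that continuity of $\alpha \oplus (-)$ applies. Left distributivity $\alpha \otimes (\gamma \oplus \beta) = (\alpha \otimes \gamma) \oplus (\alpha \otimes \beta)$ is then an induction on $\beta$; its successor step expands the outer $\otimes$ by the successor clause and rebrackets by associativity of $\oplus$ (already proved), and its limit step again interchanges unions via the lemma. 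Associativity of $\otimes$ is an induction on $\gamma$ whose successor step is exactly where left distributivity is needed, to turn $(\alpha \otimes (\beta \otimes \delta)) \oplus (\alpha \otimes \beta)$ into $\alpha \otimes ((\beta \otimes \delta) \oplus \beta)$. Finally the exponential law $\alpha \ootimes (\gamma \oplus \beta) = (\alpha \ootimes \gamma) \otimes (\alpha \ootimes \beta)$ is an induction on $\beta$ whose successor step uses associativity of $\otimes$ in precisely the same pattern.

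The statement over $\N$ is the easy end: for finite ordinals no limit clause is ever triggered, so the three recursions reduce verbatim to the Peano recursions for $+$, $\cdot$ and exponentiation, and a finite induction identifies them. Non-commutativity is witnessed at once by $1 \oplus \omega = \omega \neq \omega + 1 = \omega \oplus 1$ and by $2 \otimes \omega = \omega \neq \omega \oplus \omega = \omega \otimes 2$. I expect the only genuinely delicate point to be the limit cases, and more precisely the justification that the union taken over all $\delta < \beta \oplus \gamma$ agrees with the union over the cofinal subfamily indexed by $\gamma' < \gamma$; this cofinality-plus-monotonicity interchange is the crux, and everything else is bookkeeping.
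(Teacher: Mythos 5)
Your proposal cannot be compared step-by-step with the paper's argument, because the paper does not give one: its ``proof'' of this theorem is a one-line deferral to set-theory textbooks. What you have written is precisely the standard textbook argument, and its architecture is correct: the dependency order you impose (strict-monotonicity-plus-continuity lemma first, then associativity of $\oplus$, then left distributivity, then associativity of $\otimes$, then the exponential law, each successor step consuming exactly the identity proved before it) is the right one, and your identification of the crux --- that at a limit $\beta$ the union defining $\alpha \otimes (\gamma \oplus \beta)$ over all $\delta < \gamma \oplus \beta$ may be replaced by the union over the monotone cofinal family $\gamma \oplus \beta'$, $\beta' < \beta$ --- is exactly where the content lies. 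Your treatment of the finite case (no limit clause is ever triggered, so the recursions are the Peano recursions) and your non-commutativity witnesses coincide with what the paper itself records immediately after the theorem.

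There is, however, one concrete flaw: your claim that in the degenerate cases ``every identity below still holds trivially because both sides collapse'' is false for exponentiation with base $0$. Under the paper's limit clause one has $0 \ootimes \omega = \bigcup_{n<\omega} (0 \ootimes n) = \bigcup \{1,0,0,\ldots\} = 1$, while $0 \ootimes 1 = (0\ootimes 0)\otimes 0 = 1 \otimes 0 = 0$. Taking $\alpha = 0$, $\gamma = 1$, $\beta = \omega$ therefore gives
\[
0 \ootimes (1 \oplus \omega) = 0 \ootimes \omega = 1,
\qquad
(0 \ootimes 1) \otimes (0 \ootimes \omega) = 0 \otimes 1 = 0,
\]
so the law $\alpha \ootimes (\gamma \oplus \beta) = (\alpha \ootimes \gamma) \otimes (\alpha \ootimes \beta)$ fails outright at $\alpha = 0$. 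This is a known pathology of the continuity-based definition of ordinal exponentiation; the theorem (and the textbooks it cites) implicitly excludes base $0$ or redefines $0 \ootimes \lambda$ at limits. Your induction is untouched for $\alpha \geq 1$, but the case $\alpha = 0$ must be excluded from the exponential law rather than declared trivially true.
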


\begin{proof}
Textbooks on set theory.
(See, e.g.,
\url{https://www.math.uni-bonn.de/ag/logik/teaching/2012WS/Set%20theory/oa.pdf},
for full details.)
\end{proof}

\nin To illustrate non-commutativity:

$1 \oplus \omega =\bigcup_{n\in \N} (1+n) = \omega$,

$\omega \oplus 1 =\omega +1$.

 $
 2 \otimes \omega = \cup_{n\in \N}  (2n) = \omega$,
 
 $
 \omega \otimes  2  = \omega \oplus \omega > \omega$.

\nin
Some more properties:
ordinal addition admits {\em left subtraction}:
when $\alpha \leq \beta$, there exists a unique $\gamma$ such that
$\beta = \alpha \oplus  \gamma$. 
It follows that every ordinal $\alpha$ has a unique decomposition
\begin{equation}\label{eqn:decomposition}
\alpha = \lambda \oplus  n = \lambda(\alpha) \oplus  n(\alpha), \quad
\lambda \mbox{ limit ordinal, }  n \in \N .
\end{equation}
Also, there is a {\em left division with remainder}: given $\alpha$, and
 $\beta \not= 0$, we can write
\begin{equation}
\alpha = \beta \otimes u \oplus v, \mbox{  with unique }  u \mbox{  and } v < \beta.
\end{equation}
Taking $\beta = \omega$, we get 
$\alpha = \omega \otimes u  \oplus v$, with $v \in \N$.
Decomposing $u$ again in this way, etc., we get after finitely many steps 
 the \href{https://en.wikipedia.org/wiki/Ordinal_arithmetic#Cantor_normal_form}{\em Cantor normal form} of an ordinal
\begin{equation} \label{eqn:CantorNoFo}
\alpha = \bigoplus_{i=1}^N \omega^{u_i} \otimes v_i,
\mbox{ where } u_1 > \ldots > u_N \geq 0, \,  v_i \in \N \setminus \{ 0 \}.
 \end{equation}
Continuity (\ref{eqn:lim}) has the good effect that "infinite sums" make no problem:

\begin{definition}\label{def:infiniteoplus}
Assume given ordinals $n_\alpha$ for all ordinals $\alpha < \gamma$. Then the (generically) infinite sum
$\oplus_{\alpha < \gamma} n_\alpha$ is defined by transfinite induction via:
$$
\bigoplus_{\alpha < \gamma} n_\alpha := \Bigl\{
\begin{matrix}
\oplus_{\alpha < \beta} \, n_\alpha \oplus n_\beta & \mbox{ if } & \gamma = \beta + 1
\\
\cup_{\beta < \gamma} (\oplus_{\alpha < \beta} n_\alpha )& \mbox{ if } & \gamma \mbox{ is a limit ordinal.}
\end{matrix}
$$
\end{definition}

\subsection{The hyperoperations}\label{app:A}\label{app:hyperoperations}


The sequence of operations $\oplus,\otimes,\ootimes$ can be extended, to finite or 
infinite rank. 
For  $a,b,n \in \N$, the \href{https://en.wikipedia.org/wiki/Hyperoperation}{\em classical hyperoperations}
$H_n(a,b) \in \N$ are defined by:

\nin (1)
initial conditions
$$
\begin{matrix}
H_0 (a,b) = b+ 1, &  H_1(a,0) = a, &
 H_2(a,0)=0,
 &H_n(a,0) = 1 \mbox{ if  }n  \geq 3, &
 \end{matrix}
 $$
\nin (2)  recursion formula 
\begin{equation}\label{eqn:recursion}
H_{n+1}(a,b+1) = H_n(a,H_{n+1}(a,b)) .
\end{equation}
Then, by complete induction, it follows, among other things, that

$H_1(a,b) = a+b$ (sum),

$H_2 (a,b) = ab $ (product),

$H_3(a,b)= a^b$ (exponentiation),

$H_4 (a,n) =  a^{(a^{(a^{... })})}$ with $n$ terms: this operation is called {\em tetration},

$H_n(a,1) = a$ if $n \geq 1$, 

$H_{n+1}(a,2) = H_n(a,a)$; for instance $a^a = H_3(a,a)=H_4(a,2)$.

\begin{remark} 
Following  Doner and  Tarski \cite{DT}, the hyperoperations can be extended to
ordinal numbers $a,b,n$.  
We shall keep $n$ finite,
and define $H_n(\alpha,\beta)$ for ordinals $\alpha,\beta$, as follows:
$H_0$ is just the ordinal successor function.
The recursion formula 
(\ref{eqn:recursion}) remains the same for successor ordinals $\beta+1$.
To define $H_n(a,b)$ for limit ordinals $b$, we  demand {\em continuity in $b$}: 
\begin{equation}\label{eqn:lim}
H_n(a,b) = 
H_{n} (a , \bigcup_{b' < b} b') = \bigcup_{b'<b} H_n(a,b') .
\end{equation}
By transfinite induction, $H_n(a,b)$ is then defined for all ordinals $a,b$ ($n$ finite).

Then $H_1 (a,b) = a \oplus b$ is ordinal addition, but
$H_2(a,b)$ would fail to define $a \otimes b$, because of non-commutativity of $\oplus$.
For this reason,
Doner and Tarski use a slightly different definition:
their operations $O_n(a,b)$ are defined by the recursion
 $$
 O_{n+1}(a+1,b)= O_n(O_{n+1} (a,b),a)
 $$
 and the initial datum $O_0(a,b) = a \oplus b$. 
 Then $O_1$ is indeed ordinal multiplication, and
 $O_2$ is ordinal exponentiation
 (but $O_3$ is not ordinal tetration). 
 
The first of  Cantor's 
\href{https://en.wikipedia.org/wiki/Epsilon_number}{\em epsilon-numbers} 
can be defined by $\eps_0 = H_5(\omega,2)$. 
It cannot be finitely expressed by addition, multiplication and exponentiation of lower ordinals.
And there are ever and ever 
\href{https://en.wikipedia.org/wiki/Large_countable_ordinal}{larger ordinals}
that cannot be attained from below by the sequence of hyperoperations.
\end{remark}

\subsection{Hessenberg  sum and product of ordinals}\label{ssec:natural}

Besides the (Cantor) operations  of ordinal calculus, there are also
 the {\em Hessenberg}, or {\em natural}, operations on ordinals (sum and product), which are
 associative and {\em commutative}.
 There are several different definitions of these operations, see e.g., \cite{Alt, Ca}.

\subsubsection{Definition in terms of surreal numbers}

The Hessenberg operations are the restriction of sum and product from surreal
numbers $\No$ to the (Conway) ordinals. 
For this reason we denote them by $+$ and $\cdot \quad$ . 
Conway's construction does not need a prior definition of the Hessenberg operations, so
it can indeed be used to define them, and the reader may skip the following.
However,  
 Gonshor \cite{Go}, p.\ 13, and Alling \cite{A}, p. 133, use the natural sum 
 in the  formalisation of their inductive proofs, so they assume it to be defined beforehand.
 (From a logical viewpoint, this seems to be unnecessary, and it is merely a technical
 question of organizing inductive proofs on several variables.)

\subsubsection{Definition in terms of the Cantor normal form}

 \href{https://en.wikipedia.org/wiki/Ordinal_arithmetic#Natural_operations}{This seems to be the most common definition}: 
using the Cantor normal form
(\ref{eqn:CantorNoFo}), ordinals are added and multiplied like "polynomials in $\omega$'', using
associativity and commutativity:
let $\alpha = \oplus_{i=1}^k  \omega^{\alpha_i}$ and
$\beta = \oplus_{j=1}^\ell  \omega^{\beta_j}$, with
$\alpha_k \geq \ldots \geq \alpha_1 \geq 0$ and
$\beta_\ell \geq \ldots \geq \beta_1 \geq 0$.   Then
\begin{equation}
\alpha + \beta : = \oplus_{i=1}^{k+\ell}
\omega^\gamma_i 
\end{equation}
where $\gamma_1,\ldots,\gamma_{k+\ell}$ are the exponents 
$\alpha_1,\ldots,\alpha_k,\beta_1,\ldots,\beta_\ell$ sorted in nonincreasing order. 
Using the Hessenberg sum, the Hessenberg product is defined by
\begin{equation}
\alpha \cdot \beta :=
\sum_{1\leq i \leq k\atop 1 \leq j \leq \ell}
\omega^{\alpha_i + \beta_j}.
\end{equation}
The operations $+$ and $\cdot$ are commutative, associative and distributive. 

\subsubsection{Order theoretic definition}

Carruth (\cite{Ca}) shows that $\alpha + \beta$ is a {\em mimal natural
well-order} on the disjoint union of $\alpha$ and $\beta$, and
$\alpha \cdot \beta$ a {\em minimal natural well-order} on the Cartesian
product $\alpha \times \beta$.
For our purposes, this approach is unsuitable, since the notions of disjoint union
and Cartesian product are more problematic in pure set theory than the operation 
we wish to define.

\subsubsection{Definition by transfinite induction}

Avoiding the use of the Cantor normal form, one may define the Hessenberg operations inductively,
by transfinite $\in$-induction.
According to \cite{Alt}, Theorem 2.4, we have
\begin{align}\label{eqn:O1}
\alpha + \beta & = \min \{ \gamma \in \On \mid \,
\forall \beta' < \beta, \forall \alpha' < \alpha :
\, \alpha + \beta' < \gamma, 
\, \alpha' + \beta < \gamma \} ,
\\
\alpha \cdot \beta & =
\min \{ \gamma \in \On  \mid  \forall \beta' < \beta, \forall \alpha' < \alpha :
\, \alpha \cdot \beta' + \alpha' \cdot \beta < \gamma + \alpha' \cdot \beta' \} .
\end{align}
These properties can be used to define first the operation $+$ by $\in$-induction
-- indeed,  this is the definition mentioned at the  wikipedia page :
{\em
 We can also define the natural sum of $\alpha$ and $\beta$  inductively (by simultaneous induction on 
 $\alpha$ and $\beta$) as the smallest ordinal greater than the natural sum of $\alpha$ and 
 $\gamma$  for all $\gamma < \beta$  and of $\gamma$ and $\beta$ for all 
 $\gamma < \alpha$ }. 
In a second step, using the sum, this defines the product, again by $\in$-induction. 
 In \cite{Alt}, loc.cit., both formulae are attributed to Conway, \cite{Co01}, but they do not appear there in 
 an explicit way. 
 Certainly, the idea behind these formulae is the one explained in \cite{Co01}:
 these are the "simplest" conditions making true the desired monotonicity
 $\alpha + \beta' < \alpha + \beta$, $\alpha' +\beta < \alpha + \beta$, respectively,
 $(\alpha - \alpha') \cdot (\beta - \beta') > 0$. 
 However, Conway does not state these formulae, and
 a formal proof not involving surreal numbers would be quite technical
 (cf.\  \href{https://math.stackexchange.com/questions/4852808/hessenberg-sum-natural-sum-of-ordinals-definition}{question 4852808 on math.stackexchange}).
This definition should be compared with the definition of the nimber-sum on the ordinals, which follows
the same ideas, but by dropping the positivity condition (\cite{Co01}, Chapter 6, see our Section 
\ref{sec:Nimbers}) . 
By the way, the Cantor sum is described by a similar formula:
 \begin{align}\label{eqn:O2}
\alpha \oplus  \beta & = \min \{ \gamma   \mid \,  \,   \, \alpha \leq \gamma, \,
\forall \beta' < \beta: \, \alpha + \beta' < \gamma
\, \} ,
\end{align}
the "simplest" conditions making true 
$\alpha + \beta' < \alpha + \beta$ and  $\alpha \leq \alpha + \beta$ (we state this last condition 
explicitly in order to include the case $\beta = \eset$,  to get
$\alpha\oplus \eset = \alpha$, and not "$=\eset$"), 
but it seems that there is no such simple formula describing the Cantor product (the reason being
that one-sided distributivity does not give strong enough inequalities). 
Finally, let us note that Altmann in \cite{Alt} investigates other operations on ordinals, 
called the {\em  Jacobsthal (super) operations}, such as exponentiation
 -- this could be even further extended to higher
hyperoperations. See also \cite{Go2}.

\subsubsection{Definition of "sums of pure sets", Conway-style}

Remarkably, the operations $+$ and $\oplus$ extend to associative operations on the whole
von Neumann universe.
Such  "Conway-style" definitions really belong already to Chapter \ref{chap:Games},
 see Theorems \ref{th:Conway-sum} and \ref{th:Conway-style}.
However, concerning $\cdot$ and $\otimes$, the situation is  more complicated:
"Conway-style" definitions still work, but the operations thus defined tend to have less nice
properties.

\chapter{A construction of Conway numbers}\label{chap:Conway}\label{chap:numbers}

\section{Numbers and their quanta}\label{sec:No}

We recall  basic definitions and examples  already given in the introduction:

\begin{definition}\label{def:Number}
A {\em (Conway) number},  {\em surreal number}, or just {\em number},
 is a set $x$ of (von Neumann) ordinals having a maximal element 
$\max(x)$. (In particular, $x$ is non-empty.)
This maximal element  is  called the {\em birthday of $x$} and denoted by 
$\Beta(x)$.
We say that 
{\em $x$ is older than $y$},  if $\Beta(x) < \Beta(y)$.\footnote{Conway says "simpler than" 
instead of "older than", but other authors  use the term "simpler than" for the
relation $\preceq$, see Def.\ \ref{def:descendant}; for this reason we rather avoid it.}
We say that an ordinal $\alpha$

--  is an {\em element in $x$} if
$\alpha \in x$ and $\alpha < \Beta(x)$; 
we then let $s_x(\alpha) = +$;

-- is a
{\em hole in $x$} if 
$\alpha \notin x$ and $\alpha <\Beta(x)$ ;
we then let $s_x(\alpha) = -$;

-- is
{\em not in $x$} if $\alpha \geq \Beta(x)$; then we let
$s_x(\alpha) = 0$.

\nin
The Function $s_x: \alpha \mapsto s_x(\alpha)$ is called the
{\em sign-expansion of $x$}
(cf.\ Def.\ \ref{def:sign-expansion}).
\end{definition}

The birthday $\Beta(x)$ is an element  "of $x$" , but not  "in $x$": one may think of
it as the  "boundary of $x$", being neither "in $x$" nor "outside of $x$". 

\begin{definition}\label{def:sharp}
The {\em opposite $x^\sharp$} of a number $x$ is the number obtained by exchanging
the signs $+$ and $-$ in the sign-expansion; that is, exchanging  "elements in $x$" and 
"holes in $x$":
$$
x^\sharp := \{ \alpha \mid \, \alpha < \Beta(x), \alpha \notin x \} \cup \{ \Beta(x) \} .
$$
\end{definition}

\begin{example}
The empty set $\eset$ is not a number.
The set $\N=\omega$ is not a number: in fact, no limit ordinal is a number since it has no
maximum.
No infinite subset of $\N$ is a number, since it has no maximum. 
\end{example}

\begin{definition}\label{def:CoReal} \label{def:Conway-real} 
A {\em short number} is a non-empty and finite subset of $\N$.

A {\em long Conway real} is a number of the form
$x = X \cup \{ \omega \}$, where
$X \subset \N$ is a subset that is both infinite and
co-infinite (i.e., $\N\setminus X$ is infinite).
The birthday of such a number is $\omega$.

A {\em Conway real} is either a short number, or a long Conway real.
\end{definition}

For instance, the set $P$ of prime numbers in $\N$ defines a long Conway
real $P \cup \{ \omega \}$.
We'll see that there is a structure-preserving bijection between
"usual"  reals $\R$, and the set $\R_\Co$ of Conway reals (Section \ref{sec:Conwayreals}).

\begin{example}
Every successor ordinal $\alpha + 1$ has a maximum, namely $ \alpha $, hence is a number. 
In particular, $n+1 = \{ 0,\ldots,n \}$ is a (short) number, and
$\I = \{ 0  \}$ is the oldest of all numbers.
There exist numbers that are finite sets but are not short: for instance,
$x = \{ \omega \}$ is such a number (it is not a real number). 
\end{example}

\begin{definition}\label{def:Conway-ordinal}
For every (von Neumann) ordinal $\alpha$, the corresponding {\em Conway ordinal} is
the number defined by
$\Alpha := \alpha_\Co := (\alpha + 1)_\vn$, see Equation (\ref{eqn:Conway-ordinal}).
Its birthday is $\alpha$.
\end{definition}

\begin{definition}\label{def:children}
To every number $x$, we associate two other numbers, its {\em right child} $x_+$ and
its {\em left child} $x_-$ :
\begin{align*}
x_+ & := x \cup \{ \Beta(x) + 1 \},
\\
x_- &:=  x \cup \{ \Beta(x) + 1 \} \setminus \{ \Beta(x) \} .
\end{align*}
The maximal element is $\Beta(x)+1$, whence, in both cases:
$\Beta(x_{\pm}) = \Beta(x)+1$.
\end{definition}

The preceding definition leads to the structure of the {\em binary tree of numbers}
-- see Section \ref{sec:number-tree}.

\subsubsection{Quanta}
To every number $x$ we associate certain ordinals or integers, which we call 
{\em quanta of $x$}. The most fundamental quantum of $x$ is its birthday $\Beta(x)$.
Another fundamental quantum of $x$ is its {\em tipping element (tipping point)},
the first place where a sign change occurs in the sign expansion:  

\begin{definition}[Tipping Point]\label{def:quanta}\label{def:tippingpoint}
The {\em tipping element} of a number $x$ is the ordinal
\begin{equation}
\tip(x) :=
\min \{ \alpha \in x \mid \,
\alpha + 1 \notin x, \mbox{ or }
\exists \beta \notin x : \alpha = \beta + 1 \} .
\end{equation}
By definition, $\tip(x)$ is an ordinal belonging to $x$,
and since $\Beta(x)$ satisfies the condition inside the set description,
such an ordinal exists, and
 we always have
 $\tip(x) \leq \Beta(x)$.
\end{definition}

\begin{definition}[Omnific integer]\label{def:integer}
The {\em (omnific)  integer part} of a number $x$ is the number
$$
[x]:= \{ \alpha \in x \mid \alpha \leq \tip(x) \}.
$$
An {\em omnific integer} is a number $x$ such that
$\tip(x)=\Beta(x)$, equivalently, $x = [x]$.
\end{definition}

\begin{definition}[Absolute) Sign]\label{def:sign}
The {\em (absolute) sign of  a number $x$} is
\begin{equation}
\sgn(x) := +1 \mbox{ if } 0 \in x, \quad
\sgn(x) := -1 \mbox{ if } 0 \notin x.
\end{equation}
\end{definition}


We'll give more explanations and define other quanta in due course.

\section{The class $\No$: the Conway hierarchy}\label{sec:Ccubes} 

Just like the ordinals $\On$, the Conway numbers form a class,
$\No$:

\begin{definition}[Limit and successors, stages, boundary, the class $\No$]\label{def:stages} $ $
\begin{itemize}
\item
A number $x$ is called a {\em successor number} if its birthday $\Beta(x)$ is a successor ordinal, 
and a {\em limit number} if $\Beta(x)$ is a limit ordinal.
\item
For every ordinal $\alpha$, the  {\em $\alpha$-stage of numbers} is the set
$$
\No_\alpha := \{ x \mid \, x \mbox{ is a number with } \Beta(x) <  \alpha \}
$$
Stages form a hierarchy:
$\alpha < \beta \Rightarrow \No_\alpha \subset \No_\beta$. Note  that
$\No_\alpha \subset \vN_{\alpha +1}$.
\item
For every ordinal $\alpha$, we define  the {\em boundary of $\No_\alpha$}
$$
\partial (\No_\alpha)   := \No_{\alpha + 1}\setminus \No_\alpha= 
  \{ x \mid \, x \mbox{ is a number with } \Beta(x) = \alpha \} .
$$
\item
By $\No$ we denote the (proper) class of all Conway numbers, formally:
$$
\No = \bigcup_{\alpha \in \On} \No_\alpha \subset \vN .
$$
\end{itemize}
\end{definition}

\begin{example}[Short numbers]\label{ex:short}
A non-empty subset $x \subset \N$ is a number iff it is finite, iff it is short.
The set of all short numbers is $\No_\omega$. 
Every short number, except $0_\Co$, is a successor number. 
For small values of $\alpha$, we have (with notation as in  Table \ref{table:beginning}): 
$\No_0=\eset$,
\begin{align*}
\No_1 &=  
  \{ \{ \eset \} \} = \{ \I \}  = \II , \\
\No_2 & =  \
\{ \{ 0 \}, \{ 1\}, \{ 0,1 \}\} =
\{ \I , \II, \III \} = {\mathrm{XIV}} ,\\
\No_3 &   =  \{ \{ 0 \}, \{ 1\}, \{ 0,1 \}, \{ 2 \}, \{ 0,2\}, \{ 1,2\}, \{ 0,1,2\} \} =
 \{ \I, \II, \III, {\mathrm{ VII, VIII, IX, X}} \} .
\end{align*}
One observes that the share of $\No_\alpha \subset \vN_{\alpha+1}$ is $\frac{1}{2}$ for
$\alpha = 1$, it is $\frac{3}{4}$ for $\alpha = 2$, it is 
$\frac{7}{16}$ for $\alpha = 3$, and then quickly tends to $0$.
Besides $\eset$, the "simpleset" pure sets which are {\em not} numbers, are
IV, V, VI, VII, XII, XIII, XIV, and  XV (those in Table \ref{table:beginning} 
 containing a $\II$ at depth $1$). 
\end{example}

\begin{remark}
The class of all Conway ordinals is the class of all von Neumann successor ordinals. 
There is a Bijection between Conway ordinals and von Neumann ordinals, just as there
is one between all natural numbers and all strictly positive natural numbers. In other terms,
the Birthday Map
$
\Beta : \No \to \On$
has a Section $\alpha \mapsto \alpha_\Co$.
Note also that the rank of $x$ in the von Neumann universe is  the successor of $\Beta(x)$, i.e.,
\begin{equation}
\rk(x) = \Beta(x) + 1.
\end{equation}
\end{remark}

\begin{remark}[Numbers in a well-ordered set]\label{rk:well-ordered}
For every well-ordered set $M$, one can define similarly the
set of {\em numbers in $M$},
$$
\No_M := \{ x \in  \cP(M) \mid  x \mbox{ has a maximum } \Beta(x) := \max(x) \ \}.
$$
Since $M$ is order isomorphic to some ordinal $\alpha$, this is just the definition of
$\No_\alpha$ in another form. 
This definition stresses the point of view that the theory of surreal numbers can be considered
as a topic in order theory, notably via the {\em  interplay of several partial orders}
that can all be defined on $\No_M$.
\end{remark}

\section{Conway reals}\label{sec:Conwayreals}

An advantage of our approach is that the imbedding of the  "usual"  reals $\R$ into the
Conway-hierarchy can be defined in a simple and algorithmic way.
Recall Definition \ref{def:Conway-real} of the set $\R_\Co$ of Conway reals.  We will
define two bijections, inverse to each other, 
\begin{equation}
\R_\Co \to \R, \, x \mapsto r(x), 
\qquad
\R \to \R_\Co, \, r \mapsto x(r),
\end{equation}
satisfying a relation saying that $x^\sharp$ corresponds to the usual negative:
\begin{equation}
r(x^\sharp) = - r(x), \qquad
x(-r) = (x(r))^\sharp .
\end{equation}
Therefore it suffices to define $r(x)$ only for $x$ with negative sign (i.e., $0 \notin x$),
or only for $x$ with positive sign (i.e., $0 \in x$).
The latter is the algorithm 
attributed at p.\ 31 \cite{Co01} to Elwyn Berlekamp, defined  for {\em positive} reals.
However, it turns out that for negative $x$ the formalisation is somewhat simpler.
The reason is that, in this case, the tipping element $\tip(x)$ (Def.\ \ref{def:quanta})
is equal to the minimal element $\min(x) \in x$, which is not the case for positive numbers.
The following algorithm starts by finding the integer part $[x]$ of a number $x$, and 
this in turn corresponds to determining the tipping element of $x$.

\begin{example}\label{ex:tip}
Let $x = \{ 2,3,5\}$. Then $\min(x) = 2 = \tip(x)$, and
$[x]=\{ 2 \}$.

\nin
Let $y=x^\sharp=\{ 0,1,4,5\}$.
Then $\tip(y) = 1 = \min(y^\sharp) - 1$, and $[y]= \{ 0,1\} = 1_\Co$.
\end{example}

\begin{theorem}[The  mirror Berlekamp algorithm]\label{th:Berlekamp}
There is a bijection from the (usual) reals $\R$ onto
the set $\R_\Co$ of Conway-reals, given as follows:
\begin{enumerate}
\item
Let $x$ be a Conway real with negative sign, i.e., $0 \notin x$, i.e., $\min(x)\in \N$, $\min(x)>0$.
Let $X = x\cap\N$. 
Define the "usual" real number
$$
r:= r(x) :=  - \min(x)   + \sum_{m \in X, m > \min(x) } \, (  \frac{1}{2} )^{m- \min(x)}  .
$$
If $0 \in x$, then the corresponding usual real is defined to be
$r(x) = - r(x^\sharp) $.
\item
In the other direction, 
let $r \in \R$ be negative, and expand
$$
r = - r_0 + \sum_{k=1}^{\infty} a_k 2^{-k}, \quad
r_0 \in \N^*, \, a_k \in \{ 0,1\} .
$$
Avoid ambiguity of such representation by replacing
 "long ends" ($\exists m: \forall k >m: a_k=1$) always by the corresponding
finite sum.
If the expansion is finite, the corresponding Conway real is the short number
$$
x = x(r) = \{ r_0 \} \cup \{ r_0 + k \mid k \in \N , a_k = 1 \} ,
$$
and if the expansion is infinite, the corresponding Conway real is the long real
$$
x = x(r) = \{ r_0 \} \cup \{ r_0 + k \mid k\in \N, a_k = 1 \} \cup \{ \omega \} .
$$
If $r >0$, then let  $x(r) = (x(-r))^\sharp$.
\end{enumerate}
Under this correspondence, short numbers correspond to the ring $\Z[\frac{1}{2}]$ of
dyadic rationals, the finite Conway ordinals correspond to $\N$, and 
$\{ n \}$ corresponds to $-n$, for all $n\in\N$.
Ordinal arithmetics on $\N_\Co$ corresponds  to usual arithmetics on $\N$.
\end{theorem}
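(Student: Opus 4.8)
The plan is to exploit the opposite operation to reduce everything to the negative branch. By Definition~\ref{def:sharp}, $x \mapsto x^\sharp$ exchanges the roles of ``element'' and ``hole'' below the birthday, hence is an involution that swaps the sign $\sgn(x)$ and maps $\R_\Co$ onto itself: a short number has short opposite, and the opposite of a long real $X \cup \{\omega\}$ is $(\N \setminus X) \cup \{\omega\}$, again long because $\N \setminus X$ is infinite and co-infinite. The prescribed relations $r(x^\sharp) = -r(x)$ and $x(-r) = (x(r))^\sharp$ then reduce the whole statement to the negative case: it suffices to prove that the direct formula $x \mapsto r(x)$ (for $0 \notin x$) and the direct formula $r \mapsto x(r)$ (for $r < 0$) are well defined and mutually inverse, after which the $\sharp$-relations propagate both composition identities to the positive branch. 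The one junction point to treat separately is $r = 0 \leftrightarrow 0_\Co = \{0\}$, the unique $\sharp$-invariant Conway real, which forces $r(0_\Co)=0$ consistently.

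First I would settle well-definedness. For negative $x$ with $r_0 := \min(x) \ge 1$ and $X := x \cap \N$, setting $b_k = 1$ iff $r_0 + k \in X$ yields $r(x) = -r_0 + \sum_{k \ge 1} b_k\, 2^{-k}$; the series is dominated by $\sum_{k\ge1} 2^{-k} = 1$, so $r(x) \in [-r_0, -r_0+1) \subset (-\infty,0)$ is a genuine negative real. Conversely, for $r < 0$ the expansion $r = -r_0 + \sum_{k\ge1} a_k 2^{-k}$ with $r_0 \in \N^*$ becomes unique once an infinite tail of $1$'s is forbidden, and one checks $x(r)$ is a legitimate Conway real: in the terminating case it is a finite, hence short, number, while in the non-terminating case the set $X = \{r_0\} \cup \{r_0 + k \mid a_k = 1\}$ is infinite (infinitely many $a_k=1$) and co-infinite (infinitely many $a_k = 0$, by the no-tail convention), so $X \cup \{\omega\}$ is a long real in the sense of Definition~\ref{def:Conway-real}.

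Next I would verify the maps are inverse. With the identification $a_k = 1 \Leftrightarrow r_0 + k \in X$, the summation and the binary expansion are formally inverse operations, so the real content lies in matching the two \emph{finiteness conventions} exactly, and this is the step I expect to be the main obstacle. The correspondence must pair short number ($X$ finite) with terminating expansion, and long real ($X$ infinite \emph{and} co-infinite) with non-terminating expansion having no all-$1$'s tail; crucially, the excluded configurations must coincide on both sides, since a co-finite $X$ — equivalently an all-$1$'s tail, i.e.\ the ``long ends'' discarded in the statement — is precisely what fails to define a long real, so no Conway real is lost and none is double-counted. Granting this, a direct computation gives $x(r(x)) = x$ for negative $x$ and $r(x(r)) = r$ for negative $r$, and the $\sharp$-relations then give the bijection on all of $\R_\Co$.

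Finally I would read off the arithmetic identifications. The reals with terminating binary expansion are exactly the dyadic rationals, and these are exactly the images of the short numbers, giving $\No_\omega \cong \Z[\frac{1}{2}]$. The formula yields $r(\{n\}) = -n$ at once (the sum is empty), and since $(n_\Co)^\sharp = \{n\}$ one gets $r(n_\Co) = -r(\{n\}) = n$, so the finite Conway ordinals correspond to $\N$. For the last assertion it remains to note that on finite ordinals the Hessenberg (Conway) operations $+,\cdot$ coincide with ordinary addition and multiplication of natural numbers; hence the bijection $n_\Co \leftrightarrow n$ intertwines the ordinal arithmetic on $\N_\Co$ with usual arithmetic on $\N$, which is elementary and needs no surreal-number machinery.
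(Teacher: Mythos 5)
Your proposal is correct and follows essentially the same route as the paper: the paper's proof also reduces everything to the $\sharp$-symmetry and identifies as "the main point" exactly the step you single out, namely that co-infiniteness of $X$ excludes "long ends," so finite expansions correspond precisely to finite (short) numbers, after which the inverse-bijection property and the identifications $r(\{n\})=-n$, $r(n_\Co)=n$ follow directly. Your write-up merely fills in details the paper leaves implicit (convergence bounds, the self-consistent junction $r(\{0\})=0$, and the explicit propagation of the composition identities through $\sharp$).
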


\begin{proof}
Both maps $x \mapsto r(x)$ and $r \mapsto x(r)$ are well-defined and inverse to each
other: the main point is that, since $X$ in Def.\ \ref{def:Conway-real} is co-infinite,
 "long ends" do not appear, and hence the expansion of $r(x)$ is finite if, and only if,
 the set $x$ is finite (cf.\ also \cite{Go}, p.33). 
It is now obvious from the definitions that both constructions define inverse bijections.
The algorithm shows that $x = \{ n \}$ corresponds to  $r=-n$ (there is no fractional part),
so $n_\Co = \{0,\ldots,n\} = \{n\}^\sharp$ corresponds to $r=n$.
The statement about ordinal arithmetics is obvious from this correspondence.
\end{proof}

The following is an equivalent version of the preceding algorithm.
For positive reals, it corresponds to the original ``Berlekamp algorithm''.

\begin{corollary}\label{cor:Berlekamp}
The bijection from the preceding theorem can also be described as follows:
let $x \in \R_\Co$, $X= x \cap \N$.
If $x$ has positive sign, 
then
$[x]= \tip(x)$ is its integer part, and
$$
r (x) = [x] + \sum_{m \in X, m > \tip(x) }  \frac{1}{2^{m - 1 - \tip(x)}}.
$$
If $x$ has negative sign, then
$[x]=-\tip(x)$ is its integer part, and
$$
r (x) = [x] + \sum_{m \in x, m > \tip(x) } \frac{1}{2^{m - \tip(x)}} . 
$$
\end{corollary}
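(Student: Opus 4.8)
The plan is to read this corollary as a \emph{reformulation} of Theorem \ref{th:Berlekamp}: both statements purport to describe the \emph{same} bijection $\R_\Co \to \R$, so it suffices to check that the formulas given here agree with those already established. I would treat the two signs separately, dispatching the negative case directly and reducing the positive case to it through the relation $r(x) = -r(x^\sharp)$.

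For negative sign the claim is essentially immediate. By the discussion preceding Theorem \ref{th:Berlekamp} one has $\tip(x) = \min(x)$, so the integer part of Definition \ref{def:integer} is $[x] = \{\alpha \in x \mid \alpha \leq \tip(x)\} = \{\min(x)\}$, which by the final assertion of Theorem \ref{th:Berlekamp} (namely $\{n\}$ corresponds to $-n$) is carried to the real number $-\min(x) = -\tip(x)$; this is the asserted integer part. Substituting $\tip(x) = \min(x)$ into the corollary's sum reproduces, exponent for exponent ($m-\tip(x)$), the negative-sign formula of Theorem \ref{th:Berlekamp}. In the long case $x = X \cup \{\omega\}$ the birthday $\omega \in x$ is tacitly dropped, the $\omega$-term being infinitesimal; equivalently the sum runs over $X$.

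For positive sign, write $t = \tip(x)$. First I would record the structural facts. Since $0 \in x$, the sign-expansion opens with a run of $t+1$ pluses, so $0,\dots,t \in x$ and $t+1 \notin x$; hence $[x] = \{0,\dots,t\} = t_\Co$ corresponds to the real $t = \tip(x)$ (the claimed integer part), while $x^\sharp$ has negative sign with $\min(x^\sharp) = t+1$. Applying $r(x) = -r(x^\sharp)$ together with the negative-sign formula for $x^\sharp$ yields
\[
r(x) = (t+1) - \sum_{m \in H} 2^{-(m-t-1)} - 2^{-(\Beta(x)-t-1)},
\]
where $H$ is the set of holes of $x$ strictly between $t+1$ and $\Beta(x)$ (precisely the elements of $x^\sharp$ above its minimum) and the last term occurs only in the short case. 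This is to be matched with the corollary's positive formula $r(x) = t + \sum_{m \in E} 2^{-(m-1-t)}$, where $E$ is the set of elements of $x$ above $t+1$. Since $m-1-t = m-t-1$ the exponents coincide, and $E \cup H$ exhausts all positions strictly between $t+1$ and $\Beta(x)$.

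The one genuine computation, and the main obstacle, is converting this subtractive expression into the additive one, which also explains the mysterious extra $-1$ in the positive exponent. Equating the two forms, the claim reduces to the geometric-series identity $1 = \sum_{n \geq 1} 2^{-n}$ in the long case, and to its finite truncation $\sum_{n=1}^{N} 2^{-n} = 1 - 2^{-N}$ (with $N = \Beta(x)-t-2$) in the short case, the truncation defect $2^{-N}$ being exactly supplied by the doubled birthday contribution $2\cdot 2^{-(\Beta(x)-t-1)} = 2^{-(\Beta(x)-t-2)}$. Thus the geometric series, anchored at the birthday, is precisely what turns the subtractive negative-sign rule into the additive positive-sign one. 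The concluding remark, that the positive-sign formula agrees with the original Berlekamp rule, is then simply the observation that this additive expression \emph{is} Berlekamp's.
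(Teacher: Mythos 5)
Your route is the one the paper itself takes: the negative case by direct substitution (using $\tip(x)=\min(x)$), the positive case reduced to it through $r(x)=-r(x^\sharp)$, the key structural input being how $\tip(x)$ relates to $\min(x^\sharp)$. The paper's proof merely compresses this into the remark that $\tip$ mimics the behaviour of the usual integer part ($[-r]=-[r]$ for integers, $[-r]=-([r]+1)$ for positive non-integers), leaving your geometric-series bookkeeping implicit; that bookkeeping is correct and is worth writing down, including the short-case cancellation of the truncation defect against the doubled birthday term.

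However, one step fails as stated. In the positive case you infer from $t+1\notin x$ that $t+1\in x^\sharp$, i.e.\ that $\min(x^\sharp)=t+1$. This requires $t+1<\Beta(x)$ --- in the paper's terminology, $t+1$ must be a \emph{hole in} $x$, not merely \emph{not in} $x$ --- and it is false precisely when $x$ is a positive omnific integer, $x=t_\Co=\{0,\dots,t\}$ with $\tip(x)=\Beta(x)=t$. There $x^\sharp=\{t\}$, so $\min(x^\sharp)=t$, and your displayed formula (empty $H$, birthday term $2^{-(\Beta(x)-t-1)}=2^{-(-1)}=2$) evaluates to $(t+1)-2=t-1$ instead of the correct value $t$; already $x=0_\Co$ gives $-1$ instead of $0$. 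The corollary does cover these $x$, so the case must be split off --- which is exactly the dichotomy ($\tip(x)=\Beta(x)$ versus $\tip(x)<\Beta(x)$) around which the paper's own proof is organized. The repair is immediate: for a positive omnific integer both sums are empty and both the theorem and the corollary give $r(x)=t$ directly. With that case set aside, $t+1$ really is a hole, $\Beta(x)\geq t+2$, all exponents in your formulas are positive, and the rest of your computation is sound.
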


\begin{proof}
Notice that the definition of the tipping element matches exactly the behaviour of the usual
 integer part:
if $x\in \R_\Co$ is an (omnific) integer, then $\tip(x) = \Beta(x)$, corresponding to the
fact that $[-r] = - [r]$ for a ``usual'' integer.

If $r>0$ is not a usual integer, then $[-r] = - ([r] + 1)$, which corresponds to the fact that,
if $x$ is not an omnific integer, and $0 \in x$, then
$\tip(x) + 1 = \min(x^\sharp) $  (cf.\ Example \ref{ex:tip}; and Examples \ref{ex:numbers}).
\end{proof}


\begin{example}\label{ex:numbers}
The following Conway-reals $x$ give "usual" reals $r = r(x)$:

\ssk
$x = \{ 1,2\}$ gives $r = -1 + \frac{1}{2}= - \frac{1}{2}$,

$x = \{ 0,2\}$, with $\tip(x)=0$,  gives  $r = 0 + \frac{1}{2^{2-1}} = \frac{1}{2}$,

$x = \{ 1,2,\ldots,n\}$ gives $r = -1 + \sum_{m=2}^n 2^{m-1} = - \frac{1}{2^{n-1}}$

$x = \{ 0,n \}$, with $\tip(x)=0$,  gives $r = \frac{1}{2^{n-1}}$

$x = \{ 2,3,5, 7,11  \}$ gives
$r = - 2 + \frac{1}{2} + \frac{1}{2^3} + \frac{1}{2^5} + \frac{1}{2^9}  = - 1.34375$

$x = P \cup \{ \omega\}$, $P=$ set of prime numbers, gives
$r = -2 + \sum_{p\in P\atop p>2} 2^{2-p}$

$x = \{1,3,5,7,\ldots,\omega\}$ gives
$r = - 1 + \sum_{k=1}^\infty  \frac{1}{4^k} =-  \frac{2}{3}$

$x =\{ 0,2,4,\ldots,\omega \}$ gives $r=\sum_{k=0}^\infty \frac{1}{2^{1 + 2k}} = 
\frac{2}{3}$

\ssk\nin
Conversely, the following usual reals $r$ give Conway-reals $x$:

\ssk
$\sqrt{2} = 1.4142...$ gives
$\{ 0,1,4,5,7,9,\ldots , \omega \}$

$e = 2.71828...$ 
gives $\{ 0,1,2,4,6,7,9,\ldots , \omega \}$
\end{example}

\begin{example}["What are the elements of $\pi$?"]\label{ex:pi}
In his paper \cite{Lei}, Tom Leinster writes:
{\em...
accost a mathematician at random and ask them

{\em ‘what are the elements of $\pi$?’}, 

\nin
and they will probably assume they misheard you, or ask you what you’re talking about, or else tell you that 
{\em your question makes no sense}. If forced to answer, they might reply that real numbers have no elements. But this too is in conflict with ZFC’s usage of ‘set’: if all elements of $\R$ are sets, and they all have no elements, then they are all the empty set, from which it follows that all real numbers are equal...}

So, here is an answer to that question:
$
\pi = 3,1415926535897932...$
corresponds to the sign expansion given in \cite{WW}, or
\cite{CG}, p. 288, Figure 10.8, starting with

$\pi \simeq +++(+-)--+--+----++++++ - ++ - + ... $ 

\nin which gives (the first four elements just say that the integer part is $\tip(\pi) = 3$):

$
\pi_\Co = 
 \{ 0,1,2,3, 7 , 10, 14,15,16,17,18, 19 ,21,22,24   \ldots, \omega \}
$
\end{example}
 
\nin \nin
These examples suggest several number theoretic questions :

(1) How to characterize $x=x(r)$ corresponding to {\em rational} reals $r$?

(2) How to characterize $x=x(r)$ corresponding to {\em algebraic} reals $r$?

(3) How to characterize $x=x(r)$ corresponding to {\em transcendental} reals $r$?

(4) What can we say about $r=r(x)$ for a "random" sequence $x$?

\nin
Here, we'll just give an answer to question (1):
a Conway real is {\em rational} if, and only if,  it is short, or has a finite pattern repeated $\omega$ times.
This has already been proved by Moritz Schick (\cite{Sch}, 
Bemerkung 6.19).\footnote{For this and related questions, see also
\url{https://mathoverflow.net/questions/214354/nice-sign-expansions-of-special-surreal-numbers }.}
Before stating the result, let us consider some more examples of rational Conway numbers:

\begin{example}
Consider long Conway reals 
$x = \{ m+ 4 n \mid n \in \N \} \cup \{ 0,\omega \}$.
Using the Berlekamp algorithm, with $m = 2,3,4,5$, 

$x = \{ 0,2,6,10 ,\ldots,\omega \}$
gives
$r = \frac{1}{2} + \frac{1}{2^5} + \frac{1}{2^9} + \ldots =
\frac{1}{2} \sum_{k=0}^\infty \frac{1}{(2^4)^k} = \frac{8}{15}$

$x = \{ 0,3,7,11 ,\ldots,\omega \}$
gives
$r = \frac{1}{2^2} + \frac{1}{2^6} + \frac{1}{2^{10}} + \ldots =
\frac{1}{2} \frac{8}{15} = \frac{4}{15}$

$x = \{ 0,4,8,12 ,\ldots,\omega \}$
gives
$r = \frac{1}{2^3} + \frac{1}{2^7} + \frac{1}{2^{11}} + \ldots =
\frac{1}{2} \frac{4}{15}  = \frac{2}{15}$

$x = \{ 0,5,9,13 ,\ldots,\omega \}$
gives
$r = \frac{1}{2^4} + \frac{1}{2^8} + \frac{1}{2^{12}} + \ldots 
=\frac{1}{2} \frac{2}{15} 
= \frac{1}{15}$

\nin
To represent a rational $r = \frac{k}{15}$ with $0 < k < 15$ by a set $x$, 
write $k = k_0 2^0 + k_1 2^1 + k_2 2^2 + k_3 2^3$ in base $2$, with
$k_i = 0$ or $1$;
then take the union of the sets shown above, for all $i$ with  $k_i = 1$. 
For instance, to represent
$  \frac{1}{5}  =  \frac{2}{15} +  \frac{1}{15}$, we take the union of the last two
numbers shown, that is:

$x = \{ 0,4,5,8,9,12 ,13, \ldots,\omega \}$
corresponds to
$r = 
 \frac{2}{15} +  \frac{1}{15} =  \frac{1}{5} $.
 
\nin
To represent, e.g., $\frac{1}{10}$, shift the non-zero part of $x(\frac{1}{5})$ one step ``to the right''.
To represent, e.g., $\frac{3}{10} = \frac{1}{2} \cdot \frac{9}{15}$, take the representation of
$\frac{1+8}{15}$, and then shift it again ``one step to the right'':

$x = \{ 0,3 , 6, 7 , 10, 11 ,\ldots , \omega\} $ corresponds to to $\frac{3}{10}$.
\end{example}

\begin{theorem}\label{th:rationals}
A Conway real $x$ is rational if, and only if, its sign expansion starts with a finite sequence, 
and then repeats another finite sequence $\omega$ times, followed by zeroes;
equivalently, with notation to be introduced in Section \ref{sec:CoCa}, if and only if,
$$
x = y \oplus z \otimes \omega , \quad \mbox{ with short numbers } y, z .
$$
This representation becomes unique if we require $\Beta(y)$ and $\Beta(z)$ to be minimal 
(then call $y$ the "prefix", and $z$ the "repetitor" of $x$). 
\end{theorem}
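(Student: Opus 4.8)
The plan is to transport the statement across the mirror Berlekamp algorithm (Theorem \ref{th:Berlekamp}) and reduce it to the classical fact that a real is rational if and only if its base-$2$ expansion is eventually periodic. Since $r(x^\sharp) = -r(x)$ and passing to $x^\sharp$ (Definition \ref{def:sharp}) merely interchanges $+$ and $-$ in the sign expansion — an operation preserving both rationality of $r(x)$ and the property of being eventually periodic — I may assume throughout that $x$ has negative sign, i.e.\ $0 \notin x$. Then $\tip(x) = \min(x) = r_0$, and the formula of Theorem \ref{th:Berlekamp}(1) gives $r(x) = -r_0 + f$, where $f := \sum_{k \geq 1,\ r_0 + k \in x} 2^{-k} \in [0,1)$. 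Since $r_0 \in \N$, the number $r(x)$ is rational if and only if $f$ is, and the $k$-th binary digit of $f$ is precisely the indicator of the condition $r_0 + k \in x$. Thus the binary digits of $f$ are exactly the $\pm$-pattern of the sign expansion of $x$, read off $\N$ starting just past $\tip(x) = r_0$.

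With this dictionary, both implications reduce to the classical criterion that $f \in [0,1)$ is rational iff its binary expansion is eventually periodic. If the sign expansion of $x$ is a finite prefix followed by a finite block repeated $\omega$ times (then zeroes), the digit sequence of $f$ is eventually periodic, so $f$, and hence $r(x)$, is rational; the degenerate case of an empty repeated block is exactly that of a short number, corresponding to a terminating expansion, i.e.\ a dyadic rational. Conversely, if $r(x) \in \bQ$ then $f \in \bQ$ has an eventually periodic binary expansion, and reading back through the dictionary exhibits the sign expansion of $x$ as a finite prefix followed by a periodically repeated finite block; a non-dyadic rational yields a genuinely infinite eventually periodic pattern and hence a long Conway real, a dyadic one a short number.

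It remains to recast ``finite prefix, then a finite block repeated $\omega$ times'' in the algebraic form $x = y \oplus z \otimes \omega$. Once one unwinds the definitions of $\oplus$ and $\otimes$ from Section \ref{sec:CoCa} — under which $y \oplus z$ concatenates the sign expansion of $z$ after that of $y$, and $z \otimes \omega$ denotes the $\omega$-fold repetition of the sign expansion of $z$ — the two descriptions coincide tautologically, with $y$ the prefix and $z$ the repetitor. For uniqueness I would invoke the standard canonical decomposition of an eventually periodic sequence: requiring $\Beta(y)$ minimal pins down the shortest prefix beyond which the tail is purely periodic, and then requiring $\Beta(z)$ minimal pins down the least period of that tail, both being unique by the elementary theory of eventually periodic sequences.

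The delicate point — and the step I expect to cause the most trouble — is the handling of the representation ambiguity that the Berlekamp correspondence was designed to suppress. A dyadic rational admits both a terminating and an all-ones (hence periodic) binary expansion, and the co-infiniteness of $X$ in Definition \ref{def:Conway-real} is exactly what forbids the all-ones tail and selects the terminating one. I must therefore check that no long Conway real can have an eventually constant $\pm$-pattern, and that the minimal repeated block is never the all-$+$ or all-$-$ block (which would force $X$ co-finite or finite, respectively). Verifying that these boundary cases fall on the correct side of the short/long dichotomy, so that the minimal decomposition is genuinely well-defined, is where the routine-looking bookkeeping carries the real content.
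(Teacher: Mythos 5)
Your proposal is correct and takes essentially the same route as the paper: both transport the question through the mirror Berlekamp correspondence of Theorem \ref{th:Berlekamp}, identify rationality of $r(x)$ with eventual periodicity of the binary digit sequence (equivalently, of the sign pattern), and treat the dyadic/short case as the degenerate empty-repetitor instance. The only difference is packaging: you invoke the classical criterion ``rational iff eventually periodic base-$2$ expansion'' as a black box, whereas the paper proves it inline — the geometric-series computation for one direction and, for the converse, the lemma that $\frac{a}{b}$ with $b$ odd equals $\frac{c}{2^d-1}$ where $2^d \equiv 1 \pmod{b}$ — and the boundary issue you flag at the end (excluding all-$+$ or all-$-$ tails) is exactly what the co-infiniteness clause of Definition \ref{def:Conway-real} and the ``no long ends'' convention in Theorem \ref{th:Berlekamp} already secure.
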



\begin{proof}
Let $x$ be a Conway real having a representation as in the theorem.
By the Berlekamp algorithm, the corresponding real $r=r(x)$ will have a fractional part that
can be expressed by certain integer combinations of geometric series with ratio a power of
$\frac{1}{2}$, and so will be a rational number.

Conversely, every  "usual" rational
$r = \frac{a}{b}$ is thus obtained. To see this,
we may assume that $r>0$, since the statement is obviously invariant under taking negatives
(opposites), and moreover that
 $0 < r < 1$, by changing the  "prefix", if necessary.
If $b$ is a power of $2$, then $r$ is a short number, and conversely every short number in the unit interval
is obtained this way. 
If $b$ is odd, apply the following

\begin{lemma}
Every rational number $r = \frac{a}{b}$ with $0 <  r < 1$ and odd $b$ 
can be represented in the
form $r = \frac{c}{2^d - 1}$ with $0<c<2^d - 1$, for some $d \in \N$.
\end{lemma}

\begin{proof}
Since $b$ is odd, the element $2$ is invertible in $\Z/ b \Z$.
Thus it is of finite order in the group $(\Z/b \Z)^\times$, so
 there is $d \in \N$ with $2^d \equiv 1 \mod b$, so there is $k \in \N$ with
$2^d - 1 = k b$, whence
$\frac{a}{b} = \frac{ka}{kb} = \frac{ka}{2^d - 1}$.
\end{proof} 

In this case we proceed as in the example above (where $2^d - 1 = 2^4 - 1 = 15$): 
we use geometric
series with ratio $\frac{1}{2^\ell}$ for some $\ell \geq 1$.
If $\ell \in \N$ is the length of a periodic pattern $M$, then the sum
$\sum_{k=0}^\infty  2^{- \ell k + m} = \frac{1}{2^m} \frac{ 1}{1 - 2^{-\ell}} =
2^{\ell - m} \frac{1}{2^\ell - 1}
$
will appear.
Various $M_i$ with length $\ell$ give rise to various $m_i$, 
so the corresponding real number 
has the form of a finite sum
$\sum_i \frac{ 2^{\ell - m_i}}{2^\ell -1 } = \frac{ u}{ 2^\ell - 1}$, and all such numbers
are represented in the form given by the theorem.

When $b = 2^k b'$ with odd $b'$, then the Conway real corresponding
$\frac{a}{b'}$ can be realized as in the preceding step, and the Conway real corresponding to
$\frac{a}{b} = \frac{1}{2^k} \cdot \frac{a}{b'}$ can be deduced, as in the example.

Finally, uniqueness of an expression $x = y \oplus z \otimes \omega$ is best explained
by using the notation related to concatenation, see Section \ref{sec:CoCa}.
  \end{proof}

\section{The total order on $\No$}\label{sec:totalorder}

The bijection between $\R_\Co$ and $\R$ will be compatible with all relevant 
structures. Let us start with {\em total order}.

 \begin{definition}\label{def:discriminant}
 Recall that the {\em symmetric difference} of two sets $x$ and $y$ is
 $$
 x \Delta y = (x \setminus y) \cup (y \setminus x) =
 (x \cup y) \setminus (x \cap y) ,
 $$
 so $x=y$ iff $x \Delta y= \eset$.
 We say that an ordinal $\alpha$ {\em discriminates between two numbers $x$ and $y$}
 if $\alpha$ belongs either to $x$ or to $y$, i.e., if
 $\alpha \in x \Delta y$.    The {\em discriminant}\footnote{ We take this terminology from \cite{Ba}, 13.5:  "The smallest ordinal to
 discriminate between two Numbers is called their {\em discriminant}."}
 of two numbers $x \not= y$ is the ordinal
 $$
 \delta := \delta (x,y) := \min (x\Delta y).
 $$
 \end{definition}

\nin
The discriminant belongs to exactly one of $x$ or $y$. Let $x<y$ if it belongs to $y$: 

\begin{theorem}\label{th:order1}
The following prescription defines a  total order on $\No$: 
$x <  y$ iff 
$$
\boxed{
x\not=y,  \mbox{ and:  } \quad   \min(x \Delta y) \in y  }.
$$
Equivalently, this order can be characterized as the {\em lexicographic order on sign-expansions}
(cf.\ Def.\ \ref{def:sign-expansion}),
where the set $S = \{ +,-,0 \}$ is ordered by
$- < 0 < +$.
\end{theorem}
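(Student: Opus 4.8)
The plan is to treat the two assertions—that the boxed prescription is a total order, and that it coincides with the lexicographic order on sign-expansions—in turn, extracting first the elementary structural facts and isolating the genuine case analysis.

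First I would record the well-posedness of the definition. For $x\neq y$ the set $x\Delta y$ is a non-empty set of ordinals, hence has a least element $\delta=\delta(x,y)$; since $x\Delta y=(x\setminus y)\cup(y\setminus x)$ is a disjoint union, $\delta$ lies in exactly one of $x,y$. Thus precisely one of ``$\delta\in y$'' and ``$\delta\in x$'' holds, and as $x\Delta y=y\Delta x$ and $\delta(x,y)=\delta(y,x)$, this gives at once asymmetry and trichotomy: for $x\neq y$ exactly one of $x<y$, $y<x$ is true. I would also note here that $x\mapsto s_x$ is injective—the sign-expansion recovers $x$, since $\Beta(x)$ is the least ordinal from which $s_x$ is identically $0$ and the elements below $\Beta(x)$ are the $+$-positions—so comparing numbers and comparing their sign-expansions are interchangeable.

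The heart is transitivity. Given $x<y$ and $y<z$, set $a:=\delta(x,y)\in y$ and $b:=\delta(y,z)\in z$, and proceed by comparing $a$ and $b$. The guiding observation is that below a discriminant the two sets agree: $\alpha<a\Rightarrow(\alpha\in x\Leftrightarrow\alpha\in y)$, and likewise for $b$. If $a<b$, then below $a$ membership in $x,y,z$ all coincide, so $x$ and $z$ have no discriminant below $a$; at $a$ one has $a\notin x$, $a\in y$, and (as $a<b$) $a\in y\Leftrightarrow a\in z$, forcing $a\in z\setminus x$, whence $\delta(x,z)=a\in z$ and $x<z$. The case $b<a$ is symmetric, yielding $\delta(x,z)=b\in z$. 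Finally $a=b$ is impossible, because $a\in y$ (from $a\in y\setminus x$) would contradict $b\notin y$ (from $b\in z\setminus y$). This establishes that the prescription is a total order.

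It remains to identify it with the lexicographic order, and this is where I expect the only real friction. Writing $\gamma$ for the first ordinal at which $s_x$ and $s_y$ differ, one would like $\gamma=\delta(x,y)$, but this fails in general: when, say, $\Beta(x)<\Beta(y)$, the sign $s_x(\Beta(x))=0$ already differs from $s_y(\Beta(x))\in\{+,-\}$, so $\gamma$ may sit strictly below $\delta$. The remedy is a short case analysis on $\Beta(x)$ versus $\Beta(y)$: when the birthdays agree, or when $\delta<\min(\Beta(x),\Beta(y))$, one gets $\gamma=\delta$ with both signs in $\{+,-\}$, and the verdicts of ``$\delta\in y$'' and ``$s_x(\gamma)<s_y(\gamma)$'' (for $-<0<+$) manifestly coincide; in the remaining case $\gamma=\min(\Beta(x),\Beta(y))$ and one checks directly that the $0$-versus-$\pm$ comparison at $\gamma$ again agrees with the side containing $\delta$. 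Once the prescription is shown to be exactly this lexicographic order, totality also follows abstractly from the well-ordering of $\On$ (which guarantees that $\gamma$ exists) together with the total ordering $-<0<+$ of $S$, giving an independent confirmation of the previous paragraph.
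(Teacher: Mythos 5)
Your proof is correct, and it follows the paper's overall strategy: the well-ordering of $\On$ produces the discriminant $\delta(x,y)=\min(x\Delta y)$, the disjointness of $x\setminus y$ and $y\setminus x$ gives trichotomy, transitivity is settled by a case analysis, and the lexicographic characterization is checked at the first place of disagreement. The case analyses themselves are decomposed differently, and the comparison is worth recording. For transitivity, the paper splits on membership --- whether $\eta:=\delta(x,y)\in z$, then whether $\zeta:=\delta(y,z)\in x$ --- and identifies $\min(x\Delta z)$ separately as $\zeta$, as $\min(\zeta,\eta)$, or as $\eta$ in the resulting cases; your trichotomy on $a:=\delta(x,y)$ versus $b:=\delta(y,z)$, combined with the single observation that $x$, $y$, $z$ agree below $\min(a,b)$, condenses all of this into the statement $\delta(x,z)=\min(a,b)$ (the case $a=b$ being impossible), which is tidier. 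For the lexicographic equivalence, the paper proves only the implication $x<_\ell y\Rightarrow x<y$, by inspecting the three admissible sign pairs $(-,0)$, $(0,+)$, $(-,+)$ at the first disagreement, and then obtains the converse for free because both relations are total; you verify instead that the two verdicts agree in each case, organized by comparing $\delta$ with the birthdays. Your version involves more checking but makes explicit the point you rightly flag, namely that the first sign disagreement can lie strictly below $\delta$ --- in the paper this possibility is hidden inside the case $(0,+)$, where the disagreement occurs at $\Beta(x)\notin x\Delta y$. Finally, note that the paper's shortcut, like your closing remark on totality of $<_\ell$, tacitly relies on the injectivity of $x\mapsto s_x$ (so that distinct numbers have comparable sign expansions), a fact you are right to record explicitly.
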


\begin{proof}
If $x\not=y$, 
the non-empty set $x \Delta y$ admits a minimum $\delta$ (principle of well-order).
This minimum must belong to $x \setminus y$ or to $y \setminus x$,
but cannot belong to both since these sets are disjoint. 
In particular, it belongs either to $x$ or to $y$.
Therefore, if $x \not= y$,
then either $x<y$ or $y<x$, whence antisymmetry and totality.
To prove transitivity,
assume $\eta := \min(x\Delta y) \in y$ and $\zeta := \min(y \Delta z) \in z$, and
distinguish cases: 
\begin{itemize}
\item
$\eta \notin z$.
Then $\zeta \leq \eta$, and
$\forall \xi \in x \setminus z$: 
(either $\xi \notin y$, then $\xi \geq \eta \geq \zeta$)(or
$\xi \in y$, then $y \geq \zeta$), so in this case 
$ \min(x \Delta z) = \zeta \in z$.
\item
$\eta \in z$.  We distinguish again:
\begin{itemize}
\item
If $\zeta \notin x$, then $\mu := \min(\zeta,\eta) = \min(x \Delta z) \in z$.
\item
If $\zeta \in x$, then $\zeta \geq \eta$.
It follows that $\forall \xi  \in x \setminus z: \xi \geq \eta$
(by distinguishing the two cases $\xi \in y ; \xi \notin y$), whence
$\eta = \min (x \Delta z) \in z$.
\end{itemize}
\end{itemize}

Recall the lexicographic order of sign-expansions:
$s <_\ell s'$ if there is an ordinal $\alpha$ such that
$s(\beta) = s'(\beta)$ for all $\beta <\alpha$, and $s(\alpha) < s'(\alpha)$.
When $s = s_x$ and $s'=s_y$ with $x \not= y$, then
($\beta < \alpha \Rightarrow \beta \notin x \Delta y$), so
$\min (x \Delta y) \geq \alpha$.
The condition $s(\alpha) <_\ell  s'(\alpha)$ gives three possible values for the pair
 $(s(\alpha),s'(\alpha))$:
\begin{enumerate}
\item
$(-,0)$, i.e., ($\alpha \notin x$ and $\max(x) > \alpha)$)
and $\alpha = \max(y)$, so $\min(x \Delta y)=\alpha =\max(y) \in y$.
\item
$(0,+)$, i.e., $\max(x)=\alpha$ and ($\alpha \in y$, $\max(y) >\alpha$),
so again $\min(x \Delta y) \in y$.
\item
$(-,+)$, i.e., $\alpha \notin x$, $\alpha \in y$, and
$\max(x),\max(y)>\alpha$, so $\min(x \Delta y) = \alpha \in y$.
\end{enumerate}
Therefore $x \leq_\ell  y$ implies $x \leq y$, and since both are total orders,
the converse then also holds, and both orders agree. 
\end{proof}

\nin
Remark.
The property defining the order can be rewritten as:
$x\leq y$ iff
\begin{equation}
x \subset y \,
\mbox{ or } \,  [(x \setminus y) \not= \eset \mbox{ and } (y \setminus x) \not= \eset \mbox{ and }
\min (x \setminus y) > \min(y \setminus x) ]   .
\end{equation}

\begin{definition}
We define the {\em absolute value} of a number $x$ by
$$
\vert x \vert := \Bigl\{ \begin{matrix} x & \mbox{ if } & x \geq 0 \\
x^\sharp & \mbox{ if } & x < 0 .\end{matrix} .
$$
\end{definition}

\begin{theorem}\label{th:order}
The total order $\leq$  has the following properties:
\begin{enumerate}
\item[(1)]
it  extends the partial order given by inclusion of subsets of $\On$,
\item[(2)]
the pseudo-complement map $\sharp$ is an order-reversing bijection fixing $0_\Co$, 
\item[(3)]
for each ordinal $\alpha$ and each surreal number $x$ with $\Beta(x)=\alpha$:

the 
right child $x_+$ is an immediate successor of $x$ for $\leq$ in $\No_{\alpha+1}$, and 

the left child
$x_-$ is an immediate predecessor of $x$ for $\leq$  in $\No_{\alpha+1}$.
  \item[(4)] 
The following inequalities are expressed by set-membership:

$
0_\Co \leq x \, \mbox{ if and only if } \, 0 \in x,
$
iff $\sgn(x) = 1$.

$
0_\Co \leq x < 1_\Co \, \mbox{ if and only if } \, (0 \in x)  \land ( 1 \notin x);
$
 iff: $\tip(x) = 0$.

$
1_\Co \leq x \, \mbox{ if and only if } \, \{ 0,1 \} \subset x. 
$

$\alpha_\Co \leq x$ if and only if $\alpha_\Co \subset x$.
\item[(5)] 
$\alpha_\Co$ is a maximal element in
$\No_\alpha$, and $\{ \alpha \}$ is a minimal element in $\No_\alpha$.
\end{enumerate}
\end{theorem}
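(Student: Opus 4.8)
The plan is to route every assertion through the lexicographic description of $\leq$ proved in Theorem \ref{th:order1}, namely $x \leq y \iff s_x \leq_\ell s_y$ with sign-expansions compared position-by-position under $- < 0 < +$. So the first step is to record the sign-expansions of the numbers appearing in the statement. With $\alpha := \Beta(x)$: the expansion $s_{0_\Co}$ is identically $0$; $s_{\alpha_\Co}(\beta) = +$ for $\beta < \alpha$ and $0$ for $\beta \geq \alpha$ (an unbroken block of pluses); $s_{\{\alpha\}}(\beta) = -$ for $\beta < \alpha$ and $0$ for $\beta \geq \alpha$; and the children merely append one symbol at position $\alpha$, so $s_{x_+}$ extends $s_x$ by a single $+$ and $s_{x_-}$ by a single $-$ at $\alpha$, both being $0$ thereafter. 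In particular, comparing at position $\alpha$ gives $x < x_+$ and $x_- < x$ at once.

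Assertion (1) is then immediate from the discriminant form of $\leq$: if $x \subseteq y$ and $x \neq y$ then $x \Delta y = y \setminus x$, so $\min(x \Delta y) \in y$ and $x < y$; hence $\leq$ contains, and therefore extends, inclusion. For (2), I would note that $\sharp$ fixes the birthday and swaps the values $+$ and $-$ while fixing $0$, so $s_{x^\sharp}$ is the pointwise negation of $s_x$; negation is the order-reversing involution of the chain $- < 0 < +$, and a pointwise order-reversal reverses the lexicographic order, so $\sharp$ is order-reversing; since $\sharp \circ \sharp = \id$ it is a bijection, and $0_\Co^\sharp = \{0\} = 0_\Co$ gives the fixed point.

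For (4) the core is $\alpha_\Co \leq x \iff \alpha_\Co \subseteq x$. The implication $\Leftarrow$ is the special case of (1). For $\Rightarrow$ I argue by contraposition: if $\alpha_\Co \not\subseteq x$, let $\gamma \leq \alpha$ be the least element of $\alpha_\Co$ missing from $x$; then $s_x$ and $s_{\alpha_\Co}$ agree (with value $+$) below $\gamma$, while at $\gamma$ one has $s_{\alpha_\Co}(\gamma) \in \{+,0\}$ and $s_x(\gamma) \in \{-,0\}$ strictly smaller, so $x < \alpha_\Co$. The remaining lines of (4) are read off: $0 \in x \iff 0_\Co \subseteq x \iff 0_\Co \leq x \iff \sgn(x)=1$ is the case $\alpha = 0$, and $\{0,1\} \subseteq x \iff 1_\Co \leq x$ is the case $\alpha = 1$; the middle line combines $0_\Co \leq x$ with $x < 1_\Co$, the latter equivalent to $1 \notin x$ by the same discriminant computation at position $1$, and $\tip(x) = 0$ unwinds through Definition \ref{def:tippingpoint} to exactly $0 \in x$ and $1 \notin x$. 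Assertion (5) is the extremal version of the same computation: the all-pluses expansion of $\alpha_\Co$ is lexicographically largest, and the all-minuses expansion of $\{\alpha\}$ lexicographically smallest, among all numbers of birthday $\leq \alpha$; and since $\{\alpha\} = \alpha_\Co^\sharp$ while $\sharp$ preserves birthdays (hence permutes this stage), minimality of $\{\alpha\}$ also follows from maximality of $\alpha_\Co$ via (2). (I read the intended stage here as $\No_{\alpha+1}$, since $\Beta(\alpha_\Co) = \Beta(\{\alpha\}) = \alpha$ places both numbers in $\No_{\alpha+1}$ rather than $\No_\alpha$.)

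The main obstacle is (3). Fixing $x$ with $\Beta(x) = \alpha$, I would show no number $z$ with $\Beta(z) \leq \alpha+1$ satisfies $x < z < x_+$, so that $x_+$ covers $x$ within the stage $\No_{\alpha+2}$ that actually contains the children. Since $s_x$ and $s_{x_+}$ agree below $\alpha$ and differ only as $0$ versus $+$ at $\alpha$, any such $z$ must have $s_z$ agreeing with both below $\alpha$ (a deviation at some $\gamma < \alpha$ would force $z > x_+$ or $z < x$), and $s_z(\alpha)$ falls into three cases: $s_z(\alpha) = -$ gives $z < x$; $s_z(\alpha) = 0$ forces $\Beta(z) \leq \alpha$ and hence $s_z = s_x$, i.e. $z = x$; and $s_z(\alpha) = +$ would require a later $-$ to keep $z < x_+$, pushing $\Beta(z) \geq \alpha+2$ against the birthday bound. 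Each case contradicts $x < z < x_+$, so $x_+$ is the immediate successor; applying $\sharp$ via the identity $x_- = \bigl((x^\sharp)_+\bigr)^\sharp$ (which I would verify directly on the defining sets) and using (2) converts this into the statement that $x_-$ is the immediate predecessor. The delicate point throughout is bookkeeping the interaction between the lexicographic comparison and the birthday constraint, i.e. the fact that a number cannot continue past its own birthday; this is precisely what forbids an intruder between a node and its children, and it is also where the stage index must be read as $\No_{\alpha+2}$ rather than $\No_{\alpha+1}$.
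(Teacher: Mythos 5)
Your proof is correct and follows essentially the same route as the paper's: every item is reduced to the discriminant/lexicographic characterization of $\leq$ from Theorem \ref{th:order1}, with (1) giving the inclusion-to-order step, (2) the order-reversal of $\sharp$, and the children handled by comparing sign-expansions at position $\alpha$. Your additional bookkeeping — the explicit three-case analysis in (3), the identity $x_- = ((x^\sharp)_+)^\sharp$ in place of the paper's ``similarly'', and the observation that the stage indices should be read as $\No_{\alpha+2}$ in (3) and $\No_{\alpha+1}$ in (5) — merely makes explicit what the paper's terse proof glosses over (its own consideration of the case $\Beta(y)=\alpha+1$ shows it, too, is really working in $\No_{\alpha+2}$).
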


\begin{proof}
(1):
If $x \subset y$, then $x \Delta y = y \setminus x$, and
$\min(x \Delta y) \in y$.

(2):
The sign-expansion of $x^\sharp$ is related to the one of $x$ by exchanging
$+$ and $-$, that is, by reversing the order on the set $S =\{ -,0,+\}$,
and thus $\sharp$ reverses the order of the lexicographic order.

(3) follows again from the definition of $\leq$:
clearly, $x_- < x < x_+$; and  if
$x_- < y < x$, we distinguish 
the cases $\Beta(y) \leq \alpha$ and $\Beta(y) =\alpha + 1$.
Both cases can be ruled out by the definition of $\leq$, so the interval
$]x_-,x[ \cap \No_{\alpha + 1}$ is empty; similarly for $]x,x_+[$.

(4) 
Since $0_\Co = \{ \eset \}$, the definition of the order shows that $0_\Co \leq x$  iff
$\eset \in x$, iff $\{ 0 \} = 0_\Co \subset x$. 
Similarly, for every Conway ordinal $\alpha_\Co$, we have
$\alpha_\Co \leq y$ iff 
$\alpha_\Co = \{ 0,\ldots, \alpha \}  \subset y$. 

By Definition \ref{def:tippingpoint} of $\tip(x)$, the condition
$\tip(x)=0$ amounts to $0\in x$, $1 \notin x$.

(5)  
$x \in \No_\alpha$ implies $x \subset \alpha_\Co$; now use (1) to get
$x \leq \alpha_\Co$. 

For the other statement, apply (2). 
\end{proof}

\begin{theorem}
The bijection $\R \to \R_\Co$ from Theorem \ref{th:Berlekamp} is order-preserving.
\end{theorem}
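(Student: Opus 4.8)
The plan is to push everything through the lexicographic description of the order. By Theorem~\ref{th:order1} the total order $\leq$ on $\No$ (hence on $\R_\Co$) is the lexicographic order on sign-expansions with $- < 0 < +$, so it suffices to show that the Berlekamp bijection $r \mapsto x(r)$ of Theorem~\ref{th:Berlekamp} is strictly monotone, i.e. $r < r' \Rightarrow x(r) < x(r')$. Since $r \mapsto x(r)$ is a \emph{bijection} between the total orders $(\R,<)$ and $(\R_\Co,\leq)$, strict monotonicity in one direction automatically upgrades to an order isomorphism: if $r<r'$ could force $x(r)\geq x(r')$ we would contradict injectivity together with totality.

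First I would strip off the sign. The Berlekamp formula gives, for a number of negative sign ($0\notin x$), the value $r(x) = -\min(x) + f$ with $f\in[0,1)$, the fractional sum being strictly $<1$ precisely because co-infiniteness of $X$ forbids the "long ends'' normalized away in the algorithm; hence $r(x)<0$, while $r(0_\Co)=0$ and $r(x)>0$ for positive sign. Combined with Theorem~\ref{th:order}(4), which reads $0_\Co \leq x \Leftrightarrow 0\in x$, this shows that the sign of $r(x)$ matches the position of $x$ relative to $0_\Co$, so cross-sign comparisons are immediate and it remains to compare two numbers of the same sign. Using $r(x^\sharp)=-r(x)$, the fact that $\sharp$ is order-reversing on $\No$ (Theorem~\ref{th:order}(2)), and that negation is order-reversing on $\R$, the whole monotonicity statement is invariant under $(x,r)\mapsto(x^\sharp,-r)$; so I may assume both numbers have negative sign.

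For two negative Conway reals $x,y$ I would then read along the sign-expansion. With $n=\min(x)=\tip(x)$ the expansion begins with $n$ minus signs at positions $0,\dots,n-1$ and a plus at position $n$, and this leading block encodes the integer part $[x]=-n$. If $x$ and $y$ have different numbers of leading minuses, the first sign-difference falls inside the shorter block: at position $n_x<n_y$ one has $s_x=+$ but $s_y=-$, giving $y<x$ lexicographically, while the estimate $r(y) < -n_y+1 \le -n_x \le r(x)$ gives $r(y)<r(x)$ numerically. When the leading blocks have equal length $n$, the remaining task is to match the binary order of the fractional parts $\sum_{j\geq 1} b_j 2^{-j}$, where $b_j=1 \Leftrightarrow n+j\in x$, to the lexicographic order of the sign-expansion tails.

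The main obstacle is exactly this last matching, because the sign-expansion is \emph{not} literally the binary expansion: at the birthday $\Beta(x)$ the sign is $0$ although the corresponding digit is $1$ (the maximal element always lies in $x$), and beyond $\Beta(x)$ both sign and digit vanish. Thus a $0$ can mean either "the final $1$'' or "expansion terminated'', and the clean dictionary $+\leftrightarrow 1$, $-\leftrightarrow 0$ holds only strictly below the birthday. The heart of the proof is a short case analysis at the first position where the two tails differ, checking that in each configuration ($+$ vs.\ $-$, $+$ vs.\ $0$, $-$ vs.\ $0$, and the birthday cases) the lexicographic rule $- < 0 < +$ agrees with the comparison of the real fractional parts; here the normalization "no expansion ends in all ones'' (guaranteed by co-infiniteness of $X$) is precisely what excludes the degenerate ties. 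For long reals the birthday sits at $\omega$, so below $\omega$ the dictionary $+\leftrightarrow 1$, $-\leftrightarrow 0$ is unambiguous and the comparison is immediate, and the mixed short-versus-long case is covered by the same analysis. Once same-sign monotonicity is in place, the sign reduction of the second paragraph finishes the proof.
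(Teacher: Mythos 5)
Your proof is correct and follows essentially the same route as the paper's: reduce to numbers of negative sign via the two order-reversing involutions, dispose of the case of distinct minima by comparing integer parts, and settle the case of equal minima by locating the first discrepancy in the binary fractional expansions, with finiteness/co-infiniteness of $X$ supplying the strict inequality that excludes ties. The only real difference is that the paper applies the discriminant form of the order from Theorem \ref{th:order1} (namely $x<y$ iff $\min(x\Delta y)\in y$) directly to the underlying sets rather than passing through the lexicographic order on sign-expansions, so the first non-cancelling term of $r(y)-r(x)$ sits exactly at the discriminant $\min(x\Delta y)$ and the birthday case analysis you need ($+$ versus $0$ and $-$ versus $0$) never arises.
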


\begin{proof}
Since both $\sharp$ and $r\mapsto -r$ are order-reversing,
it suffices to prove that the restriction to negative reals is order-preserving.
Let $x , y$ be Conway reals, and $X = x \cap \N$, $ Y = y \cap \N$.
Assume $x < y < 0$, so $\min(x \Delta y) \in y$.
Then $\min(x) \geq \min(y)$ (for $\min(x) < \min(y)$ implies 
$\min(x \Delta y) = \min(x) \in x$).
If $\min(x) < \min(y)$, then certainly $r(x) < r(y)$, since the fractional part of the
sum belongs to the real interval $[0,1[$.
Assume that $\min(x) = \min(y) = M$ and consider the difference 
$$
r(y) - r(x) =  \sum_{m \in Y, m > M } \, (  \frac{1}{2} )^{m- M} -
 \sum_{n \in X, n > M } \, (  \frac{1}{2} )^{n- M} .
$$
It is positive if, and only if, the first term which does not cancel out belongs 
to an index $m \in Y$, and this index is precisely $\min(x \Delta y)$; thus
$r(y) > r(x)$ iff $y > x$. 
\end{proof}

\subsubsection{Infinite and infinitesimal numbers}

Now we know that $\No$ is totally ordered and that the continuum $\R \cong \R_\Co$ is an ordered
subset of $\No$, it is obvious that $\No$ must contain many "infinite" and many "infinitesimal"
numbers.

\begin{definition}\label{def:infinitesimal}
A number $x$ is said

-- positive  infinite, $x\gg 1$,  if $\forall n \in \N : x >n$,

-- negative infinite, $x\ll -1$, if $\forall n\in \N : x< - n$,

-- infinitesimal, $x \approx 0$,  if $\forall n\in \N$ : $\{ 1,2,\ldots,n \} < x < \{ 0 , n \}$.

\nin
We denote by $\bI$ the (proper) class of all infinitesimal numbers.
\end{definition}

\nin
Directly from Theorem \ref{th:order}, it follows that $x$

--  is negative infinite iff $x \cap \N = \eset$,

--  is positive infinite iff $x \cap \N = \N$,

-- infinitesimal  and positive iff  $x \cap \N = \{ 0 \}$, 

-- infinitesimal and negative iff $x \cap \N = \N^*$, where
$\N^* = \N \setminus \{ 0 \}$.

\begin{example}
Consider numbers  $x = X \cup \{ \omega \}$ where
$X = x \cap \N \subset \N$.

-- For $X = \eset$, we get $x = - \omega_\Co$,

-- for $X =\N$, we get $x =\omega_\Co$,

-- for $X = \{ 0 \}$, we get $\eps := \{ 0 , \omega \} \approx 0_\Co$, with $\eps > 0$,

-- for $X = \N^*$, we get $x = \eps^\sharp = - \eps \approx 0_\Co$,

-- for $X = \{ 0,1\}$, we get $x = \{ 0,1,\omega \}$, which will be $1_\Co + \eps$,

-- for $X = \{ 0,2,3,4,\ldots \}$, we get a number which will be $1_\Co - \eps$,

-- for $X = \{ 2,3,4,\ldots \}$, we get a number which will be $-1_\Co - \eps$,

-- for $X = \{ 1 \}$, we get a number which will be $-1_\Co + \eps$.

\nin
In general, if $X$ is finite (so is a short number), then $x = X +\eps$, and 
if $X$ is cofinite in $\N$, then $x = X^\sharp - \eps$.
Moreover, it will turn out that $\eps = \frac{1}{\omega}$.
\end{example}

\section{Descendance:  the binary tree of numbers}\label{sec:number-tree}

Besides the total order $\leq$, there exist several partial orders on $\No$,
for instance,
inclusion of numbers, as sets, $x \subset y$, or  "older than". 
The most important partial order is "descendance" $x \preceq y$; some authors also use the
terms "simpler than", meaning that the sign expansion of $x$ is an "initial segment"
(\cite{Go}) of the one of $y$. 
Its Hasse diagram is a {\em complete binary tree}.

\begin{definition}[Descendance]\label{def:descendant}
We say that a number $y$ is a {\em descendant} of $x$, and $x$ an {\em ancestor}
of $y$, and we write $x \preceq y$, if
$$
\Beta(x) \leq \Beta(y), \mbox{ and } 
\llbrack  0, \Beta(x) \llbrack  \cap \, x = \llbrack 0,\Beta(x) \llbrack  \cap \, y,
$$
This  means that  "below the birthday of $x$, both numbers coincide". 
\end{definition}

\begin{definition}[Initial inclusion]
 We define {\em initial  inclusion}  of a number $x$ in $y$, and 
 write  $x \sqsubseteq y$, iff $x$ coincides with $y$ up to, and including,
 $\Beta(x)$:
 $$
 x = \llbrack 0, \Beta(x) \rrbrack \cap y .
 $$
This implies that $x \subset y$, and $\Beta(x) \leq \Beta(y)$.
\end{definition}

\begin{theorem}[Elementary properties of descendance]\label{la:immediate}\label{th:immediate}
$ $
\begin{enumerate}
\item
Both relations $\preceq$ and $\sqsubseteq$ are partial orders.

Moreover, $x \sqsubseteq y$ if, and only if, $[ x \subset y$ and $x \preceq y ]$.
\item
If   $y \prec z$, then:  $z$ is an {\em immediate} successor of $y$ for $\prec$
  iff $\Beta(z)=\Beta(y)+1$. 
 
Every number $x$ has exactly two immediate successors ("children") for $\prec$, namely
  its {\em left and right children} $x_\pm$, defined in Def.\
\ref{def:children}.
\item
A number $x$ is a successor number (Def.\ \ref{def:stages}) if, and only if,

it is of the form $x = y_+$ or  $x = y_-$ for some number $y$, i.e., iff

  it has an immediate predecessor ("parent")  for $\preceq$.
  \item
  if $x \sqsubset y$, then $y$ is an immediate successor of $x$ for initial inclusion, if and only if :
 $x \not= y$ and $y = x \cup \{ \Beta(y)\}$.
 In particular, every number has infinitely many immediate successors for initial inclusion.
\end{enumerate}
\end{theorem}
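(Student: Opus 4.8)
The plan is to reduce every assertion to elementary manipulations of the \emph{restriction} operation $x \mapsto \llbrack 0,\gamma\llbrack \cap\, x$, and to exploit one recurring construction: given a number $z$ and an ordinal $\gamma$, the \emph{truncation} $t_\gamma(z) := (\llbrack 0,\gamma\llbrack \cap\, z) \cup \{\gamma\}$ is again a number, with birthday $\gamma$, coinciding with $z$ strictly below $\gamma$. I would first dispose of item~(1) as a routine check of the order axioms: reflexivity is immediate for both relations; antisymmetry for $\preceq$ follows because $x \preceq y \preceq x$ forces $\Beta(x)=\Beta(y)$, and then the coincidence below the common birthday together with $\Beta(x)=\max(x)=\max(y)$ yields $x=y$; antisymmetry for $\sqsubseteq$ is immediate from $x\subseteq y\subseteq x$; transitivity in both cases follows by intersecting the defining equalities with the smaller interval $\llbrack 0,\Beta(x)\rrbrack$ (using $\Beta(x)\le\Beta(y)$). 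The ``moreover'' equivalence $x\sqsubseteq y \iff [x\subseteq y \text{ and } x\preceq y]$ I would get by restricting the defining equation $x=\llbrack 0,\Beta(x)\rrbrack\cap y$ to $\llbrack 0,\Beta(x)\llbrack$ for one direction, and for the converse by checking the two cases $\alpha=\Beta(x)$ and $\alpha<\Beta(x)$ for an element $\alpha\in\llbrack 0,\Beta(x)\rrbrack\cap y$.

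For item~(2) I would first prove the birthday criterion. The backward direction is trivial: if $\Beta(z)=\Beta(y)+1$, no ordinal lies strictly between $\Beta(y)$ and $\Beta(z)$, so any $w$ with $y\prec w\prec z$ would have an impossible birthday. The forward direction is where the only genuine construction occurs: assuming $y\prec z$ with $\Beta(z)>\Beta(y)+1$, the truncation $w:=(\llbrack 0,\Beta(y)\rrbrack\cap z)\cup\{\Beta(y)+1\}$ is a number of birthday $\Beta(y)+1$ satisfying $y\prec w\prec z$ (here one uses $y\preceq z$ to replace $z$ by $y$ below $\Beta(y)$), contradicting immediacy; hence $\Beta(z)=\Beta(y)+1$. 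Granting the criterion, the immediate successors of $x$ are exactly the numbers $z$ with $x\preceq z$ and $\Beta(z)=\Beta(x)+1$; such a $z$ must coincide with $x$ on $\llbrack 0,\Beta(x)\llbrack$ and contain $\Beta(x)+1$ as maximum, leaving a single binary choice, namely whether $\Beta(x)\in z$. The two choices produce precisely $x_+$ and $x_-$, giving ``exactly two children''.

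Items~(3) and~(4) I would then deduce. For~(3): by~(2), ``$x=y_+$ or $x=y_-$'' is equivalent to ``$x$ has an immediate predecessor for $\preceq$'', since a parent is exactly a number of which $x$ is an immediate successor. That $x=y_\pm$ forces $\Beta(x)=\Beta(y)+1$ to be a successor ordinal, hence $x$ a successor number; conversely, if $\Beta(x)=\gamma+1$, the truncation $y:=t_\gamma(x)=(\llbrack 0,\gamma\llbrack\cap x)\cup\{\gamma\}$ is a number of birthday $\gamma$ with $x=y_+$ (if $\gamma\in x$) or $x=y_-$ (if $\gamma\notin x$). For~(4), assuming $x\sqsubset y$: the forward direction sets $\beta:=\min\{\alpha\in y \mid \alpha>\Beta(x)\}$ (which exists by well-ordering, as $\Beta(y)$ lies in this set) and observes that $w:=x\cup\{\beta\}=\llbrack 0,\beta\rrbrack\cap y$ satisfies $x\sqsubset w\sqsubseteq y$; immediacy forces $w=y$, i.e.\ $y=x\cup\{\Beta(y)\}$. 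The backward direction, assuming $y=x\cup\{\Beta(y)\}$ with $\Beta(y)>\Beta(x)$, checks $x\sqsubseteq y$ directly and rules out any strict intermediate $w$ by the trichotomy $\Beta(x)<\Beta(w)\le\Beta(y)$: $\Beta(w)<\Beta(y)$ would force $w=x$, while $\Beta(w)=\Beta(y)$ forces $w=y$. Finally, letting $\gamma$ range over the infinitely many ordinals $>\Beta(x)$, the numbers $x\cup\{\gamma\}$ are distinct immediate successors, proving the last clause.

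The main obstacle, such as it is, is the single intermediate-number construction in the forward direction of~(2): one must verify that the truncated set is genuinely a number (has a maximum), that it is strictly between $y$ and $z$ for $\prec$ (distinct from both endpoints), and that the coincidence conditions hold after using $y\preceq z$ to align $z$ with $y$ below $\Beta(y)$. Everything else is bookkeeping with half-open versus closed ordinal intervals, where the only point demanding care is tracking whether the endpoint $\Beta(\cdot)$ is included, since that is exactly the degree of freedom producing the two children and the branching of $\sqsubseteq$.
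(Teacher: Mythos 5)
Your proposal is correct and takes essentially the same route as the paper's proof: your key interpolation step in (2) — the truncation $w=(\llbrack 0,\Beta(y)\rrbrack\cap z)\cup\{\Beta(y)+1\}$ — is exactly the paper's intermediate number $x_+$ or $x_-$ (the paper phrases it as a case distinction on whether $\Beta(y)\in z$), and your parent construction in (3) is literally the paper's $y=(x\setminus\{\Beta(x)\})\cup\{\alpha\}$. Items (1) and (4), which the paper dismisses as immediate from the definitions, you simply fill in with the routine details.
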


\begin{proof}
1. is immediate from the definitions.

2. 
Assume $y$  is an immediate successor of $x$ for $\prec$.
If $\Beta(y) = \Beta(x)$, then clearly $x=y$, so this is impossible.
If $\Beta(y) = \Beta(x)+1$,
then $y$  is determined up to choice of the element $\Beta(x)$, leading
to the $2$ cases $y=x_+$ and $y=x_-$, which are
 immediate successors.
If $\Beta(y) \geq \Beta(x) + 2$,
then distinguish two cases:
 either, $\Beta(x) \in y$, or $\Beta(x) \notin y$.
 In the first case, 
 $x \preceq x_+ \preceq y$, and in the second,
 $x \preceq x_- \preceq y$, so $y$ is not an {\em immediate} successor.
 Therefore we must have $\Beta(y) = \Beta(x)+1$, and we end up with the only
 possibilities $y = x_+$ or $y = x_-$. 

3. 
If $x = y_\pm$, then $\Beta(x) = \Beta(y)+1$ is a successor ordinal.
Conversely, if $\Beta (x) = \alpha + 1$, then let
$y = (x \setminus \{ \Beta(x) \}) \cup \{ \alpha \}$; then
$x= y_+$ if $\alpha \in x$ and $x=y_-$ if $\alpha \notin x$, so $x$ has a "parent"  for $\preceq$. 

4. 
Immediate from definitions.
\end{proof}

The {\em binary tree of numbers} is the Hasse diagram of the partial order
$\preceq$: it starts  with top vertex $0_\Co = \{ 0_\vn \}$,
linking each vertex $x$ with vertices $x_-$ and $x_+$, and so on:  
for $\mu,\sigma \in \{ \pm\}$,
define $x_{ \mu \sigma} := (x_\mu)_\sigma$, and present the part of the number tree descending from $x$ in the form
\begin{equation}\label{eqn:signexpansion}
 \begin{matrix}
 & & & x & & & \\
 & & \swarrow & & \searrow & & \\
 &  x_-    & & & &  x_+   & \\
 \swarrow & &  \searrow &  & \swarrow & & \searrow 
 \\
 x_{--}    & & x_{-+} & & x_{+-}  & &  x_{++}
 \end{matrix}
\end{equation}
and so on for sequences of $+$'s and $-$'s. E.g., for $\No_3$, 
$$
{ } \qquad
\begin{matrix}
 & & & \{ 0\} & & & \\
 & & \swarrow & & \searrow & & \\
 & \{ 1\}  & & & &  \{ 0,1\}  & \\
 \swarrow & &  \searrow &  & \swarrow & & \searrow \phantom{\qquad}
 \\
 \{ 2 \} & & \{1,2\} & & \{0,2\} & & \{0,1,2\} 
 \end{matrix}
 $$
  For $\No_4$ see the figures given in the Introduction,
 and the internet for \href{https://upload.wikimedia.org/wikipedia/commons/4/49/Surreal_number_tree.svg}{figures of infinite stages}. 
 Note, however, that numbers whose birthday is a limit ordinal never have immediate
 predecessors.
 The horizontal stractification is given by birthday;
 the boundary $\partial(\No_\alpha)$ of a stage is a horizontal line.
 The  partial order
 $\sqsubset$ amounts to moving in the tree {\em to the right}:
 in fact, $x \sqsubset y$ holds if, and only if,
 $x\preceq y$ and $x_+ \preceq y$. 
 For $\preceq$,
 it is obvious that $0_\Co$ is a "common ancestor" of all numbers, and it is the
"oldest ancestor" of any two numbers $x$ and $y$.
There is also a youngest one, which is a meet $x\land y$ of $x$ and $y$ in the sense of lattice
theory. This may seem "obvious" when looking at figures of the number tree, but of course
needs a rigorous proof:
recall Definition \ref{def:discriminant}.

 \begin{definition}\label{def:yca}
For  two numbers $x \not= y$, let
 $
 \delta := \delta (x,y) := \min (x\Delta y).
 $
 It follows that $x \cap y \cap \llbrack 0,\delta  \llbrack = x \cap  \llbrack 0,\delta  \llbrack
 = y \cap \llbrack 0,\delta \llbrack$.
 We define a number 
 $$
 \yca(x,y)  :=  (x \cap y \cap \llbrack 0,\delta \llbrack) \cup \{ \delta \}  .
$$
\end{definition}

 \begin{theorem}\label{th:yca}
 Any two numbers $x,y$ have a unique {\em youngest common ancestor} 
 $u$,
 that is, there is a unique number $u$ whose birthday is maximal, subject to the condition
 that $u \preceq y$ and $u \preceq x$.
 It is given by $u= \yca(x,y)$, and it satisfies 
 $$
 \mbox{ if } x \leq y, \mbox{ then } 
 x \leq \yca(x,y) \leq y .
 $$
 \end{theorem}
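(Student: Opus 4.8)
The plan is to translate the entire statement into the language of sign expansions, where the relation $\preceq$ becomes \emph{initial segment}. By Definition \ref{def:descendant}, $u \preceq x$ asserts exactly that $\Beta(u) \le \Beta(x)$ and that $u$ and $x$ agree at every ordinal below $\Beta(u)$; equivalently, $s_u$ restricted to $\llbrack 0, \Beta(u) \llbrack$ is an initial segment of $s_x$. Hence a \emph{common} ancestor of $x$ and $y$ is precisely a number whose sign expansion is a common initial segment of $s_x$ and $s_y$, and the whole problem reduces to exhibiting the longest such segment and reading it back as a set.

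First I would isolate the branch point $\delta^\ast := \min \{ \alpha \mid s_x(\alpha) \ne s_y(\alpha) \}$, the first place the two expansions disagree; it exists because the sign expansion determines the number (its birthday is the least zero, its elements are the pluses below that), so $x \ne y$ forces $s_x \ne s_y$. A short argument gives $\delta^\ast \le \min(\Beta(x), \Beta(y))$: if the expansions agreed below the smaller birthday, say $\Beta(x) \le \Beta(y)$, then $x \preceq y$, and since $x \ne y$ the disagreement is forced at $\delta^\ast = \Beta(x)$, where $s_x = 0$ but $s_y \ne 0$. I would then check directly that $u := (x \cap \llbrack 0, \delta^\ast \llbrack) \cup \{ \delta^\ast \}$ has $\Beta(u) = \delta^\ast$, agrees with both $x$ and $y$ below $\delta^\ast$, and has birthday $\le \Beta(x), \Beta(y)$; hence $u \preceq x$ and $u \preceq y$, so $u$ is a common ancestor.

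For maximality and uniqueness, I would note that any common ancestor $v$ satisfies $v = (x \cap \llbrack 0, \Beta(v) \llbrack) \cup \{ \Beta(v) \}$ with $x$ and $y$ agreeing below $\Beta(v)$, i.e.\ $\Beta(v) \le \delta^\ast$; conversely every $\gamma \le \delta^\ast$ yields such an ancestor, and these form a $\preceq$-chain whose unique member of maximal birthday is $\gamma = \delta^\ast$, namely $u$. For the betweenness claim, assuming $x \ne y$ and $x \le y$, I would compare sign expansions at $\delta^\ast$: since $x \le y$ the first difference satisfies $s_x(\delta^\ast) < s_y(\delta^\ast)$ in the order $- < 0 < +$, which excludes the value $+$ for $s_x(\delta^\ast)$ and $-$ for $s_y(\delta^\ast)$, leaving $s_x(\delta^\ast) \in \{ -, 0 \}$ and $s_y(\delta^\ast) \in \{ 0, + \}$. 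As $s_u(\delta^\ast) = 0$ and $u$ agrees with both below $\delta^\ast$, lexicographic comparison yields $x \le u \le y$, with equality on the left exactly when $\delta^\ast = \Beta(x)$ (i.e.\ $x \preceq y$) and on the right when $\delta^\ast = \Beta(y)$.

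The main obstacle is reconciling this branch point $\delta^\ast$ with the discriminant $\delta = \min(x \Delta y)$ used in Definition \ref{def:yca}. Below $\min(\Beta(x), \Beta(y))$ one has $\alpha \in x \Delta y \iff s_x(\alpha) \ne s_y(\alpha)$, so the two notions coincide in the generic situation, matching the cases $(-,0)$ and $(-,+)$ in the proof of Theorem \ref{th:order1}. They can diverge only at a birthday, in the case $(0,+)$: when $x \preceq y$ strictly and $\Beta(x) \in y$ (so $s_x(\Beta(x)) = 0$ while $s_y(\Beta(x)) = +$), the ordinal $\Beta(x)$ lies in \emph{both} sets and is invisible to the symmetric difference, so $\min(x \Delta y)$ overshoots the true branch point $\Beta(x)$. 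The careful step is therefore to verify that $\delta = \delta^\ast$ whenever neither number is an ancestor of the other, and to treat the ancestor configuration separately, where the youngest common ancestor is simply that ancestor itself; once $\delta$ is pinned to the branch point, the construction of $u$ and all verifications above go through unchanged.
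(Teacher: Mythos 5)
Your proposal is correct, and its core construction --- truncate both numbers at the first disagreement and adjoin that ordinal as the new birthday --- is the same as the paper's. The genuinely different (and valuable) ingredient is your refusal to identify the sign-expansion branch point $\delta^\ast = \min\{\alpha \mid s_x(\alpha) \neq s_y(\alpha)\}$ with the discriminant $\delta = \min(x \Delta y)$ of Definition \ref{def:yca}. This is not pedantry: it exposes an actual error in the theorem as stated, which the paper's own proof conceals behind the words ``Clearly $u \preceq y$ and $u \preceq x$.'' In the configuration you isolate, $x \sqsubset y$ strictly (i.e.\ $x \preceq y$ and $\Beta(x) \in y$), one has $x \subset y$ and every element of $y \setminus x$ exceeds $\Beta(x)$, so $\delta = \min(y \setminus x) > \Beta(x)$ and the paper's formula gives $\yca(x,y) = x \cup \{\delta\}$, a number of birthday $\delta > \Beta(x)$, which is therefore \emph{not} an ancestor of $x$. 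Concretely, for $x = \{0\} = 0_\Co$ and $y = \{0,1\} = 1_\Co$ the formula yields $\yca(x,y) = \{0,1\} = y$, whereas the only common ancestor of $x$ and $y$ (hence the youngest) is $x$ itself. Thus the existence--uniqueness part of Theorem \ref{th:yca} is true, but the identification $u = \yca(x,y)$ fails in the initial-inclusion case (only the betweenness claim survives there), and your corrected branch point $\delta^\ast = \min(\delta, \Beta(x), \Beta(y))$ is what the definition should have used. Your remaining verifications are sound: the common ancestors are exactly the truncations $[x]_\gamma$ with $\gamma \leq \delta^\ast$, they form a $\preceq$-chain whose unique top element is $u$, and $x \leq u \leq y$ follows from the lexicographic description of the order (Theorem \ref{th:order1}) exactly as you argue.

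Two remarks to round this off. First, the defect is confined to the case you flag: when neither $x \sqsubseteq y$ nor $y \sqsubseteq x$, one checks (as you outline, via the case analysis $(-,0)$, $(0,+)$, $(-,+)$ from the proof of Theorem \ref{th:order1}) that $\delta^\ast = \delta$, and then the paper's four-line argument is perfectly adequate. Second, the paper's later uses of the theorem are not damaged by the error: in Lemma \ref{la:convex} the two numbers compared have equal birthdays, which rules out strict initial inclusion, and in Theorem \ref{th:FundamentalExistence} the desired conclusion $c \preceq y$ holds trivially in the very configuration where the formula breaks down. So the trade-off is clear: the paper buys brevity at the price of a false edge case; your extra case distinction is exactly what is needed to make Definition \ref{def:yca} and Theorem \ref{th:yca} correct for all pairs $x \neq y$.
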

 
 \begin{proof} 
 Let $u =\yca(x,y)$. Clearly
  $u \preceq y$ and $u \preceq x$.
And there cannot be another such number with greater birthday,
since $\alpha > \delta$ implies that $\alpha$ cannot belong to both  $x$ and $y$. 

\nin
Let $x<y$. Then $\delta \notin x$, whence
$x < \yca(x,y)$. And $\delta \in y$, so $\yca(x,y) < y$. 
\end{proof}
  
 \subsubsection{Links between tree structure and  total order}
 
The last statement of Theorem \ref{th:yca} establishes an important link between
the partiel order $\preceq$ and the total order $\leq$. It has several consequences.
Recall that a subset or subclass $I$ of $\No$ is {\em convex} if
$x,z \in I, x \leq y \leq z$ implies $y \in I$.

\begin{lemma}\label{la:convex}
Every non-empty convex  subclass $I$ of $\No$ contains a unique element with minimal birthday.
\end{lemma}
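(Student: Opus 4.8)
The plan is to treat existence and uniqueness separately, with essentially all the work concentrated in uniqueness. For existence, I would observe that the image $\{ \Beta(x) \mid x \in I \}$ is a non-empty subclass of $\On$. Every non-empty class of ordinals has a least element: picking any ordinal $\alpha$ in the class and intersecting with $\alpha+1$ produces a non-empty \emph{set} of ordinals, to which the well-ordering property recalled in Section \ref{sec:vNo} applies, and its minimum is the minimum of the whole class. Call this least birthday $\beta$; being a minimum, it is attained by some $x \in I$, which already exhibits an element of minimal birthday.

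For uniqueness, I would argue by contradiction: suppose $x, y \in I$ both satisfy $\Beta(x) = \Beta(y) = \beta$ with $x \neq y$. Since $\leq$ is total (Theorem \ref{th:order1}), I may assume $x \leq y$. The key idea is that convexity lets us smuggle into $I$ a number that is strictly \emph{older} than $\beta$, contradicting minimality. The natural candidate is the youngest common ancestor $u := \yca(x,y)$ of Definition \ref{def:yca}. By the last assertion of Theorem \ref{th:yca}, $x \leq u \leq y$; since $x, y \in I$ and $I$ is convex, this forces $u \in I$.

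The crux is then to show $\Beta(u) < \beta$. Writing $\delta := \delta(x,y) = \min(x \Delta y)$, the defining formula $u = (x \cap y \cap \llbrack 0, \delta \llbrack) \cup \{ \delta \}$ has maximum $\delta$, so $\Beta(u) = \delta$. To see $\delta < \beta$, note that $\beta = \max(x) = \max(y)$ lies in both $x$ and $y$, hence $\beta \notin x \Delta y$; as every element of $x \Delta y$ is at most $\beta$, we conclude $\delta = \min(x \Delta y) < \beta$. Thus $u \in I$ has birthday $\delta < \beta$, contradicting the minimality of $\beta$. Hence $x = y$, giving uniqueness.

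The main obstacle is the uniqueness step, and within it the single decisive observation that the youngest common ancestor of two distinct numbers is strictly older than both; once Theorem \ref{th:yca} drags $\yca(x,y)$ into $I$ via convexity, the birthday computation is routine. I expect no genuine difficulty from the class (rather than set) setting, since all minimality arguments reduce, through the birthday map, to the well-ordering of $\On$.
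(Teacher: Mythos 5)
Your proof is correct and follows essentially the same route as the paper: existence by reducing the class of birthdays to a set and invoking the well-ordering of $\On$, and uniqueness by pulling $\yca(x,y)$ into $I$ via convexity and the last assertion of Theorem \ref{th:yca}, then contradicting minimality of the birthday. Your explicit computation that $\Beta(\yca(x,y)) = \delta(x,y) < \beta$ (because the common birthday $\beta$ lies in $x \cap y$, hence outside $x \Delta y$) just spells out a step the paper asserts without detail.
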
 

\begin{proof}
Existence: since $I$ is not empty, it contains some element with some birthday $\alpha$.
By well-order, the set
$\{ \Beta(x) \mid x \in I , \Beta(x) \leq \alpha \} $ has a minimal element $\mu$.
Uniqueness: 
Let $x,z \in I$ elements having this minimal element $\mu$ as birthday.
Theorem \ref{th:yca}
implies that $y:=\yca(x,z)$ belongs to $I$, since $I$ is convex.
But $\Beta(y) < \Beta(x)$ if $y \not= x$, so it follows that $x=y=z$.
\end{proof}

\begin{definition}\label{def:truncation}
For all numbers $x$ and ordinals $\alpha$, we define the
{\em $\alpha$-truncation of $x$} to be the number 
$$
[x]_\alpha := 
 \Big\{ 
\begin{matrix}
x & \mbox{ if } \Beta(x) \leq \alpha,
\\
(x \cap \,  \llbrack 0, \alpha \llbrack)\cup \{ \alpha \}  & \mbox{ if }  \Beta(x) > \alpha. 
\end{matrix}
$$
Note that, if $\Beta(x)>\alpha$ and $\alpha \in x$, then 
$[x]_\alpha  = x \cap \,  \llbrack 0, \alpha \rrbrack$.
In particular,
$$
[x]_{\tip(x)} = [x]
$$
is the omnific integer part of $x$ (Def.\ \ref{def:integer}).
\end{definition}

\begin{lemma}\label{la:la}
For two numbers $y,z$ we have
$[z]_{\Beta(y)} = y$ if, and only if,
$y \preceq z$.
\end{lemma}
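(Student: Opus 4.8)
The plan is to unwind both definitions with $\alpha := \Beta(y)$ and compare them directly. Since $\alpha = \max(y)$, I first record the decomposition $y = (y \cap \llbrack 0, \alpha \llbrack) \cup \{ \alpha \}$, which isolates the ``boundary'' element $\alpha$ from the elements genuinely below it. The definition of $[z]_\alpha$ then splits according to whether $\Beta(z) \leq \alpha$ or $\Beta(z) > \alpha$, and the comparison with the two clauses of descendance (namely $\Beta(y) \leq \Beta(z)$ together with $\llbrack 0, \Beta(y) \llbrack \cap \, y = \llbrack 0, \Beta(y) \llbrack \cap \, z$) falls out in each case.

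For the direction $y \preceq z \Rightarrow [z]_{\Beta(y)} = y$: from $y \preceq z$ I have $\alpha = \Beta(y) \leq \Beta(z)$. If $\Beta(z) = \alpha$, then $[z]_\alpha = z$ by the first clause of the truncation, and I must show $z = y$; both have maximal element $\alpha$, so $\alpha$ lies in $y$ and in $z$ while all remaining elements lie in $\llbrack 0, \alpha \llbrack$, where descendance forces $y$ and $z$ to agree, hence $y = z$. If $\Beta(z) > \alpha$, then $[z]_\alpha = (z \cap \llbrack 0, \alpha \llbrack) \cup \{ \alpha \}$, and descendance gives $z \cap \llbrack 0, \alpha \llbrack = y \cap \llbrack 0, \alpha \llbrack$, so this expression equals $(y \cap \llbrack 0, \alpha \llbrack) \cup \{ \alpha \} = y$ by the decomposition above.

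For the converse $[z]_{\Beta(y)} = y \Rightarrow y \preceq z$: I again split on the truncation clause that produced the equality. If $\Beta(z) \leq \alpha$, then $[z]_\alpha = z = y$, so $z = y$ and $y \preceq z$ holds by reflexivity of $\preceq$ (Theorem \ref{th:immediate}, part 1). If $\Beta(z) > \alpha$, then $y = (z \cap \llbrack 0, \alpha \llbrack) \cup \{ \alpha \}$; intersecting both sides with $\llbrack 0, \alpha \llbrack$ (and noting $\alpha \notin \llbrack 0, \alpha \llbrack$) yields $y \cap \llbrack 0, \alpha \llbrack = z \cap \llbrack 0, \alpha \llbrack$, which is precisely the second descendance condition, while $\Beta(y) = \alpha < \Beta(z)$ gives the first; hence $y \preceq z$.

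There is no real obstacle here, as the argument is pure definition-chasing. The only point that demands care is the bookkeeping around the maximal (boundary) element $\alpha = \Beta(y)$: one must consistently separate it from the elements strictly below it, and in the degenerate case $\Beta(z) = \Beta(y)$ use the descendance condition on $\llbrack 0, \alpha \llbrack$ to upgrade ``agreement below $\alpha$'' to genuine equality $y = z$. Keeping track of whether $\alpha$ lands in the open interval $\llbrack 0, \alpha \llbrack$ or only in the singleton $\{ \alpha \}$ is what makes each case close cleanly.
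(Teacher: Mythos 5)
Your proof is correct and is essentially the paper's own argument, just written out in full: the paper's one-line proof ("both conditions imply $\Beta(y)\leq\Beta(z)$, and then express that $y$ is an initial segment of $z$") is exactly your definition-chasing, with your two cases ($\Beta(z)=\Beta(y)$ versus $\Beta(z)>\Beta(y)$) compressed into that single sentence. Your explicit handling of the boundary element $\alpha=\Beta(y)$ and of the degenerate case $z=y$ is careful and accurate, so there is nothing to fix.
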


\begin{proof}
Both conditions imply that $\Beta(y)\leq \Beta(z)$, and then
express that $y$ is an initial segment of $z$.
\end{proof}

\begin{theorem}[Monotonicity of truncation]\label{th:truncation}
If 
$x \leq y$, then $[x]_\alpha \leq  [y]_\alpha$.

As a consequence, for all numbers $y$, the class $I$ of all $x$ with
$y \prec x$, is convex.
\end{theorem}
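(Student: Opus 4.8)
The plan is to reduce everything to the lexicographic characterization of $\leq$ established in Theorem~\ref{th:order1}, exploiting the fact that truncation acts in an especially transparent way on sign-expansions. Unwinding Definition~\ref{def:truncation}, the first thing I would record is that for every number $x$ and every ordinal $\alpha$,
\[
s_{[x]_\alpha}(\beta) = s_x(\beta) \ \ (\beta < \alpha), \qquad s_{[x]_\alpha}(\beta) = 0 \ \ (\beta \geq \alpha);
\]
in words, $\alpha$-truncation keeps the sign-expansion unchanged below $\alpha$ and replaces every entry from position $\alpha$ onward by $0$. Both branches of Definition~\ref{def:truncation} give this at once: when $\Beta(x)\le\alpha$ one has $[x]_\alpha=x$ and $s_x(\beta)=0$ already for $\beta\ge\alpha\ge\Beta(x)$, while when $\Beta(x)>\alpha$ the new birthday is exactly $\alpha$, so the entries below $\alpha$ survive and $\alpha$ itself becomes a $0$. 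This observation is the heart of the matter; once it is in place the comparison becomes purely combinatorial.

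For the monotonicity statement I would proceed as follows. If $x=y$ the claim is trivial, so assume $x<y$ and let $\gamma$ be the least ordinal at which $s_x$ and $s_y$ differ; by Theorem~\ref{th:order1} we then have $s_x(\gamma)<s_y(\gamma)$ in the ordering $-<0<+$, and $s_x,s_y$ agree on all $\beta<\gamma$. Now split according to the position of $\alpha$. If $\alpha\le\gamma$, then $s_x$ and $s_y$ agree on every $\beta<\alpha$, and both truncations are $0$ from $\alpha$ on, so $[x]_\alpha=[y]_\alpha$. If $\alpha>\gamma$, then the two truncated expansions still agree below $\gamma$ and retain the strict inequality $s_x(\gamma)<s_y(\gamma)$ at $\gamma<\alpha$; hence $\gamma$ is again their first point of disagreement and $[x]_\alpha<[y]_\alpha$. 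In either case $[x]_\alpha\le[y]_\alpha$, as required.

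For the convexity consequence I would combine monotonicity with Lemma~\ref{la:la}. Writing $\beta_0:=\Beta(y)$, that lemma characterizes descendance by $y\preceq w \iff [w]_{\beta_0}=y$. Thus, given two descendants $x,z$ of $y$ and any $w$ with $x\le w\le z$, applying the $\beta_0$-truncation and monotonicity gives
\[
y = [x]_{\beta_0} \leq [w]_{\beta_0} \leq [z]_{\beta_0} = y ,
\]
so $[w]_{\beta_0}=y$ and hence $y\preceq w$ again by Lemma~\ref{la:la}; this shows that the class of descendants of $y$ is convex. The only point requiring genuine care — and the main obstacle to stating the conclusion naively — is the behaviour at the endpoint $y$ itself: a number has descendants on \emph{both} sides of it in the total order (its left child lies below it, its right child above), so $y$ can sit strictly between two of its own descendants. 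The sandwich above therefore delivers exactly the nonstrict relation $y\preceq w$, and one must be attentive to whether $y$ is to be counted among the $x$ in question; apart from this boundary subtlety the argument is entirely routine, all the content residing in the clean description of truncation on sign-expansions.
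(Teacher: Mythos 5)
Your proof is correct, and while the convexity half coincides with the paper's argument, the monotonicity half takes a genuinely different (and more careful) route. The paper argues via the discriminant: from $x<y$ it asserts that the discriminant of the truncations again lies in $[y]_\alpha$, "whence $[x]_\alpha<[y]_\alpha$" — a one-liner that silently assumes the truncations are distinct. Your description of truncation on sign-expansions (keep everything below $\alpha$, zero out the rest), combined with the lexicographic characterization of Theorem \ref{th:order1}, forces the case split $\alpha\le\gamma$ versus $\alpha>\gamma$ into the open: in the first case the truncations coincide, and only in the second does the strict inequality survive. This buys actual correctness where the paper's proof overstates: for $x=\{1\}$, $y=\{0\}$, $\alpha=0$ one has $x<y$ but $[x]_0=[y]_0=\{0\}$, so the strict conclusion claimed in the paper's proof fails, while the theorem's $\leq$ (and your argument) is fine. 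The convexity part of your proposal is the paper's proof verbatim: Lemma \ref{la:la} plus the sandwich $y=[x]_{\Beta(y)}\le[w]_{\Beta(y)}\le[z]_{\Beta(y)}=y$. Your closing caution about the endpoint is also well placed, and it points at a defect in the statement rather than in your argument: with the strict relation $y\prec x$, as the theorem is literally phrased, the class $I$ is \emph{not} convex — the children $y_-$ and $y_+$ both lie in $I$ and satisfy $y_-<y<y_+$, yet $y\notin I$ — whereas what both you and the paper actually establish is convexity of the class $\{x\mid y\preceq x\}$.
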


\begin{proof}
Let $x < y$, i.e., $\delta(x,y) \in y$, so the discriminant does {\em not} belong to $x$.
Then the same holds for the truncations, whence
$\delta([x]_\alpha,[y]_\alpha)) \in [y]_\alpha$,
and so $[x]_\alpha  < [y]_\alpha$.

By Lemma \ref{la:la}, $x \in I$ iff $[x]_{\Beta(y)} = y$.
Let $x , z \in I$ and $x< u < z$.
Then $y=[x]_{\Beta(y)} \leq [u]_{\Beta(y)} \leq [z]_{\Beta(y)} = y$,
whence $[u]_{\Beta(y)} = y$, so $u \in I$.
\end{proof}

\begin{example}\label{ex:truncation}
The convex set $I$ may be an interval:

$[x]_1 = \{ 1 \} $ iff $0 \notin x$, iff $x<0$ ;

$[x]_1 = \{ 0 \}$ if $x = \{ 0 \} = 0_\Co$

$[x]_1 = \{ 0 , 1 \} = 1_\Co$ iff $0 \in x$ and $\Beta(x)>0$, iff $x > 0$.

\nin
One can show that $I$ is an interval
if $y$ is a successor number,
but in general not so if $y$ is a limit number.
\end{example}

\section{The number tree is connected and complete}

In some regards, the descendance relation $\preceq$ is more important than the
total order $\leq$:
the "topology of $\No$''  is not the "usual interval topology" for the total order
(open intervals in $\No$ are proper classes, not sets), but it is rather given
by {\em closed sets}, where a set of numbers $U$ is "closed" if it contains all
its limits under forming descending sequences from elements of $U$.
Thus the "topological" properties of the number tree are crucial for the theory:
the tree is {\em connected} and {\em complete}. 

\begin{definition}\label{def:convergent}\label{def:limit}
A {\em sequence of numbers} is given by an ordinal-indexed
 family $(x_\alpha)_{\alpha < \lambda}$ of
numbers such that
$\alpha < \beta < \lambda$ implies $\Beta(x_\alpha) \leq \Beta(x_\beta)$,
 where the ordinal $\lambda$ is the {\em length} of the sequence.
We say that such a sequence {\em converges} if there exists a number $x$
(its {\em limit}) such that

(a)  for all $\alpha < \lambda$, we have $x_\alpha \preceq x$,

(b)  $x$ is minimal for (a), i.e., if $x'$ satisfies (a), then $x \preceq x'$.
\end{definition}

\nin
In view of Thm.\ \ref{th:immediate}, (b) is equivalent to saying that $\Beta(x)$ is
minimal for (a).

 \begin{theorem}\label{th:completeness}
 A sequence of numbers $(x_\alpha)_{\alpha <\lambda}$ converges if, and only if,
 it is a {\em chain with respect to the partial order $\preceq$}, that is:
 $$
 \forall \alpha < \beta < \lambda: \quad x_\alpha \preceq x_\beta. 
 $$
 In this case, the number $x$ is unique, called the {\em limit of the sequence}, 
 denoted by $x = \lim_{\alpha \to \lambda} x_\alpha$, and it is given by:
 $$
 x=  \bigcup_{\alpha < \lambda} X_\alpha \cup \{ \sigma \}, 
 \quad \mbox{where }
 X_\alpha = x_\alpha \setminus \{ \Beta(x_\alpha)\}, \mbox{ and }
 \sigma = \sup \{ \Beta(x_\alpha) \mid \alpha < \lambda \} .
 $$
 \end{theorem}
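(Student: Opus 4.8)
The plan is to prove both implications, and in the forward (``if'') direction to exhibit the displayed set directly as the limit. The only nontrivial ingredient for the backward (``only if'') direction is the observation that a common descendant forces a $\preceq$-chain: if $u \preceq w$ and $v \preceq w$ with $\Beta(u) \leq \Beta(v)$, then $u \preceq v$. Indeed, for any ordinal $\gamma < \Beta(u) \leq \Beta(v) \leq \Beta(w)$, the descendance relations $u \preceq w$ and $v \preceq w$ give $\gamma \in u \iff \gamma \in w \iff \gamma \in v$, so $u$ and $v$ agree on $\llbrack 0, \Beta(u)\llbrack$; together with $\Beta(u) \leq \Beta(v)$ this is exactly $u \preceq v$ (Def.\ \ref{def:descendant}). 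Applying this with $u = x_\alpha$, $v = x_\beta$ for $\alpha < \beta$ (so that $\Beta(x_\alpha) \leq \Beta(x_\beta)$) and $w = x$ the assumed limit shows that a convergent sequence is a $\preceq$-chain.

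For the converse, assume the sequence is a $\preceq$-chain and set $x := \bigcup_{\alpha < \lambda} X_\alpha \cup \{ \sigma \}$ as in the statement. First I would check that $x$ is a number with $\Beta(x) = \sigma$: every element of each $X_\alpha = x_\alpha \setminus \{ \Beta(x_\alpha) \}$ is an ordinal strictly below $\Beta(x_\alpha) \leq \sigma$, so $\sigma$ is the maximum of $x$. To establish condition (a) of Definition \ref{def:limit}, fix $\alpha$ and an ordinal $\gamma < \Beta(x_\alpha)$; I must show $\gamma \in x_\alpha \iff \gamma \in x$. Since $\gamma < \sigma$, we have $\gamma \in x$ iff $\gamma \in X_\beta$ for some $\beta$, i.e.\ $\gamma \in x_\beta$ with $\gamma < \Beta(x_\beta)$. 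The forward implication is immediate (take $\beta = \alpha$). For the reverse, given such a $\beta$ I compare indices: if $\beta \leq \alpha$ the chain condition gives $x_\beta \preceq x_\alpha$, and $\gamma < \Beta(x_\beta)$ then transports $\gamma \in x_\beta$ to $\gamma \in x_\alpha$; if $\beta > \alpha$ then $x_\alpha \preceq x_\beta$, and $\gamma < \Beta(x_\alpha)$ transports $\gamma \in x_\beta$ back to $\gamma \in x_\alpha$. Together with $\Beta(x_\alpha) \leq \sigma = \Beta(x)$ this yields $x_\alpha \preceq x$.

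It remains to prove minimality (b) and uniqueness. Suppose $x'$ is any number with $x_\alpha \preceq x'$ for all $\alpha$; then $\Beta(x') \geq \Beta(x_\alpha)$ for every $\alpha$, whence $\Beta(x') \geq \sigma = \Beta(x)$. For $\gamma < \sigma$, choose (by the definition of the supremum) an index $\alpha$ with $\gamma < \Beta(x_\alpha)$; then descendance of $x_\alpha$ into both $x$ and $x'$ gives $\gamma \in x \iff \gamma \in x_\alpha \iff \gamma \in x'$, so $x$ and $x'$ agree on $\llbrack 0, \sigma \llbrack$ and $x \preceq x'$. Thus $x$ satisfies (b). Uniqueness of the limit is then forced by antisymmetry of $\preceq$ (Theorem \ref{th:immediate}): any two numbers minimal for (a) are mutually $\preceq$-related, hence equal.

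I expect the only real subtlety to lie in condition (a). The point is that the naive union could, \emph{a priori}, contain below some $\Beta(x_\alpha)$ an ordinal coming from another term $x_\beta$ that is a hole of $x_\alpha$, which would break descendance. The chain hypothesis is exactly what rules this out, and the two-case index comparison above is precisely where it is used. Everything else is bookkeeping with the interval-intersection form of $\preceq$ and elementary properties of the supremum $\sigma$ (in particular, that any $\gamma < \sigma$ admits an index with $\gamma < \Beta(x_\alpha)$, whether or not $\sigma$ is attained).
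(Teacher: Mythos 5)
Your proof is correct and follows essentially the same route as the paper's: the necessity direction via the observation that two initial segments of a common number are comparable, and the sufficiency direction by exhibiting the displayed union as the limit and verifying conditions (a) and (b) directly. Your write-up is somewhat more detailed than the paper's (in particular, the two-case index comparison spells out what the paper compresses into ``by its definition, this number satisfies $x_\alpha \preceq x$''), but the underlying argument is the same.
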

 
 \begin{proof}
 The condition for convergence is necessary:
 if both $x_\alpha$ and $x_\beta$ are initial segments of $x$, and
 $x_\alpha$ is older than $x_\beta$, then $x_\alpha$ is also an initial segment of
 $x_\beta$.
 
 Let us show that it is sufficient:
 if the sequence is a chain, then the sets $X_\alpha $ satisfy
 $\alpha < \beta \Rightarrow X_\alpha \subset X_\beta$, so
 $X:=\bigcup_{\alpha < \lambda} X_\alpha $ is an ascending union.
 Like every set of ordinals, it has a supremum (minimum of ordinals bigger than all its
 elements), which by definition is $\sigma$. 
 Thus $x = X \cup \{ \sigma \}$ is a set of ordinals having a maximum, namely $\sigma$,
 hence is a number.
 By its definition, this number satisfies $x_\alpha \preceq x$, for all
 $\alpha < \lambda$, and its birthday $\sigma$ is indeed minimal for this condition, so
 $x$ is a limit.
 
If $x'$ is another limit, then $x'$ must contain the set $X$, and from minimality it follows
that $\Beta(x') = \sigma$, whence $x'=x$, so the limit is unique. 
 \end{proof}

 \begin{definition}\label{def:branch} 
 A {\em maximal chain}, or {\em path} in the number tree,
   is a chain $(x_\gamma)_{\gamma < \lambda}$ that admits no proper 
   refinement, i.e.: 
 
 if  $z$ satisfies $x_\gamma \preceq  z \preceq x_\beta$,
 then there exists
 $\delta \in \rrbrack \gamma,\beta\llbrack$ such that $z = x_\delta$
 
 (in particular, $x_{\gamma + 1}$ is a child of $x_\gamma$, so
$x_{\gamma+1} = (x_\gamma)_+$ or $(x_\gamma)_-$).
 \end{definition}

 \begin{example} Let $x = \{ 0,2,4 \}$. A path joining $0_\Co$ to $x$ is: 

$x_0 = \{ 0 \}$

$x_1 = \{ 0 , 1 \} = (x_0)_+$

$x_2 = \{ 0,2 \} = (x_1)_-$

$x_3 = \{ 0,2,3 \} = (x_2)_+$

$x_4 = \{ 0,2,4 \} = (x_3)_-$
\end{example}

\begin{example} Let $x = \{ 1 , \omega \}$. A path joining $0_\Co$ to $x$ is:

$x_0 = \{ 0 \}$

$x_1 = \{ 1 \} = (x_0)_-$

$x_2 = \{ 1 ,2 \} = (x_1)_+$

$x_3 = \{ 1,3 \} = (x_2)_-$

$x_4 = (x_3)_- , \ldots , x_{n+1} = (x_n)_-, \ldots, x_\omega = \{ 1, \omega \}$.
\end{example}

\nin
Visualizing these paths in the number tree exhibits the algorithm how to construct a path
joining $0_\Co$ to a number $x$:
 at level $\gamma$, 

-- if $\gamma \in x$, then turn right,

-- if $\gamma \notin x$, then turn left.

\begin{theorem}\label{th:connected}
The number tree is {\em connected}:
for every number $x$, there exists a unique  path
$(x_\gamma)_{\gamma < \lambda}$ such that
$x_0 = 0_\Co$ and $ x = \lim_{\alpha \to \lambda} x_\alpha$.
Explicitly, this path is given by the family of truncations of $x$ (Def.\ \ref{def:truncation}),
with $\lambda = \Beta(x)$ if $x$ is a limit number, and
$\lambda + 1 = \Beta(x)$ if $x$ is a successor number,
$$
\forall \gamma < \lambda : \qquad
x_\gamma = [x]_\gamma  = 
 (x \cap \llbrack 0, \gamma \llbrack ) \cup \{ \gamma \}  .
$$
\end{theorem}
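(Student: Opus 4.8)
The statement has two halves — existence (the explicit truncation family is a path whose endpoint is $x$) and uniqueness — and the whole argument is a transfinite-induction verification resting on three earlier results: Lemma \ref{la:la}, the immediate-successor analysis of Theorem \ref{th:immediate}(2)--(3), and the completeness Theorem \ref{th:completeness}. First I would show that the family $x_\gamma := [x]_\gamma$ (Def.\ \ref{def:truncation}) is a $\preceq$-chain. The two facts needed are that truncation composes, $[[x]_{\gamma'}]_\gamma = [x]_\gamma$ for $\gamma \le \gamma'$, and that $\Beta([x]_\gamma) = \gamma$ whenever $\gamma < \Beta(x)$; feeding these into Lemma \ref{la:la} (which reads $y \preceq z$ iff $[z]_{\Beta(y)} = y$) gives $[x]_\gamma \preceq [x]_{\gamma'}$ for all $\gamma \le \gamma'$. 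In particular $x_0 = [x]_0 = \{0\} = 0_\Co$, so the chain starts at the root.

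Next I would show the chain is \emph{maximal}, i.e.\ a path in the sense of Def.\ \ref{def:branch}, by ruling out gaps at both kinds of level. At a successor level $\Beta(x_{\gamma+1}) = \gamma+1 = \Beta(x_\gamma)+1$, so Theorem \ref{th:immediate}(2) forces $x_{\gamma+1}$ to be an immediate child of $x_\gamma$; a one-line set computation identifies it as the right child $(x_\gamma)_+$ when $\gamma \in x$ and the left child $(x_\gamma)_-$ when $\gamma \notin x$ — exactly the turn-right/turn-left rule. At a limit level $\gamma$, the completeness Theorem \ref{th:completeness} applied to $(x_{\gamma'})_{\gamma'<\gamma}$ gives $\bigcup_{\gamma'<\gamma} X_{\gamma'} \cup \{\gamma\} = (x \cap \llbrack 0,\gamma\llbrack) \cup \{\gamma\} = [x]_\gamma$, with $X_{\gamma'} = x \cap \llbrack 0,\gamma'\llbrack$, so $x_\gamma$ is already the limit of its predecessors and no number can be inserted below it. Together these two checks establish maximality.

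Then I would identify the endpoint and prove uniqueness. When $x$ is a limit number, $\Beta(x)$ is a limit ordinal and the same completeness computation with $\gamma = \Beta(x)$ reconstitutes $x = \bigcup_{\gamma < \Beta(x)} X_\gamma \cup \{\Beta(x)\}$, so $x = \lim_{\gamma \to \lambda} x_\gamma$ genuinely; when $x$ is a successor number $\Beta(x) = \alpha+1$, the last truncation $[x]_\alpha$ is by Theorem \ref{th:immediate}(3) precisely the parent of $x$, so $x$ closes the path as the terminal child $([x]_\alpha)_\pm$. For uniqueness I would take any path $(y_\gamma)$ from $0_\Co$ with $x$ as its limit and show $\Beta(y_\gamma) = \gamma$ by transfinite induction: each $y_\gamma \preceq x$ by convergence (Def.\ \ref{def:convergent}(a)), so Lemma \ref{la:la} gives $y_\gamma = [x]_{\Beta(y_\gamma)}$; the child condition of Def.\ \ref{def:branch} forces $\Beta(y_{\gamma+1}) = \Beta(y_\gamma)+1$, and maximality together with Theorem \ref{th:completeness} forces $\Beta(y_\gamma) = \sup_{\gamma'<\gamma}\gamma' = \gamma$ at limits. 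Hence $y_\gamma = [x]_\gamma$ for every $\gamma$.

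The main obstacle I anticipate is the bookkeeping that separates the two endpoint cases and makes "no refinement" precise: one must verify that a maximal chain can neither skip a birthday (controlled by Theorem \ref{th:immediate}(2) at successors) nor stall at a limit (controlled by Theorem \ref{th:completeness}), and that these two mechanisms mesh so that the birthday function along any path is rigidly forced to be the identity $\gamma \mapsto \gamma$. Once that rigidity is in place, both existence and uniqueness fall out immediately, since Lemma \ref{la:la} pins down each chain element from its birthday alone.
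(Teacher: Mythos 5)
Your proof is correct and follows the same strategy as the paper: show that the truncation family $([x]_\gamma)_\gamma$ is a $\preceq$-chain starting at $0_\Co$, show that this chain is maximal, and identify its endpoint/limit as $x$. Two differences are worth noting. First, for maximality the paper uses a single inductive characterization --- a chain $(z_\gamma)_{\gamma \leq \alpha}$ is maximal if and only if $\Beta(z_\gamma) = \Beta(z_0) \oplus \gamma$ for all $\gamma$ --- and then observes that the truncation chain satisfies $\Beta([x]_\gamma)=\gamma$; your successor/limit case analysis (immediate children via Theorem~\ref{th:immediate}(2), no stalling at limit levels via Theorem~\ref{th:completeness}) establishes exactly this birthday rigidity, so the two arguments have the same content, yours being more explicit and the paper's more compact. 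Second --- and this is a genuine improvement --- you prove the uniqueness claim, which the paper's written proof never addresses: it constructs the path, checks maximality and the endpoint, and stops, leaving implicit the argument that any path from $0_\Co$ converging to $x$ must satisfy $\Beta(y_\gamma)=\gamma$ and hence $y_\gamma = [x]_\gamma$ by Lemma~\ref{la:la}; your transfinite induction supplies precisely this missing step. Finally, your reading of the successor case (the path terminates with $x$ itself as the child $([x]_\alpha)_\pm$ of the last proper truncation, i.e.\ effectively $\lambda = \Beta(x)+1$) agrees with the paper's own proof, which ends the path at $x_{\Beta(x)} = x$; the clause ``$\lambda + 1 = \Beta(x)$'' in the theorem's statement appears to be an indexing slip, and your version is the one consistent with the definitions of limit and path.
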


\begin{proof}
The family $(x_\gamma)_{\gamma < \Beta(x)}$ defined in the theorem is a chain
starting at $0_\Co$. 
Indeed, if $\beta < \gamma$, then $x_\beta \prec x_\gamma$, since
$\Beta(x_\beta)=\beta$, so
$$
x_\beta \cap \llbrack 0, \Beta(x_\beta)\llbrack = 
(x \cap  \llbrack 0, \beta \llbrack ) =
(x_\gamma \cap  \llbrack 0, \beta \llbrack ) .
$$
This chain is {\em maximal}.
Indeed, it is seen by induction that {\em  any chain $(z_\gamma)_{\gamma \leq \alpha}$ for
$\prec$ is maximal if, and only if, $\Beta(z_\gamma) =\Beta(z_0)\oplus  \gamma$ for all $\gamma \leq \alpha$.}
(If it were not maximal, then one could refine it; but this is impossible by the birthday-condition.)
The condition $\Beta(x_\beta)=\beta$ holds for  the chain in question, which hence is maximal, that is,
it is a path.
Finally, since $\Beta(x) \in x$, it follows
that
$$
x_{\Beta(x)} =  (x \cap \llbrack 0, \Beta(x) \llbrack ) \cup \{ \Beta(x) \}  = x,
$$
so the path joins $x_0 = \{ 0 \} = 0_\Co$ to $x = x_{\Beta(x)}$.
\end{proof}

\begin{remark}
The preceding result allows to define properties or Functions on $\No$, by transfinite induction,
via

(1) successor rules, using  $x_+$ and $x_-$, and a

(2) limit rule, for $\lim_{\alpha \to \lambda} x_\alpha$.

\nin
Here is an example: we define another quantum of $x$.
\end{remark}

\begin{definition}\label{def:width}
The {\em width}, or {\em pseudo-cardinal} $\w(x)$,  of a number $x$ is the ordinal inductively defined by:

(1) $\w(x_+) = \w(x) \oplus 1$, $\w(x_-) = \w(x)$,

(2) $\w(0_\Co) = 0$ and
$\w(\lim_{\alpha \to \lambda} x_\lambda) = 
\sup_{\alpha < \lambda} \w(x_\lambda)$.
\end{definition}

For a short number $x$, the width is its cardinal minus one. 
For a long real number $x$, we always have $\w(x) = \omega$, which agrees with its cardinal.
Also $\w (\alpha_\Co ) = \alpha$ for every ordinal, so  $\w(x)$ measures the "size", or
"width", of a number, but it is not the same as its cardinal.
This quantum appears in the description of Conway's omega-map.


 \subsubsection{Left and right predecessors of a number} 

The path joining $0$ to $x$ splits naturally into two parts,
with edges "turning right" (giving rise to a $+$ in the sign expansion) and edges
"turning left" (giving rise to a sign $-$), see the examples above.
The numbers where the path turns right are smaller than $x$, they form the 
"left set", and those where the path turns left are greater than $x$, they form the
"right set":

\begin{definition}\label{def:CanCut}
To each number $x$, we associate two sets of numbers, 
called its {\em canonical left, resp. right, sets}, defined
in terms of the truncation (Def.\ \ref{def:truncation})

$L_x := 
\{ 
[x]_\gamma 
 \mid {\gamma \in x} , \gamma < \Beta(x) \}$,

$R_x := \{ 
[x]_\gamma 
\mid {\gamma \notin x,
\gamma < \Beta(x)} \}$
\end{definition}

 \begin{lemma}\label{la:CanCut}
The canonical left set of $x$ is a maximal chain with respect to initial inclusion
$\sqsubset$. 
Moreover,

$L_x = \{ z \mid z \sqsubset x \}
= \{ z \mid z \preceq x, z < x $\},

$R_x
= \{ z \mid z \preceq x, z >  x \}$.
  \end{lemma}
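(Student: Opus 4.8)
The plan is to reduce every assertion to the single structural fact, furnished by Lemma~\ref{la:la}, that a number $w$ satisfies $w \preceq x$ if and only if $w = [x]_{\Beta(w)}$; thus the proper ancestors of $x$ (numbers $w$ with $w \prec x$) are exactly the truncations $[x]_\gamma$ with $\gamma < \Beta(x)$. First I would record that, writing $\gamma := \Beta(w)$, the formula of Definition~\ref{def:truncation} gives $w = [x]_\gamma = (x \cap \llbrack 0,\gamma\llbrack)\cup\{\gamma\}$, so that $w$ and $x$ agree strictly below $\gamma$, and the only question is the status of $\gamma$ itself and of the ordinals above it.

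The heart of the argument is the total-order computation via Theorem~\ref{th:order1}. For $w = [x]_\gamma$ with $\gamma < \Beta(x)$, the two sets coincide below $\gamma$, so the discriminant $\delta(w,x)=\min(w\Delta x)$ is at least $\gamma$. If $\gamma \in x$, then every element of $w$ lies in $x$, so $w \subset x$ and the discriminant is the least element of $x$ exceeding $\gamma$ (which exists since $\Beta(x) > \gamma$); it lies in $x$, whence $w < x$. If $\gamma \notin x$, then $\gamma \in w \setminus x$ realizes the discriminant, $\delta(w,x)=\gamma \in w$, whence $w > x$. Combining this with the previous paragraph and Definition~\ref{def:CanCut} yields at once $L_x = \{w \mid w \preceq x,\ w < x\}$ and $R_x = \{w \mid w \preceq x,\ w > x\}$: the truncations at $\gamma\in x$ are precisely $L_x$ and are precisely the ancestors lying below $x$, while those at $\gamma\notin x$ are precisely $R_x$ and are the ancestors above $x$.

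Finally I would treat initial inclusion. By Theorem~\ref{th:immediate}(1), $w \sqsubset x$ means $w \preceq x$, $w \subset x$, $w \neq x$; and for $w = [x]_\gamma$ the condition $w \subset x$ is equivalent to $\gamma \in x$, so $\{w \mid w \sqsubset x\}$ is exactly the set of truncations at $\gamma \in x$, $\gamma < \Beta(x)$, that is, $L_x$. This proves $L_x = \{w \mid w \sqsubset x\}$. That $L_x$ is a chain for $\sqsubset$ is immediate, since for $\gamma_1 < \gamma_2$ in $x$ one checks $\llbrack 0,\gamma_1\rrbrack \cap [x]_{\gamma_2} = x \cap \llbrack 0,\gamma_1\rrbrack = [x]_{\gamma_1}$, so $[x]_{\gamma_1}\sqsubset[x]_{\gamma_2}$; as $L_x$ equals the full set of $\sqsubset$-predecessors of $x$, it is the unique maximal such chain. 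Its gap-freeness (no proper refinement) follows from Theorem~\ref{th:immediate}(4): consecutive members correspond to consecutive elements of $x$ below $\Beta(x)$, and $[x]_{\gamma_2} = [x]_{\gamma_1}\cup\{\gamma_2\}$ precisely when no element of $x$ lies strictly between, exhibiting each step as an immediate $\sqsubset$-successor.

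The main obstacle is the total-order step: everything hinges on correctly identifying the discriminant of $[x]_\gamma$ and $x$ and reading off its sign from whether $\gamma$ is an element or a hole of $x$. This is exactly the bridge between the tree order and the total order, and the one place where a sign slip would interchange $L_x$ and $R_x$; the remaining manipulations of truncations and initial inclusion are routine once Lemma~\ref{la:la} is in hand.
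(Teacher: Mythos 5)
Your proposal is correct and takes essentially the same route as the paper's (much terser) proof: identify the members of $L_x$ and $R_x$ with the truncations $[x]_\gamma$ indexed by $\gamma \in x$ resp.\ $\gamma \notin x$, determine their position relative to $x$ from the discriminant, and obtain maximality from the fact that the chain is indexed by \emph{all} elements of $x$, so any refinement would already lie in $L_x$. The paper compresses the order-theoretic step into the single remark that $a \sqsubset b$ iff ($a \prec b$ and $a < b$); your explicit discriminant computation is exactly the verification of that remark.
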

 
 \begin{proof}
 For $\gamma \in x$, let
  $x_\gamma = x \cap \llbrack 0,\gamma \rrbrack $.
Clearly,
$\beta < \gamma$ implies
$x_\beta \sqsubset x_\gamma$, so we have chain for initial inclusion,
and since the left set is indexed by {\em all} elements of $x$,
this chain is maximal.  
Moreover, the definition of the total order $<$ shows that
$a \sqsubset b$ iff ($a \prec b$ and $a < b$). 
Similarly for the right set.  
  \end{proof}

See Theorem \ref{th:CanCut} for a sort of converse of this construction.

\subsubsection{Topology of $\No$: closed sets}\label{sec:topology}

The notion of "limit in $\No$" (Def.\ \ref{def:limit}) calls for defining a "topology" on
$\No$. Since  $\No$ is a proper class, 
the usual duality between "open" and "closed" sets breaks down (the "complement of a set"
in $\No$ is not a set), and it will be more convenient to base the idea of "topology" on
{\em closed} sets, since the "open" sets would rather correspond to proper classes
 - e.g., intervals of the total order on $\No$
are proper classes, whereas we have many "closed" small  sets:

\begin{definition}
Let $U$ be a set (or class) of numbers.
\begin{enumerate}
\item
A {\em limit point of $U$} is a number $x$ such that
$x = \lim_{\gamma \to \alpha} x_\gamma$ for a chain $(x_\gamma)_{\gamma < \lambda}$
 such that
all $x_\gamma$ belong to $U$. 
\item
The {\em closure} $c(U)$ or $\overline U$ of $U$ is the union of $U$ with the set
(or class)  of its limit points.
\item
The set (or class) $U$ is {\em closed} if $U = \overline U$.
\end{enumerate} \end{definition}

This definition matches the one given in \cite{BH1}, Section 5.2, and
as shown there it has properties that one may expect from closure in a topological
setting. 


\section{Extending ordinal arithmetic to Conway numbers}\label{sec:Cantor-No}\label{sec:CoCa}

Recall that we denote by $\oplus, \otimes,\ootimes$ the three operations of usual (Cantor)
ordinal arithmetic on $\On$.
These extend to $\No$ in a fairly straightforward way, and it will be convenient to use
the same symbols. 
Concerning $\oplus$ and $\otimes$, such results can be found in \cite{BH1},
Section 3.2. In fact, the first operation, $\oplus$, called 
 {\em concatenation}, is heavily used in \cite{Go} without a suitable formalisation,
 and in Game Theory, the notation
  $x:y$ is also used, see \cite{S}, p. 89, \cite{Co01}, p.31,  but this notation appears to be dangerous,
 and hides the relationship with usual ordinal addition.

\begin{definition} \label{def:involution2}
The {\em pseudo-inverse} $x^*$ of a number $x$ is defined by:
$\alpha \in x^*$ iff

-- either $\alpha = \Beta(x)$,

-- or $0 < \alpha < \Beta(x)$ and $\alpha \notin x$,

-- or [ $\alpha = 0$, if $0 \in x$ ].

\nin
In a formula:
$x^* = (\{ 0 , \Beta(x) \} \cap x) \cup (x^\sharp \cap \llbrack 1, \Beta(x) \llbrack)  .
$

\nin
We refer to $\sharp$ and $*$ as the {\em first and second main involution of $\No$}.
\end{definition}

In the number tree, $*$ is visualized as the union of the "symmetry of the right part at the
vertical axis $x=1$",  with the "symmetry of the left part at the
vertical axis $x=-1$".

\begin{lemma}\label{ref:la+-}
The pseudo-inverse satisfies:
\begin{enumerate}
\item
$\Beta(x^*) = \Beta(x)$
\item
$(x^*)^* =x$
\item
$(x^*)^\sharp = (x^\sharp)^*$
\item
$0_\Co^*  = 0_\Co$, $1_\Co^*= 1_\Co$, $(1_\Co^\sharp)^* = 1_\Co^\sharp$, and 
\item
$
\forall x \not=0_\Co : \,
(x_+)^* = (x^*)_-, \, (x_-)^* = (x^*)_+ $.
\end{enumerate}
\end{lemma}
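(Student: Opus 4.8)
The plan is to reduce every assertion to the level of sign expansions, i.e.\ to membership of ordinals, using the explicit formula $x^* = (\{0,\Beta(x)\}\cap x)\cup(x^\sharp\cap\llbrack 1,\Beta(x)\llbrack)$. The guiding observation, writing $b:=\Beta(x)$, is that $*$ acts on the sign expansion by \emph{reversing the sign at every interior position} $0<\alpha<b$, while \emph{leaving unchanged} the sign at position $0$ and at the birthday $b$; this is to be contrasted with $\sharp$, which reverses the sign at \emph{all} positions $\alpha<b$. With this dictionary, statement (1) is immediate: $b\in x^*$ and every other element of $x^*$ is $\le b$, so $\max(x^*)=b=\Beta(x)$. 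Statement (2) follows because reversing the interior signs twice returns to the original, while positions $0$ and $b$ are never touched; hence $(x^*)^*=x$.

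For (3) I would exploit that $*$ and $\sharp$ differ only in their treatment of position $0$: both reverse all interior signs, but $\sharp$ additionally reverses position $0$ whereas $*$ fixes it. Viewing $\sharp$ and the position-$0$ flip $\phi$ as commuting pointwise involutions of the sign expansion (both fixing the birthday), one has $*=\sharp\circ\phi=\phi\circ\sharp$. Then $(x^\sharp)^*=\phi\,\sharp\,\sharp\,(x)=\phi(x)$ and $(x^*)^\sharp=\sharp\,\phi\,\sharp\,(x)=\phi(x)$, using $\sharp^2=\id$ and commutativity, so both sides agree (the degenerate case $x=0_\Co$, where $b=0$ and there are no positions below the birthday, is trivial). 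Statement (4) is a one-line substitution into the formula: for $0_\Co=\{0\}$ and $1_\Co=\{0,1\}$ the interior range $\llbrack 1,\Beta\llbrack$ is empty, and $1_\Co^\sharp=\{1\}$ is handled identically.

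The substance is in (5). First I would record the sign expansions of the children: $x_+=x\cup\{b+1\}$ appends a $+$ at the old birthday position $b$ and sets the new birthday to $b+1$, while $x_-=(x\setminus\{b\})\cup\{b+1\}$ appends a $-$ at position $b$. To prove $(x_+)^*=(x^*)_-$ I would then compare membership of an arbitrary ordinal $\alpha$ on both sides across the four regimes $\alpha=0$, $0<\alpha<b$, $\alpha=b$, and $\alpha=b+1$. The decisive case is $\alpha=b$: in $x_+$ the old birthday becomes an ordinary interior element (a $+$), so $*$ flips it to a hole, while on the other side forming $(x^*)_-$ deletes $b$ from $x^*$ and appends $b+1$, which also leaves $b$ a hole. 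These match precisely because $b=\Beta(x)\in x$ always, so the interior-flip of $x_+$ at position $b$ unconditionally produces a hole, matching the deletion of $b$ in $(x^*)_-$; the three remaining regimes agree by the interior-reversal description together with $\Beta(x^*)=b$. The relation $(x_-)^*=(x^*)_+$ is entirely symmetric.

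The main obstacle, and the reason for the hypothesis $x\ne 0_\Co$, is the privileged status of position $0$ under $*$. The hypothesis is exactly what guarantees $b=\Beta(x)\ge 1$, so that position $0$ stays strictly below the birthday positions $b$ and $b+1$ and the special role $*$ assigns to $0$ never collides with the appended signs. That this is essential is visible from the boundary case itself: for $x=0_\Co$ one computes $(x_+)^*=(1_\Co)^*=1_\Co=\{0,1\}$ whereas $(x^*)_-=(0_\Co)_-=\{1\}=1_\Co^\sharp$, so the identity fails. Keeping careful track of this interaction between position $0$ and the child operation is the one genuinely delicate point; the rest is bookkeeping of sign expansions.
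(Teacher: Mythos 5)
Your proof is correct and takes essentially the same route as the paper's: a direct verification at the level of sign expansions, with item (3) reduced to the identity $(x^\sharp)^* = (x^*)^\sharp = x \Delta \{0\}$ (your $\phi(x)$), which is precisely the computation the paper records. The paper dismisses the other items as immediate, so your detailed four-regime analysis for (5) and the counterexample at $x=0_\Co$ showing the hypothesis $x\neq 0_\Co$ is sharp are welcome elaborations of, not departures from, the paper's argument.
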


\begin{proof}
Immediate, except maybe for the third equality:
we compute
$$
(x^\sharp)^* = \{ \alpha \mid \, 
\alpha \in x \cap \llbrack 1, \Beta(x) \rrbrack   \mbox{ or }
[ \alpha = 0 \mbox{ if } 0 \notin x ] \}
= (x^*)^\sharp .
$$
In other terms, $(x^\sharp)^* = x \Delta \{ 0 \}$, where $a \Delta b$ 
is the symmetric difference. 
\end{proof}

\begin{theorem}
For each pair of numbers $(x,y)$, we can define three new numbers
$x \oplus y$, $x \otimes y$ and $x\ootimes y$, uniquely defined via transfinite induction by the following rules: 
initial conditions,

$x \oplus 0 = x$

$x \otimes 0 = 0$

$x\ootimes 0 = 1$.

\nin
Succession rule in the second argument:
\begin{enumerate}
\item
$x \oplus y_+ = (x \oplus y)_+$

$x \oplus y_- = (x \oplus y)_-$
\item
$x \otimes y_+ = x \otimes y \oplus x$

$x \otimes y_- = x \otimes y \oplus x^\sharp$ (where $x^\sharp$ is given by Def.\
\ref{def:stages})

\item
$x \ootimes y_+ = x \ootimes y \otimes x$

$x \ootimes y_- = x \ootimes y \otimes x^*$ (where $x^*$ is given by Def.\
\ref{def:involution2}),
\end{enumerate}
Continuity with respect to limits in the second argument
(see Def.\  \ref{def:convergent})
\begin{enumerate}
\item
$x \oplus (\lim_{\alpha \to \lambda} y_\alpha) = \lim_{\alpha \to \lambda} (x \oplus y_\alpha)$
\item
$x \otimes (\lim_{\alpha \to \lambda} y_\alpha) = \lim_{\alpha \to \lambda} (x \otimes y_\alpha)$
\item
$x \ootimes (\lim_{\alpha \to \lambda} y_\alpha) = \lim_{\alpha \to \lambda} (x \ootimes y_\alpha)$
\end{enumerate}
\nin
The operations $\oplus$ and $\otimes$ are associative, but non commutative, and we have the
following
"left distributive laws" (where, by convention, $\ootimes$ binds stronger than $\otimes$):

$x \otimes (y \oplus z)  = x \otimes y \oplus x \otimes z$

$x \ootimes (y \oplus z) = x \ootimes y \otimes x \ootimes z$.

$x \ootimes (a \otimes b) = (x \ootimes a) \ootimes b$

\nin
The Birthday Map $\Beta: \No \to \On$, and its Section $\alpha \mapsto \alpha_\Co$, are Morphisms:

$\Beta( x \oplus y) = \Beta(x) \oplus \Beta(y)$,

$\Beta( x \otimes y) = \Beta(x) \otimes \Beta(y)$,

$\Beta( x \ootimes y) = \Beta(x) \ootimes \Beta(y)$.

\nin
Compatibility with the main involutions is given by

$(x \oplus y)^\sharp = x^\sharp \oplus y^\sharp$

$(x \otimes y)^\sharp = x^\sharp \otimes y = x \otimes y^\sharp$

$(x \ootimes y)^* = x \ootimes y^\sharp$

$(1 \oplus x)^* = 1 \oplus x^\sharp$, $\quad ( (-1) \oplus x )^*=(-1) \oplus x^\sharp$

\end{theorem}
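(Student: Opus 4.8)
The plan is to handle the three operations in strict succession — first $\oplus$, then $\otimes$, then $\ootimes$ — because each recursion invokes the previously constructed operation, and to establish every asserted property by transfinite induction on the \emph{second} argument (for associativity and distributivity, on the \emph{last} argument), running along the canonical path of Theorem \ref{th:connected}. The Remark following that theorem reduces well-definedness to one verification: that each prescribed successor rule sends a value to a \emph{descendant}, so that along any path the computed values form a $\preceq$-chain, whose limit exists by Theorem \ref{th:completeness}; continuity in the second argument is then automatic and needs no separate proof. For $\oplus$ this is transparent, since $y\mapsto x\oplus y$ is concatenation of sign-expansions and sends $x\oplus y$ to its own child. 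For $\otimes$ one uses that $a\mapsto a\oplus b$ always yields $a\sqsubseteq a\oplus b$, hence $a\preceq a\oplus b$, so both rules $x\otimes y_\pm=(x\otimes y)\oplus x^{(\sharp)}$ ascend. For $\ootimes$, restricted to a positive base $x$ (i.e. $0\in x$, whence also $0\in x^*$ by Definition \ref{def:involution2}), I would record the auxiliary fact that $a\otimes x\succeq a$ and $a\otimes x^{*}\succeq a$, which holds because right-multiplication by a positive number only appends blocks to the sign-expansion.

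Next I would dispatch the routine inductions. The birthday morphism starts with $\Beta(x\oplus y)=\Beta(x)\oplus\Beta(y)$ (concatenation adds lengths); then $\Beta(x\otimes y)=\Beta(x)\otimes\Beta(y)$ and $\Beta(x\ootimes y)=\Beta(x)\ootimes\Beta(y)$ follow because the successor rules for $\otimes$ and $\ootimes$ mirror exactly the ordinal recursions of Definition in Section \ref{sec:ordinalarithmetic}. Associativity of $\oplus$ is immediate from concatenation by induction on $z$. The child-swap identities $(x_\pm)^\sharp=(x^\sharp)_\mp$ give $(x\oplus y)^\sharp=x^\sharp\oplus y^\sharp$ and $(x\otimes y)^\sharp=x^\sharp\otimes y=x\otimes y^\sharp$ by induction. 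Left-distributivity $x\otimes(y\oplus z)=x\otimes y\oplus x\otimes z$ is an induction on $z$ resting on $\oplus$-associativity, and then associativity of $\otimes$ follows by induction on $z$, the left-child step invoking both left-distributivity and $(x\otimes y)^\sharp=x\otimes y^\sharp$. The first exponential law $x\ootimes(y\oplus z)=(x\ootimes y)\otimes(x\ootimes z)$ runs by the same template, using only $\otimes$-associativity.

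The hard part will be the identities involving the second involution $*$, above all $(x\ootimes y)^{*}=x\ootimes y^\sharp$, together with the boundary identities $(1\oplus x)^{*}=1\oplus x^\sharp$ and $((-1)\oplus x)^{*}=(-1)\oplus x^\sharp$. Unlike $\sharp$, the involution $*$ is \emph{twisted}: by Definition \ref{def:involution2} it fixes the two boundary signs (at positions $0$ and $\Beta(x)$) and reverses only the interior, so it does \emph{not} distribute over concatenation — there is no clean formula for $(a\oplus b)^{*}$, only special cases. Consequently the induction for $(x\ootimes y)^{*}=x\ootimes y^\sharp$ cannot be pushed through naively: the passage from $y$ to $y_+$ requires controlling $((x\ootimes y)\otimes x)^{*}$, and the needed compatibility of $*$ with right-multiplication survives only because the factors are powers of the common base $x$, so their sign-expansions interlock in a controlled way. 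I would isolate this as a separate lemma — a ``twisted multiplicativity'' of $*$ restricted to powers of $x$ — proved by simultaneous induction with the commutation of $x\ootimes a$ past $x$, and with Lemma \ref{ref:la+-}(5), namely $(x_\pm)^{*}=(x^{*})_\mp$. The genuine crux, and the place where errors creep in, is the bookkeeping at the boundary element $\Beta(x)$: the interior/boundary split defining $*$ must be matched exactly against the block structure that $\ootimes$ produces, and an off-by-one at the last interior position already breaks the identity. Once the $*$-identity is secured, the final law $x\ootimes(a\otimes b)=(x\ootimes a)\ootimes b$ closes by induction on $b$, the left-child step $x\ootimes(a\otimes b_-)=x\ootimes\bigl((a\otimes b)\oplus a^\sharp\bigr)=\bigl(x\ootimes(a\otimes b)\bigr)\otimes(x\ootimes a^\sharp)$ being precisely where $(x\ootimes a)^{*}=x\ootimes a^\sharp$ is invoked.
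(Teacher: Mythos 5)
Your overall route is the same as the paper's own proof, which is only a sketch (construct $\oplus$, then $\otimes$, then $\ootimes$, by transfinite induction on the birthday of the second argument, successor rules plus continuity, then "copy the ordinal-arithmetic arguments" for the algebraic laws). What you add — the verification that along the canonical path of Theorem \ref{th:connected} the computed values form a $\preceq$-chain, so that the limits demanded by the continuity clause actually exist by Theorem \ref{th:completeness} — is precisely the substance the paper omits. Two comments on this part. First, a small slip: $a \sqsubseteq a \oplus b$ is false in general (initial inclusion requires $\Beta(a) \in a \oplus b$, i.e.\ $0 \in b$); the true and sufficient statement is $a \preceq a \oplus b$. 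Second, and more importantly, your restriction of $\ootimes$ to positive bases is not optional caution but forced, and it shows that the theorem's opening claim "for each pair of numbers" is wrong for $\ootimes$: for $x = (-1)_\Co = \{1\}$ the rules give $x \ootimes 0_\Co = 1_\Co$, $x \ootimes 1_\Co = \{1\}$, $x \ootimes 2_\Co = 1_\Co$, and so on, an alternating sequence of $\preceq$-incomparable values, so $\lim_n (x \ootimes n_\Co)$ does not exist and $x \ootimes \OM$ is undefined. On this point your proposal corrects the statement rather than falling short of it.

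The step of your plan that genuinely fails is the last one. The "twisted multiplicativity" lemma for $*$ restricted to powers of a common base cannot be proved, because the identity it is meant to deliver, $(x \ootimes y)^* = x \ootimes y^\sharp$, is itself false — and false for exactly the boundary reason you flagged, even when both factors are powers of the same positive base. Take $x = y = 2_\Co = \{0,1,2\}$. Then $x \ootimes y = x \otimes x = 2_\Co \oplus 2_\Co = 4_\Co$, so $(x \ootimes y)^* = \{0,4\}$, with sign expansion $(+,-,-,-)$; on the other hand $y^\sharp = \{2\}$, and the recursion (two left steps) gives $x \ootimes y^\sharp = x^* \otimes x^* = \{0,2\} \otimes \{0,2\} = \{0,2\} \oplus \{1,2\} = \{0,3,4\}$, with sign expansion $(+,-,-,+)$. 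The two differ at the last interior position, the birthday of the second block — your predicted off-by-one, except that it is not a bookkeeping error to be avoided but an unavoidable discrepancy. The third "distributive law" falls with it: for $a = 2_\Co$, $b = (-1)_\Co$ one gets $x \ootimes (a \otimes b) = x \ootimes \{2\} = \{0,3,4\}$ while $(x \ootimes a) \ootimes b = (4_\Co)^* = \{0,4\}$. (The $*$-identity does hold for $y \in \{0_\Co, 1_\Co, (-1)_\Co\}$, but already fails at $y = 2_\Co$; by contrast the two boundary identities $(1 \oplus x)^* = 1 \oplus x^\sharp$ and $((-1) \oplus x)^* = (-1) \oplus x^\sharp$ are easy and correct, since there every sign of $x$ is interior to the concatenation.) So no refinement of your induction can close this step: these two items of the theorem are false as stated and must be weakened, and the paper's appeal to "copying the corresponding arguments for ordinal arithmetic on $\On$" is empty here, since on $\On$ there is neither a $*$ nor a negative exponent to copy from. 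Everything else in your plan — well-definedness via the chain condition, the birthday morphism, $\sharp$-compatibility, associativity, left distributivity, and the first two exponent laws — goes through as you describe.
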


\begin{proof}
If $\bullet$ is one of the three operations, then for fixed $x$, the value of $x \bullet y$ 
is defined by transfinite induction on the birthday of $y$, 
using the first set of rules for induction on successor numbers $y$ (recall that every such number
is of the form $z_+$ or $z_-$, see Theorem \ref{th:immediate}), and
the continuity condition for limit numbers $y$. 
We have to define $\oplus$ first, since it is used to define $\otimes$, just like
$\otimes$ is needed for the definition of $\ootimes$.

In the same way, all other statements may be proved by transfinite induction, copying the corresponding
arguments proving the corresponding rules for ordinal arithmetic on $\On$. 
\end{proof}

Remark. See \cite{BH1}, Rk.3.2 for
 a definition of $\oplus$ and $\otimes$  a la Conway by cuts.

\begin{lemma}\label{la:concatenation}
 The sign sequence of $x \oplus y$ is obtained by juxtaposition of those
 from $x$ (first, written on the left) and $y$ (second, written on the right).
 More formally, 
  $$
 x \oplus  y = \{ \alpha \in \On \mid \, \alpha \in x\setminus \{\Beta(x) \}, \mbox{ or } \exists \beta \in y :
 \alpha = \Beta(x) \oplus  \beta \} .
 $$
 In other terms, $\alpha \in x \oplus y$ iff
 $\alpha = \alpha_0 \oplus \alpha_1$, where
 
 either [$\alpha_0 \in x, \alpha_0 < \Beta(x)$ and $\alpha_1=0$ ] or
 [ $\alpha_0 = \Beta(x)$ and $ \alpha_1 \in y$ ].
 \end{lemma}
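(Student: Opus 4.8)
The plan is to prove the displayed set-theoretic formula by transfinite induction on the birthday $\Beta(y)$, following exactly the three clauses by which $\oplus$ was defined in the preceding theorem. Write $z := (x\setminus\{\Beta(x)\}) \cup \{\Beta(x)\oplus\beta \mid \beta\in y\}$ for the right-hand side; the goal is $x\oplus y = z$. Two facts will be used throughout: the Birthday morphism $\Beta(x\oplus y)=\Beta(x)\oplus\Beta(y)$ from the preceding theorem, and the strict monotonicity, hence injectivity, of the shift $\beta\mapsto\Beta(x)\oplus\beta$ (which is exactly what guarantees left subtraction, cf.\ the discussion around \eqref{eqn:decomposition}). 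I would first dispose of the ``in other terms'' reformulation as a mere restatement: its first alternative $\alpha=\alpha_0\oplus 0$ with $\alpha_0\in x$, $\alpha_0<\Beta(x)$ describes precisely $x\setminus\{\Beta(x)\}$, while the second, $\alpha=\Beta(x)\oplus\alpha_1$ with $\alpha_1\in y$, describes the shifted copy of $y$; the two ranges are disjoint since the first lies below $\Beta(x)$ and the second at or above it.

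For the base case $y=0_\Co=\{0\}$ one has $x\oplus 0_\Co=x$ by the initial rule, and $z=(x\setminus\{\Beta(x)\})\cup\{\Beta(x)\oplus 0\}=x$, so the two agree. For the successor cases I would use $x\oplus y_+=(x\oplus y)_+$ and $x\oplus y_-=(x\oplus y)_-$ together with Def.~\ref{def:children} and the Birthday morphism. In the $y_+$ case, $(x\oplus y)_+ = (x\oplus y)\cup\{(\Beta(x)\oplus\Beta(y))+1\}$, and $(\Beta(x)\oplus\Beta(y))+1 = \Beta(x)\oplus(\Beta(y)+1)$ is precisely the new element added when passing from $\{\Beta(x)\oplus\beta\mid\beta\in y\}$ to $\{\Beta(x)\oplus\beta\mid\beta\in y_+\}$; the induction hypothesis $x\oplus y=z$ then gives the claim. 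The $y_-$ case is the same bookkeeping, except that one also deletes the old top element $\Beta(x)\oplus\Beta(y)$. Here injectivity of $\beta\mapsto\Beta(x)\oplus\beta$ is needed to confirm that removing $\Beta(x)\oplus\Beta(y)$ from the shifted copy of $y$ corresponds exactly to removing $\Beta(y)$ from $y$, and one notes that this top element never lies in the fixed part $x\setminus\{\Beta(x)\}$ since it is $\geq\Beta(x)$ while that part lies below $\Beta(x)$.

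The limit case is where the real work lies, and I expect it to be the main obstacle. For $y=\lim_{\alpha\to\lambda}y_\alpha$ the continuity clause gives $x\oplus y=\lim_{\alpha\to\lambda}(x\oplus y_\alpha)$, where monotonicity of $\oplus$ in the second argument ensures $(x\oplus y_\alpha)$ is again a $\preceq$-chain, so Theorem~\ref{th:completeness} applies and evaluates the limit as $\bigcup_\alpha\bigl((x\oplus y_\alpha)\setminus\{\Beta(x\oplus y_\alpha)\}\bigr)\cup\{\sigma\}$ with $\sigma=\sup_\alpha\Beta(x\oplus y_\alpha)$. Using the Birthday morphism, $\Beta(x\oplus y_\alpha)=\Beta(x)\oplus\Beta(y_\alpha)$, and continuity of ordinal addition in the second argument yields $\sigma=\Beta(x)\oplus\sup_\alpha\Beta(y_\alpha)=\Beta(x)\oplus\Beta(y)$. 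By the induction hypothesis each truncated term equals $(x\setminus\{\Beta(x)\})\cup\{\Beta(x)\oplus\beta\mid\beta\in y_\alpha\setminus\{\Beta(y_\alpha)\}\}$; taking the union over $\alpha$, adjoining $\{\Beta(x)\oplus\Beta(y)\}$, and matching the result against the decomposition $y=\bigcup_\alpha\bigl(y_\alpha\setminus\{\Beta(y_\alpha)\}\bigr)\cup\{\Beta(y)\}$ furnished by the same Theorem~\ref{th:completeness} applied to $y$ itself, gives $x\oplus y=z$.

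The delicate point is precisely this double use of the completeness formula---once for the image chain $(x\oplus y_\alpha)$ and once for $y$---together with the verification that the two suprema and the two ``interiors'' line up under the order-preserving, continuous, injective shift $\beta\mapsto\Beta(x)\oplus\beta$. Once the continuity and injectivity of that shift are invoked in the right places, everything else reduces to routine rewriting of unions, and the concatenation formula follows.
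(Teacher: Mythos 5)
Your proof is correct and takes precisely the approach the paper intends: the paper's own proof of this lemma reads, in full, ``By straightforward transfinite induction.'' Your write-up simply supplies the details the paper leaves implicit --- the base case, the two successor cases via the children rules and left cancellation of the shift $\beta \mapsto \Beta(x)\oplus\beta$, and the limit case via a double application of Theorem~\ref{th:completeness} together with the Birthday morphism and continuity of ordinal addition --- all of which check out.
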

 
\begin{proof}
By straightforward transfinite induction.
\end{proof}

 In the number tree, the graph of $x \oplus y$
  corresponds to "grafting the graph of $y$ onto the one
 of $x$". The two immediate successors of $x$ are obtained as in the theorem, and 
 by iteration 
    \begin{align*}
   x_{++} & = x \oplus 1_\Co \oplus 1_\Co = x \oplus \{ 0,1,2\} = x \oplus 2_\Co ,
   \\
 x_{+-} & = x \oplus 1_\Co \oplus (-1)_\Co = 
 x \oplus \{ 0,2\} = x \oplus (\frac{1}{2})_\Co , \\
 x_{-+} & =x \oplus (-1)_\Co \oplus 1_\Co = 
  x \oplus  \{ 1,2 \}  = x \oplus (-\frac{1}{2})_\Co ,\\
 x_{ - -} & = x \oplus \{ 2 \}  =  x \oplus 2_\Co^\sharp.
 \end{align*}
 This implies, e.g., that
 $0_{-+}=\{ 1,2  \} = (-1)_\Co \oplus 1_\Co$. 
 In general: 
 
 \begin{definition}
 For each number $x$ and each sign $\sigma \in \{ +, -  \}$, we let
$$
\sigma  x := \Bigl\{ \begin{matrix}
x& \mbox{ if } & \sigma  =+ \\
x^\sharp & \mbox{ if } & \sigma = - . 
\end{matrix} 
$$
\end{definition}

\begin{theorem}\label{th:concatenation2}
 Infinite ordered concatenations of numbers are well-defined: 
 given numbers $x_\alpha$ for  all $\alpha < \gamma$, for some ordinal $\gamma$,  the (generically)
 infinite concatenation 
$z= \bigoplus_{\alpha < \gamma} x_\alpha$ exists in $\No$.
The order of "summation" is the natural order of the indices. 
Its birthday is the ordinal
$\Beta(z) = \bigoplus_{\alpha < \gamma} \Beta(x_\alpha)$
(see Def.\ \ref{def:infiniteoplus}).
Every number $x$ can (uniquely) be represented in the form  
 \begin{equation}\label{eqn:x-rep}
x = \bigoplus_{\alpha < \Beta(x) } (s_x(\alpha) 1_\Co) .
\end{equation}
\end{theorem}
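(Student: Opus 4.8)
The plan is to build the infinite concatenation as the limit of its partial concatenations and to transport the bookkeeping of signs and birthdays from the finite case (Lemma \ref{la:concatenation} and the birthday rule $\Beta(x\oplus y)=\Beta(x)\oplus\Beta(y)$) to the transfinite one via Theorem \ref{th:completeness}. Concretely, for $\beta\leq\gamma$ I would define the partial concatenation $z_\beta:=\bigoplus_{\alpha<\beta}x_\alpha$ by transfinite recursion: $z_0:=0_\Co$, $z_{\beta+1}:=z_\beta\oplus x_\beta$, and $z_\lambda:=\lim_{\beta\to\lambda}z_\beta$ for limit $\lambda$ (Def.\ \ref{def:limit}), then set $z:=z_\gamma$.

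First I would check by transfinite induction that $(z_\beta)_{\beta\leq\gamma}$ is a chain for $\preceq$, which is exactly what legitimizes the limit clause by Theorem \ref{th:completeness}. The successor step is the only computational one: by Lemma \ref{la:concatenation} the elements of $z_\beta\oplus x_\beta$ lying strictly below $\Beta(z_\beta)$ are precisely those of $z_\beta$ below $\Beta(z_\beta)$ (every contribution of $x_\beta$ is shifted up by $\Beta(z_\beta)$, hence is $\geq\Beta(z_\beta)$), so $z_\beta$ and $z_{\beta+1}$ coincide on $\llbrack 0,\Beta(z_\beta)\llbrack$ while $\Beta(z_\beta)\leq\Beta(z_{\beta+1})$; thus $z_\beta\preceq z_{\beta+1}$. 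At a limit stage the defining property of the limit gives $z_\beta\preceq z_\lambda$ for all $\beta<\lambda$, and transitivity of $\preceq$ propagates the chain condition. In parallel, the same induction yields the birthday formula $\Beta(z_\beta)=\bigoplus_{\alpha<\beta}\Beta(x_\alpha)$: at a successor $\Beta(z_{\beta+1})=\Beta(z_\beta)\oplus\Beta(x_\beta)$, and at a limit the value $\Beta(z_\lambda)=\sup_{\beta<\lambda}\Beta(z_\beta)$ from Theorem \ref{th:completeness} matches exactly the limit clause of Def.\ \ref{def:infiniteoplus}. This establishes that $z$ exists, is formed in the natural index order, and has birthday $\bigoplus_{\alpha<\gamma}\Beta(x_\alpha)$, with the explicit description
\[
z=\bigcup_{\alpha<\gamma}\{\,\tau_\alpha\oplus\beta\mid \beta\in x_\alpha,\ \beta<\Beta(x_\alpha)\,\}\cup\{\sigma\},\quad \tau_\alpha:=\bigoplus_{\alpha'<\alpha}\Beta(x_{\alpha'}),\ \ \sigma:=\bigoplus_{\alpha<\gamma}\Beta(x_\alpha),
\]
obtained by unwinding Lemma \ref{la:concatenation} along the induction: the sign sequence of $z$ is the ordered juxtaposition of those of the $x_\alpha$, with block $\alpha$ occupying the ordinal interval $\llbrack\tau_\alpha,\tau_{\alpha+1}\llbrack$.

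For the representation I would apply this to $\gamma:=\Beta(x)$ and $x_\alpha:=s_x(\alpha)1_\Co$. For $\alpha<\Beta(x)$ the value $s_x(\alpha)$ is a genuine sign in $\{+,-\}$, and $s_x(\alpha)1_\Co$ is a number of birthday $1$ whose one-entry sign sequence is $s_x(\alpha)$ (indeed $+1_\Co=1_\Co=\{0,1\}$ has sign sequence $(+)$ and $-1_\Co=1_\Co^\sharp=\{1\}$ has sign sequence $(-)$). By the juxtaposition just established, the sign sequence of $\bigoplus_{\alpha<\Beta(x)}(s_x(\alpha)1_\Co)$ is $(s_x(\alpha))_{\alpha<\Beta(x)}$, i.e.\ that of $x$, while its birthday is $\bigoplus_{\alpha<\Beta(x)}1=\Beta(x)$ (a one-line induction using Def.\ \ref{def:infiniteoplus}); since a number is determined by its sign sequence together with its birthday, this concatenation equals $x$. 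Uniqueness is then immediate: any concatenation of single-sign numbers $\sigma_\alpha 1_\Co$ has sign sequence exactly $(\sigma_\alpha)$, so requiring it to equal $x$ forces the length to be $\Beta(x)$ and $\sigma_\alpha=s_x(\alpha)$ for every $\alpha$.

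The main obstacle I anticipate is the limit-stage bookkeeping: one must verify that passing to the limit of Theorem \ref{th:completeness} neither drops nor duplicates signs, so that $\Beta(z_\lambda)=\sup_\beta\Beta(z_\beta)$ genuinely coincides with the transfinite ordinal sum of Def.\ \ref{def:infiniteoplus} and the concatenated sign sequence is the full juxtaposition without truncation. The successor steps, by contrast, are routine consequences of Lemma \ref{la:concatenation} and the finite birthday rule.
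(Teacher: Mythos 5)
Your proposal is correct and takes essentially the same route as the paper: the paper's own proof is exactly the transfinite induction you spell out, with successor stages handled by adjoining one new term via the binary $\oplus$ and limit stages handled by taking the limit of the resulting $\preceq$-chain (Theorem \ref{th:completeness}). Your write-up merely supplies the bookkeeping the paper leaves implicit (the chain verification via Lemma \ref{la:concatenation}, the birthday computation against Def.\ \ref{def:infiniteoplus}, and the sign-sequence juxtaposition argument for the representation \eqref{eqn:x-rep}), all of which is sound.
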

 
\begin{proof}
By induction.
If $\gamma$ is a successor ordinal, the sum is defined by adding a new term,
and if $\gamma$ is a limit ordinal, it is the limit of a chain. 
\end{proof}

\begin{theorem}
The Map "left translation by $a$",
$$
E_a : \No \to \desc(a) \subset \No, \quad x \mapsto a \oplus x
$$
is a Bijection onto the class of descendants of $a$. 
It preserves order and tree-structure.  In particular, for $a=1$, resp.\ $a=-1$,
we get Bijections
$$
\begin{matrix}
E_1 : &  \No \to \desc(1)=\{ x \in \No \mid x>0\}, & \quad
x \mapsto 1_\Co \oplus x,
\\
E_{-1} : &  \No \to \desc(1^\sharp)=\{ x \in \No \mid x<0\}, & \quad
x \mapsto (-1)_\Co \oplus x.
\end{matrix}
$$
They intertwine the main involutions:
$E_1 (x^\sharp) = (E_1x)^*$, and
$E_{-1} (x^\sharp)= (E_{-1}x)^*$
\end{theorem}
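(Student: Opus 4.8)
The map $E_a\colon x\mapsto a\oplus x$ is left concatenation by $a$, so by Lemma~\ref{la:concatenation} its effect on sign-expansions is simply to prepend the sign sequence of $a$ on $\llbrack 0,\Beta(a)\llbrack$ and then continue with that of $x$, shifted upward by $\Beta(a)$. The plan is to read every assertion off from this juxtaposition description, together with the left-subtraction property of ordinal addition (if $\Beta(a)\leq\gamma$ there is a unique $\beta$ with $\gamma=\Beta(a)\oplus\beta$). First I would check that $E_a$ lands in $\desc(a)$: since $a\oplus x$ agrees with $a$ on $\llbrack 0,\Beta(a)\llbrack$ and $\Beta(a\oplus x)=\Beta(a)\oplus\Beta(x)\geq\Beta(a)$, Definition~\ref{def:descendant} gives $a\preceq a\oplus x$, i.e.\ $a\oplus x\in\desc(a)$.

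For bijectivity, injectivity is immediate from left-subtraction, since $\beta\in x$ iff $\Beta(a)\oplus\beta\in a\oplus x$, so $a\oplus x$ determines $x$. For surjectivity, given $z$ with $a\preceq z$, I would set $x:=\{\beta\mid\Beta(a)\oplus\beta\in z\}$; writing $\Beta(z)=\Beta(a)\oplus\mu$ (legal as $\Beta(a)\leq\Beta(z)$), monotonicity of left addition gives $\max(x)=\mu$, so $x$ is a number, and the juxtaposition formula together with the fact that $a$ and $z$ coincide below $\Beta(a)$ yields $a\oplus x=z$. Thus $E_a$ is a bijection onto $\desc(a)$.

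For the total order, take $x\neq y$ and put $\delta:=\min(x\Delta y)$. The juxtaposition formula shows $(a\oplus x)\Delta(a\oplus y)=\{\Beta(a)\oplus\beta\mid\beta\in x\Delta y\}$, whose minimum is $\Beta(a)\oplus\delta$, and $\Beta(a)\oplus\delta\in a\oplus y$ iff $\delta\in y$; so by the discriminant description of the order (Theorem~\ref{th:order1}) we get $x<y$ iff $a\oplus x<a\oplus y$. For the tree structure, the succession rules $a\oplus y_{\pm}=(a\oplus y)_{\pm}$ and the continuity in the second argument recorded in the preceding theorem show that $E_a$ commutes with passage to children and with limits; since by connectedness (Theorem~\ref{th:connected}) every number is generated from $0_\Co$ by these operations, $E_a$ preserves $\preceq$ and the whole tree order. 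Preservation of $\preceq$ can also be checked directly from the same bookkeeping.

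Finally, once $E_a$ is a bijection onto $\desc(a)$, the images for $a=1_\Co$ and $a=1_\Co^\sharp$ follow from Theorem~\ref{th:order}(4): one computes $\desc(1_\Co)=\{y\mid 0\in y,\ \Beta(y)\geq 1\}=\{x\mid x>0\}$ and $\desc(1_\Co^\sharp)=\{y\mid 0\notin y\}=\{x\mid x<0\}$. The two intertwining identities $E_1(x^\sharp)=(E_1x)^*$ and $E_{-1}(x^\sharp)=(E_{-1}x)^*$ are exactly the relations $(1\oplus x)^*=1\oplus x^\sharp$ and $((-1)\oplus x)^*=(-1)\oplus x^\sharp$ already established in the preceding theorem, so nothing further is needed. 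The main obstacle is not conceptual: with the succession rules, the involution formulas, and the $\Beta$-morphism property all in hand, the only genuine care goes into the bijection, namely the repeated use of ordinal left-subtraction to pass between $z$ and its shifted-back part $x$ and the verification that this part is a number with maximum $\mu$; this bookkeeping, rather than any real difficulty, is the crux.
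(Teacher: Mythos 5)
Your proof is correct and follows essentially the same route as the paper's: the inverse is the ``tail'' of $a$ in a descendant $z$, recovered by ordinal left subtraction, and the intertwining identities are read off from the compatibility formulas $(1\oplus x)^*=1\oplus x^\sharp$, $((-1)\oplus x)^*=(-1)\oplus x^\sharp$ of the preceding theorem. If anything, your write-up is more careful than the paper's own proof, which asserts order- and tree-preservation without argument and whose tail formula $\{\alpha\ominus\Beta(a)\mid \alpha\in y,\ \alpha>\Beta(a)\}$ contains a small slip (the strict inequality drops $\alpha=\Beta(a)$, hence loses $0\in x$), whereas your formula $x=\{\beta\mid \Beta(a)\oplus\beta\in z\}$ handles that case correctly.
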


\begin{proof}
Clearly $a \preceq a \oplus x$.
Conversely, if $a \preceq y$, then 
$x$ is the "tail of $a$ in $y$" (\cite{Go}), p.3), given by
$$
x= \{ \alpha \ominus \Beta(a) \mid \alpha \in y, \alpha > \Beta(a) \}
$$
where $\alpha \ominus \beta = \gamma$ iff
$\alpha = \gamma \oplus \beta$ for ordinals, 
so $E_a$ is a Bijection.  
The intertwining property is direct from the definition of $\sharp$ and $*$.
\end{proof}


Computing the sign-expansions of
 $\otimes$, the following lemma states that the sign expansion of
 $x \otimes y$ is given 
by repeating the one of $x$ for each $+$ in the one of $y$,
repectively,  of $x^\sharp$ for each $-$ in the sign-expansion of $y$:

\begin{lemma}
For each pair of numbers $(x,y)$,
$$
x \otimes y = 
\bigoplus_{\alpha < \Beta(y)} (s_y(\alpha) x ).
$$
The sign-expansion of $x \otimes y$ is given by
\begin{equation}
s_{x \otimes y} ( \Beta(x) \otimes \alpha \oplus \beta) =
s_y(\alpha) \cdot s_x(\beta) .
\end{equation}
\end{lemma}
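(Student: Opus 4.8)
The plan is to establish the concatenation identity
\[
x \otimes y = \bigoplus_{\alpha < \Beta(y)} (s_y(\alpha)\, x)
\]
first, and then read off the sign-expansion formula from it. I would prove the identity by transfinite induction on $y$, following the canonical path from $0_\Co$ to $y$ furnished by Theorem \ref{th:connected}. The guiding observation is that both sides are generated by the \emph{same} inductive scheme: on the left, the recursive rules defining $\otimes$ (succession in the second argument and continuity at limits); on the right, the inductive construction of the infinite concatenation in Theorem \ref{th:concatenation2}.

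For the base case $y = 0_\Co$ the right-hand side is an empty concatenation, equal to $0_\Co$, matching $x \otimes 0 = 0$. For the successor step I would use that $y_+ = y \cup \{\Beta(y)+1\}$ and $y_- = (y \setminus \{\Beta(y)\}) \cup \{\Beta(y)+1\}$ have sign-expansions obtained from that of $y$ by appending a single $+$, resp.\ $-$, at position $\Beta(y)$, so that
\[
\bigoplus_{\alpha < \Beta(y_\pm)}(s_{y_\pm}(\alpha)\, x)
= \Bigl(\bigoplus_{\alpha<\Beta(y)}(s_y(\alpha)\,x)\Bigr) \oplus \bigl(s_{y_\pm}(\Beta(y))\, x\bigr),
\]
where $s_{y_+}(\Beta(y))\,x = x$ and $s_{y_-}(\Beta(y))\,x = x^\sharp$; by the induction hypothesis the bracketed term is $x \otimes y$, and the succession rules $x \otimes y_+ = x\otimes y \oplus x$ and $x \otimes y_- = x \otimes y \oplus x^\sharp$ close the step. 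For a limit number $y = \lim_{\alpha\to\lambda}[y]_\alpha$, the continuity rule gives $x\otimes y = \lim_{\alpha\to\lambda}(x\otimes [y]_\alpha)$, while the truncations satisfy $x \otimes [y]_\alpha = \bigoplus_{\beta<\alpha}(s_y(\beta)\,x)$ by the induction hypothesis; since $\bigoplus_{\beta<\lambda}(s_y(\beta)\,x)$ is \emph{by definition} the limit of these partial concatenations (Theorem \ref{th:concatenation2}), the two coincide.

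To derive the sign-expansion formula I would exploit that the $\alpha$-th block $s_y(\alpha)\,x$ of the concatenation has birthday $\Beta(x)$ (since $\Beta(x^\sharp)=\Beta(x)$), so by the birthday morphism it begins at cumulative position $\bigoplus_{\alpha'<\alpha}\Beta(x) = \Beta(x)\otimes\alpha$ and occupies the positions $\Beta(x)\otimes\alpha\oplus\beta$ with $\beta<\Beta(x)$. As $\alpha$ ranges over $\Beta(y)$ and $\beta$ over $\Beta(x)$, these exhaust uniquely all positions below $\Beta(x)\otimes\Beta(y)=\Beta(x\otimes y)$ (left division with remainder by $\Beta(x)$). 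Iterating the juxtaposition rule of Lemma \ref{la:concatenation} to the infinite concatenation, the sign of $x\otimes y$ at $\Beta(x)\otimes\alpha\oplus\beta$ equals the sign of the $\alpha$-th block at position $\beta$: if $s_y(\alpha)=+$ this is $s_x(\beta)$, and if $s_y(\alpha)=-$ the block is $x^\sharp$, whose sign at $\beta$ is $-s_x(\beta)$ by Definition \ref{def:sharp}. Identifying the signs $\pm$ with $\pm 1$, both cases are summarised by $s_y(\alpha)\cdot s_x(\beta)$, which is the claimed formula (positions $\geq \Beta(x\otimes y)$ carrying the sign $0$, consistently with $s_y(\alpha)=0$ for $\alpha\geq\Beta(y)$).

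The main obstacle is the limit step: one must know that the partial concatenations $\bigoplus_{\beta<\alpha}(s_y(\beta)\,x)$ form a $\preceq$-chain (so their limit exists and is computed by Theorem \ref{th:completeness}) and that the continuity rule for $\otimes$ agrees with this limit. Both are exactly what Theorem \ref{th:concatenation2} packages, so I would lean on it rather than redo the limit analysis. A secondary, more clerical difficulty is the positional bookkeeping in the sign formula: checking that $\bigoplus_{\alpha'<\alpha}\Beta(x)=\Beta(x)\otimes\alpha$ and that the decomposition $\Beta(x)\otimes\alpha\oplus\beta$ with $\beta<\Beta(x)$ is unique, which rests only on the elementary facts about ordinal multiplication as iterated addition and on left division with remainder.
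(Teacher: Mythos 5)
Your proposal is correct and follows exactly the route the paper intends: the paper's own proof is the one-line remark that "the first claim is proved by induction, and the second is a direct consequence," and your transfinite induction on $y$ (base, successor via the rules $x\otimes y_\pm = x\otimes y \oplus x$ resp.\ $\oplus\, x^\sharp$, limit via continuity and Theorem \ref{th:concatenation2}) together with the positional bookkeeping $\bigoplus_{\alpha'<\alpha}\Beta(x)=\Beta(x)\otimes\alpha$ and left division with remainder is precisely the argument being compressed there. Nothing is missing; you have simply supplied the details the paper leaves to the reader.
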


\begin{proof} The first claim is proved by induction, and the second is a direct consequence.
\end{proof}

\begin{example}

$x \otimes \OM$ is $\omega$ times juxtaposition of the sequence of $x$ with itself,

$\OM \otimes x$ is a juxtaposition of sequences of $\omega$ plusses or minusses.
\end{example}

Clearly the element $(-1)_\Co$ commutes with all numbers under $\otimes$, so that
$$
(x \otimes y)^\sharp = x \otimes y \otimes (-1)_\Co = x \otimes (-1)_\Co \otimes y =
x^\sharp \otimes y = x \otimes y^\sharp.
$$
(In fact, the only numbers that commute with all others under $\otimes$ are 
$0_\Co,1_\Co,(-1)_\Co$.)

\begin{lemma}
For every number $a$ and sign $\sigma \in \{ +,- \}$,  let
$$
a^\sigma := \Bigl\{ \begin{matrix} a & \mbox{ if } \sigma = + \\
a^* & \mbox{ if } \sigma = -  . 
\end{matrix}
$$
Then,  for every pair of numbers $(x,y)$,
$$
x \ootimes y := \bigotimes_{\alpha < \Beta(y)} x^{s_y(\alpha)}.
$$
In particular, it follows that $a \ootimes \alpha_\Co = \otimes_{\gamma \in \alpha} a$ is a concatenation
product of $\alpha$ factors $a$,  and $a^* = a \ootimes 1_\Co^\sharp$, and

$a \otimes a = a \ootimes 2_\Co = a \ootimes \{ 0,1,2 \}$, 

$a \otimes a^* = a \ootimes \{ 0, 2 \}$,

$a^* \otimes a = a \ootimes \{ 1,2 \}$

$a^* \otimes a^* = a \ootimes \{ 2 \}$,

$ a \otimes a \otimes a = a \ootimes 3_\Co = a \ootimes \{ 0,1,2,3 \}$, etc.
\end{lemma}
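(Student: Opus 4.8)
The plan is to prove the product formula by transfinite induction on $\Beta(y)$, in exact parallel with the sign-expansion formula for $\otimes$ just established, under the dictionary ``concatenation sum $\oplus$ becomes concatenation product $\otimes$, and the twist $\sharp$ becomes the twist $*$''. Before running the induction I would make sense of the infinite concatenation product $\bigotimes_{\alpha<\gamma} f_\alpha$ of a family of numbers, defined exactly as for $\bigoplus$ (Theorem \ref{th:concatenation2}): left-associated partial products on successor stages, and the $\preceq$-limit (Theorem \ref{th:completeness}) on limit stages. The point requiring attention is that the limit stage presupposes that the partial products form a $\preceq$-chain, and this is controlled by a single observation: since $0\in x^*\iff 0\in x$ by Definition \ref{def:involution2}, we have $\sgn(x^*)=\sgn(x)$, so every factor $x^{s_y(\alpha)}\in\{x,x^*\}$ carries the same leading sign $s_x(0)$. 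When $s_x(0)=+$, appending a factor extends the current product as an initial segment (by the low-index part of the sign-expansion formula for $\otimes$), so the partial products do form a $\preceq$-chain and the limit exists.

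With $\bigotimes$ in hand, the induction is routine. For the base case $\Beta(y)=0$ we have $y=0_\Co$, the product is empty, and the empty product is $1_\Co=x\ootimes 0_\Co$. For the successor case, write $y=z_+$ or $y=z_-$; then $\Beta(y)=\Beta(z)+1$, the sign-expansion $s_y$ agrees with $s_z$ on $\llbrack 0,\Beta(z)\llbrack$, while $s_y(\Beta(z))=+$ in the first case and $-$ in the second. Peeling off the last factor gives $\bigotimes_{\alpha<\Beta(y)} x^{s_y(\alpha)}=\bigl(\bigotimes_{\alpha<\Beta(z)} x^{s_z(\alpha)}\bigr)\otimes x^{s_y(\Beta(z))}$, which by the induction hypothesis equals $(x\ootimes z)\otimes x^{\pm}$; since $x^+=x$ and $x^-=x^*$, this matches exactly the succession rules $x\ootimes z_+=(x\ootimes z)\otimes x$ and $x\ootimes z_-=(x\ootimes z)\otimes x^*$. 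For a limit number $y=\lim_{\alpha\to\lambda} y_\alpha$ with $y_\alpha=[y]_\alpha$, the continuity rule defining $\ootimes$ gives $x\ootimes y=\lim_{\alpha\to\lambda}(x\ootimes y_\alpha)$, while by the induction hypothesis each $x\ootimes y_\alpha=\bigotimes_{\beta<\Beta(y_\alpha)} x^{s_{y_\alpha}(\beta)}$ is a partial product of $\bigotimes_{\beta<\Beta(y)} x^{s_y(\beta)}$; the two limits coincide by the limit-stage definition of $\bigotimes$.

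The asserted special values then follow by reading off sign-expansions and applying the formula. For $y=\alpha_\Co=\{0,1,\dots,\alpha\}$ the signs $s_y(\gamma)$ are all $+$ for $\gamma<\alpha=\Beta(\alpha_\Co)$, so $a\ootimes\alpha_\Co=\bigotimes_{\gamma\in\alpha} a$; for $1_\Co^\sharp=\{1\}$ the unique sign is $-$, giving $a\ootimes 1_\Co^\sharp=a^*$; and $2_\Co=\{0,1,2\}$, $\{0,2\}$, $\{1,2\}$, $\{2\}$, $3_\Co=\{0,1,2,3\}$ carry the sign-strings $(+,+)$, $(+,-)$, $(-,+)$, $(-,-)$, $(+,+,+)$, which produce $a\otimes a$, $a\otimes a^*$, $a^*\otimes a$, $a^*\otimes a^*$, $a\otimes a\otimes a$ respectively.

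The main obstacle is precisely the limit stage. The naive ``$\preceq$-limit of partial products'' definition of $\bigotimes$ is legitimate only when those partial products form a $\preceq$-chain, which, as noted, holds when $\sgn(x)=+$. For $\sgn(x)=-$ the leading block flips at every factor, so the successive powers $a\ootimes n_\Co$ oscillate in sign and are \emph{not} a $\preceq$-chain; there one cannot take the limit naively, and the formula must be read through the continuity clause already built into the definition of $\ootimes$ (equivalently, one restricts the clean statement to positive base $x$, as is standard for surreal exponentiation). Verifying that this is the only delicate point, and that the successor computation reproduces the recursive rules verbatim, is the crux; everything else is bookkeeping parallel to the $\otimes$-lemma.
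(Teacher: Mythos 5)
Your proof is correct and takes essentially the same route as the paper: the paper's entire proof of this lemma is ``By induction,'' and your transfinite induction on $\Beta(y)$ (empty base case, peeling off the last factor to match the succession rules $x \ootimes y_\pm$, continuity at limits) is exactly that argument with the details filled in. The subtlety you flag for $\sgn(x)=-$ --- that the partial products fail to form a $\preceq$-chain, so the naive limit stage breaks down --- is genuine, but it is a gap in the paper's own continuity clause defining $\ootimes$ rather than a defect introduced by your argument, and your explicit handling of it (restriction to positive base, or deferring to the definitional clause) goes beyond what the paper provides.
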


\begin{proof}
By induction.
\end{proof}

\begin{remark}
One could continue and define ordinal tetration, and other higher hyperoperations $H_n(a,x)$, on 
Conway numbers, following the pattern started above:
the only problem is to figure out the "correct definition" of
$H_n(a,-1)$, i.e., to define higher order "main involutions of $\No$";
 then  induction together with distributivity  entirely determines
$H_n(a,x)$ for all $x$.
\end{remark}

\begin{remark}
What other properties of ordinal calculus carry over to $\No$?
For instance, what about (left) division with remainder, and some analog of the
Cantor normal form?
How would  this be related to the "Conway normal form"?
This is a topic for subsequent work. See also Section \ref{sec:Grothendieck}. 
\end{remark}

\subsubsection{Purely infinite numbers}

\begin{definition}
A number $y$ is called {\em purely infinite} \cite{Go} (or {\em indivisible integer}, \cite{Co01}), or
{\em star}, \cite{Sim}), if there is a number $x$ such that  $y = \OM \otimes x$.
In other terms, the sign sequence is given by blocs of $\omega$ pluses or minuses, according to
the pattern of $x$.
The class of all purely infinite numbers is denoted by
$(\OM)$ or
${\mathbb J}$.
\end{definition}

\begin{example}
$\OM \otimes 1 = \OM$,

$\OM \otimes 2 = \OM \oplus \OM = \{ 0,1,2,\ldots, \omega \oplus \omega \}$

$\OM \otimes 2^\sharp = \OM^\sharp \oplus \OM^\sharp = \{ \omega \oplus \omega\}$

$\OM \otimes \{ 1,2 \} = \{ \omega,\omega+1,\ldots,\omega + \omega \}$. In general, 

\begin{equation}\label{eqn:string}
\OM \otimes x =
\OM  \otimes (\bigoplus_{\alpha < \Beta(x)} s_x(\alpha) 1) =
\bigoplus_{\alpha < \Beta(x)} s_x(\alpha) \omega 
\end{equation}
can be seen as the concatenation of strings which take the sign expansion of a number, and repeat each sign
$\omega$ times (cf.\  \cite{Sim}, 7.3, who adds the remark: 
 "this is another fractal view of $\No$.")  
The Map
 \begin{equation}\label{eqn:J-map}
 \No \to {\mathbb J}, \quad
 x \mapsto \omega \otimes x
 \end{equation}
  is a Bijection.
   From distributivity it follows that this map is an isomorphism of binary trees.
  (This an example of a "surreal substructure", cf.\ \cite{BH1}). 
   \end{example}



\subsubsection{Omnific integers and purely infinite numbers} \label{sec:OZ}

\begin{lemma}\label{la:omnific} 
Every omnific integer has a unique representation
$$
x = x_\bJ \oplus y,
$$
where $x_\bJ = \omega \otimes z$ is purely infinite, and $y \in \Z$ is a usual integer.
Conversely, every such sum defines an omnific integer. 
\end{lemma}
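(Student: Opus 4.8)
The plan is to pass from the tipping-point definition to the sign expansion, where the notion ``omnific integer'' becomes transparent, and then to read off the decomposition directly from the $\omega$-block structure of the sign sequence. First I would establish the key reformulation: a number $x$ is an omnific integer (i.e.\ $\tip(x)=\Beta(x)$, Def.\ \ref{def:integer}) \emph{if and only if} its sign expansion is constant across successor steps below the birthday, that is $s_x(\alpha)=s_x(\alpha+1)$ whenever $\alpha+1<\Beta(x)$. Indeed, a change $+\to-$ at $\alpha\to\alpha+1$ means $\alpha\in x$ and $\alpha+1\notin x$, which is exactly the first alternative in Def.\ \ref{def:tippingpoint}; a change $-\to+$ means $\alpha+1\in x$ is a successor whose predecessor $\alpha$ is a hole, which is the second alternative; and conversely any tipping candidate strictly below $\Beta(x)$ produces such a change. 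Since a limit ordinal has no predecessor, successor-constancy forces the sign to be constant on each maximal block $[\lambda,\lambda\oplus\omega)$ (for $\lambda$ a limit or $0$), with no relation imposed between distinct blocks. Thus omnific integers are precisely the numbers whose sign expansion is constant on every such $\omega$-block.

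For existence, given an omnific $x$ I would apply the canonical decomposition (\ref{eqn:decomposition}) to write $\Beta(x)=\lambda\oplus n$ with $\lambda$ a limit (possibly $0$) and $n\in\N$. Set $x_\bJ:=[x]_\lambda$ (truncation, Def.\ \ref{def:truncation}). Because $\lambda$ is a limit it is a multiple of $\omega$, so $[0,\lambda)$ is exactly a union of complete $\omega$-blocks, on each of which $x$ --- hence $x_\bJ$ --- has constant sign. Encoding these block-signs by a number $z$ with $\Beta(z)=\gamma$ where $\lambda=\omega\otimes\gamma$, formula (\ref{eqn:string}) identifies $x_\bJ=\OM\otimes z$, which is purely infinite. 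The terminal run $[\lambda,\Beta(x))$ carries a single constant sign $\sigma=s_x(\lambda)$ of length $n$; put $y:=n_\Co$ if $\sigma=+$ and $y:=\{n\}$ if $\sigma=-$, so $y\in\Z$ and $\Beta(y)=n$. By the concatenation Lemma \ref{la:concatenation} the sign expansion of $x_\bJ\oplus y$ is the juxtaposition of those of $x_\bJ$ and $y$, which reproduces $s_x$; as both numbers have birthday $\Beta(x)$, they coincide, giving $x=x_\bJ\oplus y$.

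For uniqueness, I would observe that $\Beta(x_\bJ)=\omega\otimes\Beta(z)$ is always $0$ or a limit, whereas $\Beta(y)<\omega$; so in $\Beta(x)=\Beta(x_\bJ)\oplus\Beta(y)$ the uniqueness in (\ref{eqn:decomposition}) forces $\Beta(x_\bJ)=\lambda$ and $\Beta(y)=n$. With both birthdays fixed, $x_\bJ$ must be the initial segment $[x]_\lambda$ and $y$ the terminal run, so the two factors are determined by $x$. The converse is then immediate: any $x_\bJ\oplus y$ with $x_\bJ=\OM\otimes z$ purely infinite and $y\in\Z$ has, by (\ref{eqn:string}) and Lemma \ref{la:concatenation}, a sign expansion consisting of $\omega$-blocks followed by a finite constant run, which is successor-constant below its birthday, hence (first paragraph) an omnific integer.

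The one genuinely delicate step is the sign-expansion characterization of the first paragraph: correctly collapsing the two disjunctive alternatives in Def.\ \ref{def:tippingpoint} into the single statement ``no sign change at a successor position'', and in particular handling the boundary case in which a putative change would fall exactly at $\Beta(x)$, where $s_x=0$ and the clause $\alpha+1\notin x$ fails precisely because $\Beta(x)\in x$. Once that equivalence is pinned down, the remainder is routine bookkeeping with the $\omega$-block structure and the concatenation lemma.
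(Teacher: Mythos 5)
Your proof is correct and takes essentially the same route as the paper's: decompose the birthday (which equals the tipping point) as $\lambda\oplus n$ with $\lambda$ a limit, set $x_\bJ=[x]_\lambda$, and read off $y$ as $n_\Co$ or $\{n\}$ according to the sign of the terminal run --- precisely the paper's cases (a) $\lambda\in x$ and (b) $\lambda\notin x$. You additionally make explicit what the paper's terse proof leaves implicit (the block-constancy characterization of omnific integers, the identification $x_\bJ=\OM\otimes z$ via the sign-expansion formula, uniqueness through birthday arithmetic, and the converse), so nothing is missing.
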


\begin{proof}
Decompose $\tip(x) = \lambda(x) \oplus n(x)$ into a limit ordinal plus a natural number 
and let $x_\bJ := x \cap \llbrack 0, \lambda(x) \rrbrack \cup \{ \lambda(x) \}$.
Two cases arise:

(a) if $\lambda(x) \in x$, then $y = \{ 0,1,\ldots,n(x) \}$ is such that
$x = x_\bJ \oplus y$,

(b) if if $\lambda(x) \notin x$, then $y = \{ n(x) \}$ is such that
$x = x_\bJ \oplus y$.

\nin
In both cases, we get a decomposition as claimed.
\end{proof}

\begin{theorem}\label{th:decomp1}
Every number $x$ has a unique decomposition
$$
x = z \oplus u,
$$
where $z = [x]\in \Oz$ and $0 \leq u < 1$.
In other terms, $\No / \Oz \cong  [0,1[$.
\end{theorem}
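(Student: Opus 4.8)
The plan is to recognise $z=[x]$ as the order-theoretic \emph{floor} of $x$ and to realise the decomposition through the additive (Conway) structure, under which $\Oz$ is a subgroup of $(\No,+)$ and, by Theorem \ref{th:order}(4), $[0,1[\,=\{u\in\No\mid \tip(u)=0\}$ is a set of coset representatives. Concretely I would put $u:=x-z$ and reduce the whole statement to the three facts $z\in\Oz$, $z\le x$, and $x<z+1$: the first two are pure sign-expansion combinatorics, while the third is the genuine content.

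That $z=[x]$ is an omnific integer is immediate. Since $\tip(x)\in x$, the set $[x]=\{\alpha\in x\mid\alpha\le\tip(x)\}$ has maximum $\tip(x)$, so $\Beta([x])=\tip(x)$ and, by Definition \ref{def:truncation}, $[x]=[x]_{\tip(x)}$; hence $[x]\preceq x$ by Lemma \ref{la:la}. As $[x]$ agrees with $x$ on $\llbrack 0,\tip(x)\llbrack$ and $\tip(x)$ is, by Definition \ref{def:tippingpoint}, the \emph{least} ordinal at which $x$ tips, no tip of $[x]$ can occur below its birthday, so $\tip([x])=\Beta([x])$ and $[x]\in\Oz$. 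For $z\le x$, either $x=z$, or else the discriminant $\delta(z,x)=\min\{\alpha\in x\mid\alpha>\tip(x)\}$ belongs to $x$, whence $z<x$ by Theorem \ref{th:order1}; in either case $u=x-z\ge 0$.

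The main obstacle is the upper bound $x<z+1$, equivalently $u<1$: this is exactly where the \emph{combinatorial} integer part must be identified with the \emph{order-theoretic} floor, and where one genuinely leaves set/tree combinatorics for Conway addition. I would handle it by first recording the two sign-expansion characterisations $\Oz=\{w\mid\tip(w)=\Beta(w)\}$ and $[0,1[\,=\{u\mid\tip(u)=0\}$, together with the \emph{discreteness} of $\Oz$ (distinct omnific integers differ by at least $1$). Splitting on $\sgn(x)$ then makes the role of $\tip(x)$ transparent: the truncation $[x]=[x]_{\tip(x)}$ absorbs precisely the maximal ``integer'' initial segment, so that the Conway tail $x-[x]$ has first sign change at $0$, i.e.\ $\tip(x-[x])=0$. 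Consequently $z=[x]$ is the \emph{greatest} omnific integer $\le x$: were $z+1\le x$, then $z+1$ would be an omnific integer with $z<z+1\le x$, contradicting maximality; hence $x<z+1$ and $0\le u<1$. I expect this verification that the tail lands in $[0,1[$ (rather than in a larger coset) to be the hard step, precisely because it is insensitive to the purely combinatorial structure and must invoke the additive group law.

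Finally, uniqueness follows from discreteness of $\Oz$: if $x=z'+u'$ with $z'\in\Oz$ and $0\le u'<1$, then $z'\le x<z'+1$, so $z'$ is an omnific integer lying in the same unit interval as $z=[x]$, forcing $z'=z$ and then $u'=u$. This exhibits the mutually inverse maps $x\mapsto\bigl([x],\,x-[x]\bigr)$ and $(z,u)\mapsto z+u$ between $\No$ and $\Oz\times[0,1[$, i.e.\ $\No/\Oz\cong[0,1[$.
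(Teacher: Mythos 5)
You are not proving the statement as it stands: throughout, you have replaced the concatenation $\oplus$ of Section \ref{sec:CoCa} by the Conway sum $+$, taking $u:=x-[x]$. In the theorem, $x=[x]\oplus u$ means that the sign expansion of $x$ is the juxtaposition of the sign expansion of $[x]$ with that of $u$ (Lemma \ref{la:concatenation}); since $[x]\preceq x$, such a $u$ exists and is unique by the left-translation Bijection $E_{[x]}$, and that is how the paper's proof begins. No Conway arithmetic enters at all: the paper's whole argument is a reading of sign expansions ($0\in u$ because $\tip(x)\in x$, and --- so it is claimed --- $1\notin u$, which gives $0\le u<1$ by Theorem \ref{th:order}(4)). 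Your $u$ is in general a different number: for $x=\{2,3,5\}=-\tfrac{11}{8}$ one has $[x]=\{2\}=(-2)_\Co$, the concatenation tail is $\{0,1,3\}=\tfrac{3}{2}$, whereas $x-[x]=\tfrac{5}{8}$. Moreover, inside your own reading the decisive step is missing: you obtain $x<[x]+1$ from ``$[x]$ is the greatest omnific integer $\le x$'', which, given the discreteness of $\Oz$ you also assume, is \emph{equivalent} to the inequality being proven --- the argument is circular --- and the facts carrying it (that $\Oz$ is a subgroup of $(\No,+)$, that distinct omnific integers differ by at least $1$, that $z+1\in\Oz$, and that $\tip(x-[x])=0$) are all asserted, not proven. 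None of them is established anywhere in the paper, and the last one is an instance of the open Question 2 of Section \ref{sec:questions} (describe $+$ in terms of sign expansions), so it cannot simply be ``read off''.

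There is, however, something important in your misreading: the $+$-version you attack is the one that is actually true, while the literal ($\oplus$-)statement fails. For $x=\{1,2\}=-\tfrac{1}{2}$, the second disjunct of Definition \ref{def:tippingpoint} gives $\tip(x)=1$, so $[x]=\{1\}=(-1)_\Co\in\Oz$, and the unique $u$ with $x=[x]\oplus u$ is $\{0,1\}=1_\Co$, which is not $<1_\Co$; no other ancestor of $x$ lying in $\Oz$ does better, so the asserted decomposition does not exist for this $x$. The gap in the paper's own proof sits exactly at ``$1\notin u$'': when the first sign change occurs \emph{into} position $\tip(x)$ (second disjunct), the ordinal $\tip(x)+1$ may well belong to $x$. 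What holds --- and is Conway's classical theorem on omnific integers --- is $x=[x]+u$ with $0\le u<1$ (for the example, $-\tfrac{1}{2}=-1+\tfrac{1}{2}$). So a correct treatment must either prove that $+$-version, which requires genuinely establishing discreteness of $(\Oz,+)$ and the bound $x<[x]+1$ (via the normal form, or Conway's cut argument), or else repair the concatenation statement; your proposal, as written, does neither.
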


\begin{proof}
Existence:
let $z:= [x]$; clearly, $z \preceq x$ so there exists a number $u$ with
$x = z \oplus u$.
From the definition of the tipping element $\tip(x)$, it follows that $\tip(x) \in x$, so
$0 \in u$, but $1 \notin u$ since the sign expansion of $x$ changes sign at $\tip(x)$.
This means that $0 \leq u < 1$ (Theorem \ref{th:order1}).

Uniqueness:
all arguments can essentially be reversed, showing that $u$ must be defined by the
first sign change that occurs in the sign-sequence of $x$, leading to
$\Beta(z) = \tip(x)$.
\end{proof}


\section{Natural arithmetic on $\No$ (Conway arithmetics)}

Conway addition $+$ and multiplication $\cdot$ on $\No$ behaves with respect
to $\oplus$ and $\otimes$ just like natural (Hessenberg) operations on $\On$
behave with respect to Cantor ordinal arithmetic.
The fact that they are not continuous with respect to limits makes them
harder to define (and appear "less natural" from a point of view of pure set theory!), 
but one gains commutativity and the full field structure of $\No$.
We first state the fundamental results, due to Conway, and then turn to discussing
strategies of proof.

\begin{theorem}[Sum of numbers]\label{th:Group}
For each pair of numbers $(x,y)$, there is a unique number $z=x+y$ such that
$\Beta(z)$ is minimal, subject to the condition:

for all numbers $x',x'',y',y''$,
\begin{align*}
\Beta(x') < \Beta(x), \, \Beta(x'') < \Beta(x), \, 
x' < x < x'' & \quad  \Rightarrow \quad x' + y < x + y < x'' + y,
\\
\Beta(y') < \Beta(y), \, \Beta(y'') < \Beta(y), \, 
y' < y < y'' & \quad \Rightarrow \quad x + y' < x + y < x + y'' .
\end{align*}
\begin{itemize}
\item
Addition is associative,
  commutative, has neutral $0_\Co$ and inverses
$-x = x^\sharp$.
\item
Monotonicity holds throughout: $(\No,+,\leq)$ is an Ordered Group.
\item
The above definition of $x+y$ holds also if we quantify over
 
 all numbers
 $x',x'',y',y''$, such that
 
 $x' \prec x, x'' \prec x, x'<x<x''$, resp.
 
 $y' \prec y, y'' \prec y, y'<y< y''$. 
\end{itemize}
\end{theorem}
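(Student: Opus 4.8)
The plan is to define $x+y$ by transfinite induction on the pair of birthdays $(\Beta(x),\Beta(y))$, ordered componentwise (the product of two well-orders, hence well-founded), running the usual Conway cut construction against the \emph{canonical} predecessor sets of Lemma~\ref{la:CanCut}. Assuming $x'+y$ and $x+y'$ already defined for all $\prec$-predecessors $x'\prec x$, $y'\prec y$, I form the left collection $L:=\{x'+y\mid x'\in L_x\}\cup\{x+y'\mid y'\in L_y\}$ and the right collection $R:=\{x''+y\mid x''\in R_x\}\cup\{x+y''\mid y''\in R_y\}$. The first task is to check that $(L,R)$ is a valid cut, i.e. every member of $L$ is strictly below every member of $R$; this is exactly where the inductively assumed monotonicity is needed (e.g. $x'+y<x+y''$ is obtained by inserting the already-defined intermediate $x'+y''$, both comparisons being between pairs strictly below $(x,y)$ in the product order). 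Granting validity, the class of numbers lying strictly between $L$ and $R$ is convex and, by the Fundamental Existence Theorem~\ref{th:FundamentalExistence}, non-empty; by Lemma~\ref{la:convex} it has a unique element of minimal birthday, which I declare to be $x+y$.

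\textbf{Monotonicity as an inductive invariant.} The heart of the argument — and the step I expect to be the main obstacle — is that well-definedness cannot be separated from monotonicity: I would prove, by one simultaneous induction on the product order, the package consisting of (a) validity of the cut, (b) the resulting strict inequalities $x'+y<x+y<x''+y$ and $x+y'<x+y<x+y''$ between $x+y$ and its options, and (c) the monotonicity law ``$a<b\Rightarrow a+c<b+c$'' together with its weak form for $\leq$, for all triples whose relevant birthday pairs lie at or below $(\Beta(x),\Beta(y))$. The delicate part is the bookkeeping of which pairs count as ``smaller'', so that each inequality invoked in the step for $(x,y)$ genuinely belongs to a strictly earlier stage; this is the same mutual recursion that appears in Conway and Gonshor, whose template I would follow. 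Once the operation is totally defined, global monotonicity — hence that $(\No,+,\leq)$ is an Ordered Group — follows since every triple is reached at some stage.

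\textbf{Algebraic laws.} Commutativity is a short induction: the defining cut of $y+x$ is, term by term and by inductive commutativity on smaller pairs, the same cut as that of $x+y$, so the two share the same simplest interpolant. For the neutral element I use that $0_\Co$ is the root, so $L_{0_\Co}=R_{0_\Co}=\eset$; the cut for $x+0_\Co$ then reduces to $(L_x,R_x)$ (after $x'+0_\Co=x'$ by induction), whose simplest interpolant is $x$ itself by the converse canonical-cut statement, Theorem~\ref{th:CanCut}. For inverses I set $-x:=x^\sharp$ and prove $x+x^\sharp=0_\Co$ by induction: since $\sharp$ is the order-reversing involution of Theorem~\ref{th:order}(2) that swaps $L$ and $R$, each left option $x'+x^\sharp$ with $x'\in L_x$ satisfies $x'+x^\sharp<x'+(x')^\sharp=0_\Co$ (monotonicity in the second argument, as $x^\sharp<(x')^\sharp$), and each right option lies above $0_\Co$ by the symmetric computation, so $0_\Co$, being of minimal birthday, is the interpolant. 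Associativity $(x+y)+z=x+(y+z)$ is the one genuinely heavy computation: both sides are shown to be the simplest number of one and the same cut, after the typical options are matched using inductive associativity; I would present this as the standard Conway induction rather than expand it.

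\textbf{Equivalence of the two option ranges.} Finally, to obtain the theorem exactly as stated (quantifying over \emph{all} smaller-birthday $x',x'',y',y''$) together with the third bullet (quantifying only over $\prec$-predecessors), I note that the canonical cut $(L,R)$ used above is contained in the full cut $(L^{f},R^{f})$, so the full betweenness class is contained in the canonical one. By the global monotonicity already proved, $x'<x$ gives $x'+y<x+y$ for every $x'$ of smaller birthday, and likewise on the other three sides; hence $x+y$ lies in the full betweenness class as well. Being an element of the smaller class of minimal birthday, it stays of minimal birthday in the larger class, so both cuts single out the same number. This closes the loop and shows the two formulations agree.
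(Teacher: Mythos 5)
Your proposal is correct and takes essentially the same route as the paper: the paper's own ``Ideas of proof'' likewise reduce the theorem to Conway's cut construction --- building the sum cut from timely representations, taking the simplest interpolant via the Fundamental Existence Theorem, establishing monotonicity and the group laws by the standard mutual transfinite induction, and observing that quantifying over all smaller-birthday numbers versus $\prec$-predecessors amounts to choosing the Cuesta-Dutari cut versus the canonical cut. The paper defers the detailed inductive bookkeeping to Conway, Gonshor, Alling and Simons, which is exactly the template you invoke for the delicate steps.
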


\begin{theorem}[Product of numbers]\label{th:number-product}
For each pair of numbers $(x,y)$, there is a unique number $xy$ such that
$b(xy)$ is minimal, 
subject to the condition:

for all numbers $x',x'',y',y''$,

$\Beta(x')< \Beta(x), \, \Beta(y') < \Beta(y), \, 
x' < x , \,   y' < y$  $\quad   \Rightarrow  \quad x'y + xy' - x' y' < xy$,

$\Beta(x'') < \Beta(x), \, \Beta(y'') < \Beta(x), \,
x < x'', \,  y < y''$  $\quad   \Rightarrow  \quad x''y + xy'' - x'' y'' < xy$,

$\Beta(x') < \Beta(x), \, \Beta(y'') < \Beta(y), \, 
x' < x , \,  y < y''$  $\quad   \Rightarrow  \quad xy < xy'' + x'y - x' y'' $,

$\Beta(x'')<\Beta(x), \, \Beta(y')< \Beta(y), \, 
x < x'' , \,   y' < y$  $\quad   \Rightarrow  \quad  xy < x'' y + xy' - x'' y'$.

\nin
The element $1_{\mathrm{Co}}:=\{ 0,1 \}$ is neutral. 
Moreover, 
 $(\No,+,\cdot,\leq)$ is a Commutative Ordered Ring.
On the class of Conway ordinals, the ring operations correspond to the  
 "natural (Hessenberg) operations".

In the definition of $xy$, we could also quantify over numbers 
$x',x'',y',y''$, where the above condition $\Beta(x') <\Beta(x)$ is replaced by
$x' \prec x$, etc. 
\end{theorem}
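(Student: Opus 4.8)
The plan is to establish all the assertions simultaneously by a single transfinite induction, following Conway's strategy (\cite{Co01}, the proof of his Theorem~8) but recast in the present set-theoretic language, exactly as was done for the sum in Theorem~\ref{th:Group}. The natural induction measure is the Hessenberg sum $\Beta(x)+\Beta(y)$ of the two birthdays. The candidate for the product is Conway's cut
\begin{equation*}
xy \;=\; \{\, x'y + xy' - x'y',\; x''y + xy'' - x''y'' \;\mid\; x'y + xy'' - x'y'',\; x''y + xy' - x''y' \,\},
\end{equation*}
where $x',x''$ range over $L_x,R_x$ and $y',y''$ over $L_y,R_y$ (Def.~\ref{def:CanCut}). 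Each of the eight products occurring here involves at least one tree-predecessor, hence has measure \emph{strictly} smaller than $\Beta(x)+\Beta(y)$; so by the inductive hypothesis all of them are already constructed and obey the full package of properties. Once the cut is shown to be \emph{valid} (every left entry below every right entry), the Fundamental Existence Theorem~\ref{th:FundamentalExistence} produces a unique simplest number filling it, and one checks, as for addition, that this number is precisely the simplest one meeting the four monotonicity inequalities in the statement, identifying the cut-definition with the minimal-birthday characterization.

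The inductive package $P(x,y)$ I would carry along contains, besides validity of the defining cut, the following clauses, each quantified over products of smaller measure: strict monotonicity in the bilinear form $x_1y_1 + x_2y_2 > x_1y_2 + x_2y_1$ whenever $x_1<x_2$ and $y_1<y_2$; commutativity $xy=yx$; left distributivity $x(y+z)=xy+xz$; and associativity $(xy)z = x(yz)$. Because $(\No,+,\leq)$ is already an ordered group (Theorem~\ref{th:Group}), differences and sign manipulations are available throughout. The validity of the cut for $(x,y)$ is then pure rearrangement: for instance the inequality $x'y+xy'-x'y' < x'y+xy''-x'y''$ reduces, after cancelling $x'y$ and applying distributivity to the simpler products, to $(x-x')(y''-y')>0$, an instance of the monotonicity clause applied to a quadruple all of whose products have smaller Hessenberg-birthday-sum; the three remaining inequalities reduce to $(x''-x)(y''-y)>0$, $(x-x')(y-y')>0$, and $(x''-x)(y-y')>0$ in the same way.

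The step I expect to be the main obstacle is precisely this entanglement of monotonicity and well-definedness. Consistency of the cut defining $xy$ rests on strict monotonicity for the eight simpler products, while monotonicity for $(x,y)$ itself is read off only \emph{after} the cut is known to be valid. The point that makes the induction well-founded is that although one factor may equal $x$ or $y$, the product measure $\Beta(x)+\Beta(y^L)$, $\Beta(x')+\Beta(y)$, etc., is always strictly below $\Beta(x)+\Beta(y)$, so every inequality invoked genuinely lies lower in the induction. Keeping this measure coherent, and verifying that the four families of entries interlock as claimed, is the delicate bookkeeping at the heart of the argument. Distributivity and associativity are then handled by expanding both sides through the defining cut and matching the resulting option families, invoking a cofinality/uniformity lemma to replace one representing cut by a mutually cofinal one without changing the simplest filler.

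Finally, $1_\Co=\{0,1\}$ being neutral, i.e.\ $1_\Co\cdot x = x$, follows by a short separate induction directly from the cut (its left and right options reproduce $L_x$ and $R_x$), and monotonicity upgrades $(\No,+,\cdot,\leq)$ to a commutative ordered ring. For the identification with the Hessenberg operations, I would restrict to Conway ordinals $\alpha_\Co$ and verify that the Conway product there satisfies the minimal-$\gamma$ recursion characterizing $\alpha\cdot\beta$ recalled in Section~\ref{sec:ordinalarithmetic} (the formula accompanying \eqref{eqn:O1}); since both are determined by the same recursion, they coincide. The closing remark — that one may quantify over $x'\prec x$ rather than over all $x'$ of smaller birthday — is a cofinality argument: by Lemma~\ref{la:CanCut} the canonical sets $L_x,R_x$ are respectively cofinal and coinitial among numbers of smaller birthday lying below, resp.\ above, $x$, so enlarging the option families leaves the simplest filler of the cut unchanged.
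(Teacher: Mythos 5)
Your overall route is the same as the paper's: the paper treats Theorem~\ref{th:number-product} only by an ``Ideas of proof'' sketch, identifying the minimal-birthday formulation with Conway's cut formula (CD-cuts when one quantifies over all numbers of smaller birthday, the canonical cut of Definition~\ref{def:canonicalcut} when one quantifies over $\prec$-predecessors), invoking the Fundamental Existence Theorem~\ref{th:FundamentalExistence}, deferring the transfinite-induction verifications (cut validity, monotonicity, ring axioms, independence of the representing cut) to \cite{Co01} and \cite{Sim}, and matching inductive recursions to identify the restriction to Conway ordinals with the Hessenberg operations. Your inductive package, your induction measure (Gonshor's natural sum of the birthdays, which the paper itself mentions in a footnote as an alternative to Conway's ``repeated single inductions''), and your cofinality argument for the closing remark all agree with that sketch.

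There is, however, one concrete error in the step you yourself single out as the heart of the matter. Of the four cut-validity inequalities (each left option below each right option), only the one you display is reduced correctly: comparing the $(x',y')$-left option with the $(x',y'')$-right option does give $(x-x')(y''-y')>0$, an instance of your monotonicity clause in which every product carries at least one primed factor. But the ``three remaining'' forms you list, $(x''-x)(y''-y)>0$, $(x-x')(y-y')>0$ and $(x''-x)(y-y')>0$, are not cut-validity conditions at all: expanded through the monotonicity clause they read, e.g., $xy+x'y'>xy'+x'y$, i.e.\ they involve the very product $xy$ being defined, so invoking them at this stage would be circular. (They are in fact three of the four \emph{defining} inequalities of the theorem --- the conditions relating $xy$ to its options --- not conditions among the options themselves.) The correct remaining reductions are $(x''-x')(y-y')>0$ for the $(x',y')$-left option against the $(x'',y')$-right option, $(x''-x')(y''-y)>0$ for the $(x'',y'')$-left option against the $(x',y'')$-right option, and $(x''-x)(y''-y')>0$ for the $(x'',y'')$-left option against the $(x'',y')$-right option; each of these involves only products with at least one primed factor, hence of strictly smaller measure, exactly as the principle you state requires. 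With this correction your plan coincides with the argument the paper delegates to Conway and Simons.
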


\begin{theorem}\label{th:Field} 
Every non-zero number $x$ has a multiplicative inverse in $\No$, and thus $\No$ is an 
Ordered Field.
\end{theorem}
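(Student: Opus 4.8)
The plan is to reduce to positive numbers and then exhibit an inverse by Conway's recursive cut, checking its correctness by transfinite induction on birthdays. Since $(\No,+,\cdot,\leq)$ is already a Commutative Ordered Ring by Theorems \ref{th:Group} and \ref{th:number-product}, and since $-x=x^\sharp$, it suffices to invert numbers $x>0$: in a commutative ring $x^\sharp\,y^\sharp=xy$, so an inverse $y$ of $x$ yields the inverse $y^\sharp$ of $x^\sharp=-x$; and in an ordered ring $x>0$, $xy=1_\Co>0$ forces $y>0$, so inverses of positive numbers are positive and the field will automatically be \emph{ordered}. By the Fundamental Existence Theorem (\ref{th:FundamentalExistence}) represent $x$ by a Conway cut $(L,R)$; since $x>0$ we may take all options positive, writing $x=\{x^L\mid x^R\}$ with $0<x^L<x<x^R$.

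Next I would define the inverse $y$ by the self-referential Conway cut
\[
y=\Bigl\{\,0,\ \tfrac{1_\Co+(x^R-x)\,y^L}{x^R},\ \tfrac{1_\Co+(x^L-x)\,y^R}{x^L}\ \Bigm|\ \tfrac{1_\Co+(x^L-x)\,y^L}{x^L},\ \tfrac{1_\Co+(x^R-x)\,y^R}{x^R}\,\Bigr\},
\]
where $y^L,y^R$ range over options of $y$ already produced: the options are generated in stages, starting from $0$ in the left set, each pair (option $x'$ of $x$, option $y'$ of $y$) producing one new option of $y$, placed left or right according to the sign of $(x'-x)$ and of $y'$. The two claims I would establish by simultaneous transfinite induction are: (i) the generated options form a valid cut, i.e. no left option is $\geq$ any right option, so that the cut genuinely defines a number by \ref{th:FundamentalExistence}; and (ii) every left option $y'$ of $y$ satisfies $x\,y'<1_\Co$ while every right option satisfies $x\,y'>1_\Co$. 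Both use crucially that $x>0$, so that division by the positive options $x^L,x^R$ and multiplication by them preserve the relevant inequalities.

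Granting (i) and (ii), I would compute the options of the product $xy$ from the multiplication rule of Theorem \ref{th:number-product}: these have the shape $x'y+xy'-x'y'$. Substituting the recursive definition of $y'$ and simplifying algebraically, each resulting left option of $xy$ is $<1_\Co$ and each right option is $>1_\Co$, and these approximations to $1_\Co$ are cofinal from below and coinitial from above. Hence $1_\Co$ is the unique number lying in the induced gap, and by the minimal-birthday clause characterizing the product in \ref{th:number-product} one concludes $xy=1_\Co$. Combined with the reductions above, this proves that every nonzero number is invertible, so $(\No,+,\cdot,\leq)$ is an Ordered Field. The main obstacle is precisely the interplay of steps (i)--(ii) with the final identity: verifying that the self-referential recursion is consistent (left options really below right options) and that the telescoping simplification of $x'y+xy'-x'y'$ pins the product to exactly $1_\Co$; this is the delicate sign-bookkeeping at the heart of Conway's construction, and everything else is routine induction mirroring the proofs of associativity and distributivity.
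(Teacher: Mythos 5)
Your outline is the classical Conway--Gonshor inverse construction, and on the level of strategy it coincides with what the paper does: for Theorem \ref{th:Field} the paper gives no self-contained argument, only the ``Ideas of proof'' paragraph, which defers precisely this construction to \cite{Co01} and to the compact presentation of Simons \cite{Sim}. The reduction to $x>0$, the self-referential cut for $y$, the lemma that $x\,y^L<1_\Co<x\,y^R$ for all options of $y$, and the telescoping identity $x\,y'+x'y''-x'y'=1_\Co$ (linking an old option $y'$ to the new option $y''$ it generates) are exactly the cited argument, so your route is the intended one.

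Two steps, however, are stated incorrectly and need repair. First, ``these approximations to $1_\Co$ are cofinal from below and coinitial from above, hence $1_\Co$ is the unique number lying in the induced gap'' is false: for any Conway cut $\langle L,R\rangle$ with $L,R$ sets, the class of numbers strictly between $L$ and $R$ is a proper class, never a single number (given any $c$ in the gap, Theorem \ref{th:FundamentalExistence} yields a positive $\eps$ below every element of $\{b-c\mid b\in R\}\cup\{c-a\mid a\in L\}$, and then $c\pm\eps$ lie in the gap as well); no cofinality statement can rescue this, since numbers infinitesimally close to $1_\Co$ survive any such ``limit''. What actually closes the argument is the minimal-birthday clause you invoke in the same sentence: by your step (ii) the number $1_\Co$ lies in the gap of the product cut, while the only number of smaller birthday, $0_\Co$, does not (it is itself a left option of $xy$, coming from the pair $(x^L,y^L)=(0_\Co,0_\Co)$; alternatively $xy>0$ in the Ordered Ring of Theorem \ref{th:number-product}), so the number of minimal birthday compatible with the cut is $1_\Co$, and Theorem \ref{th:number-product} forces $xy=1_\Co$. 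Second, ``since $x>0$ we may take all options positive, writing $x=\{x^L\mid x^R\}$ with $0<x^L<x<x^R$'' conflicts with the induction that makes your displayed formula meaningful: the divisions by $x^L$ and $x^R$ presuppose that the options of $x$ are already inverted, i.e.\ the outer transfinite induction runs on $\Beta(x)$ and the cut must be timely, $\Beta(x^L),\Beta(x^R)<\Beta(x)$. But a number such as $x=1_\Co$ has no positive timely left option at all. The standard repair (\cite{Go}, Ch.\ 3; \cite{Co01}, p.\ 21) is to keep $0_\Co$ as a left option, writing $x=\{0_\Co,\,x^L\mid x^R\}$ with a timely cut, and to let the recursion range only over the strictly positive options; with that convention your steps (i), (ii) and the final computation go through by induction on $\Beta(x)$.
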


\begin{theorem}\label{th:Reals}
The Conway reals $\R_\Co$ form a subfield of $\No$, and the bijection
between $\R $ and $\R_\Co$ from Theorem \ref{th:Berlekamp} is a field isomorphism.
\end{theorem}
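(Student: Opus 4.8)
The plan is to reduce everything to the two homomorphism identities
$\phi(r)+\phi(s)=\phi(r+s)$ and $\phi(r)\cdot\phi(s)=\phi(rs)$
for the order-preserving bijection $\phi\colon r\mapsto x(r)$, $\R\to\R_\Co$, of Theorem \ref{th:Berlekamp}. Once these hold, closure of $\R_\Co$ under $+$ and $\cdot$ is automatic, since the right-hand sides lie in $\R_\Co$ by the very construction of $\phi$; and closure under reciprocals follows from $\phi(r)\cdot\phi(1/r)=\phi(1)=1_\Co$, which exhibits $\phi(r)^{-1}=\phi(1/r)\in\R_\Co$ for $r\neq 0$. Combined with Theorem \ref{th:Field}, this makes $\R_\Co$ an ordered subfield and $\phi$ an isomorphism of ordered fields, the order part being already settled by the order-preserving property shown above. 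So the whole statement collapses to proving the two multiplicative/additive identities.

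First I would settle the dyadic case. The short numbers $\No_\omega$ correspond under $\phi$ to $\Z[\frac12]$ (Theorem \ref{th:Berlekamp}), and I would check, by induction on birthdays using the base correspondence $1_\Co\leftrightarrow 1$ together with the inductive succession rules of Theorems \ref{th:Group} and \ref{th:number-product}, that on $\No_\omega$ the Conway operations $+,\cdot$ restrict to the usual dyadic ones; hence $\phi|_{\Z[1/2]}$ is an isomorphism of ordered rings onto $\No_\omega$. The two identities I would then prove in general by well-founded induction on $\Beta(x)\oplus\Beta(y)$, exploiting that in Theorems \ref{th:Group} and \ref{th:number-product} the defining inequalities may be quantified over the $\prec$-predecessors of $x$ and $y$, that is, over the canonical sets $L_x,R_x,L_y,R_y$ of Definition \ref{def:CanCut}. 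For a Conway real these are families of dyadics cofinal (resp.\ coinitial) at $x$ from below and from above, increasing/decreasing $\omega$-sequences in the case of a long real.

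For the sum, if $x^L=\phi(d)$ with $d<r$ dyadic, then $x^L+\phi(s)=\phi(d+s)$ by the induction hypothesis, and $d+s<r+s$ forces $\phi(d+s)<\phi(r+s)$ since $\phi$ preserves order; the same for the other three families. Thus $\phi(r+s)$ satisfies all the defining inequalities, and density of the dyadics squeezes those four families of bounds down to the single value $\phi(r+s)$, so by convexity and completeness (Lemma \ref{la:convex}, Theorems \ref{th:completeness} and \ref{th:connected}, with existence guaranteed by Theorem \ref{th:FundamentalExistence}) no older number can satisfy them; hence $\phi(r+s)$ is the simplest solution and equals $\phi(r)+\phi(s)$. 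Multiplication runs along the same lines, now using the four-term expressions $x^Ly+xy^L-x^Ly^L$ and their mixed analogues: under $\phi$ and the already-proved additive homomorphism these correspond to $ds+re-de$ (with $d<r$, $e<s$ dyadic), and the required inequality $ds+re-de<rs$ is exactly the positivity $(r-d)(s-e)>0$, while the upper bounds reduce likewise to $(r-d)(g-s)>0$ for $g>s$.

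The main obstacle is precisely the squeezing step for the product. One must verify that as the dyadic approximants $d\to r$ and $e\to s$ from the appropriate sides, the four families of values $ds+re-de$ (and their right-hand analogues) accumulate at $rs$ and at nothing simpler, so that $\phi(rs)$ is forced as the unique oldest number of the resulting convex gap. This is where the identity $rs-ds-re+de=(r-d)(s-e)$ and the Archimedean density of $\Z[\frac12]$ in $\R$ do the real work. Note that the birthday of the product needs no separate estimate: $\phi(rs)\in\R_\Co$ already has birthday $\le\omega$ and is exhibited directly as the simplest solution of the Conway product-cut, whence $\phi(r)\cdot\phi(s)=\phi(rs)$ and the field isomorphism follows.
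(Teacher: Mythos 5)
Your strategy is sound, and it is genuinely more self-contained than what the paper itself offers: the paper states Theorem \ref{th:Reals} as one of the ``fundamental results, due to Conway'' and its \emph{Ideas of proof} only set up the cut formalism, deferring the actual verification to Conway, Gonshor, Alling and Simons. Your route --- (i) treat the dyadic case $\No_\omega \cong \Z[\frac{1}{2}]$ separately by birthday induction, (ii) observe that the canonical cut of a long Conway real consists of dyadic approximants converging to $r(x)$ from both sides, (iii) verify the Conway-cut inequalities for $\phi(r+s)$ and $\phi(rs)$ via order-preservation and the identity $rs-ds-re+de=(r-d)(s-e)$, and (iv) use density plus simplicity to identify $\phi(r+s)$, resp.\ $\phi(rs)$, as the oldest number in the resulting gap --- is exactly the argument one would have to write out to make the theorem self-contained in this setting. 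The separation of the dyadic case is not cosmetic but necessary: for two short numbers the option families are finite, so the density/squeezing mechanism is unavailable there and genuine dyadic arithmetic (carrying) must be established by its own induction.

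Two points need repair or expansion. First, a concrete flaw: your induction measure $\Beta(x)\oplus\Beta(y)$ uses Cantor ordinal addition, which is only \emph{weakly} increasing in the left argument. In the mixed case where $x$ is short and $y$ is a long real one has $\Beta(x^L)\oplus\Beta(y)=n\oplus\omega=\omega=\Beta(x)\oplus\Beta(y)$, so the measure does not drop when you pass to $x^L+y$, and the recursion stalls precisely in a case you need. Replace the measure by the Hessenberg sum $\Beta(x)+\Beta(y)$, which is strictly monotone in both arguments, or run nested single inductions; this is exactly the subtlety the paper flags in its footnote on Gonshor's and Alling's use of the natural sum. Second, the squeezing step is named but not executed. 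To conclude that no number older than $\phi(r+s)$ lies in the gap, invoke the ``moreover'' clause of Theorem \ref{th:FundamentalExistence}: the oldest element $c$ of the gap satisfies $c\preceq z$ for every $z$ in the gap, in particular $c\preceq\phi(r+s)$; if $c\neq\phi(r+s)$, then by Lemma \ref{la:la} $c$ is a proper truncation of $\phi(r+s)$, hence a short number whose real value differs from $r+s$ by a positive real amount, and density of your bound families at $r+s$ then places some bound strictly between $c$ and $\phi(r+s)$, expelling $c$ from the gap --- a contradiction. With these two repairs (and the same argument run for $rs$), your proposal is a correct proof of the statement.
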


\nin
{\em Ideas of proof.}
First of all, we show that every number $x$ can be represented (non-uniquely)
by a {\em Conway cut}: $x = [ \langle L_x,R_x \rangle ]$ (see next subsection).
Such a cut may be chosen {\em timely}, that is, elements $x',\ldots$ of
$L_x$ and $R_x$ have strictly lower birthday than $x$ itself (Definition \ref{def:timely}).
Then, assuming that $x+x'$, $x+x''$, etc., are already defined, Conway defines a
new cut by the formula
\begin{align*}
 \langle  \{ x + y' , x' + y \mid x'\in L_x,y' \in L_y \} , \{ x + y'' , x''+y \mid x'' \in R_x, y'' \in R_y \} \rangle .
 \end{align*}
He shows that this is indeed a cut, and it defines the number $z = x+y$,
by transfinite induction.\footnote{ 
Cf.\  \cite{Co01},  p. 5:
"such multiple inductions can be justified in the usual way in terms of repeated single
inductions".
However, Gonshor prefers to organize such multiple inductions differently, 
 \cite{Go}, p.\ 13: "We define addition by induction on the natural sum of the lengths on
 the addends." (Idem \cite{A}, p. 133, and Note p.\ 138.) 
 Possibly, this has some technical advantages, but I see no conceptual reason to do so.}    
 Moreover, the number $x+y$ thus defined {\em does not depend on the choice 
 of the cuts defining $x$ and $y$.}
 Our version of Theorem \ref{th:Group} simply states this formula in another way
 (if we quantify over all numbers with lower birthday, this means that our formulation
 corresponds to choosing the maximal possible timely cut, the Cuesta-Dutari (CD) cut, and if we quantify
 over numbers preceding for $\prec$, we choose the canonical cut, Def.\ \ref{def:canonicalcut}).
 Similarly for the product: our version of Theorem \ref{th:number-product} is equivalent
 to Conway's formula, \cite{Co01}, p.4, in terms of cuts,
 \begin{align*}
\langle  \{ x' y + xy' - x' y', \, x''y + xy'' - x'' y'' \vert  x' \in L_x, y' \in L_y, x'' \in R_x, y'' \in R_y  \} \mid 
\\
 { } \quad \, \, \, \, \,   \{ x' y + xy'' - x' y'', \, x''y + xy' - x'' y' \vert x' \in L_x,  y' \in L_y, x'' \in R_x, y'' \in R_y  \} \rangle,
\end{align*}
where we use quantifiers corresponding to the CD-cuts defining $x$ and $y$, resp.\ the
canonical cut if we use $\prec$.
Properties like associativity, distributivity, or those of the additive inverse,
 have to be checked, again by transfinite induction. 
Writing up all of these arguments in full detail takes some place  
(see \cite{Sim} for a full and  relatively compact presentation; see also the
following chapter \ref{chap:Games}). 

By induction, one sees that the sum of 
Conway ordinals is the same as their natural sum, which is also defined by induction
(see Section \ref{ssec:natural}, Equation (\ref{eqn:lim'})):
when $\beta$ is a successor ordinal, we get the same recursion as for ordinal addition
(hyperoperation $H_1$),
$$
\alpha_\Co + (\beta+1)_\Co = \langle \{ \alpha   \} \vert \eset \rangle
+ \langle  \{ \beta + 1  \} \vert \eset \rangle =
\langle \{ \alpha + \beta + 1 \} \vert \eset \rangle = (\alpha + (\beta + 1))_\Co,
$$
and if $\beta$ is a limit ordinal, then since commutativity is imposed by the Conway
definition, we get the modified recursion, Equation (\ref{eqn:lim'}),
which corresponds exactly  to the inductive definition of the natural sum of ordinals.
Similarly, the inductive definition of the modified hyperoperation $H_2$ corresponds exactly
to Conway's inductive definition of $\alpha_\Co \cdot \beta_\Co$. 

Finally, the culminating point of Conway's ingenious approach to the Field $\No$ is
the proof that every non-zero number $x$ admits a multiplicative inverse 
(Theorem \ref{th:Field}) -- again, a relatively compact presentation is given by
Simons \cite{Sim}.
Norman Alling 
 (\cite{A}, p. 160), having filled  all the technical details left out
by Conway, remarks: [this technical work]
 "does not reduce the author's admiration of Conway's insight. Indeed,
to see that the ring $\No$ is a field seems remarkable indeed. To prove it in the way
that Conway did seems to the author little short of inspired."


\subsection{Cut representation of a number}

Every number
can be represented by certain {\em cuts}. This idea, which originates in Dedekind's construction
of the real numbers, is basic in Conway's approach. First recall various definitions of {\em cuts}
(see, e.g., \cite{A}, 1.20 and 4.02):

\begin{definition}\label{def:cuts} 
Let $(N,<)$ be a totally ordered set (or class).
\begin{itemize}
\item
A {\em  Conway cut} in $(N,<)$  is given by an ordered pair $\langle L,R \rangle$ of
sets $L,R\subset N$ such that $L<R$, that is:
$\forall a \in L, \forall b \in R: a<b.
$
\item
If moreover $N = L \cup R$, then the cut  is a {\em  Cuesta-Dutari (CD) cut} in $N$.
(Then $N$ has to be a set itself!)
\item
If moreover $L\not= \eset,R\not=\emptyset$,  it is a {\em Cuesta-Dutari-Dedekind  (CDD) cut}.
\end{itemize}
The {\em Cuesta-Dutari completion} of $(N,<)$ is the disjoint union
$N \cup \CD(N)$ of $N$ with the set $\CD(N)$ of all its Cuesta-Dutari cuts.
\end{definition}

\begin{theorem}[Fundamental Existence Theorem, cf.\ \cite{Go}, Th.\ 2.1]\label{th:FundamentalExistence}
$ $
\begin{enumerate}
\item
Every Conway cut $\langle L,R\rangle$ in $\No$
 determines a unique number $c = [\langle L, R \rangle]$ such that:
\begin{enumerate}
\item[(1)]
$L<c<R$, i.e., $\forall a \in L, \forall b \in R : a < c < b$,
\item[(2)]
$\Beta(c)$ is minimal with respect to (1).
 \end{enumerate}
Moreover, if $y$ is a number such that  $L<y<R$, then $c \preceq y$. 
 
\item
Let $\alpha$ be some fixed ordinal. 
For every Cuesta-Dutari cut $\langle L,R\rangle$
 in $\No_\alpha$, the number $c$ from the preceding item
belongs to  the boundary $\partial(\No_\alpha)$.
Conversely, every number $c \in \partial(\No_\alpha)$ gives rise to a CD-cut in $\No_\alpha$: 
$$
L^c = \{ y \mid \Beta(y) < \alpha, \mbox{and }  y < c \}, \quad
R^c = \{ z \mid \Beta(z) < \alpha,\mbox{and }   z >c\}.
$$
Both constructions are inverse to each
other. Thus $\No_{\alpha+1} = \No_\alpha \cup \partial (\No_\alpha)$ is the CD-completion of $\No_\alpha$.
\end{enumerate}
\end{theorem}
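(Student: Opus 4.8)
The plan is to funnel both parts through the interval class $I := \{\, y \in \No \mid L < y < R \,\}$ together with two results already in hand: uniqueness of a minimal-birthday element in a convex class (Lemma~\ref{la:convex}) and the youngest common ancestor (Theorem~\ref{th:yca}). First I would note that $I$ is convex: if $y_1 \leq y \leq y_2$ with $y_1,y_2 \in I$, then $a < y_1 \leq y$ for $a \in L$ and $y \leq y_2 < b$ for $b \in R$, so $y \in I$. Granting nonemptiness of $I$ for the moment, Lemma~\ref{la:convex} yields a unique element $c$ of minimal birthday; this is $[\langle L,R\rangle]$, and conditions (1),(2) hold by construction. For the ``moreover'' clause, given $y \in I$ set $u := \yca(c,y)$: by Theorem~\ref{th:yca} $u$ lies (for $\leq$) between $c$ and $y$, hence $u \in I$ by convexity, so minimality gives $\Beta(u) \geq \Beta(c)$ while $u \preceq c$ gives $\Beta(u) \leq \Beta(c)$; thus $\Beta(u) = \Beta(c)$, forcing $u = c$, i.e. $c \preceq y$.

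The crux --- and the one genuinely hard step --- is nonemptiness of $I$, that is, actually \emph{filling the gap}. I would reduce to a Cuesta-Dutari cut: choose $\alpha$ with $L \cup R \subseteq \No_\alpha$, put $R' := \{\, y \in \No_\alpha \mid \exists b \in R,\ y \geq b \,\}$ and $L' := \No_\alpha \setminus R'$; transitivity gives $L' < R'$, and $L \subseteq L'$, $R \subseteq R'$, so any number strictly between $L'$ and $R'$ lands in $I$. To produce one I would walk down the tree: set $t_0 := 0_\Co$, and at stage $\gamma < \alpha$, since $t_\gamma \in \No_\alpha = L' \cup R'$, put $t_{\gamma+1} := (t_\gamma)_+$ if $t_\gamma \in L'$ and $t_{\gamma+1} := (t_\gamma)_-$ if $t_\gamma \in R'$, taking limits of this $\preceq$-chain at limit stages by completeness (Theorem~\ref{th:completeness}); this gives $c := t_\alpha$ with $\Beta(c) = \alpha$. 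To verify $L' < c < R'$, fix $a \in L'$ and let $u := \yca(a,c)$, which equals $[c]_{\Beta(u)}$ and hence is the stage $t_{\Beta(u)}$ (with $\Beta(u) \leq \Beta(a) < \alpha$). If $u \in R'$ then $c$ descends left of $u$, so $c < u$, while $a$, being on the opposite side, would be a right-descendant with $a > u$ --- impossible, since $a \in L'$, $u \in R'$ forces $a < u$; hence $u \in L'$, $c$ descends right of $u$, and $a$ (on the left, or $a=u$) satisfies $a < c$. Symmetrically $c < R'$. The bookkeeping relating ``side of the split at $u$'' to the order, via Theorem~\ref{th:yca}, is where I expect the real work to lie; the cases $L = \eset$ or $R = \eset$ are immediate.

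For Part 2, fix $\alpha$ and start from a CD-cut $\langle L,R\rangle$ of $\No_\alpha$ with $c := [\langle L,R\rangle]$. That $\Beta(c) \geq \alpha$ is clear, since a number of birthday $< \alpha$ lies in $\No_\alpha = L \cup R$ and cannot satisfy $L < c < R$. For $\Beta(c) \leq \alpha$ I would argue by contradiction using monotonicity of truncation (Theorem~\ref{th:truncation}): if $\Beta(c) > \alpha$, put $d := [c]_\alpha$, so $\Beta(d) = \alpha$; for $a \in L$ we have $a < c$, hence $a = [a]_\alpha \leq [c]_\alpha = d$, and $a \neq d$ as their birthdays differ, so $a < d$, and symmetrically $d < b$ for $b \in R$. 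Then $L < d < R$ with $\Beta(d) = \alpha < \Beta(c)$ contradicts minimality, so $\Beta(c) = \alpha$ and $c \in \partial(\No_\alpha)$.

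Finally, for the converse and the bijection, given $c \in \partial(\No_\alpha)$ the sets $L^c, R^c$ form a CD-cut: every $y \in \No_\alpha$ has $\Beta(y) < \alpha = \Beta(c)$, hence $y \neq c$ and $y$ is $<c$ or $>c$, giving $L^c \cup R^c = \No_\alpha$, while $L^c < R^c$ by transitivity. The constructions invert each other: passing $\langle L,R\rangle \mapsto c \mapsto \langle L^{c},R^{c}\rangle$ one checks $L = L^c$ (if $y \in L$ then $y < c$ and $\Beta(y) < \alpha$, so $y \in L^c$; conversely $y \in L^c \subseteq \No_\alpha = L \cup R$ cannot lie in $R$ since $y < c < R$), and likewise $R = R^c$; passing $c \mapsto \langle L^c,R^c\rangle \mapsto c'$, the number $c$ itself lies in the gap with $\Beta(c) = \alpha$, so the ``moreover'' clause of Part 1 gives $c' \preceq c$ with equal birthday, whence $c' = c$. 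The closing identity $\No_{\alpha+1} = \No_\alpha \cup \partial(\No_\alpha)$ then restates that $\partial(\No_\alpha)$ is, under this bijection, the set $\CD(\No_\alpha)$, so that $\No_{\alpha+1}$ is the Cuesta-Dutari completion of $\No_\alpha$.
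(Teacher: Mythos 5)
Your proposal is correct, but it is organized in the opposite direction from the paper, and its existence step uses a genuinely different construction. The paper proves Part 2 \emph{first}, by transfinite induction on $\alpha$: it restricts the given CD-cut to each lower stage $\No_\beta$, invokes the induction hypothesis to obtain the unique cut numbers $c_\beta$ with $\Beta(c_\beta)=\beta$, shows via monotonicity of truncation (Theorem \ref{th:truncation}) that these form a maximal chain, and takes its limit by completeness (Theorem \ref{th:completeness}); Part 1 is then deduced by saturating a Conway cut inside some $\No_\alpha$ and observing that an empty gap would force the saturated pair to be a CD-cut, whose cut number (by Part 2) would lie in the gap --- a contradiction. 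You instead attack nonemptiness of the gap $I$ directly: you complete the cut to the CD-cut $\langle L',R'\rangle$ with $L'=\No_\alpha\setminus R'$, and run an explicit transfinite walk down the tree (turn right when the current node lies in $L'$, left when it lies in $R'$), verifying $L'<c<R'$ by a youngest-common-ancestor case analysis; Part 2 then falls out of Part 1 via your truncation argument for $\Beta(c)=\alpha$. The two constructions produce the same chain (your $t_\beta$ is the paper's $c_\beta$), but the proofs are structured differently, and each has its advantages: the paper's induction gets existence, uniqueness and $\Beta(c_\beta)=\beta$ in one package at every stage, at the cost of carrying the full inductive hypothesis; your walk is more constructive --- it reads off the sign expansion of the cut number (sign at $\gamma$ is $+$ exactly when $t_\gamma\in L'$) and needs no stage-by-stage uniqueness --- at the cost of the order bookkeeping you flag yourself (descendants of $u_+$ lie above $u$, descendants of $u_-$ below it, and two numbers whose youngest common ancestor is $u$ descend from opposite children; all of this follows from the order definition, Theorem \ref{th:yca} and Lemma \ref{la:la}, so the gap is one of detail, not of substance). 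Your write-up also has a real bonus: you verify the converse half of Part 2 --- that $\langle L^c,R^c\rangle$ is a CD-cut, that $L=L^c$, $R=R^c$, and that $c'=c$ via the ``moreover'' clause --- which the paper's written proof leaves implicit. The uniqueness argument (Lemma \ref{la:convex}) and the ``moreover'' argument via $\yca$ are identical in both.
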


\begin{proof}
2. 
Assume $(L,R)$ is a CD-cut in $\No_\alpha$.
We prove by transfinite induction on $\alpha$
 that a unique number $c$ satisfying (1) and (2) exists and  that
 $\Beta(c)=\alpha$.

When $\alpha=0$, so $\No_0=\eset$, Condition (1) is empty, so Condition (2) characterizes
$c$ as the number with minimal birthday, $c= 0_\Co$.
 
 Assume the statement holds for all ranks $\beta <\alpha$.
The pair $\langle L_\beta,R_\beta \rangle :=\langle
L\cap \No_\beta, R \cap \No_\beta\rangle $ forms a CD-cut of $\No_\beta$.
By induction, there exist  unique numbers $c_\beta$ with $\Beta(c_\beta) = \beta$ and
$$
 L_\beta  < c_\beta <  R_\beta .
$$
Then  the family $(c_\gamma)_{\gamma < \alpha}$ forms a maximal chain in the
number tree.
Indeed, this 
follows from Theorem \ref{th:truncation}: for all
$\beta < \gamma < \alpha$, we have
$[c_\gamma]_\beta = c_\beta$ (the projections $x \mapsto [x]_\beta$ are monotonic,
so if this property would not hold, then we would have a contradiction to the 
fact that $c_\beta$ defines a CD-cut), whence $c_\beta \preceq c_\gamma$.
And the chain is maximal since $\Beta(c_\beta)=\beta$.

According to Theorem \ref{th:completeness}, the maximal path $(c_\gamma)_{\gamma < \alpha}$
has a limit $c$.
Again, by monotonicity (Theorem \ref{th:truncation}),
this limit $c$ is the cut number sought for.

\ssk
1. 
It suffices to show that
the class $I := \{ x \in \No \mid L < x < R \}$ is not empty, for
clearly it is convex, and therefore by Lemma
\ref{la:convex} then admits a unique element $c$ with minimal birthday.

Since $L$ and $R$ are sets, they are included in $\No_\alpha$ for some ordinal $\alpha$.
Define
$\tilde L:= \{ x \in \No_\alpha \mid \exists a \in L: x \leq a \}$ and
$\tilde R:= \{ x \in \No_\alpha \mid \exists b \in R: x \geq b \}$.
Then $(\tilde L,\tilde R)$ is a Conway cut.
If the class $I$ were empty, then the corresponding class $\tilde I$ for this cut would be 
empty, too, and $(\tilde L,\tilde R)$ would be a CD-cut of $\No_\alpha$.
According to Part 2., it would admit a cut number, in contradiction with our assumption that
$\tilde I$ is empty. 

If $L<y<R$, then we have one of the following:
$y \leq c$, so
$y \leq  \yca (y,c) \leq c$, or
$c \leq y$, so
$c \leq \yca(y,c) \leq y$.
In both cases, minimality of $\Beta(c)$ implies that
$\yca(y,c) = c$, that is, $c \preceq y$.
\end{proof}

\subsection{The canonical cut} 

\begin{definition}
Two Conway cuts $\langle L,R \rangle$ and $\langle L',R'\rangle$ are called
{\em equivalent} if they generate the same number, i.e., if
$[\langle L,R \rangle] = [\langle L',R'\rangle]$.
(Conway says they are "equal", and uses the sign for equality,
 but this way of talking is not adequate in the present  context.) 
\end{definition}

\begin{definition}\label{def:timely}
Following \cite{A}, p.\ 125, we say that a Conway cut $\langle L,R\rangle$ is
{\em timely} if its cut number is "new", in the sense that
$$
\forall x \in L \cup R: \qquad \Beta(x) < \Beta ([\langle L,R \rangle ]).
$$
\end{definition}

\begin{example}
The number $0_\Co$ has only one timely cut
representation: $(L,R) = (\eset,\eset)$. All  other cuts with
$L<0$ and $R>0$ also represent $0_\Co$, but are not timely. 
\end{example}

By Theorem \ref{th:FundamentalExistence}, every number $x$
is represented by its timely CD-cut
$$
L = \{ y \mid y < x, \Beta(y)<\Beta(x) \}, \qquad
R = \{ y \mid y>x, \Beta(y) < \Beta(x) \}.
$$
The CD-cuts  are the most "saturated" timely cuts: the sets $L$ and $R$ are the biggest
possible.
On the other hand, one may wish to work with cuts such that the sets $L$ and $R$ are
"the smallest possible" -- in \cite{Go}, Theorem 2.8 such a choice is called
{\em canonical representation} of a surreal number:

\begin{definition}\label{def:canonicalcut}
The {\em canonical cut} of a number $a$ is the one given by the
left and right families of its branch, see Def.\ \ref{def:CanCut} and Theorem \ref{th:CanCut}:
$a=[\langle L_a\mid R_a\rangle ]$ with

$L_a = \{ x \mid x \preceq a, x < a \}$,

$R_a  = \{ x \mid x \preceq a, x>a \}$.
\end{definition}

\begin{theorem}[Canonical cut]\label{th:CanCut}
The pair $\langle L_a,R_a \rangle$ just defined is a timely Conway cut
defining $a$, i.e.,
$[\langle L_a,R_a \rangle]=a$.
\end{theorem}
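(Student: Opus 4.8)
The plan is to verify separately the three assertions hidden in the statement: that $\langle L_a,R_a\rangle$ is a Conway cut, that it is timely, and that its cut number equals $a$. The observation I would record at the outset is that, by Lemma~\ref{la:CanCut} together with totality of $\leq$,
$$
L_a \cup R_a = \{\, x \mid x \preceq a,\ x \neq a \,\} = \{\, x \mid x \prec a \,\},
$$
so that $L_a$ and $R_a$ together are precisely the strict ancestors of $a$, split according to whether they sit below or above $a$ in the total order. This reformulation is what makes the rest go through.

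The cut property and timeliness I expect to be immediate. For the cut property, take $x \in L_a$ and $z \in R_a$; then $x < a$ and $a < z$, so $x < z$ by transitivity of the total order (Theorem~\ref{th:order1}), giving $L_a < R_a$. For timeliness, I would first note that every strict ancestor has strictly smaller birthday: if $x \preceq a$ with $\Beta(x) = \Beta(a)$, then Definition~\ref{def:descendant} forces $x$ and $a$ to agree below their common birthday and to share the same maximal element, whence $x = a$; contrapositively $x \prec a$ gives $\Beta(x) < \Beta(a)$. Since every element of $L_a \cup R_a$ is a strict ancestor of $a$, the timeliness condition $\Beta(x) < \Beta([\langle L_a,R_a\rangle])$ reduces to $\Beta(x) < \Beta(a)$, which we will have once the cut number is identified with $a$.

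The substantive step is to show $[\langle L_a,R_a\rangle] = a$. By the very definition of the canonical sets, $a$ satisfies condition~(1) of the Fundamental Existence Theorem, namely $L_a < a < R_a$. Let $c := [\langle L_a,R_a\rangle]$ be the cut number, i.e.\ the unique number of minimal birthday with $L_a < c < R_a$. The key leverage is the ``moreover'' clause of Theorem~\ref{th:FundamentalExistence}(1): since $a$ lies in the open interval determined by $\langle L_a,R_a\rangle$, it yields $c \preceq a$. I would then rule out $c \prec a$ directly: if $c \neq a$, then $c$ is a strict ancestor of $a$, so $c \in L_a \cup R_a$ by the opening observation; but $c \in L_a$ forces $c < c$ and $c \in R_a$ forces $c > c$, both absurd. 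Hence $c = a$, which simultaneously identifies the cut number and confirms the timeliness inequality from the previous paragraph.

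The only place where anything beyond bookkeeping enters is this last paragraph, and there the entire weight is carried by the clause $c \preceq a$. So the main (and essentially only) obstacle is conceptual rather than computational: to extract minimality of $\Beta(c)$ not via a direct birthday comparison but through the descendance relation $c \preceq a$, after which the strict comparability of an ancestor with $a$ in the total order closes the argument.
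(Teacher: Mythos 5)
Your proposal is correct and follows essentially the same route as the paper's own proof: both obtain $c \preceq a$ from the "moreover" clause of Theorem \ref{th:FundamentalExistence}(1) and then rule out $c \prec a$ by observing that a strict ancestor of $a$ would lie in $L_a$ or $R_a$, forcing the absurdity $c<c$ or $c>c$. Your write-up is slightly more complete in that it explicitly verifies the cut property and timeliness, which the paper leaves implicit.
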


\begin{proof}
From the very definition,
$x<a<y$ for all $x \in L_a$, $y \in R_b$.

Let $c:= [\langle L_a,R_a \rangle]$ be the number defined by this cut.
Then $c \preceq a$ (by Theorem \ref{th:FundamentalExistence}, 1),  
so if $\Beta(c) <\Beta(a)$, then the very definition of
$L_a$ and $R_a$ implies that $c \in L_a$ or $c_\in L_b$:
contradiction.  So $\Beta(c)=\Beta(a)$ and $c=a$.
\end{proof}

\section{Equivalence with other approaches}\label{sec:equivalence}

We end this chapter by discussing the equivalence of our approach with other 
known approaches to Conway numbers.
From the very beginning, it is clear that our basic definition 
(Definition \ref{def:Number}) is equivalent to Gonshor's one, and our approach
really is a transcription of  Gonshor's one (\cite{Go}) into a more rigorous
set-theoretic setting.
Next, let us discuss the relation with Alling's approach \cite{A}.

\subsection{Cuesta-Dutari completions}

For every totally ordered set $N$, its CD-completion $N \cup \CD(N)$ (cf.\ Def.\ \ref{def:cuts})
carries a natural total order:

\begin{lemma}
If $(N,<)$ is totally ordered, then
there is a total order on $N \cup \CD(N)$, given for elements $x,y \in N$ and CD-cuts $(A,B),(A',B')$
by the prescription: 
$$
\begin{matrix}
x<y & \mbox{  if } &  x<y \mbox{ in } (N,\leq),
\\
x < (A,B) & \mbox{ if } & x \in A,
\\
(A,B) < y  & \mbox{  if  } & y \in B,
\\
(A,B) < (A',B') &\mbox{ if }  &  A \subset A'.
\end{matrix}
$$
\end{lemma}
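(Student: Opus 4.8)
The plan is to verify directly that the four prescriptions define a \emph{strict} total order, and the whole argument rests on one structural observation about CD-cuts that I would isolate first. Namely, I would show that for a Cuesta--Dutari cut $(A,B)$ in a totally ordered set $N$ the lower part $A$ is an \emph{initial segment} (downward closed), with $B = N\setminus A$ its complementary final segment: if $x\in A$ and $y<x$ in $N$, then $y$ cannot lie in $B$ (else $x<y$ by $A<B$, contradicting $y<x$), so $y\in A$. Consequently a CD-cut is determined by its lower set $A$ alone, and two CD-cuts coincide iff they have the same lower set. The second, and crucial, fact is that the initial segments of a totally ordered set form a \emph{chain} under inclusion: given two such $A,A'$ with $A\not\subseteq A'$ and $A'\not\subseteq A$, pick $a\in A\setminus A'$ and $a'\in A'\setminus A$; totality of $<$ on $N$ forces $a<a'$ or $a'<a$, and downward closedness of $A'$ resp.\ $A$ then yields $a\in A'$ resp.\ $a'\in A$, a contradiction in either case. (Here I read the relation $A\subset A'$ in the statement as \emph{proper} inclusion, since $A=A'$ corresponds to identical cuts.)

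Granting these two facts, I would then check the axioms of a strict total order by a routine case analysis according to whether each element is a point of $N$ or a cut, using throughout that $N$ and $\CD(N)$ are disjoint, so a point and a cut are never equal. For \emph{trichotomy}: point-versus-point is the given total order on $N$; point-versus-cut holds because $A,B$ partition $N$, so exactly one of $x\in A$ (giving $x<(A,B)$) and $x\in B$ (giving $(A,B)<x$) occurs; cut-versus-cut holds because, by the chain property, exactly one of $A\subsetneq A'$, $A=A'$, $A'\subsetneq A$ holds, and $A=A'$ means the two cuts are the same element. Irreflexivity is immediate in each type.

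The remaining work is \emph{transitivity}, which I would dispatch by running through the eight type-patterns for a triple. The purely order-theoretic patterns (P,P,P), (P,C,P), (C,P,P), (P,P,C) reduce to transitivity of $<$ on $N$ together with the relation $A<B$ and the up/down-closedness of $B$ resp.\ $A$; the patterns involving inclusions of lower sets, (P,C,C), (C,C,P), (C,C,C), reduce to transitivity of proper inclusion together with $x\in A\subseteq A'\Rightarrow x\in A'$ and, dually, $z\notin A'$ with $A\subseteq A'$ giving $z\in B$. I expect the genuinely delicate step to be the mixed pattern (C,P,C): from $(A,B)<y$ and $y<(A',B')$ one knows $y\in B=N\setminus A$ and $y\in A'$, so $y\in A'\setminus A$; this shows $A'\not\subseteq A$, and \emph{only} the chain property then upgrades this to $A\subsetneq A'$, i.e.\ $(A,B)<(A',B')$. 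Thus the single real obstacle is the comparability of initial segments established at the outset; once that is in hand, the verification is mechanical.
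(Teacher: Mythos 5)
Your proof is correct; the paper's own ``proof'' is only the remark that the claim is straightforward by distinction of cases (deferring to Alling, 4.02), and your argument is exactly that case distinction carried out in full. Your two preliminary observations --- that a CD-cut is determined by its downward-closed lower set $A$, and that the initial segments of a totally ordered set form a chain under inclusion --- are precisely what make the cut-versus-cut trichotomy and the mixed pattern $(A,B)<y<(A',B')$ go through, so nothing is missing.
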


\begin{proof}
Straightforward by distinction of cases --
see \cite{A}, 4.02.
\end{proof}

As we have seen (Theorem  \ref{th:FundamentalExistence}, 2), 
$\No_{\alpha + 1}$ is the CD-completion of $\No_\alpha$.
Starting from $\No_0 = \{ 0_\Co\}$, one can thus construct, by transfinite induction,
all higher stages by CD-completions and limits of preceding ones.
This defines $\No$ as ordered class, and it can be used to "reconstruct"
$\No$ by starting from the empty set -- see \cite{A}. 
However, this approach does not simplify in any way the definition of the
arithmetic structure on $\No$.

\subsection{Allings axiomatic approach}

The following definition is given in  \cite{A}, Section 4.03:

\begin{definition}\label{def:Alling}
For every ordinal $\beta$,
a {\em class of surreal numbers of height $\beta$} is a triple consisting of

(a) an ordered class $(F,<)$,

(b) a function $\Beta: F \to [0,\beta[$, such that

(c) for every Conway cut $(L,R)$ in $F$ with $\Beta(L), \Beta(R) < \alpha <\beta$,

$\quad$ there is a unique $c \in F$ such that $L<c<R$ and
$\Beta(c)$ is minimal. 

\nin
The class is called {\em full} if, under assumptions as in (c), we  have
$\Beta(c) \leq \alpha$.
\end{definition}

\begin{theorem}\label{th:Alling}
Any two full classes of surreal numbers of height $\beta$ are isomorphic to each
other under a unique isomorphism (order and birthday-preserving bijection).
\end{theorem}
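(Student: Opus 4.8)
The plan is to build the isomorphism stage by stage, by transfinite induction on birthday, exploiting the fact---established abstractly below---that in a full class each stage is the Cuesta-Dutari completion of the preceding one, exactly as in Theorem~\ref{th:FundamentalExistence}(2). For a full class $(F,<,\Beta)$ of height $\beta$ and an ordinal $\alpha < \beta$, set $F_\alpha := \{ x \in F \mid \Beta(x) < \alpha \}$ and $\partial F_\alpha := \{ x \in F \mid \Beta(x) = \alpha \}$, so that $F_{\alpha+1} = F_\alpha \cup \partial F_\alpha$; write $F'_\alpha, \partial F'_\alpha$ for the analogous objects attached to the second full class $(F',<',\Beta')$.

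First I would prove the abstract analogue of Theorem~\ref{th:FundamentalExistence}(2): \emph{each $F_\alpha$ is a set, and the assignment $x \mapsto (L^x, R^x)$, where $L^x = \{ y \in F_\alpha \mid y < x \}$ and $R^x = \{ y \in F_\alpha \mid y > x \}$, is a bijection $\partial F_\alpha \to \CD(F_\alpha)$.} Indeed, for $x \in \partial F_\alpha$ totality of $<$ gives $L^x \cup R^x = F_\alpha$ (no $y \in F_\alpha$ equals $x$, since $\Beta(y) < \alpha = \Beta(x)$), so $(L^x,R^x)$ is a CD-cut. Conversely, any CD-cut $(L,R)$ of $F_\alpha$ has all birthdays below $\alpha$, so axiom~(c) applies (with bound $\alpha < \beta$) and yields a unique $c$ with $L < c < R$ of minimal birthday; fullness forces $\Beta(c) \leq \alpha$, while $\Beta(c) < \alpha$ is impossible, as then $c \in F_\alpha = L \cup R$, contradicting $L < c < R$. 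Hence $\Beta(c) = \alpha$, and the two assignments are mutually inverse. This is precisely the step where fullness is indispensable: without it a cut could be "filled" only at some later stage, destroying the stagewise correspondence. That $F_\alpha$ is a set now follows by the same induction, since $\CD(F_\alpha) \subset \cP(F_\alpha) \times \cP(F_\alpha)$ is a set, whence so are $\partial F_\alpha$ and $F_{\alpha+1}$, and unions at limit stages remain sets.

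Next I would construct, by transfinite induction on $\alpha \leq \beta$, a \emph{unique} order- and birthday-preserving bijection $\phi_\alpha : F_\alpha \to F'_\alpha$ with $\phi_\alpha$ extending $\phi_{\alpha'}$ for $\alpha' < \alpha$. For $\alpha = 0$ both sides are empty. At a limit $\lambda$ put $\phi_\lambda := \bigcup_{\alpha < \lambda} \phi_\alpha$, well-defined and order-preserving by the induction hypothesis and the compatibility of the $\phi_\alpha$. At a successor step, $\phi_\alpha$ is an order-isomorphism $F_\alpha \to F'_\alpha$ and so carries CD-cuts to CD-cuts, inducing a bijection $\CD(F_\alpha) \to \CD(F'_\alpha)$; composing with the two boundary bijections of the previous paragraph gives a bijection $\partial F_\alpha \to \partial F'_\alpha$, and I set $\phi_{\alpha+1} := \phi_\alpha \cup (\text{this map})$. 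It is birthday-preserving by construction and order-preserving because, by the Lemma preceding this theorem, the order on a Cuesta-Dutari completion is determined by the inclusion order of the cuts; thus $\phi_{\alpha+1}$ respects $<$ within $F_\alpha$, within $\partial F_\alpha$, and across the two. Uniqueness at each stage is forced: any order- and birthday-preserving bijection restricts to $\phi_\alpha$ on $F_\alpha$ (induction) and must send each $x \in \partial F_\alpha$ to the unique element of $\partial F'_\alpha$ realizing the image cut $(\phi_\alpha(L^x), \phi_\alpha(R^x))$, since an order-isomorphism preserves the sets of smaller and of larger elements. Setting $\phi := \bigcup_{\alpha < \beta} \phi_\alpha$ then yields the desired unique isomorphism $F \to F'$.

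The main obstacle is the verification of order-preservation across the boundary in the successor step, i.e.\ that $\phi_{\alpha+1}$ relates elements of $F_\alpha$ to elements of $\partial F_\alpha$ correctly; this rests on having identified $\partial F_\alpha$ with $\CD(F_\alpha)$ in an order-compatible way, which in turn is exactly what the fullness hypothesis secures. Once that identification is in place, the remainder is routine---if lengthy---transfinite bookkeeping.
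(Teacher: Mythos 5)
Your proposal is correct, but note that the paper itself offers no proof of this statement: Theorem \ref{th:Alling} is simply quoted from Alling's book (\cite{A}, Section 4.03 and p.\ 131), and the surrounding text only records the consequence that all known constructions of surreal numbers give isomorphic results. So there is no in-paper argument to compare yours against; what you have written is a self-contained proof of a result the paper delegates to the literature. Your route --- first showing that in a full class each boundary $\partial F_\alpha$ is canonically in bijection with $\CD(F_\alpha)$ (the abstract analogue of Theorem \ref{th:FundamentalExistence}(2), with fullness used exactly where you say, to pin down $\Beta(c)=\alpha$), then building $\phi_\alpha : F_\alpha \to F'_\alpha$ by transfinite induction, transporting CD-cuts through $\phi_\alpha$ at successor stages and taking unions at limits, with uniqueness forced stage by stage --- is the natural argument, and is essentially Alling's own, since his framework is built on Cuesta-Dutari completions (cf.\ the Lemma and remarks preceding the theorem in the paper). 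Two cosmetic points. First, the claim that the order on $F_{\alpha+1}$ agrees with the Lemma's order on $F_\alpha \cup \CD(F_\alpha)$ deserves the one or two lines you only gesture at: across the boundary it is the definition of $(L^x,R^x)$ plus totality, and within the boundary one needs that distinct elements $c,d \in \partial F_\alpha$ have distinct, inclusion-comparable cuts, which uses transitivity together with the uniqueness clause of axiom (c) (equivalently, the injectivity you established in your first step). Second, if $\beta$ is a successor ordinal, say $\beta = \gamma+1$, the desired isomorphism is $\phi_\beta$ itself rather than $\bigcup_{\alpha<\beta}\phi_\alpha$, since the latter has domain only $F_\gamma$; your induction does construct $\phi_\beta$, so this is a slip of notation, not of substance.
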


Using this result, Alling (\cite{A}, p.\ 131) concludes that, since all known
constructions of Conway numbers  satisfy these properties,
they give isomorphic results.

\subsection{The full binary tree of numbers}

We have shown  that $(\No,\preceq)$ forms a full (complete and connected) 
binary tree.
Since the binary tree contains all order theoretic aspects of $\No$, the whole
theory can be reconstructed on these grounds (see \cite{E11, E12, E20}).

\subsection{Badiou's Number and Numbers}

Given a number $x$ in our sense, let 

$W:=\Beta(x)$ its birthday and
$F:= x \cap W = x \setminus \{ \Beta(x)\}$, a subset of $W$.

Conversely, given an ordinal $W$ and subset $F\subset W$, we find
$x = F \cup \{ W \}$.

\nin
Badiou (\cite{Ba}, Section 12.2) defines a {\em Number}
 to be such a pair $N= (W,F) = (W(N),F(N))$, and calls $W$ the
{\em matter} of $N$, and $F$ the {\em form} of $N$.
He also considers the {\em residue} $R(N) = W \setminus F$, so that
$(W,R(N))$ corresponds 
to our number $x^\sharp$.

Badiou states and proves a series of results very similar to the ones presented here;
however, he often uses long phrases instead of mathematical formalism, 
and this makes the development
hard to read and to check. 
Nevertheless, the whole presentation is mathematically sound and contains 
several interesting ideas (e.g., emphasizing the role of the {\em discriminant}).
\footnote{ 
Badiou's main interest is, of course, in philosophical questions,
and this hides or even obscures the purely mathematical contents of his work.
Professional mathematicians generally dislike such mixture of philosophy and
mathematics -- see, e.g., the very negative review of \cite{Ba} by
Reuben Hersh in the Mathematical Intelligencer 31 (3), 2009, p. 67-69, which
should be compared with the more in-depth review by J. Kadvany,
Notre Dame Philosophical Reviews, 
\url{https://ndpr.nd.edu/reviews/number-and-numbers/}. }


\subsection{Conway's original approach: combinatorial game theory}

We'll discus the link with this approach in full detail in Chapter \ref{chap:Games}.

\chapter{Set theory and Game theory}\label{chap:Games}\label{chap:CGT}

The definition of surreal numbers given in the preceding chapter is close to Gonshor's sign-sequence
approach. Conway's original approach is to define numbers as special
instances of {\em games}. His definition of number (\cite{Co01}, p. 4) has the same structure 
as the one of a game  (\cite{Co01},  p.78/79):

{\textbf{Construction.}} {\em
If $L,R$ are any two sets of games, 
 there is a game $\{ L \vert R \}$. All games are constructed this way.}

\nin
Conway puts forward two main arguments why this approach should be the "good one" 
 (cf.\ \cite{Co01}, Epilogue, p. 225/26): 
\begin{enumerate}
\item[(a)]
{\em The greatest delight, and at the same time, the greatest mystery, of the Surreal numbers
is the amazing way that a few simple "genetic" definitions magically create a richly structured
Universe out of nothing.}
\item[(b)]
{\em 
The sign-sequence definition has also the failing that it requires a prior construction of the 
ordinals, which are in ONAG produced as particular cases of the surreals.
To my mind, this is another symptom of the same problem, because the definitions that work
universally should automatically render such prior constructions unnecessary.}
\end{enumerate}
Concerning (b), see remarks in the Introduction (Section \ref{sec:CGT}): I think this argument  cannot be maintained.
Concerning (a), it certainly expresses a deep feeling Conway had about his own creation, and it
has to be taken seriously.
However, Conway seems to overlook that the same "delight" and "mystery" already must have
been experienced by the founders of set theory, and in particular by von Neumann, when
he "created the richly structured universe $\vN$ out of nothing".
For this reason, in this chapter, I will try  to define a clear framework for the
approach by Combinatorial Game Theory (CGT), so that one can compare both approaches on firm grounds. 
I closely follow the presentation from \cite{S}, with some  deviations. 

Logically and historically, CGT developed in two steps:
\begin{itemize}
\item
the theory of {\em impartial games} (Sprague, Grundy, 1930ies),
\item
the theory of {\em partizan games} (\cite{Co01}, Chapter 1, \cite{WW}).
\end{itemize}
Although surreal numbers only arise in the latter, it is necessary to start with the former.
For me, the main point is that the theory of impartial games is what I call
"pure set theory" -- this has very clearly been remarked by Lenstra 
 \cite{Le}. Thus, it can be considered as a topic in pure, foundational mathematics, and 
in the following section we shall
 present it like this. 
 
 In the second section, we will define the analog setting for partizan games:
 following Conway, \cite{Co01}, p.66,
 
 \begin{itemize}
 \item[ ]
 {\em
 Plainly the proper set theory in which to perform a formalisation would be one with two kinds of
membership.}
\end{itemize}

\nin This is exactly what we will do.

\section{Pure set theory, and impartial games}\label{sec:pure}\label{sec:impartial}

By "pure set theory" we mean the algebraic theory of $\vN$, the universe of pure sets, 
by taking seriously its structure -- a set is a set of, set of... sets.  As said in
Section \ref{sec:puresettheory}, this way of looking at pure sets has a "quantum" taste, as 
opposed to the "classical" viewpoint on sets. In this sense, game theory is "quantum set theory".

\begin{definition}
An {\em (impartial) game} is just a pure set $G$. In this context $\vN$ is called the {\em universe of
impartial games}, and an element $g \in G$ is also called an {\em option of $G$}. 
An  {\em $n$-th order element} $g \in^n G$, or {\em position of $G$}, is inductively defined:

$\qquad g \in^1 G$ iff $g \in G$, and $g \in^{n+1} G$ if $\exists u \in^n G$: $g \in u$. 

\nin
A {\em complete chain, or run, in a game $G$} is given by a sequence of pure sets
$$
G = G_0 \ni G_1 \ni G_2 \ni \ldots \ni G_n = \eset ,
$$
where $n\in \N$.
Thus every position arises in some complete chain.
\end{definition}

All runs are of {\em finite} length:
there is no point in defining $x \in^\alpha A$ for infinite ordinals $\alpha$, because
by construction of the von Neumann universe, there are 
{\em no infinite descending chains of sets} (this can be seen as a 
\href{https://en.wikipedia.org/wiki/Axiom_of_regularity#No_infinite_descending_sequence_of_sets_exists}{consequence of
the axiom of foundation}).
Note that  {\em second order} elements appear quite frequently in "usual" maths
(via various \href{https://de.wikipedia.org/wiki/Mengensystem}{systems of sets}), but third and higher order
elements hardly ever.

\begin{theorem}
There are two types of (impartial) games:
every game $G$ is either 
 {\em fuzzy} or of  {\em zero type}, 
defined inductively as follows: $G$ is
\begin{enumerate}
\item
of {\em zero type} if all elements $G_1 \in G$ are fuzzy,
\item
 {\em  fuzzy}, if there exists an element $G_1 \in G$ of zero type.
\end{enumerate}
\end{theorem}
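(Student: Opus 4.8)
The plan is to prove the dichotomy by $\epsilon$-induction (equivalently, by transfinite induction on the von Neumann rank), relying on the fact just recalled that the von Neumann universe contains no infinite descending $\in$-chains. This is exactly what makes the seemingly circular Definition well-founded: "zero type" is defined through the status of the \emph{elements} (options), and "fuzzy" likewise, and since every option $G_1 \in G$ has strictly smaller rank, the two predicates are defined by a legitimate simultaneous recursion. Accordingly, I would carry through the induction the combined statement $P(G)$: \emph{$G$ is of zero type or fuzzy, and not both}, and invoke the principle of $\epsilon$-induction recalled after Definition \ref{def:vNu}.

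First I would dispose of the base case $G = \eset$. The clause defining "zero type" requires that \emph{all} elements be fuzzy, which holds vacuously, so $\eset$ is of zero type; the clause defining "fuzzy" requires the \emph{existence} of an element of zero type, which fails for want of any element, so $\eset$ is not fuzzy. Hence $P(\eset)$ holds. For the inductive step I would assume $P(g)$ for every $g \in G$, so that each option is exactly one of the two types, and then split according to whether every option of $G$ is fuzzy. If so, clause (1) makes $G$ of zero type, and since no option is then of zero type, clause (2) shows $G$ is not fuzzy. If not, some option $g$ fails to be fuzzy and is therefore of zero type by the hypothesis; clause (2) makes $G$ fuzzy, and since the options are not all fuzzy, clause (1) shows $G$ is not of zero type. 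In either branch $P(G)$ holds, and the induction closes.

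The step I expect to be the real point — not difficult, but the one that must be handled with care — is the \emph{mutual exclusivity} clause rather than exhaustiveness. A priori, a game $G$ might satisfy both defining conditions at once, which would require simultaneously that some option be of zero type and that \emph{all} options be fuzzy; but these contradict the "not both" half of the hypothesis applied to that very option. The crux is thus to propagate the exclusivity clause through the recursion in lockstep with existence: keeping only the "zero type or fuzzy" part would show the definition is consistent but would not yield a genuine partition of $\No$ — sorry, of $\vN$ — into the two types. No computation enters anywhere; the entire content is the correct bookkeeping of the two clauses against the well-foundedness of $\in$, which is precisely what guarantees that every run terminates and hence that the inductive classification terminates as well.
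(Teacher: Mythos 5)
Your proof is correct and takes essentially the same approach as the paper's: well-founded induction on rank (equivalently $\epsilon$-induction), with the base case $G = \eset$ and the observation that, under the induction hypothesis applied to the options, the two defining clauses become each other's negations, so exactly one holds. Your write-up merely spells out the case analysis and the propagation of mutual exclusivity that the paper compresses into the remark that ``one is just the negation of the other.''
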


\begin{proof}
Clearly, $G = \eset$ is of zero type, and not fuzzy. 
By induction, assume the claim true below a certain rank $\alpha$, and
$G$ of rank $\alpha$. 
Then both conditions are well-defined propositions, and one is just the negation of the other,
so $G$ must be either of zero type or not (i.e., fuzzy).
\end{proof}

\nin {\textbf{Game interpretation.}}
"Playing $G$" means to fix a run 
$G = G_0 \ni G_1  \ni \ldots \ni G_n = \eset$
where $n\in \N$. 
The "first" player (W: White) chooses $G_1$, the "second player" (B: Black) $G_2$, etc.
In a fuzzy game, W has a "winning strategy" (she just has to choose a zero-type element
$G_1 \in G$, and then B is lost),  in a zero-type game, B has one.
To be precise, this is 
{\em normal-play convention}. There is also another convention, called
{\em  mis\`ere-play}, which is less natural: $G$ is
\begin{enumerate}
\item
 of {\em mis\`ere zero type} if either $G = \eset$, or there is
an element $G_1 \in G$ of mis\`ere fuzzy type,
\item
of {\em mis\`ere fuzzy type} if $G \not= \eset$, and
all  $G_1 \in G$ are of mis\`ere zero-type.
\end{enumerate}
Then an analog of the above theorem holds, with same proof; but the exception for $G = \eset$
has to be stated explicity, if we wish that $\eset$ be of mis\`ere  zero-type
(see \cite{S}, Chapter V, for mis\`ere-play theory).
This makes mis\`ere-play more cumbersome,
and we will not pursue this.

\begin{definition}
To every pure set $G$ we associate an ordinal $\Gamma(G)$, its {\em Grundy number}, as follows. 
For every set $A$ of ordinals, the {\em excluded minimum} is the ordinal
\begin{equation}  
\mex(A) := \min \{ \alpha \mid \alpha \mbox{ ordinal, } \alpha \notin A \}.
\end{equation}
Note that $\mex(A) = 0$ iff $0 \notin A$.
Now define, by induction,
\begin{equation}
\Gamma(G) := \mex \{ \Gamma(g) \mid g \in G \}   .
\end{equation}
\end{definition}

\nin Example.
We compute the Grundy numbers for the pure sets from Table \ref{table:beginning}:
\begin{table}[!h] \caption{The first Grundy numbers}\label{table:Grundy}
\begin{center}
\begin{tabular}{|*{10}{c|}}
\hline
order nr. & symbol & rank  & depth $1$ & Grundy nr. & outcome 
\\
\hline
$0$ & $\eset$ & $0$ & $\eset$ & $0$ & zero-type
\\
\hline
$1$ & $\I$ & $1$ & $\{ \eset \}$ &  $1$ & fuzzy
\\
\hline
$2$ & $\II$ & $2$ & $\{ \I \}$ & $0$ & zero-type 
\\
$3$ & $\III$ & $2$ & $\{ \I,\eset \}$  & $2$ & fuzzy 
\\
    \hline
$4$ & IV & $3$ & $\{ \II \}$   & $1$ & fuzzy
\\
 $5$ & V & $3$ &  $\{ \II,\eset \}$ & $1$ & fuzzy 
 \\
 $6$ & VI &  $3$ &  $\{ \II,\I \}$ & $2$ & fuzzy
 \\
 $7$ &VII  &  $3$ &  $\{ \II,\I ,\eset\}$ & $2$ & fuzzy 
 \\
 $8$ &VIII  & $3$ & $\{ \III \}$   & $0$ & zero-type 
\\
 $9$ & IX & $3$ & $\{ \III , \eset \}$   & $1$ & fuzzy 
\\
 $10$ & X  & $3$ & $\{ \III , \I \}$   & $0$ & zero-type
\\
 $11$ & XI & $3$ & $\{ \III , \I, \eset  \}$   & $3$ & fuzzy
\\
 $12$ & XII  & $3$ & $\{ \III , \II  \}$   & $1$ & fuzzy 
\\
 $13$ &  XIII & $3$ & $\{ \III , \II ,\eset \}$   & $1$ & fuzzy 
\\
 $14$ & XIV  & $3$ & $\{ \III , \II ,\I \}$   & $3$ & fuzzy 
\\
 $15$ & XV  & $3$ & $\{ \III , \II ,\I ,\eset \}$   & $3$ & fuzzy 
\\
\hline
$16$ & XVI & $4$ & $\{ \mbox{ IV } \}$ &  $0$ & zero-type
\\ 
    \end{tabular}
\end{center}
\end{table}

\nin By induction, always $\Gamma(x) \leq \rk(x)$, and,  $\Gamma(\alpha)=\alpha$ for all ordinals $\alpha$.

\begin{theorem}
A game
$G$ is of zero-type iff $\Gamma(G)=0$, and fuzzy
 iff $\Gamma(G) >0$. 
\end{theorem}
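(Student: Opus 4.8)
The plan is to prove both biconditionals at once by transfinite $\in$-induction on $G$. This is legitimate in $\vN$: by the axiom of foundation there are no infinite descending $\in$-chains, so the principle of $\epsilon$-induction applies. Although the previous theorem already tells us that ``zero-type'' and ``fuzzy'' are complementary, I would not need to invoke that dichotomy as a separate step, since both statements fall out of the same induction.

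The one genuinely load-bearing fact is the elementary remark recorded right after the definition of $\mex$: for any set $A$ of ordinals one has $\mex(A)=0$ if and only if $0\notin A$. Applied to $A=\{\,\Gamma(g)\mid g\in G\,\}$, this says that $\Gamma(G)=0$ precisely when no option $g\in G$ has $\Gamma(g)=0$.

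For the base case $G=\eset$ I would note that $\Gamma(\eset)=\mex(\eset)=0$ and that $\eset$ is of zero-type vacuously, so the two sides agree. For the inductive step, assume the theorem for every option $g\in G$. Then
\[
\Gamma(G)=0 \iff \bigl(\forall g\in G:\ \Gamma(g)\neq 0\bigr) \iff \bigl(\forall g\in G:\ \Gamma(g)>0\bigr) \iff \bigl(\forall g\in G:\ g\ \text{is fuzzy}\bigr),
\]
where the last equivalence is the induction hypothesis and the final condition is, by the definition of zero-type, exactly that $G$ is of zero-type. Dually,
\[
\Gamma(G)>0 \iff \bigl(\exists g\in G:\ \Gamma(g)=0\bigr) \iff \bigl(\exists g\in G:\ g\ \text{is of zero-type}\bigr) \iff G\ \text{is fuzzy},
\]
again by the induction hypothesis and then the definition of fuzzy. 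This closes the induction and yields both claims simultaneously.

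I do not expect a real obstacle here: all the content has already been packed into the recursion defining $\mex$ and $\Gamma$ and into the inductive definitions of the two outcome classes. The only things to watch are to feed the induction hypothesis in its correct direction on each line ($\Gamma(g)>0\iff g$ fuzzy versus $\Gamma(g)=0\iff g$ zero-type), and to use that each $\Gamma(g)$ is an ordinal, so that $\Gamma(g)\neq 0$ is the same as $\Gamma(g)>0$.
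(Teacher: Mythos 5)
Your proposal is correct and takes essentially the same route as the paper's own proof: transfinite $\in$-induction on $G$, with the single load-bearing fact that $\Gamma(G)=0$ iff $0\notin\{\Gamma(g)\mid g\in G\}$, i.e.\ iff no option of $G$ has Grundy number $0$. The only cosmetic difference is that you carry both biconditionals through the induction (so that $\Gamma(g)>0 \iff g$ fuzzy is available directly), whereas the paper passes from ``no option is of zero-type'' to ``all options are fuzzy'' by invoking the dichotomy of the preceding theorem; both organizations are sound.
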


\begin{proof}
Since $\Gamma(G) = 0$ iff $0 \notin \{ \Gamma(g) \mid g \in G\}$,
the condition $\Gamma(G)=0$ means, by induction,  that no element $g \in G$ is of zero-type, hence
all elements of $G$ are fuzzy, hence $G$ is of zero type. 
Conversely, $\Gamma(G)>0$ means, by induction, that some element of $G$ is of zero-type, hence
$G$ is fuzzy.
\end{proof}

\begin{definition}
Let us call {\em Grundy-Map}, and  write
$\Gamma : \vN \to \On$,
for the collection of maps $\Gamma_{\vN_\alpha} : \vN_\alpha \to \alpha$.  
We write $G \equiv H$ iff $\Gamma(G)=\Gamma(H)$.
\end{definition}

The fiber over $0$ is the class of zero-type games.
The class of fuzzy games is characterized by non-zero Grundy numbers.
A key result of \cite{Co01}, Chapter 6, is that there are algebraic structures turning
$\Gamma$ into a morphism.

\subsection{Disjunctive, and other sums of pure sets}\label{sec:sum_impartial}

Conway realized that, via transfinite $\epsilon$-induction, one can define various
algebraic laws on universes such as $\vN$ or $\On$. 
In \cite{Co01}, Chapter 14, he distinguishes 3 basic "rules for moving in the compound game",
called {\em selective, conjunctive, disjunctive}; but in fact there are  more (cf. the table
p.41 in \cite{S}), 
and a systematic algebraic theory seems to be
 missing so far.  We do not pretend to fill this gap by the present
section, but just give some examples.  
Let us call "Conway-style" the way of defining such laws:

\begin{theorem}\label{th:Conway-sum}
The following  operations are well-defined by transfinite induction, for pure
sets $F$ and $G$, and all of them are associative: 
\begin{align*}
F \ast_1 G & := \{ F \ast_1 g , f \ast_1 g \mid f \in F, g\in G \},
\\
F \ast_2 G & : = \{ F \ast_2 g , f \ast_2 G,  f \ast_2 g \mid f \in F, g\in G \},
\\
F \ast_3 G & := \{ F \ast_3 g , f \ast_3 G \mid f \in F , g \in G \},
\\
F\ast_4 G & := \{ F \ast_4 g, f \mid f \in F, g \in G\} = F \cup \{ F \ast_4 g \mid g \in G \},
\\
F\ast_5 G & := \{ F \ast_5 g, f \ast_5 G, f',g' \mid f,f' \in G, g,g' \in G \} 
\\ &
= F \cup G \cup \{ F \ast_5 g \mid g\in G\} \cup \{ f \ast_5 G \mid f\in F \} .
\end{align*}
\end{theorem}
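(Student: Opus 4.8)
The plan is to prove the two assertions — existence/well-definedness and associativity — separately, both by transfinite $\in$-induction, which is available through the principle of $\epsilon$-induction recalled after Definition \ref{def:vNu}. For well-definedness I would introduce the well-founded relation on pairs of pure sets given by $(F',G')\prec(F,G)$ iff $F'$ equals $F$ or is a position of $F$, $G'$ equals $G$ or is a position of $G$, and $(F',G')\neq(F,G)$; this is well-founded precisely because $\vN$ has no infinite descending $\in$-chains (the fact already used in Section \ref{sec:pure}). The point is then to observe that, for each $i$, every term on the right-hand side of the defining clause is either a bare element ($f$ or $g$, with no recursive call) or a product $F\ast_i g$, $f\ast_i G$, $f\ast_i g$ whose argument pair is $\prec(F,G)$. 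Hence each clause defines $F\ast_i G$ by legitimate recursion on $\prec$, and the recursion theorem yields a unique class function $\ast_i\colon\vN\times\vN\to\vN$.

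For associativity, fix $i$ and prove $(F\ast_i G)\ast_i H=F\ast_i(G\ast_i H)$ by induction on the triple $(F,G,H)$ with respect to the analogous well-founded order, assuming the identity for all strictly smaller triples. I would expand the left side by the defining clause, expand once more to resolve membership in $F\ast_i G$, and proceed symmetrically on the right; applying the induction hypothesis to each composite option reparenthesizes it, so that the two resulting families of options can be compared directly. For the disjunctive sum $\ast_3$ and the ordinal-type sum $\ast_4$ this is an exact term-by-term match — for $\ast_3$ the three families ``move in the first end / the second end / both'' correspond bijectively, while for $\ast_4$ the bare options $f$ and the recursive options $F\ast_4 g$ align after one use of the hypothesis — and the same bookkeeping of ``which component moves'' settles the selective sum $\ast_2$ and $\ast_5$.

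The step I expect to be the main obstacle is that the post-induction families need not coincide expression-by-expression, so a purely formal match is not always available: for $\ast_1$ the right side produces a surplus family $\{\,f\ast_1(G\ast_1 h)\mid f\in F,\ h\in H\,\}$ absent from the left. The clean remedy is to precede the associativity argument with the closed-form identity $X\ast_1 Y=Y$, a one-line induction since $\{X\ast_1 y\mid y\in Y\}\cup\{x\ast_1 y\mid x\in X,\ y\in Y\}=Y$ once the hypothesis is applied; associativity of $\ast_1$ is then trivial and the surplus family collapses into $H$. More generally I would record the closed forms as a cross-check — $\ast_1$ is the right projection, $\ast_3$ the disjunctive sum, $\ast_4$ ordinal addition $\oplus$ and $\ast_5$ the natural (Hessenberg) sum $+$ on the Conway ordinals, with $\ast_2$ the selective sum — so that the genuinely substantive content is the move-set combinatorics behind $\ast_2,\ast_3,\ast_5$. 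In every case verifying equality of sets means checking both inclusions, and that is exactly where the induction hypothesis and, for $\ast_1$, the degeneracy identity are used.
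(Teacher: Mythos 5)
Your well-definedness argument is sound, and it is in fact tighter than the paper's: the paper runs two successive single inductions, which as literally stated do not cover the recursive call $f \ast_i G$ (second argument of full rank) occurring in $\ast_2,\ast_3,\ast_5$, whereas your well-founded relation on pairs covers all three kinds of calls $F \ast_i g$, $f \ast_i G$, $f \ast_i g$ at once. Your discussion of $\ast_1$ is also correct, and it repairs a genuine defect in the paper's own ``one-line proof'': expanding $F \ast_1 (G \ast_1 H)$ really does produce the family $\{ f \ast_1 (G\ast_1 h) \mid f \in F,\, h \in H\}$, which has no counterpart in the expansion of $(F\ast_1 G)\ast_1 H$, and your identity $X \ast_1 Y = Y$ (valid under the ``union'' reading of the set-builder braces, which is the reading confirmed by the paper's own rewriting of $\ast_4$ and $\ast_5$ as unions) is exactly what collapses both sides to $H$. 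For $\ast_2,\ast_3,\ast_4$ the term-by-term matching you describe does work; for $\ast_2$ one gets seven families on each side, paired off bijectively by the induction hypothesis.

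The gap is your final claim that the same bookkeeping ``settles\dots $\ast_5$''. It does not, and it cannot, because $\ast_5$ is not associative on $\vN$. After applying the induction hypothesis, the expansion of $(F\ast_5 G)\ast_5 H$ contains the families $\{f \ast_5 G \mid f \in F\}$ and $\{f \ast_5 H \mid f \in F\}$, while the expansion of $F \ast_5 (G\ast_5 H)$ contains instead $\{G \ast_5 h \mid h \in H\}$ and $\{F \ast_5 h \mid h \in H\}$; there is nothing to pair these with, and they are genuinely different. Concretely, in the notation of Table \ref{table:beginning} ($\I = \{\eset\}$, $\II = \{\I\}$, $\III = \{\eset,\I\}$), one checks $X \ast_5 \eset = X = \eset \ast_5 X$, then $\I \ast_5 \I = \III$, $\II\ast_5\I = \{\eset,\I,\II,\III\}$, and $\I \ast_5 \III = \{\eset,\I,\III\}$; hence
\begin{align*}
\II \ast_5 (\I \ast_5 \I) = \II \ast_5 \III
&= (\II \cup \III) \cup \{ \II\ast_5\eset,\ \II\ast_5\I \} \cup \{ \I\ast_5\III \} \\
&= \{\, \eset,\ \I,\ \II,\ \{\eset,\I,\II,\III\},\ \{\eset,\I,\III\} \,\},
\end{align*}
which does not contain $\III$. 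But $\III = \I\ast_5\I$ does belong to $(\II\ast_5\I)\ast_5\I$, since $\I \in \II\ast_5\I$ and therefore $\I \ast_5 \I$ occurs in the family $\{ f \ast_5 \I \mid f \in \II\ast_5\I \}$. So $(\II\ast_5\I)\ast_5\I \neq \II\ast_5(\I\ast_5\I)$: associativity of $\ast_5$ holds on the ordinals, where it reduces to the Hessenberg sum (Theorem \ref{th:Conway-style}), but it fails on general pure sets. The honest conclusion at this point is not a proof but a counterexample -- the statement itself, and the paper's ``and similarly for the other laws'' (which glosses over both the $\ast_1$ subtlety you caught and this $\ast_5$ failure), is false as it stands for $\ast_5$.
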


\begin{proof}
Note that $\eset \ast_i \eset = \eset$, for $i=1,\ldots,5$, since the conditions are empty.
Assume $f \ast \eset$ is defined for all pure sets $f$ of lower rank than the one of $F$.
Then by induction the given formulae define $F \ast \eset$, for all pure sets $F$.
Next,
assume $F \ast g$ is defined for all pure sets $F$ and all $g$ with lower rank than the one of $G$.
Then by induction the formulae define $F \ast G$. 
By construction, when $g \in G$, the rank of $F \ast g$ is lower than the rank of $F \ast G$.
Associativity is, in the words of Conway,  a "one-line proof", using the
definition and  the induction hypothesis,
e.g.,
\begin{align*}
(F \ast_1 G)\ast_1 H & = \{ (F\ast_1 G) \ast_1 h ,  (F \ast_1 g) \ast_1 h , 
(f \ast_1 g) \ast_2 h \mid f\in F, g\in G,h\in H  \} \\
& = \{ F\ast_1 ( G \ast_1 h ),  F \ast_1( g \ast_1 h ), 
f \ast_1 (g  \ast_2 h)  \mid f\in F, g\in G,h\in H  \} \\
&=
F \ast_1 (G  \ast_1 H),
\\
F\ast_4 (G \ast_4 H) & = \{ F \ast_4 G \ast_4 h, F \ast_4 g, f \mid f \in G,g\in G,h \in H\}
= (F\ast_4 G) \ast_4 H, 
\end{align*}
and similarly for the other laws (cf.  \cite{S}, p.40 : "all of these operations are associative").
\end{proof}

\begin{definition}\label{def:disjunctive-impartial}
We call the operation $\ast_3$ the
 {\em  disjunctive sum}  of two impartial games (pure sets), denoted in this context by $+$ or
 by $+_d$ :
$$
F+G := F+_d G :=  F\ast_3 G = 
 \{ f +_d G , F +_d  g \mid f \in F, g\in G \} .
$$
\end{definition}

\begin{example}
$\I+\I = \{ \I \} = \II$, 

\nin
$\I+\II=\{ \II\}=$IV, 

\nin $ \II + \II = \{ \I+\II\}=$XVI.

\nin Since $1+1 = \II$ is not an ordinal, 
 the class of ordinals is not stable under disjunctive sum.
 The following definition remedies this. 
\end{example}

\begin{definition}
For two ordinals $\alpha,\beta$, we define recursively their
\href{https://en.wikipedia.org/wiki/Nimber#Addition}{\em nimber sum}
$$
\alpha \diamond \beta := \mex \{ \alpha \diamond \beta' , \alpha' \diamond
\beta \mid \beta' \in \beta, \alpha' \in \alpha \}. 
$$
\end{definition}

\begin{theorem}
\begin{enumerate}
\item
The Grundy-map $\Gamma$ is a morphism from $+$ to $\diamond$:

for all pure sets $G,H$, we have
$\,  \Gamma(G +H) = \Gamma(G) \diamond \Gamma(H)$.
\item
Nimber-sum is associative, commutative, and
$\alpha \diamond \beta = 0 $ iff $\alpha = \beta$.

In particular, $\Gamma(G+H)=0$ iff $\Gamma(G)=\Gamma(H)$ (iff $G \equiv H$).
\item
$\vN$ with disjunctive sum forms a Monoid, and the Quotient $\vN / \equiv$
 forms  a Group, isomorphic to $(\On,\diamond)$.
\end{enumerate}
\end{theorem}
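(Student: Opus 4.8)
The plan is to treat all three parts together, with Part (1)---the Sprague--Grundy identity---as the engine that drives the rest. The one preliminary fact I would record first is a \emph{distinctness} (injectivity) property of $\diamond$ that follows instantly from the definition, with no induction: if $\alpha' \neq \alpha$ then $\alpha' \diamond \beta \neq \alpha \diamond \beta$. Indeed, taking $\alpha' < \alpha$ without loss of generality, the value $\alpha' \diamond \beta$ occurs among the terms $\{\alpha'' \diamond \beta \mid \alpha'' < \alpha\}$ whose $\mex$ is $\alpha \diamond \beta$, so it cannot equal that $\mex$; the case $\alpha < \alpha'$ is symmetric, and the same argument applies in the second slot. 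Crucially, this costs nothing and does not presuppose commutativity or associativity of $\diamond$, so Part (1) can be proved before any structural properties of $\diamond$ are known.

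For Part (1) I would argue by transfinite induction on the pair $(G,H)$ (say on $\rk(G) + \rk(H)$). Writing $\alpha := \Gamma(G)$, $\beta := \Gamma(H)$, the options of $G+H$ are exactly the sets $g+H$ ($g \in G$) and $G+h$ ($h \in H$), so by the induction hypothesis
$$
\Gamma(G+H) = \mex\{\Gamma(g)\diamond\beta,\ \alpha\diamond\Gamma(h) \mid g \in G,\ h \in H\}.
$$
I then show this $\mex$ equals $\alpha \diamond \beta$ via the two defining properties of $\mex$. First, $\alpha \diamond \beta$ is \emph{not} attained: for $g \in G$ we have $\Gamma(g) \neq \alpha$ (since $\alpha = \mex\{\Gamma(g')\}$ is unattained), so distinctness gives $\Gamma(g)\diamond\beta \neq \alpha\diamond\beta$, and symmetrically $\alpha\diamond\Gamma(h)\neq\alpha\diamond\beta$. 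Second, every $\gamma < \alpha\diamond\beta$ \emph{is} attained: by definition of $\alpha\diamond\beta$ as a $\mex$, such a $\gamma$ equals $\alpha'\diamond\beta$ for some $\alpha'<\alpha$, or $\alpha\diamond\beta'$ for some $\beta'<\beta$; in the first case $\alpha' < \alpha = \mex\{\Gamma(g')\}$ forces $\alpha' = \Gamma(g)$ for some $g \in G$, whence $\gamma = \Gamma(g)\diamond\beta = \Gamma(g+H)$ is an option-value, and the second case is symmetric. Together these say exactly $\Gamma(G+H) = \alpha\diamond\beta$.

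For Part (2): commutativity of $\diamond$ is a direct induction, since the defining set for $\alpha\diamond\beta$ becomes, under the induction hypothesis, the defining set for $\beta\diamond\alpha$. The equivalence $\alpha\diamond\beta = 0 \Leftrightarrow \alpha=\beta$ is a simultaneous induction using $\mex(A)=0 \Leftrightarrow 0\notin A$: if $\alpha=\beta$, then no term $\alpha\diamond\alpha'$ or $\alpha'\diamond\alpha$ with $\alpha'<\alpha$ vanishes (induction), so $0$ is excluded and $\alpha\diamond\alpha=0$; if say $\beta<\alpha$, then the term with $\alpha'=\beta$ contributes $\beta\diamond\beta=0$ (induction), so $0$ lies in the set and $\alpha\diamond\beta\neq 0$. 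For associativity I would avoid the messy direct induction and instead bootstrap off Part (1): since $\Gamma(\alpha)=\alpha$ for every ordinal, viewing ordinals as pure sets gives $\alpha\diamond\beta = \Gamma(\alpha+\beta)$, and hence
$$
(\alpha\diamond\beta)\diamond\gamma = \Gamma\big((\alpha+\beta)+\gamma\big) = \Gamma\big(\alpha+(\beta+\gamma)\big) = \alpha\diamond(\beta\diamond\gamma),
$$
the middle equality being associativity of the disjunctive sum (Theorem \ref{th:Conway-sum}). The ``in particular'' clause is then immediate from Part (1): $\Gamma(G+H)=0$ iff $\Gamma(G)\diamond\Gamma(H)=0$ iff $\Gamma(G)=\Gamma(H)$.

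Finally, for Part (3): $\eset$ is a two-sided identity (a one-line induction shows $G+\eset=G$), and associativity is Theorem \ref{th:Conway-sum}, so $(\vN,+)$ is a monoid. Part (1) shows $\equiv$ is a congruence, since $\Gamma(G)=\Gamma(G')$ and $\Gamma(H)=\Gamma(H')$ give $\Gamma(G+H)=\Gamma(G)\diamond\Gamma(H)=\Gamma(G')\diamond\Gamma(H')=\Gamma(G'+H')$. On $\vN/\!\equiv$ the class of $\eset$ is the identity, and each class is its own inverse because $\Gamma(G+G)=\Gamma(G)\diamond\Gamma(G)=0$ by Part (2), so $G+G\equiv\eset$; hence $\vN/\!\equiv$ is a group. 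The induced map $\bar\Gamma\colon\vN/\!\equiv\,\to\On$ is well defined and injective by the very definition of $\equiv$, surjective because $\Gamma(\alpha)=\alpha$, and a homomorphism by Part (1), so it is the claimed isomorphism onto $(\On,\diamond)$. The main obstacle is entirely concentrated in Part (1): the delicate point is aligning the $\mex$ indexed by the \emph{options} of $G$ and $H$ with the $\mex$ indexed by the \emph{ordinals below} $\alpha$ and $\beta$, and the thing that makes it work---and that one must be careful to establish without circularity---is the cost-free distinctness property of $\diamond$.
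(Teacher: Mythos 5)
Your proof is correct, and its overall architecture matches the paper's: both use the Sprague--Grundy morphism property as the engine, both obtain associativity of $\diamond$ by transporting associativity of the disjunctive sum (Theorem \ref{th:Conway-sum}) through $\Gamma$, and both settle Part (3) by the same routine congruence/self-inverse/isomorphism bookkeeping. The real difference is in where the work is placed. The paper first proves $\Gamma(\alpha+\beta)=\alpha\diamond\beta$ for \emph{ordinals} --- which is genuinely easy, since the options of an ordinal $\alpha$ are exactly the ordinals $\alpha'<\alpha$ and $\Gamma(\alpha')=\alpha'$, so the two $\mex$-sets coincide term by term --- and then dismisses the general case with ``similarly.'' But the general case is not similar in that easy sense: the Grundy values $\{\Gamma(g)\mid g\in G\}$ need not be the ordinals below $\Gamma(G)$; they merely contain all of them and omit $\Gamma(G)$ itself, possibly containing larger values as well. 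Aligning $\mex\{\Gamma(g)\diamond\beta,\ \alpha\diamond\Gamma(h)\}$ with $\alpha\diamond\beta$ therefore needs exactly the two-sided argument you give, and your cost-free distinctness observation (the $\mex$ of a set never lies in the set, so $\alpha'\neq\alpha$ forces $\alpha'\diamond\beta\neq\alpha\diamond\beta$) is the lemma that makes the ``not attained'' half work without circularity. So your proof proves directly, and rigorously, the statement the paper only gestures at; the paper's ordering (ordinals first, then ``it follows that nimber sum is associative'') is in fact slightly loose, since associativity of $\diamond$ already requires the morphism identity for non-ordinal pure sets such as $\alpha+\beta$, which is precisely the general statement you establish first. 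Your route buys a complete argument; the paper's buys brevity at the cost of hiding the one step where the theorem has content.
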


\begin{proof}
By the definitions, for ordinals $\alpha,\beta$,
$$
\Gamma(\alpha + \beta) = \mex \{ \Gamma(\alpha'+\beta) , \Gamma(\alpha+\beta') \mid
\alpha' <\alpha,\beta' <\beta \}.
$$
By an inductive argument, we see that this equals $\alpha \diamond \beta$. 
It follows that nimber sum is associative.
Similarly, it follows that $\Gamma(G +H) = \Gamma(G) \diamond \Gamma(H)$.
Finally, it is shown by induction that
$0 \in \alpha \diamond \beta$ iff $\alpha \not= \beta$. 
From this, it follows that
$\Gamma(G+H)\not= 0$ iff $\Gamma(G) \not= \Gamma(H)$.
In particular, $\alpha \diamond \alpha = 0$, so the ordinals with nimber sum form a 
Group, to which $\vN / \equiv$ is isomorphic. 
\end{proof}

Although this is not needed for the theory of impartial games, let us return to the
"sums" $\ast_4$ and $\ast_5$ :

\begin{theorem}\label{th:Conway-style}
The (von Neumann) ordinals are stable under $\ast_4$ and $\ast_5$, and:

$\alpha \ast_4 \beta = \alpha \oplus \beta$ is the ordinal sum of ordinals $\alpha$ and $\beta$, 

$\alpha \ast_5 \beta$ is the Hessenberg sum of $\alpha$ and $\beta$.
\end{theorem}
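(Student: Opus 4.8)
The plan is to prove the two identities together with the stability assertion, by transfinite induction on the pair $(\alpha,\beta)$ in the componentwise well-founded order on $\On\times\On$. For each of $\ast_4,\ast_5$ the inductive statement is that $\alpha\ast_i\beta$ is a von Neumann ordinal, equal respectively to $(\alpha\oplus\beta)_\vn$ and $(\alpha+\beta)_\vn$. Once this is known for all strictly smaller pairs, every ``sub-option'' occurring in the defining formula of Theorem~\ref{th:Conway-sum} is already a known ordinal, so $\alpha\ast_i\beta$ is automatically a set of ordinals, and it remains only to identify which one. In both cases the inclusion $\alpha\ast_i\beta\subseteq(\alpha\oplus\beta)_\vn$ (resp.\ $(\alpha+\beta)_\vn$) is the easy half, coming from monotonicity; the work, and the stability, lie in the reverse inclusion.

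For $\ast_4$ this is immediate. From $F\ast_4 G=F\cup\{F\ast_4 g\mid g\in G\}$ and the inductive hypothesis $\alpha\ast_4 g=\alpha\oplus g$ one obtains
\[
\alpha\ast_4\beta=\alpha\cup\{\alpha\oplus g\mid g<\beta\}.
\]
Now I would invoke left subtraction for $\oplus$ (every $\delta\geq\alpha$ is uniquely $\alpha\oplus\gamma$, and $\delta<\alpha\oplus\beta$ iff $\gamma<\beta$): the ordinals below $\alpha\oplus\beta$ split exactly into those $<\alpha$ and those of the form $\alpha\oplus g$ with $g<\beta$. Hence the displayed set of elements is literally $\{\delta\mid\delta<\alpha\oplus\beta\}$, which is a von Neumann ordinal; this proves both stability and $\alpha\ast_4\beta=\alpha\oplus\beta$ in one stroke.

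For $\ast_5$ I would compare with the recursive description (\ref{eqn:O1}) of the Hessenberg sum, which says that $\alpha+\beta$ is the least ordinal strictly above all $\alpha+g$ $(g<\beta)$ and all $f+\beta$ $(f<\alpha)$. By the inductive hypothesis the options of $\alpha\ast_5\beta$ are the ordinals $f<\alpha$, the ordinals $g<\beta$, the sums $\alpha+g$ $(g<\beta)$, $f+\beta$ $(f<\alpha)$, and the ``interior'' sums $f+g$ $(f<\alpha,\ g<\beta)$. Strict monotonicity of $+$ gives $\subseteq$. For $\supseteq$, take $\gamma<\alpha+\beta$; by (\ref{eqn:O1}) either $\gamma\leq\alpha+g_0$ for some $g_0<\beta$ or $\gamma\leq f_0+\beta$ for some $f_0<\alpha$. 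If equality holds, $\gamma$ is one of the listed options and we are done; otherwise one applies (\ref{eqn:O1}) again to the strictly smaller pair and descends. Well-foundedness of the product order forces the descent to stop at an equality, i.e.\ at an ordinal actually realized as an option, which gives $\gamma\in\alpha\ast_5\beta$; comparison with (\ref{eqn:O1}) then identifies this ordinal as $\alpha+\beta$.

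The hard part is precisely this last step: showing that the descent always terminates at an option that is genuinely an \emph{element} of $\alpha\ast_5\beta$, equivalently that the set of options is downward closed (transitive). This is delicate because the one-sided options $\alpha+g$ and $f+\beta$ trace out only the ``boundary'' of the rectangle $[0,\alpha]\times[0,\beta]$ and do not by themselves fill its interior. For instance, when $\alpha=\beta=\omega+1$ the ordinal $\omega\cdot 2$ satisfies $\omega\cdot 2<\alpha+\beta=\omega\cdot 2+2$ yet is of neither boundary form, so without the interior options $f\ast_5 g$ the set of elements would fail to contain $\omega\cdot 2$ (while containing $\omega\cdot 2+1$) and would not even be an ordinal. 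Thus the crux is to verify that the options of $\ast_5$ are rich enough to realize every $\gamma<\alpha+\beta$; granting this, stability and the identification with the Hessenberg sum follow at once from (\ref{eqn:O1}).
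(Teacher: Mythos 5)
Your argument for $\ast_4$ is correct, and it is essentially the argument the paper intends, with the one essential detail made explicit: by left subtraction, every ordinal $\delta$ with $\alpha \leq \delta < \alpha \oplus \beta$ is uniquely of the form $\alpha \oplus g$ with $g < \beta$, so the option set $\alpha \cup \{ \alpha \oplus g \mid g < \beta \}$ is transitive and equals the von Neumann ordinal $\alpha \oplus \beta$. The paper's proof of this half just says ``compare with (\ref{eqn:O2})'', which silently uses exactly this fact.

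For $\ast_5$, your proof does not address the paper's statement, because you misread the definition: the operation of Theorem \ref{th:Conway-sum} is $F \ast_5 G = F \cup G \cup \{ F \ast_5 g \mid g \in G \} \cup \{ f \ast_5 G \mid f \in F \}$, with \emph{no} interior options $f \ast_5 g$; you analyzed the operation augmented by them. However, the aside in your last paragraph is the heart of the matter, and it tells against the paper rather than against you: with the paper's definition, the inductive computation (which is valid for all smaller pairs) gives $(\omega+1) \ast_5 (\omega+1) = \{ \gamma \mid \gamma < \omega \cdot 2 \} \cup \{ \omega \cdot 2 + 1 \}$, because the options $(\omega+1) \ast_5 g$ and $f \ast_5 (\omega+1)$ with $f,g \leq \omega$ realize only the ordinals $\omega + n + 1$ ($n \in \N$) and $\omega \cdot 2 + 1$, never $\omega \cdot 2$. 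This set is not transitive, hence not an ordinal, and it is not the Hessenberg sum $\omega \cdot 2 + 2$; so the von Neumann ordinals are \emph{not} stable under $\ast_5$, and Theorem \ref{th:Conway-style} is false as stated. The paper's one-line proof (``$\ast_5$ coincides with the description of sets given by (\ref{eqn:O1})'') overlooks that (\ref{eqn:O1}) characterizes $\alpha + \beta$ as a \emph{minimum} --- equivalently, as a downward-closed set of ordinals --- whereas a Conway-style operation produces only the bare set of options; for $\ast_4$ and (\ref{eqn:O2}) the two coincide thanks to left subtraction, but for $\ast_5$ they differ already at $(\omega+1,\omega+1)$. The operation you actually studied (equivalently the paper's $\ast_2$: on nonzero ordinals the summands $F \cup G$ are redundant, since $f = f+0$ and $g = 0+g$ are interior options) is the one for which the claim is true, and your descent via (\ref{eqn:O1}), terminating at an equality $\gamma = f' + g'$ with $(f',g') \neq (\alpha,\beta)$ and hence at an option, does prove it. So what needs repair is the statement --- replace $\ast_5$ by the operation with interior options --- not your proof.
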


\begin{proof}
Compare with formulae (\ref{eqn:O1}) and (\ref{eqn:O2}): when $F = \alpha$ ad $G =\beta$
are (von Neumann) ordinals, then $\ast_4$ and $\ast_5$ coincide with the description of sets
given by these formulae, whence the claim.
(The "sum" $\ast_4$ is in \cite{Co01,S} denoted by $F:G$, and is called "ordinal sum";
concerning $\ast_5$, see e.g., \href{https://math.stackexchange.com/questions/3715343/explicit-formulas-for-natural-hessenberg-and-ordinal-sum-do-those-work}{question 3715343 on math.stackexchange}). 
\end{proof}

Note the similarity in logical structure
 between the operations $\ast_3$ and $\ast_5$, leading to nimbers, on the
one hand (see next section), and to ordinals with Hessenberg sum, on the other hand.

\subsection{Product, and the Field of nimbers}\label{sec:Nimbers}

The definition of multiplication is more delicate than the definition of addition.
For instance, usual ordinal, or Hessenberg multiplication does not extend to $\vN$ in
a way admitting all desired properties.
Concerning  "impartial pure set theory", we define: 

\begin{definition}
The {\em product} of two pure sets  $F,G$ is inductively defined by
$$
F \cdot G :=  \{ (f \cdot G) +  (F \cdot g) + (f \cdot g)
 \mid f \in F, g\in G \} ,
$$
where $+$ means $+_d$,
and the \href{https://en.wikipedia.org/wiki/Nimber#Multiplication}{\em nimber product}
of two ordinals inductively by
$$
\alpha \circ  \beta := \mex \{ (\alpha \circ  \beta' )\diamond   ( \alpha' \circ
\beta) \diamond  ( \alpha' \circ  \beta' ) \mid \beta' \in \beta, \alpha' \in \alpha \} .
$$
\end{definition}

By induction, just like for addition, we see that
$\Gamma(G \cdot H) = \Gamma(G) \circ \Gamma(H)$.
But whereas the product on $\vN$ fails to have the properties of a product in a Ring,
the product on $\On$ has excellent properties --
the following is the main result of \cite{Co01}, Chapter 6, cf.\  \cite{S}, Section VIII.4:

\begin{theorem}
The class of ordinals $\On$ together with nimber addition and nimber multiplication is a 
Field of characteristic $2$.
\end{theorem}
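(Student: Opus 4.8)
The plan is to carry out everything by transfinite $\in$-induction on the ordinals. The additive structure is already available: by the preceding theorem $(\On,\diamond)$ is an abelian group with $\alpha\diamond\alpha=0$ for every $\alpha$. In particular $1\diamond1=0$, so once we know the structure is a field it will automatically have characteristic $2$; and the relation $\alpha\diamond\alpha=0$, i.e.\ every element is its own negative, is what makes the multiplicative formula self-dual and keeps the bookkeeping light.

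Several ring axioms are then cheap. Commutativity of $\circ$ is immediate, since the defining option set is symmetric under the simultaneous exchange $(\alpha,\alpha')\leftrightarrow(\beta,\beta')$. Both factors annihilate $0$, because $\alpha\circ 0=\mex\eset=0$, and a one-step induction gives $1\circ\beta=\mex\{\,1\circ\beta'\mid\beta'<\beta\,\}=\mex\{\,\beta'\mid\beta'<\beta\,\}=\beta$, so $1=\{0\}$ is the identity. Absence of zero divisors is also essentially free: if $\alpha\neq0$ and $\beta\neq0$ then $(\alpha',\beta')=(0,0)$ is an admissible index, and its option value is $(\alpha\circ0)\diamond(0\circ\beta)\diamond(0\circ0)=0$; since $\alpha\circ\beta$ is by construction the $\mex$ of the option set and hence differs from every option, we get $\alpha\circ\beta\neq0$.

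The substantial part is distributivity of $\circ$ over $\diamond$, together with associativity, which I would prove by one simultaneous transfinite induction. The natural object is the symmetric bilinear identity
\[ (\alpha\diamond\alpha')\circ(\beta\diamond\beta')=(\alpha\circ\beta)\diamond(\alpha\circ\beta')\diamond(\alpha'\circ\beta)\diamond(\alpha'\circ\beta'), \]
which is just distributivity applied in each slot. Combined with it, the fact that the $\mex$ avoids each option reflects exactly no-zero-divisors: the option $(\alpha\circ\beta')\diamond(\alpha'\circ\beta)\diamond(\alpha'\circ\beta')$ equals $(\alpha\circ\beta)\diamond\bigl[(\alpha\diamond\alpha')\circ(\beta\diamond\beta')\bigr]$, which differs from $\alpha\circ\beta$ precisely because the bracketed product of two nonzero ordinals is nonzero. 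The delicate point is setting up a well-founded order on which the induction actually closes: the factors $\alpha\diamond\alpha'$ and $\beta\diamond\beta'$ need not be ordinally below $\alpha$ and $\beta$, so the naive product order on pairs is insufficient, and this reconciliation of the options with distributivity is where the weight of Conway's argument sits. Associativity then follows from a further induction of the same shape, using the identity above to break both bracketings down to strictly simpler products.

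The last step, and the one I expect to be the main obstacle, is the construction of inverses. For $\alpha\neq0$ I would follow Conway's self-referential recipe: generate a set $S$ of candidate options, starting from $0$ and closing under
\[ y\;\longmapsto\;\bigl(1\diamond(\alpha\diamond\alpha')\circ y\bigr)\circ(\alpha')^{-1},\qquad 0\neq\alpha'<\alpha, \]
where $(\alpha')^{-1}$ is supplied by the induction on $\alpha$, and then set $\alpha^{-1}:=\mex S$. Two verifications are needed and are both genuinely delicate: that this closure produces only ordinals below a fixed bound, so that $S$ is a set and $\mex S$ is legitimate; and that the $\mex$ forces $\alpha\circ\alpha^{-1}=1$, by checking that $1$ is excluded from the option set of $\alpha\circ\alpha^{-1}$ while every smaller ordinal occurs there. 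Granting this, every nonzero ordinal is a unit, so $(\On,\diamond,\circ)$ is a commutative ring in which every nonzero element is invertible, hence a field; and $1\diamond1=0$ shows its characteristic is $2$.
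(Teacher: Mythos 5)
First, a remark on the comparison itself: the paper does not prove this theorem at all. It is stated as ``the main result of \cite{Co01}, Chapter 6, cf.\ \cite{S}, Section VIII.4'', with no proof following, so your proposal can only be measured against that classical argument, which your outline tracks. The cheap steps are all correct as you give them: commutativity of $\circ$, $\alpha\circ 0=0$, $1\circ\beta=\beta$, absence of zero divisors via the option at $(\alpha',\beta')=(0,0)$, and characteristic $2$ from $1\diamond 1=0$. Your inverse recipe is also the right one (the characteristic-$2$ form of Conway's surreal reciprocal), and the first of your two worries about it is vacuous: the closure $S$ is a set by replacement, being indexed by finite sequences of applications of the closure maps, and \emph{any} set of ordinals has a $\mex$; no bound is needed. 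The second verification goes through essentially as you predict, via the invariant $\alpha\circ y\neq 1$ for $y\in S$, propagated along the closure using no zero divisors: then $0$ is an option of $\alpha\circ\mex(S)$, while an option equal to $1$ would force either some element of $S$ to invert $\alpha$ or $\mex(S)\in S$.

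The genuine gap is the one you flag yourself --- distributivity and associativity --- and it is worth naming what is missing, because your sketch lacks the two tools that make these inductions close, and one of your remarks points backwards. The first tool is the \emph{reconstitution property} of $\mex$: every ordinal strictly below $\mex(A)$ belongs to $A$. Hence every ordinal below $\beta\diamond\gamma$ is itself of the form $\beta'\diamond\gamma$ or $\beta\diamond\gamma'$, and every ordinal below $\alpha\circ\beta$ is itself an option value of that product; this is what lets an induction reach the arbitrary smaller ordinals occurring as indices. The second is the \emph{mex-equality criterion}: $\mex(A)=\mex(B)$ as soon as $\mex(A)\notin B$ and $\mex(B)\notin A$, which reduces each law to showing that each side differs from every option of the other; those inequalities come from additive cancellation together with the fact that a $\mex$ is, \emph{by definition}, distinct from every member of the set it is the $\mex$ of. This last point is where your logic runs in the wrong direction: you propose to see that $\alpha\circ\beta$ avoids its options by applying ``no zero divisors'' to $(\alpha\diamond\alpha')\circ(\beta\diamond\beta')$, but that rewriting already presupposes distributivity for arguments that are not inductively smaller --- exactly the circularity you then worry about. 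In the correct order of reasoning the avoidance is free, distributivity is proved by a transfinite induction organized \emph{lexicographically} (first on $\alpha$, then on the pair $(\beta,\gamma)$ componentwise), and the troublesome case you identify --- $\beta'\diamond\gamma$ need not lie below $\beta\diamond\gamma$ --- is resolved by using reconstitution once more to write $\beta\diamond\gamma=\beta'\diamond\gamma''$ with $\gamma''<\gamma$, and then invoking the hypothesis for the strictly smaller first argument $\alpha'$ together with the free inequality ``a product differs from its option values''. Your bilinear identity is a corollary of distributivity, not the vehicle of the induction. So your proposal is a faithful plan of Conway's proof with its hard core left open; the core is not bookkeeping, it is the theorem.
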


The Field of nimbers is the analog, in the impartial theory, of the Field of numbers
for partizan games. 
In loc.\, cit., more is proved about the nimbers: e.g., 
for every finite ordinal $n$, the Galois field $\F_{2^{2^n}}$ is a subfield of the nimbers, 
 and $(\N , \diamond, \circ)$ is a subfield,
isomorphic to the direct  limit of these.
Just as for numbers, there are two viewpoints:
either, view the nimbers as the ordinals together with special laws defining a Field structure,
or view them as $\vN$, quotiented by $\equiv$, together with a rather natural sum, and
a  less natural product.

\section{Partizan games, graded sets}\label{sec:partizan}

Impartial games correspond to usual, pure set theory, and partizan games correspond to a
similar theory, where now 
{\em the proper set theory in which to perform a formalisation would be one with two kinds of
membership}.
In principle, this is carried out in \cite{S}, Chapter VIII, albeit in a different language.
As stressed in the introduction, the main issue is about {\em ordered pairs}:
there seems to be no {\em natural} 
operation in pure set theory that modelizes "the" 
\href{https://en.wikipedia.org/wiki/Cartesian_product}{\em Cartesian product of sets},
(nor  "the"
\href{https://en.wikipedia.org/wiki/Disjoint_union#Set_theory_definition}{\em disjoint union}, that is, the {\em set theoretic coproduct}).
Every  such construction depends on choices, or on "contexts", specific to 
  a given situation.
 The usual convention of set theory is to define ordered pairs via the 
 \href{https://en.wikipedia.org/wiki/Ordered_pair#Kuratowski's_definition} {\em Kuratowski construction}
 \begin{equation}\label{eqn:Kuratowski}
 \langle a,b \rangle := \{ \{ a,b\}, \{ a \} \} 
 \end{equation} 
 and Cartesian products as the set of all ordered pairs,
 \begin{equation}\label{eqn:Cartesian} 
 a \times b := \{ \langle x,y  \rangle \mid x\in a, y \in b\} .
 \end{equation}
 This has the desired properties. But any other injective family of maps
 $f=f_\alpha:\vN_\alpha \times \vN_\alpha \to \vN_{\alpha + k}$ (for some $k \in \N$) could serve to define the notion of
 ordered pair, and the choice $f((a,b)) = \langle a,b \rangle$ (with $k=2$) appears to be arbitrary. 
 Note also that in general 
 $(a \times b) \times c \not= a \times (b \times c)$. 
  As far as I see, all known variants of the Kuratowski construction share 
these inconvenients.

\subsubsection{The graded von Neumann universe}

Our aim is to define a hierarchy having the same properties as the von Neumann hierarchy
$\vN$, but with two kinds of membership, $\in_L$ (left member) and $\in_R$ (right member) 
instead of just one type $\in$.
Technically, it will be defined as a subclass of $\vN$, by using one
 of the current definitions of {\em ordered pair} $\langle x, y \rangle$ of pure sets
$x,y$. For convenience, think of
the Kuratowski definition (\ref{eqn:Kuratowski}), but any other would be equally convenient.
The imbedding of our new universe in $\vN$ will depend on this choice, but its
"intrinsic" properties won't.

\begin{definition}\label{def:gvN}
Fix some definition of "ordered pair" in pure set theory, to be denoted by \begin{equation}\label{eqn:Kuratowski2}
 a \sqcup b  := \mbox{ ordered pair, often also denoted by }  (a,b).
 \end{equation} 
If $z = x \sqcup y$ is an ordered pair,
we call $x = z_L$ its {\em left component}, and $y = z_R$ its {\em right component}.
We define a subclass $\gvN$ of $\vN$, which we call the {\em graded von Neumann universe},
 as follows: let $\gvN_0 = \eset$,
$$
\gvN_{\alpha + 1} = \cP(\gvN_\alpha) \times \cP(\gvN_\alpha) = \{
y =  y_L \sqcup  y_R  \mid y_L, y_R \in \cP(\gvN_\alpha) \}
$$
for successor ordinals $\alpha + 1$, and if $\lambda$ is a limit ordinal, we let
$$
\gvN_\lambda = \bigcup_{\alpha < \lambda} \gvN_\alpha .
$$
Elements of $\gvN_\alpha$ are called {\em partizan games, {\em or}
graded sets (of rank at most $\alpha$)}.
Elements $a,b\ldots$ of $y_L$ are called {\em left elements (left options) of $y$}, and elements 
$v,w,\ldots$ of
$y_R$ {\em right elements (right options) of $y$}; we write
$$
a \in_L y \mbox{ for } a \in y_L, \qquad
v \in_R y \mbox{ for } v \in v_R.
$$
Thus $y = \{ a,b,\ldots\} \sqcup  \{ v,w,\ldots \}$ is, in our notation,
 the set which Conway often writes like this:
$y=\{ a,b,\ldots \mid v,w,\ldots \}$.
\end{definition}

\begin{remark}
Our notation $a \sqcup b$ for the ordered pair should remind the idea of 
"disjoint union with first set $a$ and second set $b$".
This also justifies Conway's notation, which would be an excellent way to describe
such disjoint unions, if it were not in conflict with set-builder notation -- we wish to 
use the vertical bar to indicate set-builder conditions, and hence do not use
Conway-notation.
\end{remark}

\begin{remark}\label{rk:Simons}
Following Simons, \cite{Sim}, it is sometimes useful to turn $\{ L , R \}$ into a group
with neutral element $R$, and to write $P$ or $O$ for an element of this group.
So, $\xi \in_P x$ means $\xi \in x_P$.
The letters $L,R$  look like   new "urelements" of set theory. 
One could also use $A,B$ for "Arthur" and "Bertha" (\cite{Co01}, p.\ 71).
Probably, in pure set theory, it would be better to use $\I, 0$, but we are going to follow the convention
from CGT.
\end{remark}

\begin{remark}
As a class, our $\gvN$ is the same as the class $\tilde{\mathbf{PG}}$ defined on pages 53/54 and
 398  in 
\cite{S}, but our labelling by ordinals $\alpha$, and hence the following rank definition, is slightly different:
e.g., we start with  $\gvN_0 = \eset$, whereas 
for $\alpha = 0$, the definition in loc.\ cit.\ yields $\tilde {\mathbf{PG}}_0 = \{ 0\} = \{ \langle \eset, \eset \rangle \}$,
and likewise for all limit ordinals $\lambda$, the definition from loc.\ cit.\ directly yields our
$\gvN_{\lambda + 1}$.
In other terms, the stages in loc.\ cit.\ are rather labelled by "Conway ordinals", than by "von Neumann ordinals".
\end{remark}

\begin{definition}
The {\em (graded) rank} of a graded set (partizan game) $G$ is the least $\alpha$
such that $G \in \gvN_{\alpha + 1}$.
\end{definition}

For $n \in \N$, we have
$\card(\gvN_{n+1}) = 2^{2 \cdot \card(\gvN_n)}$. 
For $n=1$, the only element of $\gvN_1$ is the "zero-game" ("graded zero")
\begin{equation}
0_G  : = \eset \sqcup  \eset .
\end{equation}
Here is $\gvN_2$ and some elements of $\gvN_3$
(our order-numbers are ad hoc): 

\begin{table}[!h] \caption{Beginning of the graded universe}\label{table:partizan}
\begin{center}
\begin{tabular}{|*{10}{c|}}
\hline
order nr. & CGT symbol & rank  & depth $1$ & Conway notation & type 
\\
\hline
$0$ & $0_G$ & $0$ &  $\eset  \sqcup \eset$ &   $\{ \mid \}$ & $\equiv 0$
\\
\hline
$1$ & $1_G$ & $1$ &  $ \{ 0_G   \} \sqcup  \eset $ & $  \{ 0 \mid \}$ & $>0$ 
\\
$2$ & $(-1)_G$ & $1$ &  $ \eset \sqcup  \{ 0_G   \} $ & $ \{  \mid 0 \}$ & $<0$
\\
$3$ &  $*$ & $1$ &  $ \{  0_G  \} \sqcup  \{  0_G   \}  $ & $ \{ 0 \mid 0 \} $ & $\Vert 0$ 
\\
\hline
$4$ & $2_G$ & $2$ & $\{ 1_G\} \sqcup \eset $ & $\{ 1 \vert \}$ & $>0$
\\
$5$ & $\uparrow$ & $2$ & $ \{ 0_G \} \sqcup \{ * \}$ &  $\{ 0 \vert *\}$ & $>0$ 
\\
$6$ & $\downarrow$ & $2$ & $ \{ * \} \sqcup \{ 0_G \}$ &  $\{ * \vert 0\}$ & $<0$ 
     \end{tabular}
\end{center}
\end{table}

\nin
We  leave to a machine the task of
giving  a list of the $2^{8} = 256$ elements of $\gvN_3$.

\begin{remark} 
By construction, $\gvN$ is included in $\vN$, but there are also converse inclusions.
By induction, we define three Imbeddings of $\vN$ into $\gvN$:
\begin{enumerate}
\item
$i_L$, Left Imbedding: $i_L(y) :=  \{ i_L(x) \mid x \in y \} \sqcup  \eset $,
\item
$i_R$, Right Imbedding: $i_R(y) :=  \eset \sqcup  \{ i_R(x) \mid x \in y \} $,
\item
$i_D$, Diagonal Imbedding: $i_D(y) :=  \{ i_D(x) \mid x \in y \} \sqcup
 \{ i_D(x) \mid x \in y \}$.
\end{enumerate}
The image of $i_L$ consists of "positive" elements, and the one of $i_R$ of "negative" elements
(see below).
The Diagonal Imbedding can be  (and is, in \cite{S})
  used to consider the theory of impartial games (preceding section)
 as a "subtheory" of the theory of partizan games. 
\end{remark}

\begin{definition}
For a single partizan game $G$, there is an {\em opposite game} $G^{\op}$ defined by:
$g \in_L G^{\op}$ iff $g \in_R G$, and 
$g \in_R G^{\op}$ iff $g \in_L G$.

More interestingly, by induction for all partizan games one can define 
 the {\em  Conway-opposite} $G^\sharp$ (which Conway denotes by $-G$) by
$$
G^\sharp :=  \{ g^\sharp \vert g \in_R G\} \sqcup  \{ g^\sharp \vert g \in_L G \}  .
$$
In Simons' convention (Remark \ref{rk:Simons}):
$g \in_P G^\sharp$ iff
$g^\sharp  \in_{P+L} G$.
\end{definition}

At lowest ranks, opposite and Conway-opposite coincide:
from the four elements of $\gvN_2$, $0_G$ and $*$ are fixed, and
$1_G$ and $(-1)_G$ are exchanged.

\begin{theorem}[The four outcome classes]
Every graded set (partizan game) $G$ is either 
 {\em positive} or {\em not positive}, 
 and either {\em negative} or {\em not negative},
defined inductively: $G$ is
\begin{enumerate}
\item
{\em (a) positive}, $G \geq 0$, if: $\forall g \in_R G$: not($g \leq 0$),

{\em (b) not positive}, $\lnot(G \geq 0)$, or: not$(G \geq 0)$, if : $\exists g \in_R G $: $g \leq 0$,
\item
{\em (a) negative}, $G \leq 0$, if: $\forall g \in_L G$: not($g \geq 0$),

{\em (b) not negative}, $\lnot(G \leq 0)$, or: not$(G \leq 0)$, if: $\exists g\in_L G$: $g \geq 0$,
\end{enumerate}
and Case i.(a) is the logical negation of Case i.(b).
Therefore, every partizan game $G$ is of exactly one of the following four types:
\begin{enumerate}
\item
$G  \equiv 0$: $G$ is both positive and negative (also: "of zero type"),
\item
$G \Vert 0$: $G$ is both not positive and not negative (also: "fuzzy"),
\item
$G >0$: $G$ is positive and not negative (also: "strictly positive"),
\item
$G<0$: $G$ is negative and not positive (also: "strictly negative").
\end{enumerate}
Moreover, the operator $\sharp$ exchanges "positive" and "negative",
hence exchanges $G>0$ and $G<0$, and  it preserves
$G \equiv 0$ as well as $G \Vert 0$.
\end{theorem}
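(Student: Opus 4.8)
The plan is to treat the two relations ``$G\geq 0$'' and ``$G\leq 0$'' as a pair of predicates defined by \emph{simultaneous} transfinite induction on the graded rank, and then to extract the whole statement from propositional logic together with a single symmetry induction for $\sharp$. First I would check that this inductive definition is well-founded. To evaluate $G\geq 0$ one quantifies over the right options $g\in_R G$, asking whether $g\leq 0$; to evaluate $G\leq 0$ one quantifies over the left options $g\in_L G$, asking whether $g\geq 0$. Since every option of $G$ has strictly smaller graded rank than $G$, both predicates at a given rank depend only on the two predicates at strictly smaller ranks. Hence a simultaneous transfinite induction assigns a definite truth value to ``$G\geq 0$'' and to ``$G\leq 0$'' for every $G\in\gvN$; the base case is $0_G=\eset\sqcup\eset$, where both quantifications are vacuous, so $0_G\geq 0$ and $0_G\leq 0$ both hold.

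Next, the two dichotomies are immediate. By construction the predicate in i.(a), namely $\forall g\in_R G:\ \lnot(g\leq 0)$, is the literal logical negation of the predicate in i.(b), namely $\exists g\in_R G:\ g\leq 0$, and likewise ii.(a) negates ii.(b). Therefore every $G$ is exactly one of positive / not positive, and exactly one of negative / not negative. Forming the four conjunctions of these two independent alternatives yields a partition into the mutually exclusive and exhaustive classes $G\equiv 0$ (positive and negative), $G>0$ (positive and not negative), $G<0$ (negative and not positive), and $G\Vert 0$ (not positive and not negative), which are precisely the four named types.

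Finally, for the behaviour of $\sharp$ I would prove, again by simultaneous induction on rank, the two equivalences
$$
G\geq 0\iff G^\sharp\leq 0,\qquad G\leq 0\iff G^\sharp\geq 0 .
$$
The key input is the definition $G^\sharp=\{g^\sharp\mid g\in_R G\}\sqcup\{g^\sharp\mid g\in_L G\}$, which makes the left options of $G^\sharp$ exactly the $\sharp$-images of the right options of $G$. Thus ``$G^\sharp\leq 0$'' unfolds to $\forall g\in_R G:\ \lnot(g^\sharp\geq 0)$, and the induction hypothesis applied to the lower-rank $g$ (namely $g^\sharp\geq 0\iff g\leq 0$) rewrites this as $\forall g\in_R G:\ \lnot(g\leq 0)$, i.e.\ $G\geq 0$; the other equivalence is symmetric. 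From these, $\sharp$ swaps ``positive'' and ``negative'', hence swaps $G>0$ with $G<0$ while fixing both $G\equiv 0$ and $G\Vert 0$.

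The only genuine obstacle here is organizational rather than mathematical: because the relations $\geq 0$ and $\leq 0$ are defined in terms of each other across lower ranks, both the well-definedness in the first step and the $\sharp$-compatibility in the last step must be carried as a coupled pair of statements inside one induction, not proved one after the other. Once this coupling is set up correctly, each individual verification is a one-line unfolding of the definitions.
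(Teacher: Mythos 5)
Your proposal is correct and follows essentially the same route as the paper's own proof: transfinite induction on graded rank, with the vacuous base case $0_G=\eset\sqcup\eset$, the observation that each case (a) is the literal negation of case (b) giving the fourfold partition, and an induction showing $\sharp$ exchanges the two predicates. You merely spell out the details the paper leaves terse (the coupled definition of $\geq 0$ and $\leq 0$, and the explicit equivalences $G\geq 0\iff G^\sharp\leq 0$ and $G\leq 0\iff G^\sharp\geq 0$), which is a faithful elaboration rather than a different argument.
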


\begin{proof}
By induction: note that the zero-grame $0_G=\widehat \eset$ has no left and no right elements,
so it is positive and negative, and not (not positive or not negative).
If the claim holds at graded rank below $\alpha$, and $G$ has graded rank $\alpha$,
then all four statements are well-defined propositions, and 
(not positive/not negative) is the logical negative of (positive/negative). 
Therefore the properties are well-defined, and we get the four cases.
By induction, it is clear that $\sharp$ exchanges $>$ and $<$, but preserves $\equiv$ and
$\Vert$. 
\end{proof}


\nin
\textbf{Game interpretation.}
See explanations p.72/73 concerning Theorem 50  in \cite{Co01}.
(A description by words seems more difficult, to me, than the formalism given above.
There are two kinds of "runs", starting with $\in_L$, resp.\ with $\in_R$, 
and "strategy" refers to an existence quantifier "$\exists$": "make a good choice". In the end,

$G >0$: there is a winning strategy for $L$,

$G<0$: there is a winning strategy for $R$,

$G \equiv 0$: there is a winning strategy for the "second" player,

$G \Vert 0$: there is a winning strategy for the "first" player.

\nin
The letters $L$ and $R$ seem to be used here in several different ways, as names
for "players" -- a kind of variable --, and as fixed "urelements" denoting an absolute
choice of order. From a logical point of view, this is rather confusing.)
Notice that we are in "normal-play convention": the zero-game satisfies $0_G \equiv 0$, as it should.
Analogous definitions for "mis\`ere-play convention" would be clumsy, and will not be 
considered here.
Next, one would like to define an analogue of the "Grundy-map" $\Gamma$, but this turns out
to be much more involved. First of all, the arithmetic operations:

\begin{definition}
We define inductively the {\em disjunctive sum} of $G,H \in \gvN$ by 
$u \in_P G+H$ iff [ $\exists g \in_P G$: $u = g+H$, or $\exists h \in_P H$: $g = G + h$ ]:
$$
G + H := 
 \{ G +h, g +H \vert h \in_L H,g\in_L G \} 
 \sqcup  \{ G +h, g+H \vert h \in_R H, g \in_R G \} 
$$
(cf.\ Theorem \ref{th:Conway-sum} for the principle of such
Conway-style definition). 
\end{definition}

\begin{theorem}
The disjunctive sum  is associative and  commutative, and has neutral element $0_G$.
The operator $\sharp$ is an Automorphism of the additive monoid, of order two:
$(G + H)^\sharp = G^\sharp + H^\sharp$, 
$(G^\sharp)^\sharp = G$. 

For every partizan game $G$, the game $G + G^\sharp$ is of zero-type:
 $G + G^\sharp \equiv 0$.
 \end{theorem}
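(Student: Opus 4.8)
The plan is to prove the final assertion $G + G^\sharp \equiv 0$ by transfinite induction on the graded rank of $G$, reducing the two halves of "$\equiv 0$" to a single computation. Recall that $G + G^\sharp \equiv 0$ means that $G + G^\sharp$ is both positive and negative. First I would observe that it suffices to prove $G + G^\sharp \geq 0$: by the already-established rules $(G+H)^\sharp = G^\sharp + H^\sharp$, $(G^\sharp)^\sharp = G$ and commutativity of the disjunctive sum, the game $G + G^\sharp$ is fixed by $\sharp$, since $(G + G^\sharp)^\sharp = G^\sharp + G = G + G^\sharp$; as $\sharp$ exchanges "positive" and "negative" (Theorem on the four outcome classes), positivity of $G + G^\sharp$ immediately yields its negativity, and the two together give $G + G^\sharp \equiv 0$.

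So the core step is to establish $G + G^\sharp \geq 0$. Unwinding the definitions, positivity of $G + G^\sharp$ means that no right option $u$ satisfies $u \leq 0$; and $u \leq 0$ fails precisely when $u$ admits some left option $w$ with $w \geq 0$. By the definitions of the disjunctive sum and of the Conway-opposite, the right options of $G + G^\sharp$ are exactly the games $G + g^\sharp$ with $g \in_L G$, together with the games $g + G^\sharp$ with $g \in_R G$ (using that $h \in_R G^\sharp$ iff $h = g^\sharp$ for some $g \in_L G$). For a right option of the first kind, $u = G + g^\sharp$ with $g \in_L G$, I would exhibit the left option $g + g^\sharp$ of $u$, obtained by moving in the left $G$-component to $g$; by the induction hypothesis $g + g^\sharp \equiv 0$, hence $g + g^\sharp \geq 0$, so $u$ has a left option $\geq 0$, i.e.\ $u \leq 0$ fails. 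For a right option of the second kind, $u = g + G^\sharp$ with $g \in_R G$, the mirror move to $g^\sharp$ in the $G^\sharp$-component produces the left option $g + g^\sharp$ of $u$ (since $g \in_R G$ gives $g^\sharp \in_L G^\sharp$), again $\geq 0$ by induction. In both cases the copied move $g + g^\sharp$ (the "Tweedledum--Tweedledee" copying strategy of combinatorial game theory) witnesses that the right option is not $\leq 0$; hence $G + G^\sharp \geq 0$.

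The induction is well-founded because every move $g$ invoked above is an option, $g \in_L G$ or $g \in_R G$, and therefore has strictly smaller graded rank than $G$, so the inductive hypothesis applies to $g + g^\sharp$. The base case $G = 0_G$ is covered automatically, since $0_G + 0_G^\sharp = 0_G$ has no options and the positivity condition holds vacuously. I do not expect a serious obstacle here; the only delicate point, and the place where care is needed, is the bookkeeping that correctly identifies the options of $G + G^\sharp$ — in particular that the right options of $G^\sharp$ are the $\sharp$-images of the left options of $G$ and conversely — together with the verification that the copied move really is an available left option of the chosen right option. Once this bookkeeping is in place, each case is a one-line application of the induction hypothesis.
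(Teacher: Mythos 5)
Your proposal is correct and is essentially the proof the paper delegates to Conway (``by induction, via one-line proofs''): a transfinite induction on rank in which the copying move $g + g^\sharp$ witnesses that no right option of $G + G^\sharp$ is $\leq 0$, with the options of $G+G^\sharp$ and of $G^\sharp$ identified exactly as in the paper's definitions of the disjunctive sum and the Conway-opposite. The one small economy you add is deriving negativity from positivity via the $\sharp$-invariance $(G+G^\sharp)^\sharp = G+G^\sharp$ instead of running the mirror argument a second time; this is legitimate provided (as you indicate) the commutativity and automorphism clauses of the theorem are proved first, which they are by independent one-line inductions, so no circularity arises.
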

 
 \begin{proof}
By induction, via "one-line proofs": \cite{Co01}, Chapter 1.
\end{proof}

\begin{definition}
Following \cite{Co01,S}, we write $G - H:= G + H^\sharp$, and let
\begin{enumerate}
\item
$G  \equiv H$ iff $G - H \equiv 0$,
\item
$G \Vert H$ iff $G - H \Vert 0$,
\item
$G > H$ iff $G - H > 0$, and we write  $G \geq H$ iff $G - H \geq 0$,
\item
$G< H$ iff $G - H < 0$, and we write $G \leq H$ iff $G - H \leq 0$.
\end{enumerate}
\end{definition}


\begin{theorem}
The Relation $G \equiv H$ 
 is an Equivalence Relation on $\gvN$, defining a Congruence Relation
for the disjunctive sum $+$, and  compatible with with $\sharp$.

The Relations $\leq$ and $\geq$ are 
\href{https://en.wikipedia.org/wiki/Preorder}{preorders} on $\gvN$ (transitive and reflexive),
and define partial orders on the quotient $\gvN/\equiv$.

The quotient ${\mathbb G} :=\gvN/\equiv$  becomes an ordered abelian Group,
with Group inversion induced by  $\sharp$.
\end{theorem}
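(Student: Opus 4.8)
The plan is to isolate the two genuinely inductive facts about the disjunctive sum and the four outcome classes, and then to obtain every assertion of the theorem by purely formal manipulation from the preceding theorem (associativity, commutativity, neutral $0_G$, the fact that $\sharp$ is an involutive automorphism of $(\gvN,+)$ that preserves $\equiv 0$, and $G+G^\sharp\equiv 0$). Writing $G-H:=G+H^\sharp$, the whole difficulty will be concentrated in a single comparison lemma; all the ``soft'' algebra flows from one easy induction.

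First I would prove \emph{additivity of sign}: if $G\geq 0$ and $H\geq 0$ then $G+H\geq 0$ (and dually for $\leq 0$). This is an induction on the sum of the graded ranks via the option-description of $+$: a right option of $G+H$ is $g+H$ with $g\in_R G$, or $G+h$ with $h\in_R H$. If $G\geq 0$ then $g\not\leq 0$, so some $\ell\in_L g$ has $\ell\geq 0$; by the induction hypothesis $\ell+H\geq 0$, and $\ell+H$ is a left option of $g+H$, whence $g+H\not\leq 0$; symmetrically $G+h\not\leq 0$. Thus no right option of $G+H$ is $\leq 0$, i.e.\ $G+H\geq 0$. The dual follows by applying $\sharp$, since $\sharp$ exchanges $\geq 0$ with $\leq 0$ and $(G+H)^\sharp=G^\sharp+H^\sharp$. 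An immediate corollary is that a sum of two zero-type games is of zero type, and from this the soft part drops out. Reflexivity $G\equiv G$ is just $G+G^\sharp\equiv 0$. Symmetry and $\sharp$-compatibility use only that $\sharp$ preserves $\equiv 0$: from $G+H^\sharp\equiv 0$ one gets $(G+H^\sharp)^\sharp=G^\sharp+H\equiv 0$, which reads both as $H\equiv G$ and as $G^\sharp\equiv H^\sharp$. Congruence for $+$ is the identity $(G+H)-(G'+H)=(G+G'^\sharp)+(H+H^\sharp)$, a sum of two zero-type games whenever $G\equiv G'$. The same bookkeeping gives translation-invariance of the order, $(G+K)-(H+K)=(G-H)+(K+K^\sharp)$, so that $\leq$ is compatible with $+$ via the dual additivity lemma; and it gives antisymmetry modulo $\equiv$, because $(G-H)^\sharp=H-G$ forces $G-H$ to be simultaneously $\leq 0$ and $\geq 0$, i.e.\ $G\equiv H$, as soon as $G\leq H$ and $H\leq G$.

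The second, and genuinely hard, fact is \emph{transitivity of the game preorder}: $A\geq B$ and $B\geq C$ imply $A\geq C$. By the additivity lemma $(A-B)+(B-C)\geq 0$, and rearranging, this equals $(A-C)+(B+B^\sharp)$; so transitivity is exactly equivalent to the invariance statement $(\star)$: if $X+W\geq 0$ with $W\equiv 0$, then $X\geq 0$. This is the main obstacle, because the additivity lemma only reduces $(\star)$ to another instance of itself (adding $W^\sharp$ replaces $W$ by $W+W^\sharp\equiv 0$), so $(\star)$ cannot be reached by formal algebra and needs its own transfinite induction on ranks. I would run this induction through the option-characterization of the order, namely that $A\geq B$ holds iff no $A^{R}\in_R A$ satisfies $A^{R}\leq B$ and no $B^{L}\in_L B$ satisfies $B^{L}\geq A$, after first checking that this characterization coincides with the difference definition $A-B\geq 0$. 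This comparison-and-transitivity induction is precisely the delicate step carried out in \cite{S}, which I would invoke at this point rather than re-derive. Once transitivity is available, $\equiv$ is transitive, ``$\equiv 0$'' is an invariant of $\equiv$-classes, and the relations $[G]\leq[H]:\Longleftrightarrow G\leq H$ are well defined on $\gvN/\equiv$.

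Finally the assembly is routine. On ${\mathbb G}:=\gvN/\equiv$ set $[G]+[H]:=[G+H]$, well defined by congruence; associativity, commutativity and the neutral $[0_G]$ are inherited, and $-[G]=[G^\sharp]$ because $G+G^\sharp\equiv 0$, so ${\mathbb G}$ is an abelian group. The induced relation $\leq$ is reflexive (from $G-G\equiv 0$), antisymmetric and, by the transitivity lemma, transitive, hence a partial order; translation-invariance makes it compatible with the group law. Thus ${\mathbb G}$ is an ordered abelian group whose inversion is induced by $\sharp$, completing the proof.
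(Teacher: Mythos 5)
Your proposal is correct, and it is worth noting that the paper itself offers no argument at all at this point: its ``proof'' is the single line ``By induction: \cite{Co01}, Chapter 1, see also \cite{S} or \cite{Sch}.'' So there is nothing in the paper to compare step-by-step; what you have done is reconstruct the standard Conway--Siegel induction that the paper merely cites, and your reconstruction is sound. Its merit is the clean separation of labor: one genuine induction (additivity of the outcome classes, whose option-chasing argument you give correctly), a body of purely formal consequences (reflexivity, symmetry, congruence, translation-invariance, antisymmetry modulo $\equiv$, and the assembly of the ordered abelian Group structure on $\gvN/\equiv$), and one remaining hard kernel, namely transitivity of $\geq$, which you correctly identify as equivalent -- given the soft algebra -- to the invariance statement that $X+W\geq 0$ and $W\equiv 0$ force $X\geq 0$. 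Your observation that this kernel cannot be reached by the formal manipulations alone (adding $W^\sharp$ only reproduces another instance of the same problem) is exactly right and is the honest heart of the matter. The only respect in which your proof falls short of being self-contained is that this kernel is, in the end, still only cited from \cite{S} rather than proved; but since the paper defers the \emph{entire} theorem to the same references, your proposal is strictly more informative than the paper's own treatment, and a reader who wants a complete argument need only supply the single comparison induction you have isolated.
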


\begin{proof}
By induction: 
 \cite{Co01}, Chapter 1,  see also \cite{S} or \cite{Sch}.
\end{proof}
 
 \begin{definition}
 One distinguishes between a {\em game form} $G \in \gvN$, and its equivalence class,
 the {\em game value} $[G] \in {\mathbb G} = \gvN / \equiv$.
Here,  under
the "quotient" $\gvN/\equiv$ (which would be a quotient of proper classes),
 we understand the hierarchy of quotients of sets
$(\gvN_\alpha /\equiv)_\alpha$, indexed by ordinals $\alpha$.
 In game theory, one is rather interested in the game value, and therefore considers games as
 "equal" if  they have the same game form.
 \end{definition}

\begin{definition}
We define  the {\em Conway-product} of two games by
$G \cdot H := $
\begin{align*}
&  \{ G \cdot h +  g \cdot H - g \cdot h,  G \cdot h' +  g' \cdot H - g' \cdot h'
 \vert h \in_L H,g\in_L G, h' \in_R H, g' \in_R G \} \sqcup 
 \\
 &
 \{ G \cdot h +  g' \cdot H - g' \cdot h,  G \cdot h' +  g \cdot H - g \cdot h'
 \vert h \in_L H,g\in_L G, h' \in_R H, g' \in_R G \} .
 \end{align*}
 \end{definition}

\begin{theorem}
The Conway product  is commutative,  the game $1_G$ is neutral for multiplication, and
$G \cdot 0_G = 0_G$.
Also,  $(G \cdot H)^\sharp = G \cdot H^\sharp = G^\sharp \cdot H$.
\end{theorem}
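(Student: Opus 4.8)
The plan is to prove all four identities by a single transfinite $\in$-induction on the combined graded rank of the pair $(G,H)$: in the defining formula every subproduct ($G\cdot h$, $g\cdot H$, $g\cdot h$, with $g$ an option of $G$ and $h$ an option of $H$) involves at least one factor of strictly smaller rank, so all of Conway's "one-line proofs" become available once the additive structure of the previous theorem is in hand. Throughout I read every equality as an identity of game \emph{forms}, not merely as $\equiv$, and I use freely that $+$ is associative and commutative with literal neutral $0_G$, and that $\sharp$ is an additive involution: $-X:=X^\sharp$, $(X^\sharp)^\sharp=X$, and $(A+B)^\sharp=A^\sharp+B^\sharp$.

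Two of the claims are essentially immediate. Since $0_G=\eset\sqcup\eset$ has neither left nor right options, every generating condition in the formula for $G\cdot 0_G$ is vacuous, so both option-sets of $G\cdot 0_G$ are empty and $G\cdot 0_G=\eset\sqcup\eset=0_G$ with no induction. For commutativity I match a "same-side" index $(h,g)$ occurring in $G\cdot H$ with the index $(g,h)$ occurring in $H\cdot G$ (and likewise for "opposite-side" pairs): the option-values $G\cdot h+g\cdot H-g\cdot h$ and $H\cdot g+h\cdot G-h\cdot g$ agree by the induction hypotheses $G\cdot h=h\cdot G$, $g\cdot H=H\cdot g$, $g\cdot h=h\cdot g$ and commutativity of $+$. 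As same-side pairs feed the left component and opposite-side pairs the right component in \emph{both} products, the left and right option-sets of $G\cdot H$ and $H\cdot G$ coincide, giving $G\cdot H=H\cdot G$.

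For neutrality of $1_G=\{0_G\}\sqcup\eset$, specialising $H=1_G$ annihilates the families requiring a right option of $1_G$; in the survivors the only left option of $1_G$ is $0_G$, so using $G\cdot 0_G=0_G$, $g\cdot 0_G=0_G$ and the induction hypothesis $g\cdot 1_G=g$ each surviving left option collapses to some $g\in_L G$ and each right option to some $g'\in_R G$. Hence $G\cdot 1_G=G_L\sqcup G_R=G$, and commutativity yields $1_G\cdot G=G$ as well. Finally, it suffices to establish $G\cdot H^\sharp=(G\cdot H)^\sharp$, since $G^\sharp\cdot H=H\cdot G^\sharp=(H\cdot G)^\sharp=(G\cdot H)^\sharp$ then follows from commutativity. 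Using $(H^\sharp)_L=\{h'^\sharp\mid h'\in_R H\}$ and $(H^\sharp)_R=\{h^\sharp\mid h\in_L H\}$, a left option of $G\cdot H^\sharp$ expands, via $G\cdot h'^\sharp=(G\cdot h')^\sharp$, $g\cdot H^\sharp=(g\cdot H)^\sharp$ and $(g\cdot h'^\sharp)^\sharp=g\cdot h'$, into $(G\cdot h')^\sharp+(g\cdot H)^\sharp+g\cdot h'$; by additivity of $\sharp$ and $(-X)^\sharp=X$ this is precisely the $\sharp$-image of the right option $G\cdot h'+g\cdot H-g\cdot h'$ of $G\cdot H$, i.e. a left option of $(G\cdot H)^\sharp$. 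The remaining families match in the same way, so $G\cdot H^\sharp=(G\cdot H)^\sharp$.

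The computations are short, and the only real care is bookkeeping: organising the multi-variable induction so that each invoked sub-identity sits at strictly smaller combined rank (exactly the point behind Conway's remark that such inductions reduce to repeated single ones), and tracking the sign algebra when pushing $\sharp$ through the three-term options $A+B-C$, where one must apply $(-C)^\sharp=C$ together with additivity of $\sharp$ in the right order and correctly pair each option family of $G\cdot H^\sharp$ with the $\sharp$-image of the opposite-side family of $G\cdot H$.
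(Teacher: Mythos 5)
Your proof is correct and is essentially the paper's proof carried out in detail: the paper disposes of this theorem with "By induction, via one-line proofs" (citing ONAG, Chapter 1), and your argument is exactly that induction, with the option-matching bookkeeping made explicit and the additive form-identities from the preceding theorem used where needed. No gaps.
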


\begin{proof}
By induction, via "one-line proofs": \cite{Co01}, Chapter 1.
\end{proof}

However, the Conway product does not descend to the quotient $\gvN/\equiv$, and it is not
even associative or distributive over disjunctive sum.

\section{Numbers as games}

We realize the class $\No$ as explained in Chapter \ref{chap:Conway}.

\begin{definition}
By induction, we define two Maps from $\No$ to $\gvN$, associating to a number
$x$ the "game of $x$": 
for $G(x)$ we use the maximal cut representation of $x$ (the Cuesta-Dutari cuts), and for
$g(x)$ the canonical cut:
\begin{align*}
g: \No \to \gvN,& \quad
x \mapsto \{ g(y) \mid y<x, y \prec x \} \sqcup \{ g(y) \mid y>x, y \prec x \},
\\
G: \No \to \gvN, & \quad
x \mapsto \{ G(y) \mid y<x, \Beta(y)< \Beta(x) \} \sqcup \{ G(y) \mid y>x, 
\Beta(y) < \Beta(x)  \}.
\end{align*}
Note that
$g(0_\Co)= \eset \sqcup \eset = G(0_\Co)$ is the zero-game.
Denote by $\kappa: \gvN \to \gvN/\equiv$, $E \mapsto [E]$ the quotient Map, and
$[g] := \kappa \circ g: \No \to \mathbb G$ and 
$[G]:= \kappa \circ G$. 
\end{definition}

\begin{theorem}
We have $[G] = [g]$, and we could have used any timely cut representation of numbers
$x$ in order to define the same map $\No \to \mathbb G$.
The image of $[G]$ is given by all classes of games 
$X$
such that all higher order elements
$y \in^n_O X$ (where $n\in \N$ and $O$ is a sequence of $L$ and $R$ of length $n$)
satisfy: 
\begin{equation}\label{eqn:number}
\forall a \in_L y, \,  \forall b \in_R y : \qquad a < b .
\end{equation}
The Map $[G]$ is strictly order preserving and in particular injective.
Its image,  with disjunctive sum and Conway product, forms a Field,
the Field of Conway numbers. 
\end{theorem}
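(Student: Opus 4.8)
The whole statement is proved by transfinite induction on $\Beta(x)$, the key external inputs being the Fundamental Existence Theorem (Theorem~\ref{th:FundamentalExistence}) and Conway's ``simplest number theorem'' from combinatorial game theory: a game form all of whose options are (game-)numbers, with every left option below every right option, has as its game value the simplest (oldest) number lying strictly between the left and right options. My first goal is to identify both $[g]$ and $[G]$ with the canonical realisation of $x$ as a number-valued game. By Theorem~\ref{th:CanCut} the canonical cut is timely, and by part~2 of Theorem~\ref{th:FundamentalExistence} so is the Cuesta--Dutari cut; in both cases, by the inductive hypothesis the options are number-games ordered exactly as the corresponding surreals. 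The simplest number theorem then evaluates each of $g(x)$ and $G(x)$ to the simplest number strictly between the respective left and right sets, and by the Fundamental Existence Theorem this simplest number is $x$ in both cases. Hence $[G(x)]=[g(x)]$. The same argument applies verbatim to any timely cut of $x$: since every timely cut of $x$ defines $x$ (Definition~\ref{def:timely}), all of them yield the common value $[G(x)]$. Alternatively one may argue by domination: using the youngest-common-ancestor estimate (Theorem~\ref{th:yca}), every number $y<x$ of lower birthday satisfies $y\le\yca(y,x)$ with $\yca(y,x)\prec x$, so the extra options of the Cuesta--Dutari cut are dominated by canonical options and may be deleted without changing the game value.

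For the image I would first show by induction that $G(x)$ satisfies the number condition~(\ref{eqn:number}) at every position: its left options are the $G(y)$ with $y<x$ and its right options the $G(z)$ with $z>x$, so $y<z$ gives $[G(y)]<[G(z)]$ once order preservation is available, and the condition propagates to all higher-order positions by the inductive hypothesis. Conversely, a game form $X$ satisfying~(\ref{eqn:number}) at every position is, by Conway's theory of numbers as games, equal in value to a number: applying the simplest number theorem to the cut formed by (the values of) its left and right options produces a surreal $x$ with $[X]=[G(x)]$. This yields exactly the stated description of the image of $[G]$.

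Order preservation, injectivity and additivity are most cleanly established together by a single simultaneous transfinite induction on birthday, so the references between them below are not circular. I would reduce the order statement to a statement about signs: using additivity of $[G]$ together with the compatibility $[G(x^\sharp)]=[G(x)]^\sharp$, the comparison $[G(x)]\le[G(y)]$ becomes $[G(x-y)]\le 0_G$, so it suffices to prove $[G(z)]\ge 0_G$ iff $z\ge 0_\Co$. This follows by induction from the four-outcome analysis: the sign of the number-game $G(z)$ is read off from whether $0_\Co$ lies in, to the left of, or to the right of the cut defining $z$, which by Theorem~\ref{th:order} is governed precisely by $z\ge 0_\Co$. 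Strict order preservation then gives injectivity immediately.

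The final, and hardest, point is that the image is a Field under disjunctive sum and Conway product. Here I would invoke that the Conway game-addition and game-multiplication formulas, when restricted to timely number-cuts, reproduce verbatim the cut formulas by which the surreal operations were defined in Theorems~\ref{th:Group} and~\ref{th:number-product}. Concretely, one shows by simultaneous transfinite induction that the disjunctive sum of the timely cuts representing $x$ and $y$ is again a timely cut, defining the surreal $x+y$, whence $[G(x)]+[G(y)]=[G(x+y)]$; likewise $[G(x)]\cdot[G(y)]=[G(x\cdot y)]$ for the product. Thus $[G]$ is a ring homomorphism onto its image, and being injective and order preserving it is an isomorphism of ordered rings onto its image. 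Since $(\No,+,\cdot,\le)$ is an Ordered Field (Theorem~\ref{th:Field}), the image is a Field, the Field of Conway numbers. The main obstacle is exactly this last induction: proving that the game operations on number-cuts stay inside the class of timely number-cuts and compute the surreal sum and product, i.e.\ that the CGT formulas and the surreal definitions agree up to game-equivalence; the delicate part is controlling reversible and dominated options so that the game value of the computed cut really is the simplest number it bounds.
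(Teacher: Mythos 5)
Your proposal is correct in outline and follows essentially the same route as the paper, which in fact never writes out a detailed proof at all: it only remarks that every step is by transfinite induction, that the order-theoretic and additive statements have easy ``one-line proofs,'' that the characterization of the image paraphrases Siegel (\cite{S}, p.~401), and that the multiplicative theory (inverses especially) is the genuinely hard part, deferring the details to \cite{S}, \cite{Co01} and \cite{Sim}. Your sketch --- identifying $[G]=[g]$ via Theorem~\ref{th:FundamentalExistence} together with the simplicity theorem (or domination of Cuesta--Dutari options by canonical ones via Theorem~\ref{th:yca}), propagating condition~\eqref{eqn:number} through positions to characterize the image, and transporting the Field structure of $\No$ (Theorems~\ref{th:Group}, \ref{th:number-product}, \ref{th:Field}) through the matching cut formulas --- is precisely the argument those references carry out, and it supplies more detail than the paper itself does, including an accurate identification of where the real work lies (the multiplicative induction and the control of dominated and reversible options).
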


In order to prove the theorem, there are a lot of things to check.
Every step is by induction, and some of these steps are easy
"one-line proofs" (Conway), but some are not. 
In principle, everything is contained in \cite{S}, and our 
presentation is just a bit more formal, following the spirit of "pure set theory".
All statements concerning order and the additive theory have rather easy
"one-line proofs".
The characterization of the image of $[G]$ 
 paraphrases \cite{S} p. 401:
"A long game $x$ is a surreal number if:  $y^L < y^R$ for every subposition $y$ of $x$ and
every $y^L$ and $y^R$." 
Statements concerning the multiplicative theory
(in particular, inverses), have mostly less straightforward proofs.
Here, the presentation given by Simons \cite{Sim} offers some interesting 
shortcuts: 
he uses the group structure of $\{ L, R\}$ in  order
to simplify, or to unify, some of the computations given by Conway.
This seems to indicate that indeed some $\Z/2\Z$-graded theory might be underlying 
the structures considered here; 
however, it is not clear to me how to present such a theory in a conceptual way. 

Compared to the impartial theory, the missing element in the partizan theory
is an analog of the Grundy-map $\Gamma$:  there is no Map
$\gvN \to \No$ of which $G$ or $g$ would be a kind of Section.
Possibly,
what comes closest to this, are the "left and right values of a game", defined for rather general
games in \cite{Co01}, p.98: they do not take values in $\No$, but in {\em pairs of Dedekind 
sections of $\No$}, so give rise to pairs of "gaps" (\cite{Co01}, p. 37).
But (p. 38 loc.\ cit.), {\em the collection of all gaps is not even a Proper Class, being an 
illegal object in most set theories.} 
 It is not clear if and how these objects can be formalized in our framework,
 potentially giving rise to a new algebraic Object, yet bigger and
more comprehensive than $\No$. 
Summing up , 
the  "partizan theory of numbers" is much more intricate than the corresponding one of the nimbers from the
impartial case, and my impression is that we are still lacking a good and conceptual understanding
of what is going on here.

\chapter{All things are number.  \\
What is number?}\label{chap:final}\label{chap:No-theory} 

Conway's invention, or discovery, of the surreal numbers offers  us
the opportunity to think anew about age-old meta-mathematical 
hashtags, starting by Pythagoras' "Everything is number". 
In this last chapter, triggered by
the approaches to surreal numbers described above, I add some 
personal remarks concerning "number and numbers".
I apologize, both to  professional mathematicians and
philosophers,  if these remarks appear naive, and
 my hope would be that 
scientists who are more competent then I am, 
 might take them up,  or disprove them in case  they are completely
mislead.

\section{Pure set theory, and the universe of mathematics}

Let me recall from the Introduction, Section \ref{sec:puresettheory}: 
I propose to call "pure set theory" the set theory obtained by taking 
seriously the \textbf{Credo of classical set theory} (ZF, NBG, or equivalent):

{\em Every set is a set whose elements are sets. 

Two sets are equal if they have
the same elements.}

\nin
Set theory based on this Credo is sometimes called {\em material set theory}.\footnote{
See, e.g., \url{https://ncatlab.org/nlab/show/material+set+theory}.}
I prefer the terms "pure", or "quantum":  
as advocated in Section \ref{sec:puresettheory}, "pure" set theory in this sense
could be opposed to "common sense set theory" in the same way as
"quantum physics" could be opposed to "classical physics".
Of course, every working mathematician knows that in her or his domain,
the "material" viewpoint appears to be completely useless -- just like quantum
physics made no sense in the world of classical physics. 
However, the approaches to Conway numbers described in the preceding chapters
might indicate that the "pure", or "quantum", viewpoint deserves to be taken more
seriously.

If you are willing to temporarily accept the viewpoint of pure set theory, you realize
that the main conceptual problem is the logical status of 
{\em  products and coproducts} in such an approach: 
every "material" definition, like the Kuratowski definition, is flawed by
arbitraryness, and after iteration becomes totally uncontrollable, hence should be
rejected (cf.\ Introduction, \ref{sec:puresettheory}). 
We propose two different kinds of "solutions" to this dilemma:
\begin{itemize}
\item
When working on a specific problem, located somewhere in the von Neumann universe
$\vN$,  one always can work with "local" definitions of products and
coproducts, by indexing objects by ordered pure sets that are 
"far away" from the local setting (disjoint, higher order disjoint).
In other words, the material implementation of 
 product and coproduct in the universe would be relative and not absolute --
so we need a kind of "relativity theory for (co)product structures".
\item
If we insist in an absolute, global and material
notion of product and coproduct, then
we need to work in a "graded universe", like $\gvN$, Definition \ref{def:gvN}.
However, care must be taken not to mix up 
the theories of $\vN$ and $\gvN$.
The former can be imbedded into the latter -- these are the "impartial games",
diagonally imbedded into the "partizan games".
But possibly one should rather use the language of "supermathematics", 
or of "Bosons" and "Fermions"...
\end{itemize}

\subsubsection{The mathematical universe}

{\em Language} is an important issue for mathematics.
For instance, the language of {\em Combinatorial Game Theory} (CGT) is rich, 
colorful, often amusing and taking up words from current English in a playful
way   -- 
 see \cite{Co01, WW, S}.
Likewise, notation of CGT is rather ad-hoc,   at the opposite end of Bourbaki-style or
category theory. For insiders, this certainly contributes to the beauty of the subject, but
unfortunately  has the effect that the general mathematical reader   quickly 
gets lost and  just "retains the music" without
understanding anything of the underlying hard mathematics.
Therefore, although CGT certainly contains important
and deep results about the mathematical universe, $\vN$ or $\gvN$,
the general mathematical community is not (yet) prepared to understand and to
appreciate them.

The language of {\em mathematical physics}  (MP) is taken much more seriously in the
mathematical community, for good reasons. 
Therefore I would like to propose a switch in language, replacing terminology
from CGT by terminology from MP.
The term {\em mathematical universe} has been introduced by Tegmark
\cite{Teg}, and popularized by his subsequent  book \cite{Teg2},
mixing up ideas from physics, philosophy, and mathematics.
He states and defends what he calls the
\textbf{Mathematical Universe Hypothesis (MUH)}: 

{\em
Our external physical reality is a mathematical structure}.

Since I am a mathematician, and not a physicist, I do not want to make any
statements about our external physical reality.
But for discussing the MUH seriously, it is necessary that mathematicians
contribute to answer the question: {\em What is
 a "mathematical structure"?}
Tegmark's answer (\cite{Teg}, Appendix A) seems unsatisfactory, to me.
I see two different ways to improve or correct his explanations:
\begin{enumerate}
\item[(A)]
Material:
By "mathematical structure" is meant a pure set. Thus the MUH  would be turned into
the MMUH (material mathematical universe hypothesis):

{\em 
Our external physical reality is the (graded) von Neumann universe.}
\item[(B)]
Categorical:
By "mathematical structure" are meant all objects or morphisms of all categories
or higher categories we can think of. (The leads to the CMUH: "categorical mathematical universe
hypothesis")
\end{enumerate}
Tegmark dismisses such a distinction: he postulates a kind of big "telephone book"
indexing all structures by {\em (natural) numbers}, such that isomorphic structures 
should have the same "telephone number". 
He does not ask what happens when we change the category, or
 if the "telephone book" itself is part of reality, or not, and what consequences
 this would have.
 I have been sympathizing with Answer (B) for a long time, and I still appreciate it.
However, for today let us try to promote Answer (A):

 \begin{enumerate}
 \item
 In mainstream, ZF-based mathematics, $\vN$ {\em is} the "universe of mathematics".
 So, it  certainly is  a legitimate  candidate for the "mathematical universe"
 (or maybe, better after being upgraded to $\gvN$). 
\item
Answer (A) would be "quantum" in nature: it allows to think of a globally defined
{\em equality relation} on the universe: two things are equal, or not.
Answer (B) would be "classical" in nature, and it would call for a "relativity theory of 
equality" (e.g., some kind of type theory, see \cite{Hott}):
equality is only defined locally, as isomorphy with respect to a certain category.
Since there are "categories of categories", it is absolutely necessary to include
{\em higher order} category theory into such an approach. 
Thus (B) is more complicated than it might appear at a first glance,
whereas (A) avoids such complications.
\item
In the other direction, adopting (A), it would always be possible to consider (B)
as a "classical limit", just by forgetting the globally defined equality relation and
replacing it by some locally defined isomorphy relation. Thus 
"without loss of generality" we may adopt (A), since it does not prevent us from
working classically, if we want to.
\item
Maybe Conway himself had similar ideas in mind when using language from
MP in CGT.  E.g.: {\em
"the exact form of Mach's principle is that the atomic weight of $G$ is at least
$1$ if and only if $G$ exceeds the remote stars"}  (\cite{Co01}, p.218).
It should be worth the effort to make such phrases understandable for a large
mathematical public!
\item
The hierarchical structure of $\vN$ or $\gvN$ might turn out to be an extremely
useful feature for understanding the universe.
Usual mathematical theories just mimic the step from rank $\alpha$ to its
successors $\alpha+1$ and $\alpha + 2$
(e.g., topology, measure theory: theories working with certain "sets of subsets").
Already the step from $\alpha$ to $\alpha+3$ is quite a challenge in conventional
approaches, 
but passing beyond a limit $\alpha + \lambda$  may  completely 
change the mathematical structure -- and
beyond $\alpha+ \epsilon_0$ it may contain  mathematics of which so
far we have no idea.

It would be tempting to interprete the
 "unreasonable ineffectiveness of mathematics in biology",  pointed out by
Israel Gelfand,\footnote{cf.\ \url{https://en.wikipedia.org/wiki/Unreasonable_ineffectiveness_of_mathematics}.}
by the fact that present day mathematics, at best, understands the step from a stage to its successor stage,
but not the step going beyond the next limit. 

\item
What is "random", what is "determined", what "information" do we have? 
Maybe a more profound theory of $\vN$ or $\gvN$ will give answers.

The structure of $\vN$ or $\gvN$ is all but "random": it is a
well-defined mathematical structure that seems to be "completely determined".
 However, already $\vN_7$ has more elements
then our physical universe has atoms, so there is no hope to "compute everything in
$\vN_n$ by brute force", even for rather small $n$. 
So, if it is impossible to get "complete information" on $\vN_n$, what does
 "information  about $\vN$" mean?
 For instance, will it be
 necessary to introduce some total order
on $\vN_\alpha$ to give each element an "order number" (like the tape of a Turing machine)? 
 -- for finite $\alpha$, there is a natural order (see Table \ref{table:beginning}:
 the colex-order), but for infinite $\alpha$, to fix some order one would need to invoke the
axiom of choice (which has been avoided so far!): would this introduce an element of randomness in the theory? 
\item
In Tegmark's setting, one might say  "the telephone book {\em is} the universe".
Thus self-reference would be inherently built into the concept of "universe".
Neither (A) nor (B) can escape of this. See \cite{Ru} for a logical and a philosophical
discussion of such issues.
\item
The universe $\vN$ itself  is a proper class and not a set,
and therefore is not an element of the universe.
In the material setting (A), it is possible to name things that are outside
the universe. (E.g., the "gaps in $\No$", defined in \cite{Co01}, are outside the universe, too, yet one
may conceive such a concept.) 
 In setting (B), this is more difficult, since an appropriate language 
is missing. 
Again, see \cite{Ru}, for both a mathematical and philosophical discussion of this issue. 
\item
In quantum theory, the {\em complex numbers} play a more important r\^ole
than the real ones. So, 
where are the complex, and the
surcomplex, numbers in $\vN$ or $\gvN$?  
There are natural models, see below, Section \ref{sec:surcomplex}.
\end{enumerate}

\section{Intrinsic $\No$-theory}

There is a lot of recent work dealing, in one way or another, intrinsically with surreal numbers and 
related topics.
With no pretention of being exhaustive, let us quote
\cite{BH1, BH2, BKMM, BM, CE, Fo, GN, KM, Ly, RS, Sch}, besides the work of Ehrlich already quoted earlier --
see also the conference report \cite{BEK} for an overview. 
By "intrinsic", we mean a viewpoint taking the existence of $\No$ for granted, and 
pursuing the intrinsic theory of $\No$ and other closely related fields or Fields, mostly
various kinds of "generalized power series fields", or "transseries fields".
In most of this work, Gonshor's definition \cite{Go} of $\No$ is used, 
and Conway's original one rather serves as a kind of motivation; more general games
usually are not in the scope of such work. 
The main tool used in "intrinsic $\No$-theory" is  {\em Conway's  omega-map} $x \mapsto \omega^x$,  which we have not yet
mentioned in our approach. 
In the present text, I decided not to develop this direction, since I feel it would start another
work, less general and more specialized than the focus of the present one,
and I will 
 just give the definition of the 
omega-map in terms of our approach (Section \ref{sec:omega-map}).

\subsection{Open problems and questions}\label{sec:questions}

The conference report \cite{BEK}, p.\ 3315--3317, contains an interesting
list of  "open problems and questions" arising in what I call intrinsic $\No$-theory.
Most of these questions concern generalized power series fields and valuation
theory, in particular, related to exponential and logarithm series and derivations.
Other questions are of a more general nature, such as :

\msk \nin
\textbf{Question 2.}
Describe the field operations of $\No$ using the sign sequence representation.

\msk \nin
\textbf{Question 3.}
Let $i=\sqrt{-1}$. Is there a good way to introduce sin and cos on $\No$ and an exponential
map on $\No[i]$? 
Is there a surreal version of the $p$-adic numbers?

\msk 
\nin
\textbf{Question 16.}
Can one characterize the subset $\bQ$ of $\No$ in terms of sign-se\-quen\-ces?

\msk
\nin
\textbf{Question 17.}
Can one extend the simplicity order of $\No$ to functions? In which sense $+$
is the simplest function increasing in both arguments?
Is exp the simplest homomorphism from $(\No,+)$ to $(\No^{>0},\times)$ such that for
all $n\in \N$ and positive infinite $x$ we have $\exp(x) >x^n$?

\msk
\nin
\textbf{Question 18.}
Can one describe an integer part of $\No$ which is a model of 
\href{https://en.wikipedia.org/wiki/True_arithmetic}{true arithmetic}?
The existence of such an integer part should follow by the saturation properties
of $\No$, but an one construct such an integer part explicitly (without the axiom of
choice, say)?

\msk
Question 16 has been answered by Moritz Schick (\cite{Sch}, cf.\ our Theorem 
\ref{th:rationals}).
Question 3 is, in our opinion, related to items to be discussed below (Section \ref{sec:surcomplex}).
Question 17 about the "simplicity order of functions" is certainly
a very important topic, and which probably needs a general and abstract context for
formalizing a suitable answer. 
Question 2 really asks for a definition of "Conway arithmetics" in the context of our 
definition of $\No$.
In the following, we give a  sketch of a programme how to attack it.

\subsection{An algebraic approach to Conway arithmetics}\label{sec:Grothendieck} 
 
 The above quoted Question 2 asks for defining sum $x+y$ and product $x \cdot y$
 of numbers $x,y \in \No$ in "combinatorial terms", starting from the sign-sequence,
 that is, the list of elements, of $x$ and $y$.
 As we have seen, this is easy for the Cantor-style operations
 $\oplus, \otimes, \ootimes$, since they are continuous in the second argument.
 Thus there might be some hope that the commutative, non-continuous field operations
 could be derived from the Cantor-style operations in a similar way as the
 "natural", Hessenberg operations are derived from Cantor's ordinal arithmetic of
 ordinals.
In the following, we sketch some ideas how this could be achieved.
The idea is quite naive:   build up the arithmetic structures of 
$\No$ from $\On$ in steps, copying the procedure
 of building up $\R$ from $\N$.
If we understand the combinatorial structure of each step well enough, one might use
this to give a "purely combinatorial" definition of the Conway arithmetic operations.
 However, filling in the missing details would need a more profound understanding
from which, at present, we are still remote.

\begin{enumerate}
\item[(1)]
We equip $\On$ with its natural, Hessenberg operations (see Subsection
\ref{ssec:natural}). 
Let $\Z_\On$ be the \href{https://en.wikipedia.org/wiki/Grothendieck_group}{Grothendieck Group}
 of $(\On,+)$, which we can identify  
with the Image of the Map (here, $\On$ are the Conway ordinals)
\begin{equation}
 \On \times \On \to \No, \quad (\alpha , \beta) \mapsto \alpha - \beta .
\end{equation}
In other terms, $\Z_\On$ is the smallest Subgroup of $(\No,+)$ containing the Conway ordinals.
It is a proper Subgroup of the omnific integers
 $(\Oz,+)$, and it is stable under multiplication, so is a Ring.
 
 \textbf{Task}: describe the combinatorial structure of addition and multiplication in
 $\Z_\On$ in terms of sign-expansions (set theory). See below.
\item[(2)]
One may consider the short numbers $\No_\omega = \Z[\frac{1}{2}]$ (dyadic fractions) to be
well-understood:  they are added by the usual carry-rules, which give, for instance,
for all short numbers $x$, 
\begin{equation}
\frac{x + x_+}{2}  = x_{+-},
\qquad
\frac{x + x_-}{2}  = x_{-+}.
\end{equation}
Let $\Z^{(2)}_\On$ be the class obtained from $\Z_\On$ by taking
all finite successors of elements $x \in \Z_\On$.
This should be a ring, tentatively a kind of scalar extension of
$\Z_\On $ by $  \Z[\frac{1}{2}]$, and whose combinatorics should be closely related
to the one of $\Z[\frac{1}{2}]$.
\item[(3)]
There should also be a well-defined
Field of fractions $\bQ_\On$ of $\Z_\On$, which 
 is a Subfield of $\No$, and which should extend
$\Z^{(2)}_\On $ in a way similar to the extension from  $\Z[\frac{1}{2}]$ to
$\bQ$ (cf.\ Theorem \ref{th:rationals}).
\item[(4)]
Finally, explain in what sense $\No$ itself can be considered as completion
of $\bQ_\On$, or of $\Z^{(2)}_\On$: 
can it be understood as a passage from "short reals" to "long reals", tensored with
$\Z_\On$? 
What can we say about the intermediate "Number Fields" and their combinatorial structure?
\item[(5)]
If this approach makes sense, it could also give a hint to answer the above quoted Question 3,
"is there a surreal version of the $p$-adic numbers?" -- there could be other 
"completions" of $\bQ_\On$ than $\No$, which one would like to realize by certain
subclasses of $\vN$.
\end{enumerate}

\nin
 Comment concerning the Task from Item (1):
An answer can be given by using the Conway normal form, eqn.\  (\ref{eqn:CoNo}),
and results by Gonshor \cite{Go}.

\begin{theorem}
The Grothendieck group $\Z_\On$ is the class of all surreal numbers $x$ of the form
$$
x = \oplus_{i=1}^n \omega^{\beta_i} r_i,
$$
i.e., a
finite ordinal type sum, with  $n\in \N$ and ordinals $\beta_1 > \ldots > \beta_n \geq 0$ and 
coefficients $r_i \in \Z$.
Put differently,
 in the Cantor normal form (\ref{eqn:CantorNoFo}) we allow the coefficients to be 
usual integers instead of usual natural numbers. 
Equivalently, the Conway normal form of $x$ is given by the same formula, with
$\oplus_i$ replaced by $\sum_i$ (Conway-sum of numbers).
Thus, Conway and (generalized) Cantor normal forms coincide in this case. 
\end{theorem}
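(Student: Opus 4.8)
The plan is to prove the two inclusions between $\Z_\On$ and the class $S$ of all numbers of the form $x=\sum_{i=1}^{n}\omega^{\beta_i}\cdot r_i$ (Conway-sum $+$ and Conway-product $\cdot$, with ordinals $\beta_1>\ldots>\beta_n\geq 0$ and $r_i\in\Z$), and then to identify this expression with both normal forms. First I would record two preliminary facts. (i) Conway's omega-map restricted to an ordinal exponent agrees with ordinal exponentiation, $\omega^{\beta}=\omega\ootimes\beta\in\On$ for every ordinal $\beta$; this is a transfinite induction carried out once the omega-map is pinned down in Section~\ref{sec:omega-map}. (ii) For a single monomial the Cantor and Hessenberg sums agree, and more generally, if $\alpha$ and $\beta$ have Cantor normal forms in which every exponent of $\alpha$ strictly exceeds every exponent of $\beta$, then $\alpha\oplus\beta=\alpha+\beta$; consequently a finite ordinal-type sum $\oplus_i\omega^{\beta_i}\otimes a_i$ with strictly decreasing exponents equals the Conway-sum $\sum_i\omega^{\beta_i}\cdot a_i$.

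The inclusion $S\subseteq\Z_\On$ is easy: by (i) each $\omega^{\beta_i}$ is a Conway ordinal, hence lies in $\Z_\On$; since $\Z_\On$ is a subring of $\No$ containing $\Z$ (Item~(1)) that is closed under $+$ and $\cdot$, every finite combination $\sum_i\omega^{\beta_i}\cdot r_i$ with $r_i\in\Z$ belongs to $\Z_\On$. For the reverse inclusion I first note that $\Z_\On=\{\alpha-\beta\mid\alpha,\beta\in\On\}$ is already a group: closure under negation is $-(\alpha-\beta)=\beta-\alpha$, and under addition $(\alpha-\beta)+(\gamma-\delta)=(\alpha+\gamma)-(\beta+\delta)$, using commutativity and the fact that the Conway-sum of Conway ordinals is again a Conway ordinal (it coincides with the Hessenberg sum).

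So fix $x=\alpha-\beta$ and write the Cantor normal forms $\alpha=\oplus_i\omega^{u_i}\otimes a_i$ and $\beta=\oplus_j\omega^{v_j}\otimes b_j$ with $a_i,b_j\in\N^*$ and strictly decreasing exponents. By fact (ii) these equal the Conway-sums $\sum_i\omega^{u_i}\cdot a_i$ and $\sum_j\omega^{v_j}\cdot b_j$, so, using that $\sharp$ is Conway-negation,
$$x=\sum_i\omega^{u_i}\cdot a_i+\sum_j\omega^{v_j}\cdot(-b_j).$$
Collecting the distinct exponents $\gamma_1>\ldots>\gamma_m$ occurring among the $u_i,v_j$ and merging like powers via distributivity $\omega^{\gamma}\cdot a+\omega^{\gamma}\cdot b=\omega^{\gamma}\cdot(a+b)$ (valid since $\Z_\On$ is a commutative ring), I obtain $x=\sum_k\omega^{\gamma_k}\cdot r_k$ with $r_k\in\Z$; dropping the terms with $r_k=0$ puts $x$ in the required form, so $x\in S$ and $\Z_\On=S$.

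It remains to identify this expression with the normal forms. Since for ordinal exponents the omega-map value $\omega^{\gamma_k}$ is ordinal exponentiation, the sum $\sum_k\omega^{\gamma_k}\cdot r_k$ has exactly the shape of a Conway normal form (strictly decreasing exponents, nonzero integer coefficients); by uniqueness of the Conway normal form it therefore \emph{is} the Conway normal form of $x$. When all $r_k$ are positive — i.e.\ when $x$ is itself an ordinal — fact (ii) gives $\sum_k\omega^{\gamma_k}\cdot r_k=\oplus_k\omega^{\gamma_k}\otimes r_k$, so the Conway and (generalized) Cantor normal forms are literally the same symbolic expression, which is the asserted coincidence. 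The main obstacle is not the bookkeeping but the two imported inputs: fact (i), that Conway's omega-map agrees with ordinal exponentiation on ordinals, and the uniqueness of the Conway normal form (the "archimedean-independence'' of the $\omega^{y}$, due to Conway and Gonshor), which I would cite rather than reprove. Everything else — the group structure of $\Z_\On$, the coincidence of ordinal and Hessenberg sum on strictly-decreasing-exponent terms, and the collection of like powers — is routine once the ring structure established earlier is in hand.
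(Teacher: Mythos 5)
There is a genuine gap, and it sits exactly at the headline claim of the theorem. What you actually prove is that $\Z_\On$ coincides with the class of \emph{Conway sums} $\sum_{k}\omega^{\gamma_k}\cdot r_k$ with strictly decreasing ordinal exponents and nonzero integer coefficients, and that (by uniqueness) this expression is the Conway normal form of $x$; this half of your argument, via the ring structure of $\Z_\On$ and merging of like powers, is sound. But the theorem asserts more: that every element of $\Z_\On$ is a finite \emph{ordinal type} sum $\oplus_{i=1}^n \omega^{\beta_i}r_i$ — a concatenation of sign sequences — and that for such numbers the concatenation $\oplus_i$ and the Conway sum $\sum_i$ agree, for \emph{all} integer coefficients. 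You establish this identification only ``when all $r_k$ are positive'', where it reduces to ordinal arithmetic (your fact (ii)). For mixed signs the identity cannot be obtained from ordinal arithmetic at all: already for $x=\omega-1$ one must know that the sign expansion of the Conway sum $\OM + 1_\Co^\sharp$ is $\omega$ pluses followed by one minus, i.e.\ that $\omega - 1 = \OM\oplus 1_\Co^\sharp$; neither of your two imported facts, nor uniqueness of the Conway normal form, yields this. Since the whole point of the theorem (in the context of the ``Task'' of describing $\Z_\On$ in terms of sign expansions) is precisely this concatenation description, the mixed-sign case is not a boundary technicality but the substance of the statement.

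This missing step is exactly where the paper's proof invokes Gonshor's Theorem 5.12(c): the sign sequence of a number in Conway normal form (\ref{eqn:CoNo}) is the juxtaposition of the (reduced) sign sequences of the successive monomials $\OM^{y_\beta^o}r_\beta$, and since for ordinal exponents and integer coefficients the reduced sequences are the ordinary ones, juxtaposition is literally the concatenation $\oplus$, giving $\sum_k \omega^{\gamma_k} r_k = \oplus_k \omega^{\gamma_k} r_k$ in full generality. To close your proof you must either cite that result (as the paper does), or supply an equivalent sign-expansion computation — for instance a transfinite induction showing that adding a term $\omega^{\gamma}r$ whose exponent lies strictly below all previous exponents appends its sign sequence to the expansion already built. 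With that ingredient added, your route (Grothendieck-group algebra plus uniqueness of the normal form) and the paper's route (sign-sequence juxtaposition throughout) become two organizations of the same argument; without it, your proof only characterizes $\Z_\On$ by Conway normal forms and leaves the ordinal-type-sum characterization, hence the asserted coincidence of Conway and generalized Cantor normal forms, unproved.
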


\begin{proof}
We repeatedly use \cite{Go}, Theorem 5.12 (c), saying that the sign sequence of 
a number in Conway normal form (\ref{eqn:CoNo}) is given by juxtaposition of the
sign sequences for the successive $\OM^{y_\beta^o} r_\beta$, where
$y_\beta^o$ is the "reduced" sequence of $y_\beta$.

Let $u,v$ be Conway ordinals. We represent them in Cantor normal form.
Then, by the above quoted result, the Conway normal form of $u$ is given by the same
expression (with $\oplus$ replaced by $+$), and the one of $-v$ is given by
the same expression, replacing the coefficents $k \in \N$ by their negative $-k \in \Z$.
The sum $u-v = u + (-v)$, taken in $\No$, is commutative, so we order terms in
order of decreasing exponents  to get the Conway normal form of $u-v$.
Applying the above quoted theorem again, this sum is given by "juxtaposition"
(since the exponents are ordinals and the coefficients integer, the "reduced" sequence here
is just the usual sequence), that is, by an ordinal type sum $\oplus$, and we end up with
an expression as given in the theorem.
Conversely, every such expression corresponds to some difference $u-v$, so they describe
the Grothendieck group $\Z_\On$.
\end{proof}

\nin
Remark.
It seems that elements of the Grothendieck group correspond to the 
"surintegers", defined p.\ 27 in \cite{Re}.

\subsection{Conway's omega map}\label{sec:omega-map}

In line with the approach just sketched, one can give a definition of 
Conway's important {\em omega-map} in terms of sign-expansions.
In fact, this has already been done by Gonshor, \cite{Go}, Thm.\
5.11, and in the following we simply "translate" his result into our setting.
To state this,
recall the operations $\oplus,\otimes,\ootimes$ on $\No$, and
 from Definition \ref{def:width} the {\em width} $\w(x) $ of a number $x$. 
By induction, one gets immediately from its definition
\begin{equation}
\w(a \oplus b) = \w(a) \oplus \w(b).
\end{equation}

\begin{definition}\label{def:omega-map}
By transfinite induction,  for every number $x$, we define a number $\OM^x$ by $\OM^0 = 1$, and

$\OM^{x_+} = \OM^x \oplus \OM \ootimes (\w(x) \oplus 1)$

$\OM^{x_-} = \OM^x \oplus \OM^\sharp  \ootimes ( \w(x) \oplus 1)$

$\OM^{\lim_{\alpha \to\lambda} x_\alpha} = \lim_{\alpha \to \lambda} \OM^{x_\alpha}$
\end{definition}

\nin
Since, for ordinals, $\ootimes$ is usual ordinal exponentiation, and using $(\alpha \otimes \beta)^\sharp =
\alpha^\sharp \otimes \beta$,
 the formulae may also be written

$\OM^{x_+} = \OM^x \oplus \omega^{\w(x) \oplus 1}$

$\OM^{x_-} = \OM^x \oplus  (\omega^\sharp)^{ \w(x) \oplus 1}$.

\nin
As an immediate consequence, we get {\em monotonicity}:

 if $x \preceq y$, then
$\OM^x \preceq \OM^y$.

\begin{theorem}
The number $\OM^x$ defined above coincides with the number $\omega^x$ defined by
Conway (\cite{Co01}, p.\  31), that is, Conway's $\omega$-map is
$$
\No \to \No, \quad x \mapsto \OM^x.
$$
\end{theorem}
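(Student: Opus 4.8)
The plan is to verify that the number $\OM^x$ produced by the recursion of Definition~\ref{def:omega-map} has precisely the sign expansion that Gonshor assigns to Conway's $\omega^x$ in \cite{Go}, Thm.~5.11. Since Gonshor proves that his sign-sequence formula does describe Conway's map $x \mapsto \omega^x$ (\cite{Co01}, p.~31), establishing this coincidence of sign expansions immediately yields $\OM^x = \omega^x$, which is the assertion. I would carry out the verification by transfinite induction organized along the number tree: by connectedness (Theorem~\ref{th:connected}) every number is reached from $0_\Co$ by a unique path of moves $x \mapsto x_\pm$ followed by tree-limits, and the recursion defining $\OM^x$ is indexed by exactly these moves, so it suffices to check the base case, the two successor rules, and the limit rule against Gonshor's description.

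The base and limit cases are the easy ones. For the base, $\OM^{0_\Co} = 1_\Co = \omega^{0_\Co}$ holds by definition on both sides. For a limit number $x = \lim_{\alpha \to \lambda} x_\alpha$, the defining clause $\OM^{x} = \lim_{\alpha \to \lambda} \OM^{x_\alpha}$ matches the fact that Conway's $\omega$-map is continuous for the simplicity order $\preceq$, so that $\omega^{x}$ is the tree-limit of the chain $(\omega^{x_\alpha})_{\alpha<\lambda}$; this continuity is part of, or a direct consequence of, Gonshor's sign-sequence formula, and with it the limit case reduces to the inductive hypothesis. Hence the entire content of the theorem is concentrated in the successor step.

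For the successor step I would translate Gonshor's formula into the language of concatenation. Gonshor describes the passage from the sign expansion of $\omega^x$ to that of $\omega^{x_+}$ (respectively $\omega^{x_-}$) as the appending of one further block, a string of pluses (respectively its $\sharp$-reflection) whose length is governed by the number of pluses already present in $x$. In our setting this appended block is exactly $\OM \ootimes (\w(x) \oplus 1)$, respectively $\OM^\sharp \ootimes (\w(x)\oplus 1) = (\omega^\sharp)^{\w(x)\oplus 1}$, and the appending operation is the concatenation $\oplus$, whose effect on sign expansions is juxtaposition by Lemma~\ref{la:concatenation}. Thus I would compute the sign expansion of $\OM^x \oplus \OM\ootimes(\w(x)\oplus 1)$ using Lemma~\ref{la:concatenation} together with the sign-expansion formula for $\otimes$ and the formula $x \ootimes y = \bigotimes_{\alpha<\Beta(y)} x^{s_y(\alpha)}$, and check term by term that it agrees with Gonshor's sign expansion of $\omega^{x_+}$; the case of $x_-$ is handled identically after replacing $\OM$ by $\OM^\sharp$ and using that $\sharp$ reverses signs, so that $(\omega^{\sharp})^{\w(x)\oplus 1}$ is the reflected block.

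The main obstacle I expect is precisely the bookkeeping in this successor step: confirming that the ordinal exponent controlling the length of the appended block is $\w(x)\oplus 1$ and not some other ordinal attached to $x$. This amounts to checking that the width function of Definition~\ref{def:width}, which counts pluses by ordinal addition via $\w(x_+) = \w(x)\oplus 1$ and $\w(x_-)=\w(x)$, is exactly the parameter appearing in Gonshor's increment, and it forces careful manipulation of the three operations $\oplus,\otimes,\ootimes$ and of the additivity $\w(a\oplus b)=\w(a)\oplus\w(b)$. Once this single matching of parameters is secured, the juxtaposition computation closes the successor step and hence the induction. As an alternative route, one could instead verify directly that $\OM^x$ realizes Conway's cut $\omega^x = \{0,\ r\,\omega^{x^L}\mid r\,\omega^{x^R}\}$, but this would require the full multiplicative apparatus of $\No$ and seems less economical than translating Gonshor's ready-made sign-sequence result.
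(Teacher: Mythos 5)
Your proposal follows essentially the same route as the paper's proof: both invoke Gonshor's Theorem 5.11 for the sign expansion of Conway's $\omega^x$, rewrite his increment (governed by the number of pluses, i.e.\ the width $\w([x]_\alpha)$) as the concatenated block $\OM \ootimes (\w(x)\oplus 1)$ resp.\ its $\sharp$-reflection, and conclude by transfinite induction along the tree that this matches the recursion defining $\OM^x$. Your write-up merely spells out the base/successor/limit cases more explicitly than the paper does.
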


\begin{proof}
The sign-expansion of Conway's $\omega^x$ computed by Gonshor, \cite{Go}, Thm.\
5.11, p.80, coincides with the sign-expansion of the number inductively defined above.
(Gonshor denotes, for a number $a$, its width by $a^+$, and $a_\alpha = \w([a]_\alpha)$
is the width of the $\alpha$-truncation of $a$, that is, the "number of pluses in the initial
segment of $a$ of length $\alpha$". In our notation, his formula 
can be written 
$$
\omega^x = 
1 \oplus \bigoplus_{\alpha < \Beta(x)}
s_x(\alpha  ) \omega^{(\w ([x]_\alpha)  + 1 ) }
$$
which by induction is the same as the number $\OM^x$ defined above.)
\end{proof}

\begin{definition}
A number $y$ is called a {\em monomial} if there is a number $x$ such that
$y = \OM^x$.
The class of all monomials is denoted by $\mathcal M$.
\end{definition}

By \cite{Co01}, Theorem 21, every number $x$ has a unique expression,
its {\em Conway normal form},
\begin{equation}\label{eqn:CoNo}
x = \sum_{\beta < \alpha} \OM^{y_\beta} r_\beta,
\end{equation} 
where $\alpha$ is some ordinal, the numbers $r_\beta$ are non-zero reals,
and the numbers $y_\beta$ form a descending sequence of numbers.
Such expressions behave like generalized power series with real coefficients,
and they are added and multiplied like generalized series in the monomials.
As mentioned above, this is the point of departure of most recent work on
intrinsic $\No$-theory.

\section{The surcomplex numbers}\label{sec:surcomplex}

Conway introduces the Field $\No[i]$ of \href{https://en.wikipedia.org/wiki/Surreal_number#Surcomplex_numbers}{\em surcomplex numbers} simply as the
class of all numbers $x+iy$ with $x,y \in \No$,  $i^2 = -1$
(\cite{Co01}, p.42).
How can we realize $\No[i]$ inside the von Neumann universum $\vN$?
And if there are several realizations, is there a "natural" one, comparable to the
one of $\No$ that we have given? 
For the moment, I can only offer half of an answer.

\subsubsection{Shifting $\On$ and $\No$}

First, some remarks on various "realizations" of $\On$ and $\No$ inside $\vN$:
there are several ones, but the ones considered in Chapter \ref{chap:numbers} seem to be
by far the most natural ones.
As explained in Section \ref{sec:vN-hierarchy}, 
besides the von Neumann realization of the ordinals
$\On$ as pure sets,  there are other realisations of ordinals:

-- the Zermelo ordinals $\On_Z$,

-- taking the set $\vN_\alpha$ as ordinal $\alpha_X$,

-- or any other choice $\alpha_C \in \vN_{\alpha + 1} \setminus \vN_\alpha$.

\nin
In all cases, such ordinals would form a well-ordered class of pure sets starting with 
$0 = \eset$. 
Also, we could shift each of these classes,
$\alpha_{S+1} := (\alpha + 1)_S$.
For instance, the Conway ordinals are shifted von Neumann ordinals,
$\alpha_\Co = (\alpha + 1)_\vn$.
We can also shift by $2$, etc. 

To every such class we can associate the corresponding (surreal) numbers:
a number $x$ is a set of ordinals belonging to the class in question, having
a maximal element.
The theory of the "Conway numbers" thus obtained is, essentially, isomorphic to the one developed in
Chapter \ref{chap:numbers}. 
However, it would be less natural and might have some confusing features.
For instance,  if we work with Zermelo ordinals instead of von Neumann ordinals, then
the passage from a Zermelo ordinal $\alpha_Z$ to the corresponding
Conway number would not be described by a simple shift $+1$, but by a much
more complicated
formula
(e.g., 
$\{ 1_Z \} = 2_Z$  would correspond to the
Conway number $-1$, and $\{ 2_Z \} = 3_Z$ to $-2$).

\subsubsection{Defining $\bC\No$}

 Next we fix two realisations $\On_i$, $i=1,2$, of $\On$ in $\vN$ such that
 $\On_1 \cap \On_2 = \eset$. 
 For instance, take $\On_1 = \On_\vn $, the von Neumann ordinals,
 and $\On_2 = \On_{Z+2}$, the shifted Zermelo ordinals, which start
 at $2_Z = 0_{Z+2} = \II$, and have no element in common with the von Neumann
 ordinals.
 Let $\No_i$, $i=1,2$, be the classes of numbers defined by using the ordinals
 $\On_i$, in the sense explained above.
 
 \begin{definition}
 A {\em surcomplex number} is a subset
 $z \subset \On_1 \cup \On_2$ such that both
 $x:= z \cap \On_1$ and $y:= z \cap \On_2$ have a maximal element.
 The number $x \in \No_1$ is called the {\em real part},
 and $y \in \No_2$ the {\em imaginary part} of $z$.
 We define structures on the class $\bC\No$ of surcomplex number
 in the usual way from those of $\No$, by identifying $z$ with the pair
 $(x,y)$.
 \end{definition}
 
 Obviously, 
 this gives a realisation of Conway's Field $\No[i]$ inside $\vN$, and it avoids
 using the Kuratowski pair definition.
 Indeed, what we have done is what I meant by "local" definition of ordered pair:
 fixing the disjoint copies $\No_1$ and $\No_2$ defines, locally, a direct product
 structure on the power sets of stages of $\No_1 \cup \No_2$.
 The drawback of this definition is, however, that
  no choice of $\No_2$ seems to be
 very natural, and that
 the maps $\No_i\to \bC\No$ are not given by set inclusion.
 For instance, taking $\On_2 = \On_{Z+2}$, as suggested above,
 would give the zero element of $\bC\No$ (in the notation of Table \ref{table:beginning}) 
 $$
 0_{\bC\No} = \{ 0_\vN , \II \} = \{ \eset , \II \} = \mbox{V}.
 $$
 
 \subsubsection{Shuffle of von Neumann and Zermelo ordinals}
 
 To get a more natural realization of $\bC\No$, we define first the {\em shuflle of 
 $\No_\vn$ ad $\No_Z$} to be their union
 $$
 \No_{\vn Z}:= \No_\vn \cup \No_Z ,
 $$
 whose elements are called Co-Z-suffle ordinals,
 together with the total order given as follows:
 we have $0_\vn = 0_Z = \eset$,
 $1_\vn = 1_Z = \I$,
 $2_Z = \II < 2_\vn = \III$, and so on:
 $$
 \alpha_Z < \alpha_\vn < (\alpha + 1)_Z < (\alpha + 1)_\vn < \ldots ,
 $$
 and for limit ordinals,
 $\alpha_Z < \alpha_\vn$.
 This total order is  a well-order:
 if $S \subset \On_{\vn Z}$ is a non-empty set, then
 $S \cap \On_Z$ or $S \cap \On_\vn$ is non-empty, so has a minimal element, and the smaller
 of both minima is the minimal element of $S$. 
 
 \subsubsection{Cocomplex numbers}
 
 \begin{definition}
 A {\em Cocomplex number} (Conway complex number) is a set $z$ of Co-Z-shuffle ordinals
 having a maximal element, which we call again {\em birthday} and denote by
 $\Beta(z)$.
 \end{definition}
 
 This definition is tentative and ad-hoc:
 a full justification would be by proving that the Cocomplex numbers have all the 
 good properties that one would expect from $\bC\No$.
 Tentatively, the first "new" number in the list,  $\{ 2_Z \} = 3_Z$ 
 should correspond to a "very simple" complex, non-real number.
 For symmetry reasons, I would guess that it corresponds to 
 the unit $j$ (or $-j$) of the Eisenstein integers $\Z[j] = \Z \oplus j \Z$,
 \begin{equation}
 j:= e^{2\pi i / 3} = \frac{1}{2} + i \frac{\sqrt 3}{2}
 \end{equation}
rather than to the usual unit $i$ of the Gaussian integers.
Namely, this would fit quite well with the following tentative table of correspondence, 
 taking up notation from Table  \ref{table:beginning}:
 \begin{center}
 \begin{table}[h]\caption{The beginning of cocomplex numbers}\label{table:Coco}
\begin{tabular}{|*{10}{c|}}
\hline
nr. & sb. & rk.  & depth $1$ &  ordinal  & $\in \No$  & Cocomplex (speculative)
\\
\hline
$0$ & $\eset$ & $0$ & $\eset$ & $0_\vn= 0_Z$ &  -- & -- 
\\
\hline
$1$ & $\I$ & $1$ & $\{ \eset \}$ &  $1_\vn = 1_Z $ & $0_\Co$ & $0_\Co$
\\
\hline
$2$ & $\II$ & $2$ & $\{ \I \}$ &  $2_Z$  & $-1_\Co$ & $(-1)_\Co$ 
\\
$3$ & $\III$ & $2$ & $\{ \I,\eset \}$  &  $2_\vn$ & $1_\Co$ &  $1_\Co$ 
\\
    \hline
$4$ & IV & $3$ & $\{ \II \}$   &  $3_Z$  & -- & $ - j  $ 
\\
 $5$ & V & $3$ &  $\{ \II,\eset \}$ &  -- & -- &  $j$ 
 \\
 $6$ & VI &  $3$ &  $\{ \II,\I \}$ &  -- & -- & $ - j - 1 $ 
 \\
 $7$ &VII  &  $3$ &  $\{ \II,\I ,\eset\}$ &  -- & -- & $1+j$ 
 \\
 \hline
 $8$ &VIII  & $3$ & $\{ \III \}$   &  -- & $-2_\Co$ & $-2_\Co$
\\
 $9$ & IX & $3$ & $\{ \III , \eset \}$   &  -- & $\frac{1}{2}$ & $\frac{1}{2}$ 
\\
 $10$ & X  & $3$ & $\{ \III , \I \}$   &  -- & $ - \frac{1}{2} $ & $ - \frac{1}{2} $ 
\\
 $11$ & XI & $3$ & $\{ \III , \I, \eset  \}$   &  $3_\vn$  & $2_\Co$ & $2_\Co$
\\
\hline
 $12$ & XII  & $3$ & $\{ \III , \II  \}$   &  -- & -- & $ j-1  $
\\
 $13$ &  XIII & $3$ & $\{ \III , \II ,\eset \}$   &  -- & -- & $1-j$
\\
 $14$ & XIV  & $3$ & $\{ \III , \II ,\I \}$   &  -- & -- & $ j+2  $
\\
 $15$ & XV  & $3$ & $\{ \III , \II ,\I ,\eset \}$   &  -- & -- & $2 - j $ 
\\
\hline
$16$ & XVI & $4$ & $\{ \mbox{ IV } \}$ &  $4_Z$  & -- & $- 2 j$ 
\\ 
    \end{tabular}
    \end{table}
\end{center}
Here, the 5th column names the (von Neumann and Zermelo) ordinals, the 6th column names the
surreal numbers (7 out of the 15 elements of rank $\leq 3$), and the 7th column is purely
speculative and gives names to Cocomplex numbers.
Note that  {\em all} elements of rank $\leq 3$ (except
$\eset$, which is not a number) are indeed Cocomplex numbers.
Thus the list of "simplest complex numbers" should start with zero, followed by the six
 Eisenstein units:
 $$
 0, \qquad 1, - 1,  j,-j, 1+j, - 1 - j .
 $$
 Still by the same speculation, the "short Cocomplex numbers" then would be the
 ring $\Z[\frac{1}{2}, j ]$, that is, the scalar extension of the Eisenstein integers by
 dyadic rationals.
 The tree-structure should be {\em hexagonal}: 
 every surcomplex number that is not a limit number should have {\em six successors} -- 
 one in each main direction of the hexagonal lattice, at distances which either are $1$ or half
 of the distance to the nearest neighbor, just like in the surreal number tree for a single dimension.
Of course, the main task would be to describe the Field structure of the Cocomplex numbers in these terms.
If there is such a construction, it should have pleased Conway, since it would bring together two main strands of his work,
surreal numbers and his work on lattices, groups, and sphere packings
(\cite{CSl}, cf.\ \cite{Be24}).   
It would be a strong argument in favor of considering the universe $\vN$ as a rich mathematical structure,
deserving to be studied in its own right.


\subsubsection{Function theory: Question 3 revisited}

Question 3 (Subsection \ref{sec:questions}) is certainly one of the most important topics
concerning analysis on surreal numbers:
is there a general method of defining, or "extending", real (smooth, or real-analytic)
functions $f$ to Functions $f_\No$ defined on certain Domains in $\No$?
For the time being, there exists no general answer, but there are some examples, most notably
Gonshor's extension of the exponential function, \cite{Go}, Chapter 10.

One might think that techniques used in non-standard analysis, defining a 
 non-standard extension ${}^*f$ to the field ${}^*\R$ of hyperreal numbers
 (see \cite{Numbers}, Chapter 12), could serve here
 as a model. However, so far there seems to be no link between non-standard analysis
 and analysis on surreal numbers, as was predicted by Conway himself (\cite{Co01}, p.44):
 {\em So we can say that in fact the field $\No$ is really irrelevant to non-standard analysis.}
 I'm not so sure if this should be the last word.
 
 Be this as it may, a "most natural realization" of the field of surcomplex numbers should be
 important when considering the problem of extending {\em complex} functions to
 $\No[i]$, as suggested by "Question 3".
 Besides of the exponential map and trigonometric functions, one might also think of
 the Riemann zeta-function, and one may wonder if working on a 
    "\href{https://www.reddit.com/r/math/comments/11h0pol/the_surcomplex_riemann_hypothesis/}{surreal/surcomplex Riemann hypothesis}" might help in attacking the original one.


\begin{thebibliography}{99999}






\bibitem[A]{A}
Alling, Norman L. {\em Foundations of Analysis over Surreal Number Fields.}  Mathematics Studies 141. North-Holland, Amsterdam 1987

\bibitem[Alt]{Alt}
Altmann, H.J., "Intermediate arithmetic operations on ordinal numbers",
Mathematical Logic Quaterly, \textbf{63},  3--4, (November 2017),  228--242
\url{https://arxiv.org/abs/1501.05747v9}

\bibitem[Ba]{Ba}
Badiou, A., 
{\em Number and Numbers}, Polity Press, Cambridge 2008  (translated from
{\em Le Nombre et les Nombres}, Seuil, 1990).
\url{https://azinelibrary.org/other/Badiou_-_Number_and_Numbers.pdf}

\bibitem[BH1]{BH1}
Vincent Bagayoko, Joris van der Hoeven,
"Surreal substructures",
\url{https://arxiv.org/pdf/2305.02001}

\bibitem[BH2]{BH2}
Vincent Bagayoko, Joris van der Hoeven,
"The hyperserial field of surreal numbers",
\url{https://arxiv.org/abs/2310.14873}



\bibitem[WW]{WW}
Berlekamp, E., Conway, J., and  R. Guy, {\em Winning Ways},
2nd edition, Wellesley, Massachusetts: A. K. Peters Ltd., 4 vols., 2001 -- 2004 

\bibitem[BKMM]{BKMM}
 Berarducci, A., and Salma Kuhlmann, Vincenzo Mantova, Mickaël Matusinski,
"Exponential fields and Conway's omega-map",
Proc.\ AMS, to appear, 
\url{https://arxiv.org/abs/1810.03029}


\bibitem[BEK]{BEK}
Berarducci, A., P.\ Ehrlich, and S.\ Kuhlmann,
{\em Mini-Workshop: Surreal Numbers, Surreal Analysis, Hahn Fields and Derivations},
Oberwolfach Report 2016/60,
\url{https://ems.press/journals/owr/articles/15181 }


\bibitem[BM]{BM}
Berarducci, A., and V. Mantova,
"Surreal numbers, derivations and transseries", J. Eur. Math. Soc. 20 (2018), 339--390
\url{https://arxiv.org/abs/1503.00315}



\bibitem[Be08]{Be08}
Bertram, W., \textit{
Differential Geometry, Lie Groups and Symmetric Spaces over General Base Fields and Rings},  Memoirs of the AMS 192, no.900 (2008)    
\url{https://arxiv.org/abs/math/0502168}


\bibitem[Be21]{Be21}
Bertram, W.,
"Graded sets, graded groups, and Clifford algebras", 
\url{https://arxiv.org/abs/2109.00878v1}, arxiv 2021


\bibitem[Be24]{Be24}
Bertram, W.,
"On group and loop spheres",
\url{https://arxiv.org/abs/2410.17634}

\bibitem[Ca]{Ca}
Carruth, P.W.,
 "Arithmetic of ordinals with applications to the theory of ordered Abelian groups", Bull. Amer. Math. Soc. 
 {\textbf 48}, (1942), 262 -- 271. 
Available 
\href{https://www.ams.org/journals/bull/1942-48-04/S0002-9904-1942-07649-X/S0002-9904-1942-07649-X.pdf}{here}.


  \bibitem[ONAG]{Co01}
 Conway, J.H., {\em On Numbers and Games} (Second Edition), A K Peters Ltd., Wellesley 2001
 
 \bibitem[CG]{CG}
 Conway, J.H., and R.\ Guy, {\em The Book of Numbers}, Springer, New York 1996
 
 
\bibitem[CSl]{CSl}
Conway, J.H., and N.J.A.\ Sloane, 
\newblock
{\em Sphere Packings, Lattices and Groups},
\newblock
Springer Grundlehren \textbf{290}, New York 1988.
 
 \bibitem[CE]{CE}
 Costin, O., and Ph.\ Ehrlich,
 "Integration on the Surreals",
 Adv.\ Math., to appear, 
 \url{https://arxiv.org/abs/2208.14331}
 

 \bibitem[Ded]{Ded}
 Dedekind, R., 
 {\em Was sind und was sollen die Zahlen?}
 Vieweg,  Braunschweig 1888,
 \url{http://www.opera-platonis.de/dedekind/Dedekind_Was_sind_2.pdf}
(This text is of historical interest, but does not contain the cut-construction of the reals.
See \cite{Numbers}, Chapter 2, for more historical comments and references.) 
 
 
\bibitem[D]{D}
Dehornoy, P., 
{\em  La th\'eorie des ensembles}, Calvage et Mounet, Paris 2017
 
 
 \bibitem[DT]{DT}
 Doner, J, and A.\ Tarski, "An extended arithmetic of ordinal numbers",
 Fundamenta Mathematicae {\textbf{6}} (1969),  95 -- 127.
 \url{http://matwbn.icm.edu.pl/ksiazki/fm/fm65/fm65110.pdf}

\bibitem[Eb]{Numbers}
Ebbinghaus et al., {\em Numbers}, Springer (translated from {\em Zahlen}), 
\url{http://www.maths.ed.ac.uk/%7Eaar/papers/numbers}



\bibitem[E11]{E11}
Philip Ehrlich, 
"Conway names, the simplicity hierarchy and the surreal number tree", Journal of Logic and Analysis, vol. 3, pp. 1 -- 26 (2011)
 
  \bibitem[E12]{E12}
 Ehrlich, P.,
 "The absolute arithmetic continuum and the unification of all numbers great and small",
 Bulletin of Symbolic Logic, Vol 18, no. 1, 2012
 
 \bibitem[E20]{E20}
 Ehrlich, P., "Contemporary Infinitesimalist Theories of Continua and Their Late Nineteenth- and Early Twentieth-Century Forerunners", in Stewart Shapiro, and Geoffrey Hellman (eds), 
 {\em The History of Continua: Philosophical and Mathematical Perspectives}, Oxford, 2020; 
 \url{https://arxiv.org/abs/1808.03345}
 
 \bibitem[Fo]{Fo}
Fornasiero, A., 
"Recursive definitions on surreal numbers",
 \url{https://arxiv.org/abs/math/0612234} 
      
 \bibitem[Go]{Go}
 Gonshor, H., {\em An Introduction to the Theory of Surreal Numbers},
 London Math.\ Soc. LNS {\textbf{11}}, Cambridge University Press, Cambridge  1986     
 
 \bibitem[Go2]{Go2}
 Gonshor, H., "Number Theory for the Ordinals With a New Definition for Multiplication", 
 Notre Dame Journal of Formal Logic 21 (1980), 708 -- 710.
 \href{https://projecteuclid.org/journals/notre-dame-journal-of-formal-logic/volume-21/issue-4/Number-theory-for-the-ordinals-with-a-new-definition-for/10.1305/ndjfl/1093883256.full}{link}
      
 \bibitem[Ha]{Ha}
 Halmos, P., {\em Naive Set Theory},       Springer, New York 1974
      
 \bibitem[He]{He}
 Hermes, H., "Numbers and Games", Chapter 13 in: Ebbinghaus et al., {\em Numbers}, see
      \cite{Numbers}.
      
\bibitem[GN]{GN}
Galeotti, L., and H.\ Nobrega, "Towards computable analysis on the generalised real line",
\url{https://arxiv.org/pdf/1704.02884.pdf} 

 \bibitem[Hott]{Hott}
The Univalent Foundations Program: {\em Homotopy Type Theory:
Univalent Foundations of Mathematics},
IAS,  Princeton 2013, 
\url{http://homotopytypetheory.org/book/}
 
\bibitem[KM]{KM}
Kuhlmann, S., and M.\ Matusinski,
"The exponential-logarithmic equivalence classes of surreal numbers", 
Order \textbf{32} (1), pp 53-68 (2015), 
 \url{https://arxiv.org/pdf/1203.4538}
 
 
 \bibitem[K]{K}
 Knuth, D., {\em Surreal Numbers: How Two Ex-Students Turned on to Pure Mathematics and Found Total Happiness}, 1974, 
  

\bibitem[Lei]{Lei}
Leinster, T.,  "Rethinking Set Theory", The American Mathematical Monthly, 121:5, 403--415
\url{https://arxiv.org/abs/1212.6543}

\bibitem[Le]{Le}
Lenstra, H.W., "Nim Multiplication", IHES publications, 78/211, IHES, 1978,
\url{https://scholarlypublications.universiteitleiden.nl/access/item%3A2717072/view}

\bibitem[Ly]{Ly}
Lynch, P., "Counting Sets with Surreals. Part I: Sets of Natural Numbers",
\url{https://arxiv.org/abs/2311.09951}





 
 
\bibitem[Re]{Re}
Rea, A.,
"The Ordinals as a Consummate Abstraction of Number Systems",
\url{https://arxiv.org/abs/1706.08908} 
 
\bibitem[RS]{RS}
Rubinstein-Salzedo, S., and  A. Swaminathan,
"Analysis on Surreal Numbers", 
Journal of Logic and Analysis, Volume \textbf{6}, Number 5, pp. 1--39, 2014. 
\url{https://arxiv.org/abs/1307.7392} 
 
\bibitem[R]{Ru}
Ruckers, R., {\em Infinity and the Mind}, Princeton Science Library,
Princeton 2005 

\bibitem[Sch]{Sch}
Schick, Moritz, {\em Surreale Zahlen}, Bachelorarbeit, Universit\"at Konstanz 2019 

\bibitem[SchS]{SchS}
Schleicher, D., and M.\ Stoll,
"An introduction to Conway's games und numbers", Moscow Math Journal \textbf{6} 2 (2006), 359-388,
\url{https://arxiv.org/abs/math/0410026}


\bibitem[S]{S}
Siegel, A.N., {\em Combinatorial Game Theory},
AMS Graduate Studies in Mathematics \textbf{146}, AMS, Rhode Island, 2013

\bibitem[Sim]{Sim}
Simons, Jim,
"Meet the Surreal Numbers",
\url{https://www.m-a.org.uk/resources/4H-Jim-Simons-Meet-the-surreal-numbers.pdf}



\bibitem[We]{We}
Weiss, I,
"The real numbers -- A survey of constructions",
Rocky Mountain J. Math. 45(3): 737 -- 762 (2015).




\bibitem[Teg]{Teg} Tegmark, M., ``The Mathematical Universe'', Foundations of Physics {\textbf 38} 2008, 101 -- 150, arxiv \url{https://arxiv.org/abs/0704.0646v2} 

\bibitem[Teg2]{Teg2} Tegmark, M., 
``Our Mathematical Universe -- My Quest for the Ultimate Nature of Reality",
Penguin Books, 20015 


\end{thebibliography}
\end{document}